\newtheorem{thm}{Theorem}[section]
\newtheorem{theorem}{Theorem}
\newtheorem{prop}{Proposition}[section]
\newtheorem{coro}{Corollary}[section]
\newtheorem{lemma}{Lemma}[section]
\newtheorem{defn}{Definition}[section]
\newtheorem{remark}{Remark}[section]
\def\<{\langle}
\def\>{\rangle}
\newcommand{\R}{\mathbb{R}}
\newcommand\be{\begin{equation}} 
\newcommand\ee{\end{equation}}
\newcommand{\comment}[1]
\newcommand{\s}{\mathbb{S}}
\tikzset { domaine/.style 2 args={domain=#1:#2} }
\newtheorem{theo}{Theorem}[section]
\newtheorem*{theo*}{Theorem}
\newtheorem*{prop*}{Proposition}
\newtheorem*{lem*}{Lemma}
\newtheorem*{cor*}{Corollary}
\def\bea{\begin{eqnarray*} }
\def\eea{\end{eqnarray*} }
\begin{document}

\title{Energy in Fourth Order Gravity}
\author{R. Avalos \footnote{University of Tübingen, Mathematics Department, Tübingen, Germany.}, J. H. Lira\footnote{ Federal University of Ceará, Mathematics Department, Fortaleza, Ceará, Brazil. }, N. Marque\footnote{Université de Lorraine, Institut Elie Cartan de Lorraine, Nancy, France.}}
%Partially supported by CNPq and FUNCAP.}}
%\date{}
\maketitle

\begin{abstract}
In this paper we make a detailed analysis of conservation principles in the context of a family of fourth-order gravitational theories generated via a quadratic Lagrangian. In particular, we focus on the associated notion of energy and start a program related to its study. We also exhibit examples of solutions which provide intuitions about this notion of energy which allows us to interpret it, and introduce several study cases where its analysis seems tractable. Finally, positive energy theorems are presented in restricted situations.
\end{abstract}

\tableofcontents
%\begin{abstract} 
%\end{abstract}

%
%%\usepackage{dcolumn}
%\documentclass[aps,preprint,preprintnumbers,amsmath,amssymb,nofootinbib]{revtex4}
%%\documentclass[aps,prep,nofootinbib]{revtex4}
%%%%%%%%%%%%%%%%%%%%%%%%%%%%%%%%%%%%%%%%%%%%%%%%%%%%%%%%%%%%%%%%%%%%%%%%%%%%%%%%%%%%%%%%%%%%%%%%%%%%%%%%%%%%%%%%%%%%%%%%%%%%%
%\usepackage{amssymb}
%\usepackage{amsmath}
%\usepackage{bm}
%\usepackage{amsthm}

%\theoremstyle{plain}
%\newtheorem{thm}{Theorem} % reset theorem numbering for each chapter

%\theoremstyle{definition}
%\newtheorem{defn}{Definition} % definition numbers are dependent on theorem numbers
%\newtheorem{exmp}{Example} % same for example numbers

%\theoremstyle{proposition}
%\newtheorem{prop}{Proposition} % definition numbers are dependent on theorem numbers

%\theoremstyle{lemma}
%\newtheorem{lemma}{Lemma} % definition numbers are dependent on theorem numbers

%\theoremstyle{corollary}
%\newtheorem{coro}{Corollary} % definition numbers are dependent on theorem numbers

\setcounter{MaxMatrixCols}{10}

\section{Introduction}
%modif I took the liberty to try to punch up the intro tell me if you agree (if you don't the reverse is easy)
In the last decades there has been an increasingly rich interaction between problems arising naturally in the context of general relativity (GR) with deep problems in differential geometry and geometric analysis. Many rely on the initial value formulation of this theory. It was through the remarkable work of Y. Choquet-Bruhat that it was shown that, in most situations of interest in physics, the Einstein equations can be formulated as a hyperbolic system for initial data satisfying certain geometric constraint equations \cite{CB0} (see \cite{CB-book} for updated discussions on this topic). This led to plenty of research related both to the Einstein constraint equations (ECEs) as well as to the evolution of initial data. Both these problems are translated into geometric partial differential equation (PDE) problems. In particular, the analysis of the ECEs is intrinsically related to scalar curvature prescription problems and, for instance, their conformal formulation is intrinsically connected to the Yamabe problem in Riemannian geometry (see, for instance, \cite{Isenberg1,CB2,Maxwell1,Maxwell2,Holst1,Holst2,Maxwell3,Mazzeo1,Holst3} and references therein). Furthermore, stability issues related to both generic and special solutions are natural problems arising in physics, which have proven to be connected to stability questions in Riemannian geometry and produced rich results in geometric analysis \cite{FM,FMM,AMM,Bartnik2}. Along the same lines, producing suitable initial data through gluing techniques has proven to be a valuable tool with deep impact for both mathematics and physics \cite{IMP,Corvino1,Corvino2,Chrusciel1,Chrusciel2,Maxwell4,Chrusciel3}.

Another area of extremely fruitful interaction between these fields, and which is more directly linked to this work, is related to the analysis of conserved quantities in GR. Namely, the analysis of the so-called ADM charges, which describe the conserved total energy-momenta of isolated gravitational systems \cite{ADM,RT} (see \cite{Chrusciel4,Carlotto1,Lee} for modern reviews on this topic). Here, isolated is supposed to mean that there is a good control of the asymptotic behaviour of the fields at space-infinity, which is mathematically modelled by imposing that the ends $E_i$ of the manifold $M^n\hookrightarrow V\doteq M^n\times \mathbb{R}$ have a specific model structure, and where we have denoted by $V$ our (globally hyperbolic) $(n+1)$-dimensional space-time and by $M\cong M\times \{t=0\}$ a fixed $t=cte$ hypersurface. The simplest case, and maybe the easiest to motivate, is when the ends $E_i$ are asymptotically Euclidean (AE), which means that $E_i\cong \mathbb{R}^n\backslash \overline{B}$, where $\overline{B}$ denotes some closed ball in $\mathbb{R}^n$, and the fields are supposed to decay at infinity at specific rates. A natural problem in this context, which turned out to be highly non-trivial, is the non-negativity of the associated ADM mass of such systems. Most notably, the resolution of this problem turned out to be a fundamental result for the resolution of the Yamabe problem in the early 80's, which was open in the so called Yamabe-positive case in dimensions three, four and five, and also in the locally conformally flat case, until the remarkable work of R. Schoen in \cite{Schoen1}, where the author noted that the resolution of the positive mass conjecture in GR would imply the final resolution of the Yamabe problem in these open cases (see also \cite{Lee-Parker} for a review in this topic). 

In the above context, R. Schoen and S. T. Yau proved the positive mass theorem in the contexts needed for the resolution of the Yamabe problem in \cite{PM1,PM2}. These results led to plenty of interesting mathematical developments on their own right. In particular, E. Witten provided a proof of the same result for spin manifolds in \cite{Witten}, which itself led to plenty of mathematical research (see, for instance, \cite{Bartnik}). Furthermore, the Schoen-Yau proof, which did not demand any additional topological assumption proved to be difficult to extend to arbitrary dimensions because of its relation to minimal surfaces. This was generalised in \cite{Eichmair1} to cover the cases of dimensions $n\leq 7$ (see also references in \cite{Eichmair1}). In the last few years, this problem has continued to develop plenty of interest and at least two different proofs of the general result have been announced by Schoen-Yau \cite{Schoen2} and J. Lokhamp \cite{Lokhamp1,Lokhamp2}. Other related results concerning the analysis of the ADM energy can be found in \cite{Carlotto1,Lee}.

Let us now stress that not only the ADM energy/mass have proven to be very interesting objects from the analytic view-point, but, for instance, also the ADM center of mass (COM) has been shown to have very subtle and interesting properties. Just to name a few, it was noted by Huisken and Yau that it is related to \textit{geometric foliations of infinity} \cite{HY}, at least in special cases, and this led to several generalisations such as \cite{Metzger1,Nerz1,Nerz2,Carla1}. It should be noted that some of these works are related to physical questions on the interpretation of the COM, its dynamical properties and appropriate hypotheses that allow it to be well-defined, just to name a few.

Taking into consideration all the above history of rich connections between GR and deep mathematical problems, let  us now draw our attention to certain modifications of GR which have also been proposed in the last several decades. Namely, let us focus on higher-order gravitational theories. These are models which modify the Einstein equations by adding some higher-order modifications, which typically arise by modifying the Einstein-Hilbert action and incorporating, for instance, quadratic terms in the curvature tensor. Such modifications have a long and rich history in physics, having been explored extensively within the physics literature, and remain as objects of intensive study. In particular, in classical four-dimensional GR, addition of certain quadratic terms to GR has proven to produce theories better suited to standard approaches of quantization \cite{Hooft,Stelle1}. Furthermore, the addition of a quadratic term in the scalar curvature in the Einstein-Hilbert action is related to the so-called Starobisky inflationary model \cite{Inflation}. Also, several other motivations for the analysis of these theories can be found within effective field theory approaches to GR \cite{EF1,EF2,EF3}; low energy limits of string theory \cite{String1}; the so-called conformal gravity proposal \cite{Confgrav}; low-dimensional gravity (such as massive gravity) \cite{Massivegrav1,Massivegrav2} as well as other approaches to both classical and quantum gravity \cite{Maldacena,Kaku}. 

Since our aim is not to judge  whether or not any of the above models are successful descriptions of the associated phenomena they are meant to describe, we refer the reader to the above references and references therein for more information related to such questions. On the other hand, our intention is to explore the links that such higher derivative theories have with natural higher-order problems in geometric analysis. In particular, higher-order problems have also received plenty of attention in geometric analysis (see, for instance, \cite{Branson1,Branson2,Graham,Paneitz,Chang}) as can be seen notably in $Q$-curvature analysis (see \cite{Djadli1,Gursky1,Gursky2,Esposito,Ndiaye,Malchiodi4d,Humbert,HangYang1,HangYang2,HangYang3} and references therein). Nevertheless, to the best of our knowledge no clear connection seems to have been made between these two areas of research. In particular, based on the fruitful relation between GR and classical problems in geometry that was briefly described above, we do not think it would be surprising to find such connections. Thus, our aim is to take this paper as the starting point of a project related to the translation of certain classical problems in mathematical GR to these higher-order theories, with special emphasis in the potential connections with existing problems within geometric analysis. In particular, we will devote this first step to a detailed analysis of conservation principles in this context, which relates to very well-known literature within physics. By the end we will make contact with appropriate energy notions arising in this frame which relate to well-known $Q$-curvature positive mass theorems, which have proven essential in $Q$-curvature analysis (see \cite{Humbert,Gursky2,HangYang2,HangYang3}). This link is explored in detail in a related paper (see, \cite{avalos2021positive}).  
  
With all the above in mind, let us fix our attention to the following type of gravitational theories. Consider a globally hyperbolic space-time $(V\doteq M\times\mathbb{R},\bar{g})$, and a  functional of the form
\begin{align}\label{action}
S(\bar{g})=\int_{V}\left(\alpha R^2_{\bar{g}} + \beta\< \mathrm{Ric}_{\bar{g}},\mathrm{Ric}_{\bar{g}} \> \right)dV_{\bar{g}},
\end{align}
where $\alpha$ and $\beta$ are free parameters of the problem. In order to make sense of the above functional, let us assume that the class of metrics here considered are such that $R^{2}_{\bar{g}}$ and $\< \mathrm{Ric}_{\bar{g}},\mathrm{Ric}_{\bar{g}} \>$ are integrable. Then, the functional $\bar{g}\mapsto S(\bar{g})$ is well-defined and we have an $L^2$-gradient for this functional, given by $A_{\bar{g}}\in \Gamma(T^0_2V)$, which is explicitly given by 
\begin{align}\label{A-tensor}
\begin{split}
A_{\bar{g}}&=\beta\Box_{\bar{g}}{\mathrm{Ric}_{\bar{g}}} + (\frac{1}{2} \beta  + 2\alpha)\Box_{\bar{g}}R\: {\bar{g}} - (2\alpha +  \beta)\bar{\nabla}^2R_{\bar{g}}  + 2\beta{\mathrm{Ric}_{\bar{g}}}_{\cdot}\mathrm{Riem}_{\bar{g}}    \\
&  + 2\alpha R_{\bar{g}}\mathrm{Ric}_{\bar{g}} -\frac{1}{2 }\alpha R^2_{\bar{g}} {\bar{g}} -\frac{1}{2 } \beta\langle\mathrm{Ric}_{\bar{g}},\mathrm{Ric}_{\bar{g}}\rangle_{\bar{g}} {\bar{g}},
\end{split}
\end{align}
where the contraction ${\mathrm{Ric}_{\bar{g}}}_{\cdot}\mathrm{Riem}_{\bar{g}}$ is on the first and third indexes (see the appendix for our curvature convention). As was described in detail above, the analysis of (\ref{action}) is well-motivated within contemporary theoretical physics. We have explicitly omitted the second order term arising from the Einstein-Hilbert action, since, as was explained above, our intention is to make contact with fourth order geometric problems. Nevertheless, it is worth highlighting that models such as four-dimensional conformal gravity are contained in this analysis.

On the geometric side, the critical points of (\ref{action}) solve a set of fourth order geometric partial differential equations (PDEs), as can be seen from (\ref{A-tensor}). These equations are, in principle at least, amenable to the same geometric PDE treatment as the Einstein equations, which is to say the analysis of the Cauchy problem associated to $A_{\bar{g}}=0$. In particular, the main arguments on how the fourth order system arising from $A_{\bar{g}}=0$ can be rewritten as a larger fully coupled second order system of non-linear wave equations with constraints on the initial data have been laid out in \cite{Noakes}. Along these lines, problems such as optimum regularity and geometric uniqueness remain open, and, most importantly, the analysis of the associated constraint system remains completely open to the best of our knowledge. While we intend to address some of these problems in upcoming work, in this paper we will concentrate in the analysis of conservation principles associated to the space-time equations $A_{\bar{g}}=0$.% Let us highlight the role that conserved quantities extracted from GR have played in geometric analysis simply by pointing out to the positive energy and positive mass theorems \cite{PM1,PM2,PM3}; their role in the Yamabe problem \cite{Lee-Parker,Schoen1} and the analysis of the center of mass which related to several geometric foliations of asymptotically euclidean (AE) manifolds \cite{HY,Metzger1,Nerz1,Nerz2,Carla1}. Because of this, we expect that conserved quantities associated to (\ref{action}) will be related to interesting geometric problems.

In view of the analysis related to conserved quantities in GR, we should stress that there are different ways of finding these  under the presence of asymptotic symmetries (see, for instance, \cite{RT,AD,Stelle2,Stelle3,ADT,Teiltelboim,Ashtekar,Wald1,Wald2} and \cite{Hollands} for a review on this topic). Nevertheless, not all these approaches translate equally well to other Lagrangian theories. One method that does, is the one described in \cite{AD,ADT} (see also \cite{CB-PM} for a more mathematically-oriented presentation). This approach relies on the analysis of small perturbations of solutions with special symmetries, and using such symmetries to produce conserved quantities of the perturbed solutions. All this depends crucially on the contracted Bianchi identities, which give rise to a linearised version of these local conservation laws. From the diffeomorphism invariance of geometric Lagrangian theories, we know that they obey a version of these local conservation principles which will allow us to follow the same path towards a good notion of energy for these theories. This kind of analysis has been exploited by several authors to study conserved quantities associated to higher-order gravitational theories (see, for instance, \cite{Massivegrav1,Massivegrav2,4thenergy1,4thenergy2,4thenergy3,4thenergy4}, and \cite{4thenergy5} for a review on this topic). 

Along the lines of the above paragraph, after some preliminary preparations, in Section 3, we will apply this construction to the space-time equations (\ref{A-tensor}). This analysis will be done taking into consideration several analytic details that, although important for our purposes, have not been fully addressed in current literature to the best of our knowledge. In particular, we will consider space-times with AE ends, but weaken traditional definitions so as to, in principle, admit more flexible asymptotic conditions in our analysis, which can be better suited to this problem. Then, given an $A$-flat metric $\hat{\bar{g}}$ possessing a Killing field $\xi$, we will see that for any $A$-flat perturbed metric $\bar{g}=\hat{\bar{g}}+h$, there is a locally conserved $1$-form $\mathcal{P}_{\hat{\overline{g}}}(\xi , h)$, such that if $\mathcal{P}$ obeys certain $L^1$-integrability conditions, then
\begin{align}
\mathcal{E}_{\alpha,\beta}(\bar{g})=-\int_M\langle \mathcal{P}_{\hat{\overline{g}}}(\xi , h),\hat{n} \rangle_{\hat{\bar{g}}}dV_{\hat{g}}
\end{align}
is conserved through evolution, where $\hat{g}$ stands for the induced Riemannian metric on $M$ by $\hat{\bar{g}}$ and $\hat{n}$ stands for the $\hat{\bar{g}}$-future pointing unit normal to $M$ (see Proposition \ref{Energyconservation}). Clearly, when $\xi$ is time-like, this becomes natural notion of energy to be attached to $\bar{g}$. In physics literature, such conserved quantities are typically expressed through some charge computed as a boundary integral at space-like infinity. Because of the local conservation law that $\mathcal{P}$ obeys, this can be seen to be the case in orientable manifolds (see Proposition \ref{ADMenergy.1}), implying the existence of a $2$-form $\mathcal{Q}_{\hat{\bar{g}}}(\xi,h)$ satisfying
\begin{align*}
\mathcal{P}_{\hat{\bar{g}}}(\xi,h)=\delta_{\hat{\bar{g}}}\mathcal{Q}_{\hat{\bar{g}}}(\xi,h),
\end{align*}
where $\delta_{\hat{\bar{g}}}=d^{*}$ stands for the co-differential operator acting on differential forms. The existence of such \textit{superpotential} is known from the work of Deser and Tekin when $(V,\hat{\bar{g}})$ is taken to be a maximally symmetric space \cite{ADT}. For our purposes this is not strong enough, and therefore through Propositions \ref{Energycharge.Eins} and \ref{pqGR} we will prove that this holds without any addition symmetry assumptions around arbitrary Einstein solutions $\hat{\bar{g}}$ which possess a Killing field $\xi$ obeying appropriate asymptotic conditions, and provide the corresponding formulae. To end this section, we will provide an explicit expression for the leading order of the energy density when computed on a perturbation $\bar{g}$ of a Ricci-flat asymptotically Minkowskian (in an appropriate sense, see Definition \ref{AMmanifolds}) solution $\hat{\bar{g}}$, which is given in rectangular coordinates near infinity by
\begin{align}\label{energy-desity}
\begin{split}
-\mathcal{Q}(\hat{n},\hat{\nu})|_{t=0}&=\left( \frac{3}{2}\beta  + 2\alpha\right) \left( \partial_{j}\partial_{i}\partial_{i}g_{aa} - \partial_{j}\partial_{u}\partial_{i}g_{u i}\right)\hat{\nu}^{j} + \frac{\beta}{2}\left( \partial_{i}\ddot{g}_{ji} - \partial_{j}\ddot{g}_{ii} \right)\hat{\nu}^{j}  \\
&+ \frac{\beta}{2}\left( \partial_{i}\partial_{j}\dot{X}_{i} - \partial_{i}\partial_{i}\dot{X}_{j} \right)\hat{\nu}^{j} + ( \beta  + 2\alpha)\partial_{j}\partial_{i}\partial_{i}N^2 \hat{\nu}^{j}- ( \beta  + 2\alpha)\partial_{j}\ddot{g}_{ii}\hat{\nu}^{j} \\
&+ 2( \beta  + 2\alpha)\partial_{j}\partial_{i}\dot{X}_{i}\hat{\nu}^{j}+ O_1(r^{-(\hat{\tau}+\tau) -3}),
\end{split}
\end{align}
where $N$ and $X$ stand for the initial values of the lapse and shift functions associated to the space-time decomposition of $\bar{g}$; $\dot{X}=\partial_tX|_{t=0}$; $\ddot{g}=\partial^2_t\bar{g}|_{t=0}$; $\hat{\nu}$ stands for outward-pointing $\hat{g}$-unit normal vector field to a hypersurface $S\hookrightarrow M$ sufficiently  far away in the ends of $M$ and $\hat{\tau}$ describes the order of decay of $\hat{\bar{g}}$ at infinity while $\tau$ controls the behaviour of $\bar{g}$ at infinity.  In the above, we simplified notations for the $2$-form $\mathcal{Q}_{\hat{\bar{g}}}(\xi, h)$ applied to the vectors $\hat{n}$ and $\hat{\nu}$ by dropping the dependancy on the model metric, the Killing field and the perturbation. It should be highlighted that this explicit ADM-type expression is very useful for our analysis, since it makes contact with suitable decaying assumptions and, more importantly, with geometric objects directly linked to these expression (for instance, see Section 6 for a clear link with $Q$-curvature). The above expressions motivates our definition of energy given in Definition \ref{Energydefn}. This definition clearly suggests several issues to be analysed, two of which are the existence of examples that provide good intuitions and the rigidity which should be associated to the vanishing of the energy (see Remark \ref{Energy-rigidty}). These two issues are explored in the following sections.

Concerning the first of the above two mentioned problems, borrowing ideas from GR, these type of intuitions are typically achieved by looking at specially simple solutions of our theory where we can interpret the results straightforwardly. In GR, this can be done by testing the appropriate notion of energy on highly symmetrical solutions modelling isolated systems, which can be computed explicitly, such as Schwarzschild, Kerr, Schwarzschild de Sitter (SdS) and anti-de Sitter (SAdS) solutions. Let us notice that in dimension four, it has been observed for instance in \cite{ADT} that under certain decay assumptions of the solutions, higher-order notions of energy do not produce any new contributions (notice that this could be deduced explicitly from (\ref{energy-desity})). Nevertheless, this depends strongly on the asymptotic behaviour of the solutions, and because of the higher-order nature of the theory, one could easily expect that in highly symmetrical cases we may find new solutions with decays naturally suited to produce contributions to these higher-order notions of energy, for instance through weaker decaying solutions which accompany new integration constants. In other words, the natural decay assumptions which are known from GR may not be the appropriate ones in this context. All this will be analysed in detail in Section 4, where we will present classifications of 4-dimensional exterior static spherically symmetric $A$-flat solutions in two complementary cases. 

The first of the above cases is for arbitrary $\alpha$ and $\beta$, but for exterior solutions \textit{in Schwarzschild form}, while the second case is for the special case of $3\alpha+\beta=0$ (which corresponds to the conformally invariant case) and without the Schwarzschild form restriction. These kinds of classifications have repeatedly appeared in physics literature (specially for the conformal case), where similar results seem to have been rediscovered several times \cite{Fiedler-Schimming,SSsols1,SSsols2,SSsols3,SSsols4}. We would like to draw the reader's attention specially to \cite{Fiedler-Schimming} which seems to contain most of the subsequent results, and is, to the best of our knowledge, the original reference. These comments in particular apply to the so-called Mannheim-Kazanas solution \cite{SSsols1}, to which we will refer to as the FSMK-solution, which has been extensively analysed in the context of conformal gravity. Nevertheless, it seems to be that the resurgences of these results happened without reference to previous results which were closely related. Thus, we will use this opportunity to compile the existing results available to our knowledge and, to the benefit of the reader, provide a self-contained independent proof which, in order to save time and space associated to long computations, will be computer assisted. In particular, we will make an analysis which is well-suited for our purposes, exploring some global aspects of these classifications which, up to the best of our knowledge have not been previously analysed explicitly. The final result of this analysis can be compiled as follows (see Proposition \ref{solscharzschgener} and Theorem \ref{Classificationthm}):
\begin{prop*}
Assume that $(V,\bar{g})$ is a 4-dimensional {$A$-flat} exterior static spherically symmetric space in Schwarzschild form. Assume further that $3\alpha + \beta \neq 0$. Then, 1) If $\beta=0$, then $\bar{g}$ is either a Schwarzschild-de Sitter  (or SAdS) metric or a Reissner-Nordstr\"om metric. 2) If $\beta \neq0$ then $\bar{g}$ is a Schwarzschild-de Sitter  (or SAdS) metric.
\end{prop*}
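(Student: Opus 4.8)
The plan is to exploit the static spherically symmetric Schwarzschild-form ansatz to reduce the tensorial equation $A_{\bar g}=0$ to a system of ordinary differential equations in a single unknown. Writing $\bar g = -f(r)\,dt^2 + f(r)^{-1}dr^2 + r^2\,d\Omega^2$, all curvature quantities ($\Ric_{\bar g}$, $R_{\bar g}$, $\mathrm{Riem}_{\bar g}$) become explicit expressions in $f,f',f''$, and substituting these into (\ref{A-tensor}) produces $A_{\bar g}$ as a diagonal tensor whose components $A^t_t$, $A^r_r$, $A^\theta_\theta=A^\phi_\phi$ are fourth-order in $f$. This algebra is precisely where I would let the computer-assisted computation do the bookkeeping. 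Because $A_{\bar g}$ is the $L^2$-gradient of a diffeomorphism-invariant functional it is divergence-free, $\dive_{\bar g}A_{\bar g}=0$, and its only nontrivial (radial) component supplies one relation among the three diagonal components; hence $A_{\bar g}=0$ is equivalent to just two independent ODEs.

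For the first of these I would take the trace. A direct contraction of (\ref{A-tensor}) in dimension four collapses all the quadratic curvature terms and yields $\mathrm{tr}_{\bar g}A_{\bar g}=2(3\alpha+\beta)\,\Box_{\bar g}R_{\bar g}$. Since by hypothesis $3\alpha+\beta\neq 0$, this forces $\Box_{\bar g}R_{\bar g}=0$, which for the ansatz integrates once to $r^2 f R_{\bar g}' = \mathrm{const}$. This single first integral is the backbone of the whole argument, as it already separates the qualitative behaviour according to whether $R_{\bar g}$ is constant.

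The remaining independent component is then handled case by case. When $\beta=0$ the equation is the pure $R^2$-gravity equation, and after using $\Box_{\bar g}R_{\bar g}=0$ its trace-free part reduces to $\bar\nabla^2 R_{\bar g}=R_{\bar g}\,\mathring{\Ric}_{\bar g}$. I would split into $R_{\bar g}\equiv 0$, which turns the scalar-curvature relation into the Euler-type equation $r^2 f''+4rf'+2f-2=0$ whose general solution $f=1+a/r+b/r^2$ is exactly Reissner-Nordström form; and $R_{\bar g}$ a nonzero constant, where $\bar\nabla^2 R_{\bar g}=0$ forces $\mathring{\Ric}_{\bar g}=0$, i.e. $\bar g$ Einstein, hence Schwarzschild-(A)dS. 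When $\beta\neq 0$ the genuinely fourth-order terms $\beta\Box_{\bar g}\Ric_{\bar g}$ and $2\beta\,\Ric_{\bar g}\!\cdot\!\mathrm{Riem}_{\bar g}$ survive; here I would show that the second ODE, together with $\Box_{\bar g}R_{\bar g}=0$, forces $\mathring{\Ric}_{\bar g}=0$, so that only the Einstein branch Schwarzschild-(A)dS remains. Reissner-Nordström is excluded precisely because its nonvanishing trace-free Ricci feeds back through the $\beta$-terms (equivalently, because its Bach tensor does not vanish).

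The main obstacle I anticipate is not the tensor algebra, which the computer handles, but the complete and rigorous enumeration of branches when solving the reduced ODE system: a fourth-order problem carries extra integration constants, and one must verify that no non-constant-$R_{\bar g}$ (corresponding to $r^2 f R_{\bar g}'=\mathrm{const}\neq 0$) or weaker-decay solutions slip through beyond the named metrics. Certifying that the solution set is exhausted by Schwarzschild-(A)dS and Reissner-Nordström in the case $\beta=0$, and by Schwarzschild-(A)dS alone when $\beta\neq 0$, requires carefully tracking which integration constants are admissible and showing that every putative extra solution either collapses onto these families or is inconsistent with $A_{\bar g}=0$.
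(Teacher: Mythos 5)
Your setup and your trace computation are correct: in dimension four one indeed has $\mathrm{tr}_{\bar g}A_{\bar g}=2(3\alpha+\beta)\Box_{\bar g}R_{\bar g}$ (the paper makes the same observation in the Einstein-tensor formulation, Section 2.2.2), so $3\alpha+\beta\neq 0$ yields the first integral $r^2fR_{\bar g}'=c$; and your treatment of the constant-curvature branches is sound ($R_{\bar g}\equiv 0$ with $\beta=0$ gives the Euler equation $r^2f''+4rf'+2f-2=0$, hence Reissner--Nordstr\"om; $R_{\bar g}$ a nonzero constant gives $\mathring{\mathrm{Ric}}_{\bar g}=0$, hence Schwarzschild--de Sitter/SAdS). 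The genuine gap is that the proposition is precisely the assertion that \emph{nothing else} solves $A_{\bar g}=0$, and your proposal never establishes this. The branch $c\neq 0$ (non-constant $R_{\bar g}$) is only flagged as ``the main obstacle,'' and in the case $\beta\neq 0$ the decisive claim — that the remaining equations force $\mathring{\mathrm{Ric}}_{\bar g}=0$ — is asserted with no argument at all; it is a restatement of the conclusion. Nor is there a cheap abstract way out: taking the divergence of $\bar\nabla^2R_{\bar g}=R_{\bar g}\mathring{\mathrm{Ric}}_{\bar g}$ and using the contracted Bianchi identity produces an identity, not a new constraint, so the exclusion genuinely requires solving the ODE system.

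This missing step is where essentially all of the paper's proof lives. The paper finds a combination of the $tt$ and $rr$ components of $A$ that is \emph{linear} in the mass function $M$ (equation (\ref{equationM})); by contrast your first integral, written in terms of $M$, reads $r(r-M)\frac{d}{dr}\bigl[(rM''+2M')/r^2\bigr]=c$ and is nonlinear, so it does not by itself produce an explicit solution basis. Solving the linear equation gives $M=-m-\frac{\Lambda}{3}r^3+C_1r^{f(\alpha,\beta)}+C_2r^{g(\alpha,\beta)}$, and the constants $C_1,C_2$ are exactly your unresolved branch: since $M=r^s$ yields $R_{\bar g}\propto s(s+1)r^{s-3}$, these terms carry non-constant scalar curvature for generic $(\alpha,\beta)$ (for $\beta=0$ the exponents are $4$ and $-1$, and only the $r^{-1}$ term, the Reissner--Nordstr\"om charge, is scalar-flat). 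The paper then substitutes back into $A=0$ and uses linear independence of the resulting powers $r^{t_i^\pm}$ to force $C_1=C_2=0$ generically, with $C_2$ surviving only when $\beta=0$ (and then necessarily $\Lambda=0$), and finally checks one by one a finite list of exceptional parameter ratios together with the complex-exponent regime (see the appendix). Unless you carry out an equivalent exhaustive elimination, your argument proves only that the named metrics are solutions within the constant-curvature class, not that they exhaust all $A$-flat metrics in Schwarzschild form — which is the statement to be proved.
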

In order to address the $3\alpha+\beta=0$, let us first draw the reader's attention to the FSMK family of solutions, which is given by metrics of the form\footnote{We are parametrising these solutions in a convenient form for our purposes.}
\begin{equation}\label{FSMKsol}
\begin{aligned}
\hat{\bar{g}}(m,\Lambda,\mu)&=-N^2(r)dt^2+\frac{1}{N(r)^2}dr^2+r^2g_{\s^2},\\
N^2(r)&=1-3m\mu - \frac{m}{r} + \mu(3m\mu-2)r-\frac{\Lambda}{3} r^2,
\end{aligned}
\end{equation}
where $m,\mu$ and $\Lambda$ are constants parametrising the family. Let us highlight that all the static spherically symmetric Bach-flat space-time metrics in Schwarzschild form belong to the family of metrics given by (\ref{FSMKsol}). In particular, depending on the values of $m,\mu$ and $\Lambda$ these solution are defined either for $r\in (r_{-},r_{+})$, with $0<r_{-}<r_{+}<\infty$, or for all $r>r_{*}$, with $r_{*}$ depending on $m,\mu$ and $\Lambda$. The precise combination for each of these cases are given in Proposition \ref{Classificationlemma}. In particular, we can find combinations with $\Lambda=0$ which allow for exterior solution defined for all $r>r_{*}$. We will refer to these exterior solutions as $(\overset{\circ}{V},\overset{\circ}{\bar{g}})$ and write generically $\overset{\circ}{N}{}^2=c_1-\frac{m}{r}+c_2r$, for constants $c_1,c_2$ and $m\geq 0$.

Let us highlight that the constant which accompanies the linear term in $\overset{\circ}{N}$ possesses some interpretations within the context of conformal gravity, being related to a flattening effect in the rotation curves of galaxies, which, in that context, has been proposed as an alternative to conventional dark matter explanations. Interestingly enough, and aligned with previous comments concerning the asymptotics of interesting solutions to these higher-order theories, we can prove the following (see theorem \ref{FSMKlemma}).
\begin{theorem}
\label{lemmahighlighted}
The fourth order energy of the solution $(\overset{\circ}{V},\overset{\circ}{\bar{g}})$ is well-defined and given by $\mathcal{E}_{\alpha,-3\alpha}(\overset{\circ}{\bar{g}})=8\pi\alpha c_2$.
\end{theorem}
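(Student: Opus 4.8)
The plan is to evaluate the energy as a flux integral at spatial infinity using the explicit density (\ref{energy-desity}). Since $3\alpha+\beta=0$ places us in the conformally invariant case $\beta=-3\alpha$, I would take the reference background $\hat{\bar{g}}$ to be flat Minkowski space $\eta$ --- which is Ricci-flat, hence $A$-flat, and carries the timelike Killing field $\xi=\partial_t$ --- and regard $\overset{\circ}{\bar{g}}=\eta+h$ with $h=\overset{\circ}{\bar{g}}-\eta$. Because $\overset{\circ}{\bar{g}}$ is itself $A$-flat for $\beta=-3\alpha$ (it is Bach-flat, cf. (\ref{FSMKsol})), the construction of Proposition \ref{Energyconservation} applies, and by the superpotential representation $\mathcal{P}_{\hat{\bar{g}}}(\xi,h)=\delta_{\hat{\bar{g}}}\mathcal{Q}_{\hat{\bar{g}}}(\xi,h)$ of Proposition \ref{ADMenergy.1} the energy reduces to a boundary flux:
\begin{align*}
\mathcal{E}_{\alpha,-3\alpha}(\overset{\circ}{\bar{g}})=-\lim_{r\to\infty}\oint_{S_r}\mathcal{Q}(\hat{n},\hat{\nu})\,dA=\lim_{r\to\infty}\oint_{S_r}\bigl(-\mathcal{Q}(\hat{n},\hat{\nu})\bigr)\,dA,
\end{align*}
where $S_r$ is a coordinate sphere in the end and, crucially, $\hat{\nu}^j=x^j/r$ and $dA=r^2\,d\Omega$ are the \emph{flat background} normal and area element (recall $\hat{g}$, $\hat{\nu}$ are induced by $\hat{\bar{g}}=\eta$). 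The integrand is then supplied by (\ref{energy-desity}).

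Next I would specialize (\ref{energy-desity}) to this situation. As $\overset{\circ}{\bar{g}}$ is static its shift vanishes and all time derivatives drop, so $\dot{X}=0$ and $\ddot{g}=0$ eliminate every term carrying a time derivative. Setting $\beta=-3\alpha$ gives $\tfrac32\beta+2\alpha=-\tfrac52\alpha$ and $\beta+2\alpha=-\alpha$, leaving
\begin{align*}
-\mathcal{Q}(\hat{n},\hat{\nu})\big|_{t=0}=-\tfrac52\alpha\bigl(\partial_j\partial_i\partial_i g_{aa}-\partial_j\partial_u\partial_i g_{ui}\bigr)\hat{\nu}^{j}-\alpha\,\partial_j\partial_i\partial_i N^2\,\hat{\nu}^{j}+O_1(r^{-(\hat{\tau}+\tau)-3}).
\end{align*}
Passing to rectangular coordinates near infinity, I would write $g_{ij}=\delta_{ij}+(\overset{\circ}{N}{}^{-2}-1)x_ix_j/r^2$ and read off the lapse $N^2=\overset{\circ}{N}{}^2=c_1-m/r+c_2r$.

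The heart of the argument is then a short harmonic computation isolating the single mode that survives the limit. In $N^2=c_1-m/r+c_2r$ the constant $c_1$ is killed by any derivative, and the Newtonian term $-m/r$ is harmonic on $r>0$, so $\partial_i\partial_i(-m/r)=0$; only the linear term contributes, via $\partial_i\partial_i(c_2r)=c_2\,\Delta r=2c_2/r$, whence $\partial_j\partial_i\partial_i N^2\,\hat{\nu}^{j}=-2c_2/r^2$. Multiplying by $\beta+2\alpha=-\alpha$ and integrating against $dA=r^2\,d\Omega$ yields
\begin{align*}
\mathcal{E}_{\alpha,-3\alpha}(\overset{\circ}{\bar{g}})=\lim_{r\to\infty}\oint_{S_r}(-\alpha)\Bigl(-\frac{2c_2}{r^2}\Bigr)r^2\,d\Omega=8\pi\alpha c_2,
\end{align*}
provided the remaining terms vanish. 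They do: the mass term $-m/r$ is invisible precisely because $1/r$ is harmonic (recovering the observation of \cite{ADT} that the classical $1/r$ mode contributes nothing to this fourth-order energy), while the spatial perturbation, although only $O(1)$, has radial trace $g_{aa}=\overset{\circ}{N}{}^{-2}-1$ whose leading non-constant part $\sim 1/r$ is again harmonic, and the full combination $\partial_j\partial_i\partial_i g_{aa}-\partial_j\partial_u\partial_i g_{ui}$ contracted with $\hat{\nu}^j$ decays at least like $r^{-3}$, so that after multiplication by the area element $r^2$ its contribution tends to zero.

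The main obstacle I anticipate is not this computation but the \emph{well-definedness} assertion under the nonstandard, linearly growing asymptotics carried by the $c_2r$ mode (and the attendant $O(1)$ spatial perturbation, which lies outside the decay rates of the asymptotically Euclidean setting). I would need to verify that $h$ belongs to the flexible asymptotically Minkowskian class of Definition \ref{AMmanifolds} and that $\mathcal{P}_{\hat{\bar{g}},\xi}(h)$ meets the $L^1$-integrability hypotheses required for Propositions \ref{Energyconservation} and \ref{ADMenergy.1}, so that the flux is finite and independent of $S_r$. Finiteness is exactly what is established above --- the surviving term has a finite limit while all others vanish --- and independence of the sphere follows from the local conservation law $\mathcal{P}=\delta\mathcal{Q}$; the delicate point is checking that the non-decaying spatial part really contributes nothing to the \emph{flux} (not merely to the density), which is where the weakened asymptotic definitions of the paper are indispensable and on which the clean value $8\pi\alpha c_2$ ultimately rests.
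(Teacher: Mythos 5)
Your proposal is correct and follows essentially the same route as the paper's proof: specialize the flux formula of Definition \ref{Energydefn} to the static, $\beta=-3\alpha$ case, observe that for the lapse only the linear mode survives since $\hat{\nu}^j\partial_j\partial_i\partial_i N^2=\frac{d}{dr}\Delta_e N^2=-2c_2/r^2$ (the $m/r$ term being harmonic), and kill the spatial contribution by noting that $g_{ij}-\delta_{ij}=(N^{-2}-1)x_ix_j/r^2=O_\infty(r^0)$ so its third derivatives are $O(r^{-3})$ and the flux is $O(r^{-1})$. The only point you leave somewhat implicit --- that the $O_1(r^{-(\hat{\tau}+\tau)-3})$ remainder in (\ref{energy-desity}) still vanishes under the linear growth $\tau<0$ --- is exactly what the paper settles in Remark \ref{remarkonthegrowth}, where sub-quadratic growth is shown to suffice in dimension three.
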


This theorem is part of a series of results presented in this paper which show that the energies $\mathcal{E}_{\alpha,\beta}$ can be nicely interpreted in several cases and positivity as well as rigidity statements seem to be attainable in different limits (see Corollary \ref{ADTcoro}, Theorem \ref{PEthm} and the discussion in Section 6).

We will close the $3\alpha+\beta=0$ classification by presenting the following results, which compiles results presented in several papers in current literature (see \cite{Fiedler-Schimming,SSsols1,SSsols2,SSsols3,SSsols4}).
\begin{theo*}
Any 4-dimensional exterior static spherically symmetric Bach-flat space-time $(V,\bar{g})$ is almost conformally Einstein. More specifically, any static spherically symmetric Bach-flat space-time is almost conformal to a subset of a Schwarzschild-de Sitter (or SAdS)  space-time or to $\bar{g}_0 = - \cos^2 x dT^2 + dx^2 + g_{\mathbb{S}^{2}}$.
\end{theo*}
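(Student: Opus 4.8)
The key structural fact I would exploit is that in dimension four Bach-flatness is a conformally invariant condition, so that if $\bar g$ is Bach-flat then so is $\Omega^2\bar g$ for any positive $\Omega$; combined with staticity and spherical symmetry — which reduce the field equations to ordinary differential equations in the radial variable — this lets me search for an Einstein representative inside each conformal class. Here \emph{almost conformally Einstein} should be read in the sense that there is a scale $\sigma$, not identically zero, with $\sigma^{-2}\bar g$ Einstein on the open set $\{\sigma\neq 0\}$; the qualifier \emph{almost} accounts for the zero set of $\sigma$, where the Einstein representative develops horizons or conformal infinity. Starting from the general static spherically symmetric ansatz $\bar g=-a(r)^2\,dt^2+b(r)^2\,dr^2+c(r)^2 g_{\s^2}$, I would split the argument according to whether the areal radius $c$ is locally constant. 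This dichotomy underlies the two alternatives in the statement: a non-constant $c$ yields the warped solutions conformal to Schwarzschild--de Sitter, while $c\equiv\mathrm{const}$ yields a product whose Einstein representative is the degenerate model $\bar g_0$ (its conformally flat sub-case being again a massless member of the SdS family).

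In the generic case $c'\not\equiv 0$ I would use $c$ itself as radial coordinate (areal gauge $c=r$) and then spend the conformal freedom to normalise to Schwarzschild form $a^2b^2=1$. Imposing that $\Omega^2\bar g$ be again of areal-Schwarzschild type, with new areal coordinate $\Omega r$, produces a first-order Bernoulli equation $r\Omega'=\Omega^2 ab-\Omega$ for the conformal factor, which is solvable wherever $ab>0$; hence every such metric is conformal to one in Schwarzschild form. At this point the classification is precisely the one established in Lemma~\ref{Classificationlemma}, so the representative must belong to the FSMK family \eqref{FSMKsol}. It then remains to produce the Einstein scale: a further conformal change combined with the radial inversion $r\mapsto 1/r$ exchanges the $r^{-1}$ and $r^{1}$ contributions of the Schwarzschild function and removes the linear term $c_2 r$, carrying \eqref{FSMKsol} onto a Schwarzschild--de Sitter (or SAdS) metric. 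Tracking the zeros of the conformal factor produced along the way identifies the open subset on which the Einstein representative lives and supplies the word \emph{almost}; the exceptional members, for which this inversion degenerates, are precisely the ones falling into the second case.

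In the special case $c\equiv c_0$ the metric is an orthogonal product $\bar g=\bar h\oplus c_0^2 g_{\s^2}$ of a two-dimensional Lorentzian base $\bar h=-a^2\,dt^2+b^2\,dr^2$ with the round sphere, and here the conformal-to-Schwarzschild reduction is exactly what degenerates, so I would analyse it directly. Because the splitting is parallel and each factor is two-dimensional, the Weyl tensor equals a scalar $\Psi=\tfrac13(K+c_0^{-2})$ times a covariantly constant model tensor, with $K$ the Gauss curvature of $\bar h$, and the Bach-flat equations reduce to a single second-order equation for $\Psi$ whose algebraic part vanishes exactly when $K^2=c_0^{-4}$. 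The Einstein branch $K=c_0^{-2}$ gives the product $\mathrm{dS}_2\times\s^2$, namely $\bar g_0=-\cos^2 x\,dT^2+dx^2+g_{\s^2}$, which is Einstein and hence conformally Einstein with trivial scale; the conformally flat branch $K=-c_0^{-2}$ has vanishing Weyl and is therefore locally conformal to a maximally symmetric space, i.e.\ a massless member of the SdS family. Any solution for which the areal radius can be rendered non-constant by a conformal change is instead absorbed into the generic case.

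The main obstacle will be the generic case. The Bach tensor of the three-function ansatz is a long fourth-order expression, which is why a computer-assisted computation is the natural tool, and the genuinely conceptual work is twofold: organising the conformal gauge so that the problem collapses onto the already-established Schwarzschild-form classification, and then exhibiting the explicit almost-Einstein scale relating \eqref{FSMKsol} to Schwarzschild--de Sitter while controlling its zero set. A secondary but essential point is to verify that the two cases are genuinely exhaustive and that the degenerate limit of the inversion is correctly identified with $\bar g_0$, so that no Bach-flat solution is left unaccounted for and none is assigned to the wrong model.
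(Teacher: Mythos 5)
Your generic case is, step for step, the paper's own proof: conformal invariance of the Bach tensor in dimension four, a first-order ODE for a conformal factor putting the rescaled metric back in areal--Schwarzschild form, and then the reduction to Lemma \ref{Classificationlemma}, whose proof already supplies the explicit almost-Einstein scales (the factor $(1+\mu r)^{-2}$ carrying \eqref{FSMKsol} onto SdS/SAdS, and the factor $r^{-2}$ carrying the second family onto \eqref{cylindricalmetric}). Your Bernoulli equation $r\Omega'=\Omega^2 ab-\Omega$ is the same normalisation the paper achieves through $\frac{d}{dr}(F/r)=-UV/r^2$ with $\Omega=-1/F$, $R=\Omega r$; the two differ only in the orientation chosen for the new areal coordinate. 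Since the paper's definition of an (exterior) static spherically symmetric space-time already builds in a monotone areal radius, this case \emph{is} the whole theorem in the paper's formulation, and here your argument is correct and identical in approach.

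The constant-areal-radius case you add contains a genuine error. Bach-flatness of $\bar h\oplus c_0^2 g_{\s^2}$ does \emph{not} reduce to the algebraic condition $K^2=c_0^{-4}$: writing $W=\Psi\,\mathcal{T}$ with $\mathcal{T}$ the parallel model tensor and $\Psi$ proportional to $K+c_0^{-2}$, the equation $B_{\mu\nu}=0$ is equivalent to the \emph{differential} system consisting of the vanishing of the trace-free Hessian of $\Psi$ on the base together with $\Delta_{\bar h}\Psi=\Psi\,(c_0^{-2}-K)$, and this system has many solutions with non-constant $K$; your two branches are exactly the constant-$\Psi$ solutions and nothing more. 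A concrete counterexample is $r^{-2}g_{\mathrm{Schw}}$: it is a static Bach-flat product (being conformal to a Ricci-flat metric) of the unit sphere with the base $r^{-2}\bigl(-(1-\tfrac{m}{r})\,dt^2+(1-\tfrac{m}{r})^{-1}dr^2\bigr)$, and the Gauss curvature of that base is $K=\tfrac{3m}{r}-1$, neither $c_0^{-2}$ nor $-c_0^{-2}$; more generally $r^{-2}\hat g(m,\Lambda,\mu)$ gives $K=\tfrac{3m}{r}+3m\mu-1$, non-constant whenever $m\neq 0$. Such metrics are still almost conformally Einstein, so the theorem itself is not endangered, but your case analysis misclassifies them. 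Nor does your closing escape clause repair the logic: \emph{every} product has its areal radius rendered non-constant by an arbitrary non-constant radial conformal factor, so that clause swallows the entire case and is incompatible with the assertion that genuine products satisfy $K^2=c_0^{-4}$. The clean fix is to use that observation once and for all: any constant-areal-radius solution is conformal, via a monotone radial factor, to a static spherically symmetric metric with monotone areal radius, so the product case reduces wholesale to your generic case and requires no separate analysis of the Bach equations (equivalently, one may simply note that the paper's definition excludes constant areal radius from the scope of the statement).
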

In the above theorem, by \textit{almost conformal} we mean that in $(V,\bar{g})$ there may be topological spheres which separate connected regions which are globally conformal to one of the above model spaces. Let us also notice that the famous FSMK-solution is contained in the SdS/SAdS conformal families (see Proposition \ref{Classificationlemma} for details of the domains of definition depending on the values of the parameters.) 

The above two results (specially with the assistance of Proposition \ref{Classificationlemma}), show us that appealing to highly symmetrical solutions to the fourth order equations to provide good intuition for $\mathcal{E}_{\alpha,\beta}$, although useful, is quite limited. Thus, in Section 5 we will drop these symmetry assumptions and build implicit Einstein 4-dimensional metrics via the evolution of initial data which explicitly break time reversal symmetry, and do not impose any a priori spatial symmetry for the solutions. Furthermore, the kind of initial data that we will deal with is AE in a weaker sense which we refer to as $\Lambda$AE, where $\Lambda>0$ stands for some fixed cosmological constant (see Definition \ref{LamdAE}). In particular, in an appropriate sense, these initial data sets are asymptotically umbilical, which makes their decay weaker.\footnote{{Notice that the Einstein constraint equations imply that $\Lambda$-vacuum initial data sets, with $\Lambda>0$, cannot be AE according to standard definitions.}} {These kinds of initial data sets appear to us to be well motivated by physical arguments laid out in detail in Section \ref{PEthemSection}. In particular, the breaking of time-symmetry produced by the presence of a positive cosmological constant seems to be aligned with cosmological models, and the same holds true for our asymptotic umbilicity hypothesis. Thus, we regard such $\Lambda$AE initial data sets as appropriate models for isolated systems in an expanding (or contracting) cosmological background. Such solutions may deserve further analysis on their right in the context of GR and we refer the reader to the beginning of Section \ref{PEthemSection} for further details.} Let us draw the attention of the interested reader to \cite{Avalos-Lira}, where initial data construction for these types of initial data sets have been done, and, also, it has been explained how they can be used to incorporate the presence of a positive cosmological constant in accurate models such as Schwarzschild's solution. Furthermore, the analysis of the energy $\mathcal{E}_{\alpha\beta}$ over these solutions is also well-motivated by Corollary \ref{ADTcoro}. In this context, and for these specific type of solutions, we will address two questions which were posed above. Namely, what does the fourth order energy measure and the rigidity properties associated to the leading order of (\ref{energy-desity}) (see Theorem \ref{PEthm}). {Associated to these questions, below we present the main analytic results of this paper.}
\begin{theorem}\label{MainThm1}
Let $(V^4,\bar{g})$ be an Einstein space-time generated by $\Lambda$AE initial data $\mathcal{I}$ of order $\tau$ with $R_g\in L^{1}(M^3,dV_g)$. Then, the following statements follow:
\begin{enumerate}
\item If $g$ is asymptotically Schwarzschild, then the fourth-order energy (\ref{energy.2}) is well defined for general values of $\alpha$ and $\beta$. Furthermore if $\Lambda>0$; $\alpha<0$ and $\beta\geq -\frac{3}{2}\alpha$, then $\mathcal{E}_{\alpha,\beta}(\bar{g})\geq 0$. Additionally, if $R_g\geq 0$ and $\beta > - \frac{3}{2} \alpha$, then $\mathcal{E}_{\alpha,\beta}(\bar{g})=0$ iff $(M,g)\cong (\mathbb{R}^3,\cdot)$.
\item In the special case $2\alpha+\beta=0$, the fourth order energy is well-defined for $\tau>\frac{1}{2}$. If, additionally, $R_{g}\geq 0$; $\Lambda>0$ and $\alpha<0$, then $\mathcal{E}_{\alpha,-2\alpha}(\bar{g})\geq 0$ with equality holding iff $(M,g)\cong (\mathbb{R}^3,\cdot)$.
\end{enumerate}
\end{theorem}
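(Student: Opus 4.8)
The plan is to evaluate the boundary expression (\ref{energy.2}) for the Einstein space-time $(V,\bar g)$ by specialising the leading-order density (\ref{energy-desity}), and then to recognise the result as a non-negative combination of the ADM mass of the slice $(M,g)$ and its total scalar curvature, so that the positive mass theorem applies. Since $\bar g$ is Einstein, $\Ric_{\bar g}$ is a constant multiple of $\bar g$ and $R_{\bar g}$ is constant, while the $\Lambda$AE hypothesis forces the foliation to be asymptotically umbilical, with $\mathrm{tr}_g K\to\sqrt{3\Lambda}$ and $K\sim\tfrac{1}{3}(\mathrm{tr}_g K)\,g$, the deviations being controlled by the order $\tau$. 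The first task is to trade the time-derivative data $\ddot g$, $\dot X$ and $N$ appearing in (\ref{energy-desity}) for spatial quantities: using the Einstein evolution equations for $g$ and $K$ together with the lapse equation in this gauge, $\ddot g$ and $\dot X$ become expressions in $K$, $\mathrm{tr}_gK$, the spatial Ricci tensor and their derivatives, so that the entire density is rewritten on the Riemannian slice alone.

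The core step is the reduction of the resulting spatial boundary integral to geometric invariants. Here I would use that the relevant potentials (the Schwarzschild-type parts of $g$ and of $N^2$) are harmonic with respect to the flat background to leading order, so that the third-derivative contributions such as $\partial_j\partial_i\partial_i g_{aa}$ and $\partial_j\partial_i\partial_i N^2$ either vanish or, after applying the divergence theorem on the annuli near infinity, collapse onto the first-order ADM integrand $(\partial_i g_{ij}-\partial_j g_{ii})\hat\nu^j$; at the same time the Hamiltonian constraint $R_g=2\Lambda+|K|^2-(\mathrm{tr}_gK)^2$ converts the curvature produced by the $\ddot g$ and $\dot X$ terms into the total scalar curvature, whose finiteness is exactly the role of the hypothesis $R_g\in L^{1}(M,dV_g)$. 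The outcome I expect is an identity of the schematic form $\mathcal{E}_{\alpha,\beta}(\bar g)=c_1(\alpha,\beta)\,m_{\mathrm{ADM}}(g)+c_2(\alpha,\beta)\int_M R_g\,dV_g$, with the coefficients fixed by the combinations $\tfrac32\beta+2\alpha$, $\tfrac\beta2$ and $\beta+2\alpha$ in (\ref{energy-desity}). This reduction simultaneously yields the well-definedness claims: in the asymptotically Schwarzschild case every surviving term converges for general $\alpha,\beta$, whereas when $2\alpha+\beta=0$ the terms carrying $\beta+2\alpha$ drop out, and only the milder rate $\tau>\tfrac12$ is needed for the remaining integrals to converge.

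With the identity in hand, positivity follows from the positive mass theorem. The key point is that $\Lambda>0$ makes the spacetime energy density $\mu=\tfrac12\big(R_g+(\mathrm{tr}_gK)^2-|K|^2\big)=\Lambda$ strictly positive while the momentum density vanishes, so the dominant energy condition holds; consequently, in the asymptotically Schwarzschild case the Schoen--Yau theorem \cite{PM1,PM2} gives $m_{\mathrm{ADM}}(g)\ge0$ without assuming $R_g\ge0$, and the sign conditions $\alpha<0$, $\beta\ge-\tfrac32\alpha$ make $c_1\ge0$, whence $\mathcal{E}_{\alpha,\beta}(\bar g)\ge0$. In the special case $2\alpha+\beta=0$, where the reduction is governed by $c_2\int_M R_g\,dV_g$, positivity instead uses $R_g\ge0$ directly together with $\alpha<0$. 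For the rigidity statement, if $\mathcal{E}_{\alpha,\beta}(\bar g)=0$ and $R_g\ge0$, then each non-negative summand must vanish; in particular $m_{\mathrm{ADM}}(g)=0$, and the rigidity case of the Riemannian positive mass theorem forces $(M,g)\cong(\mathbb{R}^3,\cdot)$, the converse being immediate from (\ref{energy.2}) on flat data.

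I expect the main obstacle to be the middle step: performing the integration by parts at infinity while rigorously controlling every error term, so that the third-derivative boundary contributions genuinely collapse onto $m_{\mathrm{ADM}}(g)$ and $\int_M R_g\,dV_g$ with no spurious non-geometric remainder. This is delicate because it requires the explicit Einstein evolution and constraint equations in the $\Lambda$AE gauge to express $\ddot g$ and $\dot X$ with sharp decay, and because the asymptotic umbilicity weakens the fall-off of the slice relative to the standard asymptotically Euclidean setting; in particular, securing the sharp threshold $\tau>\tfrac12$ in the conformally natural case $2\alpha+\beta=0$, rather than a more restrictive rate, is where the estimates are tightest.
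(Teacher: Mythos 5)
Your skeleton (fix a gauge, use the field equations to turn $\ddot g$, $\dot X$, $\dot N$ into spatial data, reduce to ADM-type surface integrals, then invoke the positive mass theorem) is the paper's skeleton, and you correctly spot that the $(\beta+2\alpha)$-terms drop when $2\alpha+\beta=0$, which is what allows $\tau>\tfrac12$. But your central reduction identity is wrong, and the mechanism you propose for it is backwards. In the paper one works in the wave gauge (\ref{wavegauge}) with canonical observers $N|_{t=0}=1$, $X|_{t=0}=0$, so that $\dot X_j=g_{ji}g^{ab}\Gamma^i_{ab}(g)=O_3(|x|^{-\tau-1})$; then the purely spatial third-derivative terms and the $\partial^2\dot X$ terms in (\ref{ADM-Energy.1}) are $O(r^{-\tau-3})$ and simply \emph{vanish} after integration over spheres of area $\sim r^2$ --- they do not ``collapse onto'' the ADM integrand. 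What survives is exactly the $\ddot g$ block: the reduced Einstein equation combined with asymptotic umbilicity, $\dot g=-2K=\pm 2cg+O_2(|x|^{-\tau-1})$ with $c^2=\Lambda/3$, yields $\ddot g_{ij}=\tfrac{10}{3}\Lambda g_{ij}+O_2(|x|^{-\tau-2})$, whence $\mathcal{E}_{\alpha,\beta}(\bar g)=\tfrac{5}{3}\Lambda\lim_{r\rightarrow\infty}\big\{\beta\int_{S_r}(\partial_i g_{ji}-\partial_j g_{ii})\hat\nu^j d\omega_r-2(\beta+2\alpha)\int_{S_r}\partial_j g_{ii}\hat\nu^j d\omega_r\big\}$, which in the asymptotically Schwarzschild case equals $40\,\omega_2 m\Lambda\left(\tfrac{2}{3}\beta+\alpha\right)$. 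So the energy is $\Lambda$ \emph{times} an ADM-type charge: there is no bulk term $c_2\int_M R_g\,dV_g$, and your ansatz with $\Lambda$-independent coefficients $c_1(\alpha,\beta),c_2(\alpha,\beta)$ cannot reproduce this. The hypothesis $\Lambda>0$ enters through this explicit multiplicative factor, not through an energy condition; and $R_g\in L^1(M,dV_g)$ is not there to make a bulk integral finite --- it is the Bartnik-type condition ensuring the ADM surface integral is well defined and chart-invariant, and that the PMT hypotheses are met \cite{Bartnik}.

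Second, your positivity argument for item 1 is a step that would fail. You invoke the spacetime positive mass theorem under the dominant energy condition to get $m_{\mathrm{ADM}}(g)\geq 0$ ``without assuming $R_g\geq 0$''. But $\Lambda$AE data (Definition \ref{LamdAE}) is not asymptotically flat in the sense required by that theorem: $K\rightarrow\mp c\,g$ with $c=\sqrt{\Lambda/3}\neq 0$, so $K$ has no decay whatsoever (the paper stresses in a footnote that $\Lambda$-vacuum data with $\Lambda>0$ cannot be AE in the standard sense), and no $E\geq |P|$ statement is available for this asymptotically umbilical class \cite{PM2}. The paper's positivity instead reduces to the sign of the Riemannian ADM mass $m$ of the slice multiplied by $\Lambda\left(\tfrac{2}{3}\beta+\alpha\right)$, and it is the \emph{Riemannian} positive mass theorem \cite{PM1} (invoked explicitly, with $R_g\geq 0$, in the case $2\alpha+\beta=0$, and likewise underlying the rigidity statement via its equality case) that controls that sign --- not a dominant-energy-condition argument.
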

The above theorem proves that looking for positive energy theorems and good interpretations of the fourth order energy for appropriate classes of $A$-flat spaces is a sensible program, even in dimension four. Although it is not the main theme of this paper, the potential physical implications of positive energy theorems of the above type could be an interesting topic for higher-order models of gravitational phenomena. 

%modif Not sure of this paragraph, tell me what you think
%While it may seem counter-intuitive to consider these fourth order energies on second order type solutions, the comparison with the Willmore problem reveals its interest. Indeed, in that case the study of the trivial second order solutions in this fourth order context has revealed the existence of bubbling phenomena specific to the  fourth order problem \cite{bibnm3}. With this paradigm in mind, it is natural to study Einstein metrics as $A$-flat spacetimes to understand their behaviors when given these new degrees of freedom.

{It is worth explicitly pointing out that, besides the physical motivations commented above, considering these fourth order energies on second order type solutions has been also motivated by comparison with the Willmore problem in Riemmanian geometry. Indeed, in that case, the study of the \emph{trivial} second order solutions in the fourth order context has revealed the existence of bubbling phenomena specific to the  fourth order problem \cite{bibnm3}. With this paradigm in mind, it is natural to study Einstein metrics as $A$-flat space-times to understand their behaviours with respect to these new degrees of liberty. This can explicitly be seen to be the case within Corollary \ref{BachFlatCoro}, where we analyse conformally Einstein solutions, which are known to be Bach-flat, and thus are fourth order solutions relevant for the case $3\alpha+\beta=0$. In particular, one can then see how the analysis of Theorem \ref{MainThm1} and Theorem \ref{MainThm2} provide us with enough tools to analyse the energy of these new fourth order solutions without symmetry assumptions.}

{As is usual when dealing with asymptotic charges related to geometric invariants, one would like to know that such quantities are in some appropriate sense independent of the asymptotic coordinate systems. We address this issue in the specific cases treated in the above theorem and prove an analogous invariance property to that known for the ADM energy, for instance from \cite{Bartnik}.\footnote{{Such invariance property has been analysed in other important limiting cases in \cite{avalos2021positive}}.} Let us notice that in our case there is one further subtlety associated to the explicit dependence of $\mathcal{E}_{\alpha,\beta}$ on the space-time \emph{observers}, which manifests itself in (\ref{energy-desity}) via the dependence on the $(N,X)$ initial data. Such a dependence is also known to hold for the ADM energy in GR, although in that case it might be less explicit (see, for instance, \cite[Chapter 1, Section 1.1.3]{ChruscielEnergyNotes}). In that context, the above theorem actually refers to the energy measured by a set of \emph{canonical observers} whose flow lines are orthogonal to the initial Cauchy surface. We shall therefore analyse the invariance of the energy studied in Theorem \ref{MainThm1} both for observers asymptotic to the canonical ones as well as with respect to the asymptotic coordinate systems. This leads us to the following result.}

{\begin{theorem}\label{MainThm2}
Under the same hypotheses as in Theorem \ref{MainThm1}, given a $\Lambda$AE initial data set and two asymptotic observers $\mathcal{O}_1$ and $\mathcal{O}_2$ of orders $\rho> \frac{1}{2}$, if we denote the energies associated to them by $\mathcal{E}^{(\mathcal{O}_i)}_{\alpha,\beta}$, $i=1,2$, then
\begin{align*}
\mathcal{E}^{(\mathcal{O}_1)}_{\alpha,\beta}(\bar{g})=\mathcal{E}^{(\mathcal{O}_2)}_{\alpha,\beta}(\bar{g}).
\end{align*}
Furthermore, if $\phi_x,\phi_y:M\backslash K\mapsto \mathbb{R}^3\backslash \overline{B_1(0)}$ are two asymptotic charts where $g$ is of order $\tau_x,\tau_y>\frac{n-2}{2}$ respectively, and where the general hypotheses of Theorem \ref{MainThm1} are satisfied, then the value of  $\mathcal{E}^{(\mathcal{O}_i)}_{\alpha,\beta}$ is the same for both coordinate systems.%\newline
%The same result stands on Asymptotically Schwarzschild spaces without $R_g\in L^1(M,dV_g)$.
%\begin{enumerate}
%\item If ${\phi_x^{-1}}^{*}g, {\phi_y^{-1}}^{*}g$ are both AS, the value of  $\mathcal{E}^{(\mathcal{O}_i)}_{\alpha,\beta}$ is the same for both coordiante system for general values of $\alpha$ and $\beta$.
%\item If $2\alpha+\beta=0$, the value of  $\mathcal{E}^{(\mathcal{O}_i)}_{\alpha,\beta}$ is equal in both asymptotic coordinate systems.
%\end{enumerate}
\end{theorem}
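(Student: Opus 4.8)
The plan is to prove an invariance statement for the fourth-order energy, both with respect to the choice of asymptotic observer and with respect to the choice of asymptotic coordinate chart. I would structure the argument around the superpotential formulation: recall from Propositions \ref{Energycharge.Eins} and \ref{pqGR} that the energy density is encoded by the 2-form $\mathcal{Q}_{\hat{\bar{g}}}(\xi,h)$ via $\mathcal{P}_{\hat{\bar{g}}}(\xi,h)=\delta_{\hat{\bar{g}}}\mathcal{Q}_{\hat{\bar{g}}}(\xi,h)$, so that the energy is computed as a boundary integral at spatial infinity of the flux of $\mathcal{Q}$ through a large sphere $S$. The key structural fact to exploit is that, by the local conservation law for $\mathcal{P}$, the value of the flux integral over $S$ is insensitive to deformations of $S$ within a region where the fields have the prescribed decay; this reduces both invariance statements to comparing the \emph{leading-order} expression \eqref{energy-desity} computed in the two data (observer or chart).

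First I would treat the observer invariance. Two asymptotic observers $\mathcal{O}_1,\mathcal{O}_2$ of order $\rho>\frac{1}{2}$ differ by a choice of asymptotic lapse and shift $(N,X)$ entering \eqref{energy-desity}, so the difference of the two energy densities is governed precisely by the $N$- and $\dot{X}$-dependent terms there, namely the pieces $(\beta+2\alpha)\partial_j\partial_i\partial_i N^2\hat{\nu}^j$ and the $\dot{X}$ terms, together with the $\beta$-terms in $\ddot{g}$. The strategy is to show that the difference of these contributions between $\mathcal{O}_1$ and $\mathcal{O}_2$ is a total divergence of a quantity decaying fast enough that its integral over the sphere at infinity vanishes in the limit $r\to\infty$; here the hypothesis $\rho>\frac{1}{2}$ is exactly what guarantees the cross terms fall off faster than $r^{-2}$ so their sphere integrals vanish. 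Because the observers are both asymptotic to the canonical ones, their defining data agree to leading order, and the discrepancy lands in the subleading $O_1(r^{-(\hat{\tau}+\tau)-3})$ error already present in \eqref{energy-desity}.

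Next I would handle coordinate invariance. Given two charts $\phi_x,\phi_y$ of orders $\tau_x,\tau_y>\frac{n-2}{2}$, the transition map $\phi_y\circ\phi_x^{-1}$ on $\mathbb{R}^3\backslash\overline{B_1(0)}$ is, by the standard asymptotic rigidity of such charts (as in the ADM setting of \cite{Bartnik}), a Euclidean rotation plus a lower-order correction, i.e. asymptotic to an isometry of the flat model. The plan is to follow the Bartnik-type argument: one shows that the leading Euclidean motion leaves the boundary integral of $\mathcal{Q}$ invariant because $\mathcal{Q}$ is built covariantly from $h$ and $\xi$ and its leading part transforms as a tensor under the asymptotic isometry, while the lower-order correction of order $>\frac{n-2}{2}$ produces only terms contributing a total divergence with integrable boundary flux vanishing at infinity. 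I would then close the final sentence of the theorem by observing that in the asymptotically Schwarzschild regime the relevant leading behaviour is fixed by the Schwarzschild asymptotics alone, so the argument does not require the integrability hypothesis $R_g\in L^1(M,dV_g)$; the fall-off built into the asymptotically Schwarzschild condition already controls every term in \eqref{energy-desity}.

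The main obstacle I expect is controlling the $\dot{X}$- and $\ddot{g}$-dependent terms in \eqref{energy-desity} under a change of observer, since these involve \emph{time} derivatives of the metric and shift rather than purely spatial data, and so a change of asymptotic observer genuinely reshuffles them rather than merely rotating them. The delicate step is verifying that, for observers of order $\rho>\frac{1}{2}$, the mixed space-time derivative terms that do not cancel pointwise nonetheless assemble into an exact divergence whose sphere integral vanishes; this requires carefully matching the time-evolution of the perturbation dictated by the linearised $A$-flat equations against the transformation of the lapse-shift data, and it is here that the precise threshold $\rho>\frac{1}{2}$ (and correspondingly $\tau>\frac{n-2}{2}$) is forced. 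Once that cancellation is established, both invariance statements follow from the divergence-structure argument sketched above.
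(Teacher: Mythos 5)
There is a genuine gap at exactly the step you yourself flag as ``delicate,'' and the mechanism you propose to close it points in the wrong direction. You assert that the observer-dependent terms (the $N^2$-, $\dot X$- and $\ddot g$-contributions to \eqref{ADM-Energy.1}) assemble into an exact divergence, to be verified by ``matching the time-evolution of the perturbation dictated by the linearised $A$-flat equations against the transformation of the lapse-shift data.'' But the space-time here is an exact Einstein solution evolved from $\Lambda$AE data, and the quantities $\dot N$, $\dot X$, $\ddot g$ entering the energy are not free: they are determined by the gauge of the Cauchy problem and by the \emph{full, non-linear} Einstein equations, not by any linearised equation. The paper's proof proceeds by (i) computing the $\hat e$-wave gauge condition $\bar g^{\alpha\beta}\mathcal{S}^{\mu}_{\alpha\beta}=0$ for a completely general observer (Lemma \ref{GeneralGaugeConditions}), which yields $\partial_t N=\pm 3c+O_3(|x|^{-\tau_*-1})$ and $\partial_t X^j=O_3(|x|^{-\tau_*-1})$ with $\tau_*=\min\{\tau,\rho\}$, so that every $N$- and $\dot X$-term in \eqref{ADM-Energy.1} decays too fast to contribute; and (ii) computing $\ddot g$ from $\mathrm{Ric}_{\bar g}=\Lambda\bar g$ in this gauge, giving $\partial_k\ddot g_{ij}=\tfrac{10}{3}\Lambda\,\partial_k g_{ij}+O_2(|x|^{-\tau_*-2})+O_2(|x|^{-2\tau_*-1})$. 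It is the quadratic error $O_2(|x|^{-2\tau_*-1})$, integrated over spheres, that forces the threshold $\tau_*>\frac12$ --- not a cancellation of cross terms into a divergence. The upshot is that $\mathcal{E}^{\mathcal{O}}_{\alpha,\beta}(\bar g)$ reduces, for every admissible observer, to one and the same multiple of the ADM energy of $g$, and observer invariance is then immediate. Your proposal never obtains this reduction, and without the gauge lemma you have no control at all on $\dot X$ and $\ddot g$ for a non-canonical observer.

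The chart-invariance part inherits the same defect. In the paper it is a corollary of the reduction above: since $\mathcal{E}_{\alpha,\beta}$ is a multiple of the ADM energy, invariance under asymptotic charts of rate $>\frac12$ with $R_g\in L^1(M,dV_g)$ is exactly \cite[Theorem 4.2]{Bartnik}, and in the AS case the paper replaces the $L^1$ hypothesis by a direct computation showing that two AS charts give equal Schwarzschild masses, $m_x=m_y$. Your alternative --- working directly with the superpotential $\mathcal{Q}$ and arguing that its ``leading part transforms as a tensor under the asymptotic isometry,'' with lower-order corrections contributing vanishing divergences --- cannot work as stated: ADM-type surface integrands such as $(\partial_i g_{ji}-\partial_j g_{ii})\hat\nu^j$ are precisely \emph{not} tensorial, and chart invariance of such charges is known to fail without a curvature integrability hypothesis; this is why Bartnik's theorem requires $R_g\in L^1$ (or Schwarzschild-type asymptotics), and why a formal covariance/divergence argument is not a substitute for it.
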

}

{Let us comment that the proof of the above theorem requires one to analyse certain gauge conditions for the Cauchy problem in GR in a general manner (in particular, without imposing zero initial data for the shift vector). We refer the interested reader to Lemma \ref{GeneralGaugeConditions} for further useful details.}

%112233
%{In the AS case, $R_g=o(|x|^{-3})$, which is a limiting case for the $L^1$-integrability hypothesis. Substituting it by an AS control represents an extension of the result.}%Nevertheless, the following result holds:
%\begin{theorem}
%\label{MainThm3}
%Consider a $\Lambda$AE initial data set and an asymptotic observer $\mathcal{O}$ of order $\rho> \frac{1}{2}$. If $\phi_x$ and $\phi_y$ are two asymptotic coordinate systems where $(\phi_x^{-1})^{*}g$ and $(\phi_y^{-1})^{*}g$ are AS, then the energy $\mathcal{E}^{\mathcal{O}}_{\alpha,\beta}(\bar{g})$ is well-defined, independent of $\mathcal{O}$ in class of observers of order $\rho>\frac{1}{2}$ and furthermore it is independent of the choice of asymptotic coordinate system $\phi_x$ or $\phi_y$.
%\end{theorem}
%The above result relies on Theorems \ref{MainThm1} and \ref{MainThm2}, and it appeals to a specific property of AS metrics: the mass which appears in any specific AS coordinate system can be seen to be an invariant of $g$, with no need of an extra integrability hypothesis for $R_g$.

%{Finally, after commenting  the conformal properties of the energy of solutions such as those treated in the above theorem...}

{Finally, our last main conclusion in this paper will be the introduction of a distinguished limiting case, which deserves its own treatment.} This is the case of fourth order solutions which are \textit{stationary}. In these cases, and for the (recurrently) special choice of $2\alpha+\beta=0$, the energy associated to these solutions is given by
\begin{align*}
\begin{split}
\mathcal{E}_{\alpha,-2\alpha}(\bar{g})&=- \alpha\lim_{r\rightarrow\infty}   \int_{S^{n-1}_r} \left( \partial_{j}\partial_{i}\partial_{i}g_{aa} - \partial_{j}\partial_{u}\partial_{i}g_{u i}\right)\hat{\nu}^{j}d\omega_{r}   \end{split}.
\end{align*}
{Let us notice that the appearance and recognition of the above limiting case is a direct product of accumulated work in this paper:  it only becomes  natural  after (\ref{energy-desity}) has been found and others  have been studied. In particular, the existence of positivity/rigidity statements such as Theorems \ref{MainThm1} and \ref{MainThm2} are key to provide some evidence that the fourth order energy $\mathcal{E}_{\alpha,\beta}$ is well-behaved in these natural cases.} In this Riemannian setting, interestingly enough, the energy density is explicitly related to $\Delta_gR_g$ which is the leading order term of the $Q$-curvature associated to $g$. In a related paper,  the first two authors and P. Laurain proved a positive energy theorem for this case under natural geometric assumptions (and for $\alpha<0$), and showed how this positive energy theorem relates to $Q$-curvature analysis in very much a parallel way in which the Schoen-Yau positive mass theorems relate to scalar curvature analysis (see \cite{avalos2021positive}).

\bigskip
{\bf Acknowledgments: } The authors would like to thank the CAPES-COFECUB and CAPES-PNPD for their financial support, and Paul Laurain for insightful discussions on this topic. Most of the work on this article was done when the first author was employed by the University of Cear\'a, and the third author by the University of Potsdam. 

%modif In fine I did not change much... I think the introduction is quite good ..
\bigskip
{\bf Remarks: } {Since the publication of the original preprint version, the first and third authors have worked on exploiting this fourth order energy in the static case to obtain a fourth order rigidity result linked to $Q$-curvature \cite{avalosmarquelaurainlirab}. The energy we here study has thus been shown to provide insights on the role of fourth order curvatures on a Riemannian manifold.}

\section{Preliminaries}

In this section we will collect some necessary definitions and results which will be useful in the core of this paper. 

\subsection{Analytical preliminaries}

In order to introduce conserved quantities for solutions to the equations $A_{\bar{g}}=0$, we will consider solutions which can be treated as \textit{perturbations} of some fixed solution $\hat{\bar{g}}$, where the latter possess some continuous symmetry, which induces a conservation principle. In order to obtain such conservation principles, we will appeal to the analysis of the linearised operator $DA_{\hat{\bar{g}}}:\mathcal{B}_1\mapsto \mathcal{B}_2$, associated to the tensor field $A$ seen as a map  $\bar{g}\mapsto A_{\bar{g}}\in \mathcal{B}_2$ for Lorentzian metrics $\bar{g}\in \mathcal{B}_1$, where $\mathcal{B}_1,\mathcal{B}_2$ are appropriately chosen functional spaces.

Although quite standard, the above procedure is a little bit more delicate in our situation where we are considering Lorentzian manifolds $(V^{n+1}=M^n\times\mathbb{R},\bar{g})$ with non-compact Cauchy slices $M$, and, furthermore, where we intend to admit perturbations of $\bar{g}$ with asymptotics as flexible as possible. On the one hand, computing directional derivatives $\frac{d}{d\lambda}A(\bar{g}+\lambda h)|_{\lambda=0}$ for arbitrary $h\in \mathcal{B}_1$ can become tricky for geometric objects, such as $A$, since the curve $\bar{g}_{\lambda}\doteq \bar{g}+\lambda h$ can degenerate and make them ill-defined. On the other hand, choosing natural function spaces in this case where $\bar{g}$ is indefinite can be a subtle issue.

Concerning the last of the above problems, under mild assumptions on $M$ we can naturally introduce useful norms on $V=M\times [0,T]$. Consider that $M$ is complete with a smooth Riemannian metric $e$ of bounded geometry. Then, consider the Riemannian metric $\hat{e}\doteq dt\otimes dt + e$ on $V$. Now, let $T\in \Gamma(T^i_jV)$ be a space-time tensor field. We can then decompose $T$ into a set of time-dependent space-tensors of ranks $i+j,i+j-1,\cdots,0$ which are maps $[0,T]\mapsto \Gamma(T^k_lM)$ for the appropriate $l,k$. Explicitly, let $T\in \Gamma(T_2^0V)$, decompose it as $T^{(0)}\in C^{r}([0,T];\Gamma(M\times\mathbb{R}))$, $T^{(1)}\in C^r([0,T];\Gamma(T^{*}M))$ and $T^{(2)}\in C^r([0,T];\Gamma(T^{0}_{2}M))$ given by
\begin{align*}
T^{(0)}&\doteq T(\partial_t,\partial_t),\\
T^{(1)}(X)&\doteq T (\partial_t,X) \;\; \forall \;\; X\in \Gamma(TM),\\
T^{(2)}(X,Y)&\doteq T(X,Y) \;\; \forall \;\; X,Y\in \Gamma(TM)
\end{align*}
Having a preferred norm on $(M,e)$ which controls fields at space-infinity, say $\mathcal{B}$, we can then topologise these spaces via 
\begin{align*}
||u(\cdot,t)||_{\mathcal{B}(T)}\doteq \sup_{t\in [0,T]}\sum_{k=0}^s||\partial^k_tu(\cdot,t)||_{\mathcal{B}}.
\end{align*}
This construction is classical when $\mathcal{B}$ stands for some $L^2$-Sobolev space (standard, uniformly local or maybe weighted), which are well-suited to prove well-posedness of (non-linear) wave equations (\cite{Espaces1,CB-book}).
\begin{remark}\label{remarkdecompoADM}
The above decomposition, when applied to the space-time Lorentzian metric $\bar{g}$ gives us $\bar{g}^{(1)}=X$, $\bar{g}^{(2)}=g$  and $\bar{g}^{(0)}= -\left[N^{2} - \left|X\right|_g^2 \right]$, where $g$ stands for the induced Riemannian metric on $M$ and $N$ and $X$ stand for the lapse function and shift vector fields associated to the isometric embedding $(M,g)\hookrightarrow (V,\bar{g})$. Let us recall that this lapse-shit decomposition allows us to write
%\begin{align}
%\bar{g}=-\left[N^{2} - \left|X\right|_g^2 \right]dt^2 + g_t,
%\end{align}
{\begin{align*}
\bar{g}=-N^{2}dt^2 + g_t,
\end{align*}
where $g_t$ restricts to the induced Riemannian metric on each $M_t\cong M\times \{t\}$,
%modif 
%that is $g_t = 2X_p dx^p dt+ g_{ij}dx^idx^j$, and thus, for any tangent vector $v$ to $M_t$ $g_t(v,v)= %g(v,v)$.
that is $g_t = \bar{g}_{ij}(dx^i+X^idt)\otimes (dx^j+X^jdt)$, and thus, for any tangent vector $v$ to $M_t$ $g(v,v)\doteq g_t(v,v)$ stands for the induced Riemannian metric.}
\end{remark}
%modif On the 2 here, I hesitate to write it, it is a 2 because you have in fact g_{ti} and g_{it}
For our purposes, we will have in mind functional spaces such as those described above, where in particular, for a smooth Lorentzian metric $\bar{g}$, we can impose controls of $N,X,g$ at space-infinity, but it is not necessary to specify them a priori. On the contrary, we will impose specific decays at space-like infinity which can be accommodated appealing to different kinds either uniform or weighted spaces. The idea is to take advantage of this fact so as to impose decays suited for our specific problems. 

Concerning the problem of having well-defined directional derivatives of geometric objects such as $A_{\bar{g}}$ for arbitrary directions $h\in \mathcal{B}_1$, let us first notice that whenever $\bar{g}_{\lambda}$ does not degenerate, appealing to certain local considerations, the directional derivatives of sufficiently regular metrics $\bar{g}$ and perturbations $h$ exist in a pointwise sense (see, for instance, Theorem III.1 in \cite{Catalanes}, and also Lemma 3.1 in \cite{Moncrief1}). Since around any point we can always find an interval such that $\bar{g}_{\lambda}$ is non-degenerate, then we conclude that $DA_{\bar{g}}\cdot h$ is well-defined around any point in $V$. Since our analysis will only rely on such pointwise considerations, this is enough for our purposes. That is, we know that given any perturbation of a smooth globally hyperbolic space-time $(V=M\times\mathbb{R},\bar{g})$, around any point $p\in V$, the directional derivative $DA_{\bar{g}}\cdot h$ is well-defined for any $h$ in some (weighted) $C^4$-space such that for any compact $\Omega\subset V$, $h\in C^4(\Omega)$. 
\begin{remark}
By topologising the space of $(0,2)$ symmetric tensor fields with some of the above topologies chosen to be strong enough to provide global $C^0$-control of $\bar{g},h\in \mathcal{B}_1(T)$ (for instance using as $\mathcal{B}_1$ some - potentially weighted - $C^k$-norm with $k$ sufficiently large), it is not difficult to show that the directional derivative in an arbitrary direction $h\in \mathcal{B}_1$ is well-defined. If, furthermore, the \textit{Gateaux}-differential $DA_{\bar{g}}\cdot h\doteq \frac{d}{d\lambda}A(\bar{g}+\lambda h)|_{\lambda=0}$ is a bounded linear map $DA_{\bar{g}}:\mathcal{B}_1\mapsto \mathcal{B}_2$, then $A$ would actually be $C^1$-Frech\'et. Again, choosing spaces with good \textit{multiplication properties} on $M$, such that (weighted) $C^k$ or Sobolev spaces can be used to make $DA_{\bar{g}}$ a bounded linear map.
\end{remark}
%\bigskip
 
Let us now impose an specific structure of infinity for our manifolds $M$.

\begin{defn}[Manifolds Euclidean at infinity]\label{ManifoldsEAI}
A complete $n$-dimensional smooth Riemannian manifold $(M,e)$ is called Euclidean at infinity if there is a compact set $K$ such that $M\backslash K$ is the disjoint union of a finite number of open sets $U_i$, such that each $U_i$ is diffeomorphic to the exterior of an open ball in Euclidean space.   
\end{defn}

%Notice that any such manifold carries a smooth Riemannian metric, which is isometric to the euclidean metric in each end sufficiently near infinity. We will denote such a metric by ``$e$". In particular, the complete Riemannian manifold $(M^n,e)$ has positive injectivity radius, from which we see that the corresponding spaces $E^s_{u.l}$ are Sobolev regular. 
We will typically drop dependences on the metric $e$ assuming it fixed once and for all. 

\begin{defn}[AE manifolds]\label{AEmanifolds}
Let $(M,e)$ be a manifold Euclidean at infinity and $g$ be a Riemannian metric on $M$. We will say that $(M,g)$ is asymptotically Euclidean (AE) of order $\tau>0$ with respect to some end coordinate system $\Phi:E_i\mapsto \mathbb{R}^n\backslash\bar{B}$, if, in such coordinates, $g_{ij}-\delta_{ij}=O(|x|^{-\tau})$.
\end{defn}

Typically, we will also demand derivatives of the metric to decay at certain rates, but since such rates can depend on the specific problem at hand, we will look at these requirements explicitly whenever necessary, without introducing them in the definition of AE manifold. Along these lines, let us introduce the following distinguished family of AE manifolds.
\begin{defn}[AS manifolds]\label{ASchmanifolds}
We will say that the AE manifold $(M^n,g)$ is asymptotically Schwarzschild with respect to some end coordinate system $\Phi:E_i\mapsto \mathbb{R}^n\backslash\bar{B}$, if, in such coordinates there exists $\varepsilon>0$ such that
\begin{align*}
g_{ij}=\left(1+\frac{m}{2|x|^{n-2}} \right)^{\frac{4}{n-2}}\delta_{ij} + O_4(|x|^{-(n-2+\varepsilon)}).
\end{align*}
\end{defn}
Finally, let us introduce the following definition.
\begin{defn}[AM space-times]\label{AMmanifolds}
Let $(M\times [0,T],\bar{g})$ be a regularly sliced globally hyperbolic manifold, where $M$ is Euclidean at infinity. Thus, let us write as in remark \ref{remarkdecompoADM} %$\bar{g}=-\left[ N^2- \left| X\right|_g \right] dt^2+g_t$,
{$\bar{g}=-N^2 dt^2+g_t$} where $N$ stands for the associated lapse function and $g$ for a time-dependent Riemannian metric on $M$. We will say that $\bar{g}$ is \textbf{asymptotically-Minkowskian} of order $\tau>0$ with respect to some end coordinate system $\Phi:E_i\mapsto \mathbb{R}^n\backslash\bar{B}$, if, in such coordinates, $g_{ij}(\cdot,t)-\delta_{ij}=O(|x|^{-\tau})$; $N(\cdot,t)-1=O(|x|^{-\tau})$ and $X_i=O(|x|^{-\tau})$, where $X$ denotes the shift vector associated to the orthogonal space-time splitting.
\end{defn}

In what follows, we will analyse conservation principles for solutions to $A_{\bar{g}}=0$. We will restrict $\bar{g}$ to be perturbation of some fixed (smooth) AM solution $\hat{\bar{g}}$ which possesses some (time-like) Killing field $\xi$. In this context, by a perturbation we mean a tensor field $h=\bar{g}-\hat{\bar{g}}\in \mathcal{B}_1$ with a behaviour of $\bar{g}$ at infinity controlled by that of $\hat{\bar{g}}$ and with $\mathcal{B}_1$ chosen so that all such perturbations are at least $C^4$. %Because of all the above discussion, we will typically omit the explicit reference to $\mathcal{B}_i$ and just refer to $h$ as \textit{a perturbation}.\footnote{Notice that, for instance by appealing to weighted spaces, we could allow perturbation to grow at infinity. This will be useful along our discussion.} % In this context, the generic functional spaces $\mathcal{B}_1$ and $\mathcal{B}_2$ in the introduction of this section are taken to be of the form $\mathcal{B}_1=E^s_{u.l}(T)$ and $\mathcal{B}_2=E^{s-4}_{u.l}(T)$. Then, appealing to Sobolev multiplication properties $\bar{g}\mapsto A(\bar{g})$ is bounded for metrics in $\mathcal{B}_2$ for $\bar{g}\in B_1$, the directional derivatives $\frac{d}{d\lambda}A(\bar{g}+\lambda h)|_{\lambda=0}$ exist for all $h\in \mathcal{B}_1$ and are given by a bounded linear map $DA_{\bar{g}}:\mathcal{B}_1\mapsto \mathcal{B}_2$. 
 
Finally, before going to main sections of this paper, we will distinguish a couple of choices of $\alpha$ and $\beta$ which are of special interest. The first one is guided by the interest in conformal gravity within theoretical physics, while the second one appears to be particularly amenable to a nice analytic treatment. This last point will become more transparent by the end of the paper.

\subsection{Remarkable Cases}

\subsubsection{Conformal Gravity}\label{SectionConfGrav}

In this section we will show that some values of $\alpha$ and $\beta$  induce conformally invariant field equations. These values are the ones which are consistent with the conformal gravity proposal of \cite{Confgrav}. 
These considerations can also be found in section II of \cite{SSsols3}. %\footnote{{Nicolas: Remember to rewrite this Proposition so as to take into account the Lorentzian and non-local case. For Gauss-Bonnet-Chern, we should cite also \cite{GBC1,GBC2} as well}}

\begin{prop}
Let $(V, \bar{g})$ be a Lorentz manifold of dimension $4$. For any $(\alpha, \beta)$ satisfying $3\alpha+\beta= 0$, $A_{\bar{g}}=0$ is conformally invariant.
%For any $\lambda \in \R$, 
%$$E_{-\lambda, 3\lambda} = \lambda \int_V \left( 3 \left| \mathrm{Ric}_{\bar{g}} \right|^2 - R_{\bar{g}}^2 \right) d\mathrm{vol}_g$$
%is a conformal invariant.
\end{prop}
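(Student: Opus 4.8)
The plan is to avoid differentiating the explicit expression (\ref{A-tensor}) directly, and instead to identify $A_{\bar{g}}$, in the case $3\alpha+\beta=0$ and $\dim V=4$, with a fixed nonzero constant multiple of the Bach tensor, whose conformal covariance in dimension four is classical. Since $A_{\bar{g}}$ is by construction the $L^2$-gradient (Euler--Lagrange tensor) of the density $\alpha R^2_{\bar{g}}+\beta|\mathrm{Ric}_{\bar{g}}|^2$, and the Euler--Lagrange assignment is local and linear in the Lagrangian density, it suffices to rewrite this density, modulo a null Lagrangian, as a multiple of the Weyl-squared density and then transfer the known conformal behaviour of the latter.

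First I would recall the two dimension-four facts that drive the argument. On one hand, the Chern--Gauss--Bonnet integrand $\mathcal{G}=|\mathrm{Riem}_{\bar{g}}|^2-4|\mathrm{Ric}_{\bar{g}}|^2+R^2_{\bar{g}}$ is, in dimension four, a null Lagrangian: its Euler--Lagrange tensor (the Lanczos--Lovelock tensor) vanishes identically as a pointwise tensor, independently of compactness and of the signature of $\bar{g}$. On the other hand, the pointwise algebraic identity valid in dimension four,
\[
|W_{\bar{g}}|^2=|\mathrm{Riem}_{\bar{g}}|^2-2|\mathrm{Ric}_{\bar{g}}|^2+\tfrac{1}{3}R^2_{\bar{g}},
\]
expresses the Weyl density through the curvature invariants appearing in (\ref{action}). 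Subtracting gives $|W_{\bar{g}}|^2-\mathcal{G}=2|\mathrm{Ric}_{\bar{g}}|^2-\tfrac{2}{3}R^2_{\bar{g}}$, so by linearity of the Euler--Lagrange operator and the pointwise vanishing of the variation of $\mathcal{G}$, the gradient of $\int|W_{\bar{g}}|^2\,dV$ coincides pointwise with the gradient of $\int(2|\mathrm{Ric}_{\bar{g}}|^2-\tfrac{2}{3}R^2_{\bar{g}})\,dV$, which up to a standard normalisation is the Bach tensor $B_{\bar{g}}$.

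It then remains to match this with $S$. Imposing $\beta=-3\alpha$ gives $\alpha R^2_{\bar{g}}+\beta|\mathrm{Ric}_{\bar{g}}|^2=\alpha(R^2_{\bar{g}}-3|\mathrm{Ric}_{\bar{g}}|^2)=-\tfrac{3\alpha}{2}(2|\mathrm{Ric}_{\bar{g}}|^2-\tfrac{2}{3}R^2_{\bar{g}})$, so that $A_{\bar{g}}$ is a fixed nonzero constant multiple of $B_{\bar{g}}$. Since in dimension four the Bach tensor is conformally covariant — under $\tilde{\bar{g}}=e^{2\omega}\bar{g}$ one has $B_{\tilde{\bar{g}}}=e^{-2\omega}B_{\bar{g}}$ — the same transformation law holds for $A_{\bar{g}}$, which is the assertion; in particular the field equation $A_{\bar{g}}=0$ is conformally invariant.

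The step requiring the most care is the passage from a functional statement to a pointwise tensor identity in the present non-compact, Lorentzian setting: conformal invariance of $\int|W|^2\,dV$ by itself only controls the trace of the gradient. I would therefore not argue through invariance of the functional, but through the local Euler--Lagrange computation, invoking the pointwise vanishing of the Gauss--Bonnet/Lovelock tensor together with the known conformal transformation law of $B_{\bar{g}}$, both of which are purely algebraic-differential identities insensitive to signature and to the topology of $V$. Equivalently, one could verify the transformation law of $A_{\bar{g}}$ directly from (\ref{A-tensor}) term by term, but the route through $\mathcal{G}$ and $B_{\bar{g}}$ avoids that lengthy computation.
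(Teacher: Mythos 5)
Your proposal is correct, and its core strategy coincides with the paper's: both reduce $A_{\bar{g}}$ (for $3\alpha+\beta=0$, $\dim V=4$) to a constant multiple of the Bach tensor by combining the Gauss--Bonnet density with the dimension-four Weyl decomposition, and then transfer the Bach tensor's conformal behaviour to $A_{\bar{g}}$ (the paper records the normalisation $A=6B$ for $\alpha=-1$, $\beta=3$, consistent with your $-\tfrac{3\alpha}{2}$ factor). Where you genuinely diverge is in the two supporting lemmas, and your choices tighten the weak points of the paper's argument. First, the paper handles the Gauss--Bonnet term globally: it assumes $V$ compact, invokes the Chern--Gauss--Bonnet theorem to see that $\int\mathcal{G}\,dV_{\bar{g}}$ is topological, and only afterwards extends to the non-compact case with the remark that the conclusion is a pointwise tensorial identity; you instead invoke the local fact that $\mathcal{G}$ is a null Lagrangian in dimension four (vanishing of the Lanczos--Lovelock tensor), which is signature- and topology-insensitive and makes the non-compact Lorentzian case immediate rather than an afterthought. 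Second, the paper deduces conformal invariance of $A$ from conformal invariance of the Bach \emph{energy} functional; as you note, extracting a pointwise covariance law for the Euler--Lagrange tensor from invariance of a functional requires a further differentiation-in-$g$ argument (invariance alone, differentiated in the conformal factor, only yields trace-freeness), and on a non-compact manifold this again needs the compact detour. Your appeal to the classical pointwise transformation law $B_{\tilde{\bar{g}}}=e^{-2\omega}B_{\bar{g}}$ sidesteps this entirely and, as a bonus, records the precise conformal weight, which the paper's statement leaves implicit. One pedantic caveat: when $\alpha=0$ (hence $\beta=0$) your phrase ``nonzero constant multiple'' fails, but then $A_{\bar{g}}\equiv 0$ and the claim is trivial, so nothing is lost.
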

\begin{proof}
We first assume the manifold is compact, and recall the Chern-Gauss-Bonnet formula in dimension $4$ (we here apply (1) with $p=3$ and $k=2$ from \cite{GBC1} to apply the formula in a Semi-Riemannian context, see also \cite{MR14760} and  \cite{GBC2} for the original articles):
\begin{equation} \label{CGB} \chi(V)= -\frac{1}{32 \pi^2} \int_V \left( \left| \mathrm{Riem}_{\bar{g}} \right|^2 -4 \left| \mathrm{Ric}_{\bar{g}} \right|^2 +R^2 _{\bar{g}}\right) d{V}_{\bar{g}}, \end{equation}
with $\chi(V)$ a topological invariant.

Let us recall the definition of the Weyl curvature which is the tracefree part of the Riemannian curvature. In local coordinates, one has:
$$W^{\rho}_{\; \; \mu \lambda  \nu } := \mathrm{Riem}^\rho_{ \; \; \mu \lambda \nu   }  - \frac{1}{n-1} \left( \mathrm{Ric}^\rho_\lambda \bar{g}_{\mu \nu}- \mathrm{Ric}^\rho_\nu \bar{g}_{\mu \lambda} + \mathrm{Ric}_{\mu \nu} \bar{g}^\rho_\lambda - \mathrm{Ric}_{\mu \lambda} \bar{g}^\rho_\nu \right)-\frac{R}{n(n-1)} \left( \bar{g}^\rho_\nu \bar{g}_{\mu \lambda} - \bar{g}^{\rho}_\lambda \bar{g}_{\mu \nu} \right).$$
By design,  $W^\rho_{ \; \; \mu \rho \nu } = 0$. One can then compute
\begin{equation}
\label{eqweylriemriccscal}
\left| \mathrm{Riem}_{\bar{g}} \right|^2 = \left| W_{\bar{g}}  \right|^2 + \frac{4}{n-1} \left| \mathrm{Ric}_{\bar{g}}  \right|^2 - \frac{2}{n(n-1)} R_{\bar{g}} ^2.
\end{equation}
 Injecting \eqref{eqweylriemriccscal} with $n=3$ into \eqref{CGB} then yields:
\begin{equation}
\label{therightproportions}
\int_V \left( 3 \left| \mathrm{Ric}_{\bar{g}}  \right|^2 - R_{\bar{g}} ^2 \right) dV_{\bar{g}}  = \frac{3}{2} \int_V \left| W_{\bar{g}}  \right|^2 dV_{\bar{g}} + 48 \pi^2 \chi(V).
\end{equation}

However, since the Weyl tensor $W^{\rho}_{\; \; \mu \lambda \nu}$ is a conformal invariant, if $\tilde g = \phi^2 {\bar{g}} $, one has:  $\tilde W^{\rho}_{\; \; \mu \lambda \nu } =  W^{\rho}_{\; \; \mu \lambda \nu }$. In addition:
$$\begin{aligned}
\tilde W_{\rho}^{\; \; \mu \lambda \nu} &= \tilde g_{\rho r} \tilde g^{\mu m } \tilde g^{\lambda l } \tilde g^{\nu n }  \tilde W^{r}_{\; \; mln} \\
&= \phi^{-4}  W_{\rho}^{ \; \; \mu  \lambda  \nu }.
\end{aligned}$$

Thus, in a Lorentz manifold of space dimension $n$, $\big|\tilde W \big|^2 dV_{\tilde g}= \phi^{n-3}  \left| W_{\bar{g}}  \right|^2 dV_{\bar{g}} $. The Bach energy $$ \mathrm{Bach}= \int_V \left| W_{\bar{g}}  \right|^2 d\mathrm{vol}_{\bar{g}} $$ is thus a conformal invariant if and only if  $n=3$ (spacetime dimension: $4$).

In the case of a compact Lorentz manifold, since the right-hand side of \eqref{therightproportions} is  the sum of a conformally invariant and a topologically invariant term, $E_{-\lambda, 3 \lambda}$ is a conformal invariant. Its Euler-Lagrange tensor $A$ is thus proportional to the Euler-Lagrange tensor of the Bach energy i.e. the Bach tensor:
$$
B_{\mu \nu} = \left[ \nabla^{\rho \lambda} + \frac{1}{2} \mathrm{Ric}^{\rho \lambda} \right] W_{\rho \mu \lambda \nu },
$$
and the relation $A=0$ is necessarily conformally invariant. Here, and in what follows, $\nabla^{\kappa \lambda}$ is a notation to express in a concise manner $\nabla^\kappa \nabla^\lambda$. In fact, tensorial computations show that if $\alpha =-1$ and $\beta= 3$, $A=6B$. 

This identity remains true in the non-compact case, as it is a pointwise tensorial equality.

\qed

\end{proof}

\begin{remark}
This situation is reminiscent of the Willmore case, where the Willmore energy differs from the true conformal invariant by a topological term (see for instance \cite{bibwill} for an introduction to the Willmore problem). We refer the reader  to \cite{MR3415767} for a striking parallel between Willmore surfaces  and Bach flat spaces, notably in their conformal properties.
\end{remark}

\subsubsection{Einstein tensor formulation}
In any globally hyperbolic space-time $(V= M \times \R, \bar{g})$ we wish to express $A_{\bar{g}}$ as a function of the Einstein tensor, $\mathrm{Ric}_{\bar{g}} = G_{\bar{g}}  + \frac{R_{\bar{g}} }{2} \bar{g}$. Thus \eqref{A-tensor} yields

\begin{equation} \label{031220200944} \begin{aligned} A_{\bar{g}}&=\beta\Box_{\bar{g}}{G_{\bar{g}} } + ( \beta  + 2\alpha)\Box_{\bar{g}}R_{\bar{g}}\: {\bar{g}} - (2\alpha +  \beta)\bar{\nabla}^2R_{\bar{g}}  +2\beta{G_{\bar{g}}}_{\cdot}\mathrm{Riem}_{\bar{g}} \\&+ \beta R_{\bar{g}}  \mathrm{Ric}_{\bar{g}}
  + 2\alpha R_{\bar{g}}G_{\bar{g}} + \alpha R^2_{\bar{g}} \bar{g} -\frac{1}{2 }\alpha R^2_{\bar{g}} {\bar{g}} -\frac{1}{2 } \beta\langle\mathrm{Ric}_{\bar{g}},\mathrm{Ric}_{\bar{g}}\rangle_{\bar{g}} {\bar{g}} \\
&=\beta \left[ \Box_{\bar{g}}{G_{\bar{g}} }+ 2{G_{\bar{g}}}_{\cdot}\mathrm{Riem}_{\bar{g}} + \frac{R^2_{\bar{g}}}{4} \bar{g}- \frac{1}{2}\langle\mathrm{Ric}_{\bar{g}},\mathrm{Ric}_{\bar{g}}\rangle_{\bar{g}} {\bar{g}} \right]   \\& - \left( 2 \alpha + \beta \right) \left[ \bar{\nabla}^2R_{\bar{g}} - \Box_{\bar{g}}R_{\bar{g}}\: {\bar{g}} -  R_{\bar{g}}G_{\bar{g}} - \frac{R^2_{\bar{g}}}{4} \bar{g} \right].
\end{aligned}\end{equation}
Further, we can compute: $$\begin{aligned}
 \left( \langle \mathrm{Ric}_{\bar{g}}, \mathrm{Ric}_{\bar{g}} \rangle_{\bar{g}}- \frac{R^2_{\bar{g}}}{2} \right) \bar{g}_{\mu \nu}= {\mathrm{Ric}_{\bar{g}}}^\lambda_\tau {\mathrm{G}_{\bar{g}}}^\tau_\lambda \bar{g}_{\mu \nu} =  {\mathrm{G}_{\bar{g}}}.{\mathrm{Ric}_{\bar{g}}} \bar{g}_{\mu \nu}.
\end{aligned}$$
Injecting this into \eqref{031220200944} (with the same notation) then allows us to reformulate:
\begin{equation} \label{031220201238}\begin{aligned}
 A_{\bar{g}} &=\beta \left[ \Box_{\bar{g}}{G_{\bar{g}} }+  2{G_{\bar{g}}}_{\cdot} \left( \mathrm{Riem}_{\bar{g}} - \frac{\mathrm{Ric}_{\bar{g}}}{4} \bar{g} \right)  \right]   \\& - \left( 2 \alpha + \beta \right) \left[ \bar{\nabla}^2R_{\bar{g}} - \Box_{\bar{g}}R_{\bar{g}}\: {\bar{g}} -  R_{\bar{g}}G_{\bar{g}} - \frac{R^2_{\bar{g}}}{4} \bar{g} \right].
\end{aligned}
\end{equation}
Thus, when $2\alpha + \beta =0$, $A$ is an operator whose leading term is $\Box_{\bar{g}}{G_{\bar{g}} }$:
$$ \Box_{\bar{g}}{G_{\bar{g}} }+  2{G_{\bar{g}}}_{\cdot} \left( \mathrm{Riem}_{\bar{g}} - \frac{\mathrm{Ric}_{\bar{g}}}{4} \bar{g} \right)=0.$$
It is interesting to notice that the the term ${G_{\bar{g}}}_{\cdot} \left( \mathrm{Riem}_{\bar{g}} - \frac{\mathrm{Ric}_{\bar{g}}}{4} \bar{g} \right)$ is \emph{tracefree} in spacetime dimension $4$:
$$\begin{aligned}
\bar{g}^{\mu \nu }\left( {{G_{\bar{g}}}}^\lambda_\tau \left( {\mathrm{Riem}_{\bar{g}}}^\tau_{\, \mu  \lambda \nu} - \frac{{\mathrm{Ric}_{\bar{g}}}^\lambda_\tau}{4} \bar{g}_{\mu \nu} \right) \right) &={{G_{\bar{g}}}}^\lambda_\tau \left( {{\mathrm{Riem}_{\bar{g}}}^{\tau \mu}_{\quad \lambda \mu} }- \frac{{\mathrm{Ric}_{\bar{g}}}^\lambda_\tau}{4} \bar{g}^{\mu}_{ \mu} \right)   \\
&={{G_{\bar{g}}}}^\lambda_\tau \left(  {\mathrm{Ric}_{\bar{g}}}^\lambda_\tau - {\mathrm{Ric}_{\bar{g}}}^\lambda_\tau \right) = 0.
\end{aligned}$$ 
Similarly the $0$th order term  in $R$ is tracefree.
Thus, tracing  \eqref{031220201238} yields: $\Box_{\bar{g}} R_{\bar{g}}=0$ in dimension $4$. If  in addition $2 \alpha + \beta=0$, the Euler-Lagrange equation $A_{\bar{g}}=0$ is:
$$
\left\{
\begin{aligned}
&\Box_{\bar{g}}R_{\bar{g}} = 0 \\
&\Box_{\bar{g}}{ G_{\bar{g}}}_{\mu \nu} +2 \left( {\mathrm{Riem}_{\bar{g}}}^\tau_{\: \mu \lambda \nu } - \frac{{\mathrm{Ric}_{\bar{g}}}^\tau_\lambda}{4} \bar{g}_{\mu \nu} \right) {G_{\bar{g}}}^\lambda_\tau = 0.
\end{aligned}
\right.
$$

\section{Conservation principles and fourth order energy}
{The type of analysis that we here lead is end-local. We will thus, in the following consider manifolds with only one end.}
\subsection{Conservation principles}
The aim of this section is to present a detailed analysis which will lead us to an appropriate notion of energy for AE solution to $A_{\bar{g}}=0$. As we have already stated, we will appeal to symmetry principles and their associated conservation laws as a primary tool. Along these lines, let us start by noticing that by restricting variations of $S(\bar{g})$ to those generated by the flow of smooth vector fields, we have the well-known local conservation principle
\begin{align*}%\label{A-conservation}
\mathrm{div}_{\bar{g}}A_{\bar{g}}=0.
\end{align*}
The above local conservation identity is a local property of the tensor field $A_{\bar{g}}$ and therefore remains valid for any smooth Lorentzian metric on $V$ (regardless of whether $M$ is compact or not). This local conservation principle will imply a linearised version of the same principle, which we will exploit in order to define a notion of energy for solutions of the space-time field equations $A_{\bar{g}}=0$.

\begin{lemma}
Let $\bar{g}_{0}$ be a smooth solution of $A_{\bar{g}}=0$ in space-time. Then, for any perturbation $h$, it holds that
\begin{align}\label{Linearised-Aconservation}
\mathrm{div}_{\bar{g}_0}\left(DA_{\bar{g}_0}\cdot h\right)=0.
\end{align}
%where $DA_{\bar{g}}$ denotes the Fr\'echet derivative of $A$ as map from $\mathcal{B}_1\mapsto \mathcal{B}_2$.
\end{lemma}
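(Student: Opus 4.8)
The plan is to differentiate the exact conservation law $\mathrm{div}_{\bar{g}}A_{\bar{g}}=0$, which, as emphasized just before the lemma, holds identically for \emph{every} smooth Lorentzian metric on $V$, and then to exploit the fact that $\bar{g}_0$ is a genuine solution, so that $A_{\bar{g}_0}\equiv 0$ as a tensor field. Concretely, I would fix a point $p\in V$ and work along the path $\bar{g}_\lambda\doteq \bar{g}_0+\lambda h$; by the pointwise considerations recorded above, $\bar{g}_\lambda$ is nondegenerate on a small $\lambda$-interval around $0$ near $p$, and the map $\lambda\mapsto \mathrm{div}_{\bar{g}_\lambda}A_{\bar{g}_\lambda}$ is differentiable at $\lambda=0$. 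Since this map is identically zero, its $\lambda$-derivative at $0$ vanishes, and the entire argument reduces to identifying that derivative.

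The key step is the product/chain rule for the $\lambda$-derivative, keeping track of the fact that \emph{both} the tensor $A_{\bar{g}_\lambda}$ and the divergence operator $\mathrm{div}_{\bar{g}_\lambda}$ depend on $\lambda$. Writing $(\mathrm{div}_{\bar{g}}T)_\nu=\bar{g}^{\mu\alpha}\bar{\nabla}_\alpha T_{\mu\nu}$ for a symmetric $(0,2)$-tensor $T$ and linearizing, I would split the result into
\begin{align*}
\frac{d}{d\lambda}\Big|_{\lambda=0}\left(\mathrm{div}_{\bar{g}_\lambda}A_{\bar{g}_\lambda}\right)_\nu &= \left(D\bar{g}^{\mu\alpha}\cdot h\right)\bar{\nabla}_\alpha (A_{\bar{g}_0})_{\mu\nu} + \bar{g}_0^{\mu\alpha}\bar{\nabla}_\alpha\left(DA_{\bar{g}_0}\cdot h\right)_{\mu\nu} \\
&\quad - \bar{g}_0^{\mu\alpha}\left(D\Gamma\cdot h\right)^\sigma_{\alpha\mu}(A_{\bar{g}_0})_{\sigma\nu} - \bar{g}_0^{\mu\alpha}\left(D\Gamma\cdot h\right)^\sigma_{\alpha\nu}(A_{\bar{g}_0})_{\mu\sigma},
\end{align*}
where $D\Gamma\cdot h$ denotes the (tensorial) linearization of the Christoffel symbols of $\bar{g}_0$ in the direction $h$. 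The middle term is precisely $\mathrm{div}_{\bar{g}_0}(DA_{\bar{g}_0}\cdot h)_\nu$.

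The final step is to observe that every remaining term carries a factor of $A_{\bar{g}_0}$, either algebraically (the two $D\Gamma$-terms) or through its covariant derivative (the $D\bar{g}^{\mu\alpha}$-term). Since $\bar{g}_0$ solves $A_{\bar{g}}=0$, the field $A_{\bar{g}_0}$ vanishes identically on $V$, hence so does $\bar{\nabla}(A_{\bar{g}_0})$, and all three of these terms drop out. Equating the displayed derivative to $0$ then gives $\mathrm{div}_{\bar{g}_0}(DA_{\bar{g}_0}\cdot h)=0$, as claimed.

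The only genuine subtlety, and what I expect to be the main point to get right, is the bookkeeping guaranteeing that no term survives unless it contains $A_{\bar{g}_0}$ or one of its covariant derivatives as a factor: this is exactly the structural feature of linearizing a first-order divergence operator, where the metric-dependence enters only through the connection and thus produces terms that are algebraic in the transported tensor. The functional-analytic concerns about differentiating geometric objects along $\bar{g}_\lambda$ are already dispatched by the pointwise, local discussion preceding the lemma. In this form the statement is nothing but the linearized (contracted) Bianchi identity adapted to the operator $A$, mirroring the classical computation $\mathrm{div}_{g_0}(DG_{g_0}\cdot h)=0$ for the Einstein tensor over a Ricci-flat background.
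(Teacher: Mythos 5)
Your proposal is correct and is essentially the paper's own argument: both differentiate the exact identity $\mathrm{div}_{\bar{g}}A_{\bar{g}}=0$ along $\bar{g}_\lambda=\bar{g}_0+\lambda h$ and observe that every term produced by the metric-dependence of the divergence operator carries a factor of $A_{\bar{g}_0}$ (or its derivative), which vanishes on a solution. The only cosmetic difference is that the paper first rewrites the divergence in terms of the fixed background connection ${}^{0}\bar{\nabla}$ and the difference tensor $S^{\gamma}_{\sigma\nu}=\bar{\Gamma}^{\gamma}_{\sigma\nu}(\bar{g})-{}^{0}\bar{\Gamma}^{\gamma}_{\sigma\nu}(\bar{g}_0)$ before differentiating (so that $S(\bar{g}_0)=0$ kills those terms), whereas you differentiate the Christoffel symbols directly and obtain the equivalent $D\Gamma\cdot h$ terms.
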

\begin{proof}
Notice that for any smooth $\bar{g}$
\begin{align*}
0={\mathrm{div}_{\bar{g}}A_{\bar{g}}}_{\mu}&=\bar{g}^{\sigma\nu}\bar{\nabla}_{\sigma}A_{\nu\mu}=\bar{g}^{\sigma\nu}\left( \partial_{\sigma}A_{\nu\mu} - \bar{\Gamma}^{\gamma}_{\sigma\nu}A_{\gamma\mu} - \bar{\Gamma}^{\gamma}_{\sigma\mu}A_{\nu\gamma} \right),\\
&=\bar{g}^{\sigma\nu}\left( {}^{0}\!\bar{\nabla}_{\sigma}A_{\nu\mu} - S^{\gamma}_{\sigma\nu}A_{\gamma\mu} - S^{\gamma}_{\sigma\mu}A_{\nu\gamma} \right),
\end{align*}
where $S^{\gamma}_{\sigma\nu}(\bar{g})\doteq \bar{\Gamma}^{\gamma}_{\sigma\nu}(\bar{g})-{}^{0}\!\bar{\Gamma}^{\gamma}_{\sigma\nu}(\bar{g}_0)$; ${}^{0}\!\bar{\Gamma}^{\gamma}_{\sigma\nu}$ stands for the connection coefficients associated to $\bar{g}_0$ and ${}^{0}\!\bar{\nabla}$ denotes the covariant derivative operator associated to $\bar{g}_0$. From the above, and using that $A_{\bar{g}_0}=0$; $\mathrm{div}_{\bar{g}_0}A_{\bar{g}_0}=0$ and $S(\bar{g}_0)=0$, we find that
\begin{align*}
\bar{g}_0^{\sigma\nu} {}^{0}\!\bar{\nabla}_{\sigma}\left(DA_{\bar{g}_0}\cdot h\right)_{\nu\mu} =0,
\end{align*}
which proves the claim.
\qed
\end{proof}

\bigskip

The idea will now be to appeal to the linearised conservation identity presented above in order to analyse perturbations of solutions which present a time-like Killing field.

Let us begin, as above, assuming the existence of a smooth solution $\hat{\bar{g}}$ to the space-time equations $A_{\bar{g}}=0$. Let $\xi$ be a vector field on space-time and denote $\mathcal{P}_{\hat{\bar{g}}}(\xi, h)\doteq (DA_{\hat{\bar{g}}}\cdot h)(\xi,\cdot)$. Then, appealing to (\ref{Linearised-Aconservation}), it holds that
\begin{align*}
\mathrm{div}_{\hat{\bar{g}}}(\mathcal{P}_{\hat{\bar{g}}}(\xi, h))=\frac{1}{2}\langle DA_{\hat{\bar{g}}}\cdot h,\pounds_{\xi}\hat{\bar{g}} \rangle_{\hat{\bar{g}}},
\end{align*}
 We are actually interested in the case where $\xi$ produces some continuous symmetry of $\hat{\bar{g}}$. Therefore, suppose that $\xi$ is a $\hat{\bar{g}}$-Killing field which gives us the following.
\begin{align}\label{Linearised-EMconservation}
\mathrm{div}_{\hat{\bar{g}}}(\mathcal{P}_{\hat{\bar{g}}}(\xi, h))=0.
\end{align}

From now on, we will only consider asymptotically Minkowskian (AM) space-times with AE space-slices. In this context, let us introduce the following definition.

\begin{defn}
Let $(M\times [0,T],\hat{\bar{g}})$ be an AM space-time where $A_{\hat{\bar{g}}}=0$ which admits a time-like Killing field $\xi$. Then, given a compact set $K\subset M$, the energy within the compact sets $K_t=K\times\{t\}\subset M_t$ associated to a perturbation $h$ of $\hat{\bar{g}}$ is defined by
%modif
\begin{align}\label{energy.1}
\mathcal{E}_{\hat{\bar{g}}}(t;K,h)\doteq - \int_{K_t}\langle \mathcal{P}_{\hat{\bar{g}}}(\xi,h),n \rangle_{\hat{\bar{g}}}dV_{\hat{g}},
\end{align}
where $\hat{g}$ stands for the induced Riemannian metric on $M_t$ by $\hat{\bar{g}}$ and $n$ for the $\hat{g}$-future-pointing unit normal to $M_t$.
\end{defn}

Following the conventions adopted in the previous section, let us split the 1-form $\mathcal{P}_{\hat{\bar{g}}}(\xi,h)$ into a function $\mathcal{P}^{(0)}$ and 1-form $\mathcal{P}^{(1)}\in \Gamma(T^{*}M)$ defined by 
\begin{align}
\begin{split}
\mathcal{P}^{(0)}&=\<\mathcal{P}_{\hat{\bar{g}}}(\xi, h),\partial_t \>,\\
\mathcal{P}^{(1)}(Z)&=\<\mathcal{P}_{\hat{\bar{g}}}(\xi, h),Z \> \text{ for all } Z\in \Gamma(T^{*}M).
\end{split}
\end{align}
 Here, and in the following, we simplify notations and denote a spacetime form and its restriction using the same symbol.
\begin{prop}\label{Energyconservation}
Let $(M\times [0,T],\hat{\bar{g}})$ be an AM space-time where $A_{\hat{\bar{g}}}=0$ which admits a time-like Killing field $\xi$. Then, if there exists $\epsilon >0$ such that $\mathcal{P}^{(0)},\mathcal{P}^{(1)}= O (r^{-(n+ \epsilon)})$, the energy $(\ref{energy.1})$ over all of $M$ is conserved through evolution.
\end{prop}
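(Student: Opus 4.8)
The plan is to turn the pointwise conservation law (\ref{Linearised-EMconservation}) into a global statement by integrating over a spacetime slab and letting the spatial region exhaust $M$. Fix $0\le t_1<t_2\le T$ and set $\Omega=M\times[t_1,t_2]$. Since $M$ is non-compact I cannot apply the divergence theorem to $\Omega$ directly; instead I work on the compact pieces $\Omega_R\doteq B_R\times[t_1,t_2]$, where $B_R\subset M$ is a large coordinate ball in the end, and then pass to the limit $R\to\infty$. The boundary of $\Omega_R$ consists of a top slice $B_R\times\{t_2\}$, a bottom slice $B_R\times\{t_1\}$, and a lateral timelike tube $\partial B_R\times[t_1,t_2]$.

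First I would encode the divergence-free condition via differential forms, so as to avoid signature bookkeeping: letting $W\doteq(\mathcal{P}_{\hat{\bar{g}}}(h,\xi))^\sharp$ be the $\hat{\bar{g}}$-dual vector field and $\omega\doteq\iota_W\,dV_{\hat{\bar{g}}}$ the contracted volume form, Cartan's identity gives $d\omega=(\mathrm{div}_{\hat{\bar{g}}}W)\,dV_{\hat{\bar{g}}}=0$ by (\ref{Linearised-EMconservation}). Ordinary Stokes on $\Omega_R$ then yields
\[
0=\int_{\Omega_R}d\omega=\int_{B_R\times\{t_2\}}\omega-\int_{B_R\times\{t_1\}}\omega+\int_{\partial B_R\times[t_1,t_2]}\omega.
\]
On each constant-$t$ slice the pullback of $\omega$ equals $\pm\langle\mathcal{P}_{\hat{\bar{g}}}(h,\xi),n\rangle_{\hat{\bar{g}}}\,dV_{\hat g}$, the sign being fixed by the future-pointing normal $n$ so as to reproduce the definition (\ref{energy.1}); hence, absorbing the resulting orientation sign into the lateral contribution $\Phi(R)$, I obtain
\[
\mathcal{E}_{\hat{\bar{g}}}(t_2;B_R,h)-\mathcal{E}_{\hat{\bar{g}}}(t_1;B_R,h)=\Phi(R).
\]

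It then remains to take $R\to\infty$. For the left-hand side, the slice integrand $\langle\mathcal{P}_{\hat{\bar{g}}}(h,\xi),n\rangle_{\hat{\bar{g}}}$ is a combination of $\bar{\mathcal{P}}^{(0)}$ and $\bar{\mathcal{P}}^{(1)}$ with coefficients that stay bounded thanks to the AM structure (the lapse and shift are controlled at infinity); since $\bar{\mathcal{P}}^{(0)},\bar{\mathcal{P}}^{(1)}\in L^1(M,dV_{\hat g})$ by hypothesis, the integrals over $B_R$ converge to the integrals over all of $M$, i.e. to $\mathcal{E}_{\hat{\bar{g}}}(t_i;M,h)$. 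For the lateral flux, the outward normal to $\partial B_R\times[t_1,t_2]$ is a radial spatial direction, so $\Phi(R)$ is controlled by the $t$-integral over $\partial B_R$ of the radial component of $\bar{\mathcal{P}}^{(1)}$. Writing the volume near infinity in polar form, $\int_{t_1}^{t_2}\!\int_M|\bar{\mathcal{P}}^{(1)}|_{\hat g}\,dV_{\hat g}\,dt\gtrsim\int_{R_0}^{\infty}g(r)\,dr$ with $g(r)\doteq\int_{t_1}^{t_2}\!\int_{\partial B_r}|\bar{\mathcal{P}}^{(1)}_{\mathrm{rad}}|\,dA_r\,dt\ge0$, and the $L^1$ hypothesis makes the right-hand integral finite. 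Consequently $\liminf_{r\to\infty}g(r)=0$, so there is a sequence $R_k\to\infty$ with $|\Phi(R_k)|\le g(R_k)\to0$. Passing to this subsequence gives $\mathcal{E}_{\hat{\bar{g}}}(t_2;M,h)-\mathcal{E}_{\hat{\bar{g}}}(t_1;M,h)=\lim_k\Phi(R_k)=0$, which is the assertion.

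The main obstacle is precisely the non-compactness: the divergence theorem is only available on the compact exhaustion, and the $L^1$ hypothesis does not force the boundary flux at infinity to vanish for every radius but only along a suitable sequence $R_k\to\infty$ — this is the standard subtlety, and it is exactly why $L^1$-integrability, rather than some pointwise decay rate, is the natural hypothesis. A secondary technical point is keeping track of the identification between the intrinsic slice flux $\langle\mathcal{P}_{\hat{\bar{g}}}(h,\xi),n\rangle_{\hat{\bar g}}\,dV_{\hat g}$ and the restriction of $\omega$, together with the lapse/shift and induced-volume factors; the AM structure keeps all of these bounded and so they do not interfere with the limiting argument.
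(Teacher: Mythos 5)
Your proof is correct and follows the same global strategy as the paper's: integrate the linearized conservation law over a compact spacetime cylinder, identify the two slice terms with the truncated energies, use the $L^1$ hypothesis to pass the slice integrals to all of $M$, and show the lateral flux dies at infinity. The genuine difference lies in how the lateral flux is killed, and here your route is actually the more robust one. The paper, after bounding $|\langle\bar{\mathcal{P}},\nu\rangle_{\hat g}|\le|\bar{\mathcal{P}}^{(1)}|_{\hat g}$, asserts that $L^1$-integrability together with the AM structure yields the pointwise decay $|\langle\bar{\mathcal{P}},\nu\rangle_{\hat g}|=o(r^{-(n-1)})$ near infinity, and from this concludes that the sphere integrals vanish along the \emph{full} limit $r\to\infty$. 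As stated, that implication does not hold: an $L^1$ function on an AE end need not decay pointwise (it can carry thin spikes at arbitrarily large radii), so the paper's flux estimate is only justified along suitable radii. Your argument --- write the $L^1$ norm in polar form, note that $\int^{\infty}g(r)\,dr<\infty$ forces $\liminf_{r\to\infty}g(r)=0$, and extract a sequence $R_k\to\infty$ along which the flux vanishes --- is precisely the repair: since the slice integrals converge over \emph{any} exhaustion (dominated convergence against the $L^1$ majorant), vanishing of the flux along a single subsequence already pins down the limit. In short, the paper's intended route would give flux decay at every radius but requires a pointwise hypothesis it does not actually have, whereas your averaging argument shows that bare $L^1$-integrability genuinely suffices, which is exactly why it is the natural hypothesis.

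One point you should make explicit: your Fubini step uses $\int_{t_1}^{t_2}\|\bar{\mathcal{P}}^{(1)}(\cdot,t)\|_{L^1(M)}\,dt<\infty$, which is slightly more than the slice-wise hypothesis as literally stated. On a compact time interval, with smooth fields and the hypothesis understood uniformly in $t$ (as the paper itself implicitly assumes when it integrates its flux formula in time and passes to the limit), this is harmless, but it deserves a sentence rather than silence.
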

\begin{proof}
Consider a compact set $K\subset M$ and a cylinder $C$ over $K$, that is, $C=K\times [0,t]$, and then let us integrate (\ref{Linearised-EMconservation}) over $C$ so as to get, 
%modif
using the orientation conventions detailed in the  appendix \ref{subsectionconvention} (more precisely see \eqref{Flux.1}):
\begin{align}\label{fluxformula}
\begin{split}
-\int_{K_t}\langle \mathcal{P}_{\hat{\bar{g}}}(\xi, h),n \rangle_{\hat{\bar{g}}}dV_{\hat{g}} + \int_{K_0}\langle \mathcal{P}_{\hat{\bar{g}}}(\xi, h),n \rangle_{\hat{\bar{g}}}dV_{\hat{g}} &= - \int_L \langle \mathcal{P}_{\hat{\bar{g}}}(\xi, h),\nu \rangle_{\hat{\bar{g}}}dL,\\
&= \int_0^t\int_{\partial K_t} \hat{N}\langle \mathcal{P}_{\hat{\bar{g}}}(\xi,h),\nu \rangle_{\hat{\bar{g}}}dtd(\partial K),
\end{split}
\end{align}
where $n$ stands for the future-pointing unit normal to $M_t$ and $K_t = K \times \{ t \} $ denotes the compact on the $t$ slice; $\nu$ for the outward-pointing unit normal to the lateral boundary $L=\partial K\times [0,t]$; $\hat{N}$ for the lapse function associated with $\hat{\bar{g}}$ and $d\partial K$ for the induced volume form on $\partial K$. 

We want to present the above formula for all of $M$, not just compact subsets. In particular, we want to prove that the right-hand side goes to zero as we move towards infinity in $M$. Let us denote by $\hat{g}_t$ and $\mathcal{P}_{\hat{\bar{g}}}(\xi,h)$ the Riemannian metric and $1$-form induced on $M_t$ by $\hat{\bar{g}}$ and $\mathcal{P}_{\hat{\bar{g}}}(\xi,h)$ respectively. Then, since $\nu$ is tangent to $M$ we get that
\begin{align*}
|\langle \mathcal{P}_{\hat{\bar{g}}}(\xi, h),\nu\rangle_{\hat{g}}|&=|\langle \mathcal{P}^{(1)},\nu\rangle_{\hat{g}}|\leq|\mathcal{P}^{(1)}|_{\hat{g}} = O (r^{-(n + \varepsilon)} )\in L^1(M,dV_{\hat{g}}).
\end{align*}
The above implies that % from the asymptotically flat condition on $\hat{g}$, 
near infinity, $|\langle \mathcal{P}_{\hat{\bar{g}}}(\xi, h ),\nu\rangle_{\hat{g}}|=o(r^{-(n-1)})$.

Let us then consider space compacts  $K_r$  sufficiently large so that $\partial K_r$ is contained in the ends of $M$, and then let us chose such $K_r$ so that $\partial K_r$ is orientable. Let us build such  $K_r$  such that $\partial K_r =  \cup_j S_{j,r}$ where the $S_{j,r}$ are Euclidean spheres of radii $r$ sufficiently close to infinity in each end $\{ E_j\}_j^{\mathcal{N}=\#\text{ends}}$. Integrating in the cylinder $\mathcal{C}_r = K_r \times [0, T] = \cup_{t \in [0,T]} K_{t,r}$ then yields \eqref{fluxformula}, where the proximity of the cylindric boundary term to space infinity is quantified by the parameter $r$; and can thus decay in a controlled manner.   Indeed, denoting by $dV_{\hat{g}}$ the volume form associated to $\hat{g}$ in these ends, we see that
\begin{align*}
\langle \mathcal{P}^{(1)},\nu \rangle_{\hat{g}}d(\partial K_r)=\langle \mathcal{P}^{(1)},\nu \rangle_{\hat{g}}\nu\lrcorner dV_{\hat{g}}=\langle \mathcal{P}^{(1)},\nu \rangle_{\hat{g}}\sqrt{\frac{\mathrm{det}(\hat{g})}{\mathrm{det}(e)}}\nu\lrcorner dV_{e}=f\langle \mathcal{P}^{(1)},\nu \rangle_{\hat{g}}\:\nu\lrcorner dV_{e},
\end{align*}
where $f>0$ is a continuous and bounded function on $M$. Also,  sufficiently near infinity, in the natural Cartesian end coordinates $dV_e=dx^1 \wedge \cdots \wedge dx^n$. % Therefore, consider that the compact set $K$ has been constructed by choosing Euclidean spheres $S_{j}$ of radii $r$ sufficiently close to infinity in each end $\{ E_j\}_j^{\mathcal{N}=\#\text{ends}}$ and then considering $K$ as the compact submanifold of $M$ with $\partial K=\cup_jS_{j}$. 
Then, along each $S_{j,r}$, we get that $\nu\lrcorner dV_e= \nu\lrcorner (r^{n-1}dr\wedge d\omega_{n-1})=\nu_rr^{n-1}d\omega_{n-1}$, where $d\omega$ stands for the canonical volume form on the unit sphere. Then, 
\begin{align*}
\langle\mathcal{P}^{(1)},\nu \rangle_{\hat{g}}d(\partial K_{t,r})=f \langle \mathcal{P}^{(1)},\nu\rangle_{g} \nu_rr^{n-1} d\omega_{n-1}.
\end{align*}
Therefore, since $|\langle \mathcal{P}_{\hat{\bar{g}}}(\xi, h),\nu\rangle_{\hat{g}}|=o(r^{-(n-1)})$ and $N=1+O(r^{-\tau})$ with $\tau>0$, it follows that
\begin{align*}
\Big\vert\int_{\partial K_{t,r}}\hat{N}\langle \mathcal{P}^{(1)},\nu \rangle_{\hat{g}}d(\partial K_{t,r})\Big\vert\lesssim\sum_{j=1}^{\mathcal{N}}\int_{S_{j,r}} r^{n-1}|\mathcal{P}|_e d\omega_{n-1}\xrightarrow[r\rightarrow\infty]{}0.
\end{align*}
%Then, if $n+\alpha-1< \sigma$, the following holds
%\begin{align}
%\lim_{r\rightarrow\infty}\int_{0}^t\int_{\partial K_r}\hat{N}\langle \mathcal{P},\nu \rangle_{\hat{\bar{g}}}d(\partial K_r)dt\lesssim \lim_{r\rightarrow\infty}r^{n+\alpha-\sigma-1}=0.
%\end{align}

Similarly, consider $\langle \mathcal{P},\hat{n} \rangle_{\hat{\bar{g}}}=\hat{N}^{-1}\langle \mathcal{P},\partial_t - \hat{X} \rangle_{\hat{g}}=\hat{N}^{-1}\left(\mathcal{P}^{(0)} + \langle \mathcal{P}^{(1)},\hat{X} \rangle_{\hat{g}}\right)\in L^1(M,dV_{\hat{g}})$ since $N=1+O(r^{-\tau}); |\hat{X}|=O(r^{-\tau})$ and $\mathcal{P}^{(0)},\mathcal{P}^{(1)}\in L^1(M,dV_{\hat{g}})$. Therefore, we can pass to the limit in (\ref{fluxformula}), in order to get
\begin{align*}%\label{conservationcurrents}
\int_{M_t}\langle \mathcal{P}_{\hat{\bar{g}}}(\xi, h),n \rangle_{\hat{\hat{g}}}dV_{\bar{g}} = \int_{M_0}\langle \mathcal{P}_{\hat{\bar{g}}}(\xi, h),n \rangle_{\hat{\bar{g}}}dV_{\hat{g}},
\end{align*}

%\begin{align}\label{conservationcurrents.3}
%\mathcal{E}_{\hat{g}}(h,M)=\int_{M_t}\langle \mathcal{P}_{\hat{g}}(h,\xi),n \rangle_{\hat{g}}\mu_{\hat{\bar{g}}},
%\end{align}
%is a conserved quantity through the evolution, which is independent of which $t$ we use to compute it.
\qed
\end{proof}

\bigskip

The above proposition provides us with a natural notion for the energy of space-time solutions to $A_{\bar{g}}=0$ which arise as perturbations of solutions with time-translational symmetries. Next, we will show how $\mathcal{E}_{\hat{\bar{g}}}(h,M)$ can be computed as a boundary term at space-like-infinity. 
%modif
All the conventions for the operators in the Lorentzian setting are defined in the appendix (see subsection \ref{subsectionconvention}).

\begin{prop}\label{ADMenergy.1}
Let $(V=M\times \mathbb{R},\hat{\bar{g}})$ be an orientable AM space-time which satisfies the hypotheses of Proposition \ref{Energyconservation}. Then, there is a $2$-form $\mathcal{Q}_{\hat{\bar{g}}}(\xi, h)$ such that $\mathcal{P}_{\hat{\bar{g}}}(\xi, h)=\delta_{\hat{\bar{g}}}\mathcal{Q}_{\hat{\bar{g}}}(\xi, h)$ and such that the energy is given by
\begin{align}\label{Energychage}
\mathcal{E}_{\hat{g}}(M,h)=\int_{S_{\infty}}*_{\hat{\bar{g}}}\mathcal{Q}_{\hat{\bar{g}}}(\xi, h),
\end{align}
where $S_{\infty}$ denotes \textit{the sphere at infinity}.
\end{prop}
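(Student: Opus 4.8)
The key structural fact behind this proposition is that $\bar{\mathcal{P}}_{\hat{\bar{g}}}(h,\xi)$ is a divergence-free $1$-form on an orientable space-time, which (in the Lorentzian setting, using the conventions of the appendix where $\delta_{\hat{\bar{g}}}=d^{*}$) is equivalent to saying that the Hodge dual $(n)$-form $*_{\hat{\bar{g}}}\bar{\mathcal{P}}$ is closed. The plan is to produce the superpotential $\mathcal{Q}$ by exhibiting $*_{\hat{\bar{g}}}\bar{\mathcal{P}}$ as an exact form and then transport the closedness/exactness statement back through the Hodge star to obtain $\bar{\mathcal{P}}=\delta_{\hat{\bar{g}}}\mathcal{Q}$. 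I would first recall that (\ref{Linearised-EMconservation}) reads $\mathrm{div}_{\hat{\bar{g}}}\bar{\mathcal{P}}=0$, and translate this into the language of forms: for a $1$-form $\omega$ one has $\delta_{\hat{\bar{g}}}\omega=-\mathrm{div}_{\hat{\bar{g}}}\omega^{\sharp}$ up to the sign dictated by the appendix conventions, so the Killing identity gives $\delta_{\hat{\bar{g}}}\bar{\mathcal{P}}=0$, equivalently $d(*_{\hat{\bar{g}}}\bar{\mathcal{P}})=0$.

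Next I would invoke the explicit form of the linearised operator. Because $\bar{\mathcal{P}}_{\hat{\bar{g}}}(h,\xi)=(DA_{\hat{\bar{g}}}\cdot h)(\xi,\cdot)$ is built from $DA_{\hat{\bar{g}}}\cdot h$, which is a linear combination of covariant derivatives of $h$ together with curvature contractions, one expects the conserved current to be writable as a total covariant divergence. This is exactly the mechanism behind the Abbott--Deser--Tekin construction cited as \cite{AD,ADT}: the contracted Bianchi identities force the current associated to a Killing field to be the divergence of an antisymmetric $2$-tensor (the \emph{superpotential}). Concretely, I would integrate by parts at the level of the identity $\mathrm{div}_{\hat{\bar{g}}}\bar{\mathcal{P}}=0$, using the Killing equation $\pounds_{\xi}\hat{\bar{g}}=0$ and the fourth-order structure of $A$, to collect all terms into the form $\bar{\mathcal{P}}_{\mu}=\hat{\nabla}^{\nu}\mathcal{Q}_{\mu\nu}$ with $\mathcal{Q}$ antisymmetric; identifying $\mathcal{Q}$ as a $2$-form, this antisymmetric-divergence relation is precisely $\bar{\mathcal{P}}=\delta_{\hat{\bar{g}}}\mathcal{Q}$. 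The propositions \ref{Energycharge.Eins} and \ref{pqGR} referenced in the introduction are exactly where the explicit $\mathcal{Q}$ is produced around Einstein backgrounds, so here I would either cite that construction or, more invariantly, argue existence: since $*_{\hat{\bar{g}}}\bar{\mathcal{P}}$ is a closed $n$-form on the space-time, and the relevant cohomology in the asymptotic region vanishes (the ends are diffeomorphic to exteriors of balls in $\mathbb{R}^n$, hence have the topology of $S^{n-1}\times\mathbb{R}_{+}$ where $H^{n}=0$), there exists an $(n-1)$-form $\eta$ with $*_{\hat{\bar{g}}}\bar{\mathcal{P}}=d\eta$; setting $\mathcal{Q}=*_{\hat{\bar{g}}}^{-1}\eta$ and unwinding $\delta_{\hat{\bar{g}}}=\pm *^{-1}d\,*$ gives the desired superpotential.

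With $\bar{\mathcal{P}}=\delta_{\hat{\bar{g}}}\mathcal{Q}$ in hand, the energy formula follows by Stokes' theorem. Restricting to the initial slice $M_0=M$ with future unit normal $n$, I would rewrite the integrand $\langle\bar{\mathcal{P}},n\rangle_{\hat{\bar{g}}}\,dV_{\hat{g}}$ as the pullback of $*_{\hat{\bar{g}}}\bar{\mathcal{P}}$ to $M$, so that
\begin{align*}
\mathcal{E}_{\hat{g}}(M,h)=-\int_{M}\langle\bar{\mathcal{P}}_{\hat{\bar{g}}}(h,\xi),n\rangle_{\hat{\bar{g}}}\,dV_{\hat{g}}=\int_{M}d\eta|_{M}=\int_{M}d(*_{\hat{\bar{g}}}\mathcal{Q})|_{M}.
\end{align*}
Applying Stokes on $M$ (a manifold with the sphere at infinity $S_{\infty}$ as its only boundary component after the compact interior is accounted for) converts this volume integral into the boundary integral $\int_{S_{\infty}}*_{\hat{\bar{g}}}\mathcal{Q}$, using the $L^1$-integrability hypotheses of Proposition \ref{Energyconservation} to guarantee that the limit defining the flux at infinity exists and that no interior contributions survive. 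The main obstacle I anticipate is bookkeeping rather than conceptual: pinning down the exact signs and the correct placement of the Hodge star relative to $\delta_{\hat{\bar{g}}}=d^{*}$ in the Lorentzian (indefinite) setting, where $**=\pm\mathrm{id}$ carries signature-dependent signs, and checking that the induced orientation on $M$ and on $S_{\infty}$ matches the flux conventions \eqref{Flux.1} of the appendix so that the final expression carries the sign stated in (\ref{Energychage}). The existence of $\mathcal{Q}$ itself is the genuinely substantive point, and it is cleanest to obtain it from the algebraic structure of $DA_{\hat{\bar{g}}}$ via the Abbott--Deser--Tekin mechanism rather than purely from topology, since that also yields the explicit formula needed in the subsequent propositions.
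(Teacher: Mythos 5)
Your overall skeleton coincides with the paper's: translate the conservation law into $d(*_{\hat{\bar{g}}}\bar{\mathcal{P}})=0$, produce a potential $*_{\hat{\bar{g}}}\mathcal{Q}$ with $*_{\hat{\bar{g}}}\bar{\mathcal{P}}=d(*_{\hat{\bar{g}}}\mathcal{Q})$, establish the pullback identity $J^{*}(*_{\hat{\bar{g}}}\bar{\mathcal{P}})=-\langle \bar{\mathcal{P}},n\rangle_{\hat{\bar{g}}}\,dV_{\hat{g}}$ by a frame computation, apply Stokes on a compact exhaustion $K_r$, and use Proposition \ref{Energyconservation} to know the limit of the boundary integrals exists. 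The place where your proposal genuinely diverges from the paper, and where it has a gap, is the existence of $\mathcal{Q}$ --- which you yourself identify as the substantive point. Your preferred route, the Abbott--Deser--Tekin integration by parts invoking Propositions \ref{Energycharge.Eins} and \ref{pqGR}, is established in the paper only when the background $\hat{\bar{g}}$ is Einstein; Proposition \ref{ADMenergy.1} assumes only that $\hat{\bar{g}}$ is an orientable AM solution of $A_{\hat{\bar{g}}}=0$ with a time-like Killing field, so citing those propositions does not prove the statement in its stated generality.

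Your fallback topological argument also does not close this gap as written: you invoke vanishing of the top cohomology of the asymptotic region (ends of topology $S^{n-1}\times\mathbb{R}_{+}$), which at best produces $\mathcal{Q}$ defined near infinity. But the identity (\ref{Energychage}) requires converting $\mathcal{E}_{\hat{\bar{g}}}(K_r,h)$, an integral over the \emph{full} compact set $K_r$, into $\int_{\partial K_r}J^{*}(*_{\hat{\bar{g}}}\mathcal{Q})$ via Stokes; with $\mathcal{Q}$ defined only on the ends this fails, and one is left with an uncontrolled interior contribution, so the clean boundary formula does not follow. The paper's argument is global and is what you need: $V=M\times\mathbb{R}$ deformation retracts onto $M$, hence $H^{n}(V)\cong H^{n}(M)$, and the top de Rham cohomology of the non-compact $n$-manifold $M$ vanishes; therefore the closed $n$-form $*_{\hat{\bar{g}}}\bar{\mathcal{P}}$ is exact on all of $V$, yielding a globally defined $(n-1)$-form $*_{\hat{\bar{g}}}\mathcal{Q}$. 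With this replacement (and your acknowledged sign/orientation bookkeeping), your Stokes-plus-limit argument goes through exactly as in the paper; the ADT construction then serves its actual role in the paper, namely providing the \emph{explicit} formula for $\mathcal{Q}$ in the Einstein case rather than the existence statement here.
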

\begin{proof} 
In this proof, we drop the dependency of the corresponding forms on $\hat{\bar{g}}$, $\xi$ and $h$ for simplification. 
First we notice that $\mathrm{div}_{\hat{\bar{g}}}\mathcal{P}=0 \iff \delta_{\hat{\bar{g}}}\mathcal{P}=0  %$, and $\delta_{\hat{\bar{g}}}=(-1)^{d(p+1)}*_{\hat{\bar{g}}}d*_{\hat{\bar{g}}}$, then $\delta_{\hat{\bar{g}}}\mathcal{P}=0 
\iff d(*_{\hat{\bar{g}}}\mathcal{P})=0$. But then, since $M^n\times\{0\}$ is a deformation retraction of $V=M^n\times \mathbb{R}$, their cohomologies agree, and, furthermore, since $M^n$ is non-compact, its top cohomology vanishes (see, for instance, Theorem 11 in Chapter 9 of \cite{Spivak1} and page 279 in the same reference for more details.). All this implies that $*_{\hat{\bar{g}}}\mathcal{P}$ is exact and thus there is an $(n-1)$-form, denoted by $*_{\hat{\bar{g}}}\mathcal{Q}$, such that $*_{\hat{\bar{g}}}\mathcal{P}=d(*_{\hat{\bar{g}}}\mathcal{Q})$. 

Now, consider a neighbourhood $\Omega$ of any point $p\in M$ and an oriented $\hat{\bar{g}}$-orthonormal frame of the form $\{n,e_1,\cdots,e_n\}$, where $n$ is normal to $M$ and $\{e_i\}_{i=1}^n$ is a frame in $\Omega\cap M$. Let $\{\theta^{\alpha}\}$ be its dual frame and write $\mathcal{P}=\mathcal{P}_{\alpha}\theta^{\alpha}$\footnote{Notations and conventions are detailed in the appendix section \ref{subsectionconvention}}. We then get that
\begin{align*}
J^{*}(*\mathcal{P})=-\mathcal{P}_0\circ J\theta^1\wedge\cdots\wedge\theta^n,
\end{align*}
where $J:M \hookrightarrow V$ denotes the inclusion map. In such orthonormal frames $\mathcal{\bar{P}}_0=\epsilon\mathcal{P}^0$, where $\epsilon=\langle n,n \rangle_{\hat{\bar{g}}}$ %, and using frames in the orientation, so that $\theta^1\wedge\cdots\wedge\theta^n=dV_{\hat{g}}$,
 we get that:
\begin{align*}
J^{*}(*_{\hat{\bar{g}}}\mathcal{P})=-\langle \mathcal{P},n \rangle_{\hat{\bar{g}}} dV_{\hat{g}}.
\end{align*}
Therefore we get:
%modif
\begin{align*}
\mathcal{E}_{\hat{\bar{g}}}(K_{r},h)&=\int_{{K_r}}J^{*}(*\mathcal{P})=\int_{{K_r}}J^{*}(d*\mathcal{Q})=\int_{\partial {K_r}}J^{*}_{\partial {K_r},{K_r}}(*\mathcal{Q})
\end{align*}
where $J_{\partial {K_r},{K_r}}:\partial {K_r} \hookrightarrow {K_r}$ denotes the inclusion. From Proposition \ref{Energyconservation} we know that the limit
\begin{align}\label{energy.2}
\mathcal{E}_{\hat{\bar{g}}}(M,h)&=\lim_{r\rightarrow\infty}\mathcal{E}_{\hat{\bar{g}}}(K_{r},h)=\lim_{r\rightarrow\infty}\int_{\partial K_{r}}J^{*}_{\partial {K_r},{K_r}}(*\mathcal{Q})<\infty
\end{align}
exists and is finite. Since $M$ is Euclidean at infinity, we can take advantage that the ends are diffeomorphic to the exterior of a ball in $\mathbb{R}^n$ and integrate over sequences of spheres the boundary terms, thus using some abuse in notation, we get that
\begin{align}\label{Charge.1}
\mathcal{E}_{\hat{\bar{g}}}(M,h)=\int_{S_{\infty}}*\mathcal{Q}.
\end{align}
\qed
\end{proof}

\bigskip
\begin{remark}
\begin{itemize}
\item In the following we will drop the dependency in $\hat{\bar{g}}$, $\xi$ and $h$ when recalling it is not necessary, in order to lighten notations.
\item Notice that, following the above arguments, we see that $\mathcal{Q}$ ($*_{\hat{g}}\mathcal{Q}$) is defined up to a co-exact (exact) form. Nevertheless, notice that this \textit{indeterminacy} concerning the $(n-2)$-form $*\mathcal{Q}$ does not affect the conserved quantities (\ref{Charge.1}), since the addition of an exact form to $*\mathcal{Q}$ (after pull-back to the boundary) will not produce any contribution when integrated over the compact boundary $\partial K_r$.
\end{itemize}
\end{remark}

\begin{coro}\label{ADMenergy.2}
Under the same assumptions as in Proposition \ref{ADMenergy.1}, we can rewrite (\ref{Energychage}) as
\begin{align*}%\label{ADMconservedchage}
\mathcal{E}_{\hat{\bar{g}}}(M,h)=-\lim_{r\rightarrow\infty}\int_{\partial K_{r}}\mathcal{Q}(n,\nu)d(\partial K_{r}).
\end{align*}
\end{coro}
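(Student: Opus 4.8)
The plan is to unwind the Hodge star in an adapted orthonormal frame, exactly as in the proof of Proposition \ref{ADMenergy.1} but one codimension deeper, so that the pullback of the $(n-1)$-form $*_{\hat{\bar{g}}}\mathcal{Q}$ to the codimension-two submanifold $\partial K_r$ becomes a multiple of the induced volume form $d(\partial K_r)$. Recall from the proof of Proposition \ref{ADMenergy.1} that the energy is realised as a limit of boundary integrals, $\mathcal{E}_{\hat{\bar{g}}}(M,h)=\lim_{r\to\infty}\int_{\partial K_r}J^{*}_{\partial K_r,K_r}(*_{\hat{\bar{g}}}\mathcal{Q})$ (see (\ref{energy.2})), so it suffices to identify the pointwise integrand $J^{*}_{\partial K_r,K_r}(*_{\hat{\bar{g}}}\mathcal{Q})$.

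First I would fix, at a point of $\partial K_r$, an oriented $\hat{\bar{g}}$-orthonormal frame $\{n,\nu,e_2,\dots,e_n\}$ adapted to the nested inclusions $\partial K_r\hookrightarrow M_t\hookrightarrow V$: here $n$ is the future-pointing timelike unit normal to $M_t$, $\nu$ is the outward-pointing $\hat{g}$-unit normal to $\partial K_r$ inside $M_t$, and $e_2,\dots,e_n$ span $T\partial K_r$. Writing $\{\theta^0,\theta^1,\dots,\theta^n\}$ for the dual coframe and $\mathcal{Q}=\sum_{\alpha<\beta}\mathcal{Q}_{\alpha\beta}\,\theta^\alpha\wedge\theta^\beta$, the tangent space to $\partial K_r$ is spanned by $e_2,\dots,e_n$, so upon pullback only the component of $*_{\hat{\bar{g}}}\mathcal{Q}$ proportional to $\theta^2\wedge\cdots\wedge\theta^n$ survives; every $*_{\hat{\bar{g}}}(\theta^\alpha\wedge\theta^\beta)$ with $\{\alpha,\beta\}\neq\{0,1\}$ necessarily carries a factor $\theta^0$ or $\theta^1$ and hence vanishes on $\partial K_r$.

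The key computation is then the single Hodge-star identity $*_{\hat{\bar{g}}}(\theta^0\wedge\theta^1)=-\,\theta^2\wedge\cdots\wedge\theta^n$, where the minus sign comes from $\langle\theta^0,\theta^0\rangle_{\hat{\bar{g}}}=-1$ in the timelike direction (using the defining relation $\alpha\wedge *_{\hat{\bar{g}}}\beta=\langle\alpha,\beta\rangle_{\hat{\bar{g}}}\,dV_{\hat{\bar{g}}}$ together with the orientation $dV_{\hat{\bar{g}}}=\theta^0\wedge\cdots\wedge\theta^n$ fixed in the appendix). Combining this with the previous step yields, pointwise on $\partial K_r$,
\begin{align*}
J^{*}_{\partial K_r,K_r}(*_{\hat{\bar{g}}}\mathcal{Q})=-\mathcal{Q}_{01}\,\theta^2\wedge\cdots\wedge\theta^n=-\mathcal{Q}(n,\nu)\,d(\partial K_r),
\end{align*}
since $\theta^2\wedge\cdots\wedge\theta^n$ restricts to the induced volume form $d(\partial K_r)$ and $\mathcal{Q}_{01}=\mathcal{Q}(n,\nu)$. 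Integrating over $\partial K_r$ and passing to the limit $r\to\infty$, which exists and is finite by Proposition \ref{ADMenergy.1}, then gives the claimed formula.

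I expect the only genuine subtlety to be sign bookkeeping: one must check that the orientation of $\partial K_r$ induced from $K_r$ through $J_{\partial K_r,K_r}$ is precisely the one for which $\theta^2\wedge\cdots\wedge\theta^n=d(\partial K_r)$ relative to the ordering $\{n,\nu,e_2,\dots,e_n\}$, and that this is consistent with the Lorentzian Hodge-star and volume-form conventions of the appendix already used in Proposition \ref{ADMenergy.1}. Since this is exactly the frame computation carried out there, only one codimension higher, no new analytic input is required beyond tracking these signs coherently; the content of the corollary is essentially the observation that the boundary integrand of the energy charge is $-\mathcal{Q}(n,\nu)$ times the area element.
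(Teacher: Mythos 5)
Your proposal is correct and follows essentially the same route as the paper's proof: a pointwise computation in an adapted frame at points of $\partial K_r$ (the paper uses normal coordinates with $\partial_0|_p=n_p$, $\partial_1|_p=\nu_p$, you use the orthonormal coframe directly), identifying the pullback of $*_{\hat{\bar{g}}}\mathcal{Q}$ with $-\mathcal{Q}(n,\nu)\,d(\partial K_r)$, where the minus sign arises in both cases from the Lorentzian signature in the timelike direction (the paper via $\mathcal{Q}^{01}=-\mathcal{Q}_{01}$, you via $*_{\hat{\bar{g}}}(\theta^0\wedge\theta^1)=-\theta^2\wedge\cdots\wedge\theta^n$). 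The remaining steps — coordinate/frame independence of the pointwise identity, recognising $\nu\lrcorner(n\lrcorner dV_{\hat{\bar{g}}})$ as the induced volume form, and passing to the limit via Proposition \ref{ADMenergy.1} — coincide with the paper's argument.
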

\begin{proof}
Picking some oriented coordinate system, notice that\footnote{The completely antisymmetric symbols ${\mu_{\hat{\bar{g}}}}_{\alpha_0\cdots\alpha_{n}}$ are defined by the relation $dV_{\hat{\bar{g}}}={\mu_{\hat{\bar{g}}}}_{\alpha_0\cdots\alpha_{n}}\vartheta^{\alpha_0}\wedge\cdots\wedge\vartheta^{\alpha_n}$ for some positively oriented co-frame $\{\vartheta^{\alpha_0},\cdots,\vartheta^{\alpha_n} \}$.}
\begin{align*}
*_{\hat{\bar{g}}}\mathcal{Q}=\sum_{\nu<\beta}\mathcal{Q}_{\nu\beta}*_{\hat{\bar{g}}}dx^{\nu}\wedge dx^{\beta}=\sum_{ \nu<\beta} \sum_{  i_1< \cdots <i_{n-1}}\mathcal{Q}^{\nu\beta}{\mu_{\hat{g}}}_{\nu\beta i_1\cdots i_{n-1}}dx^{i_1}\wedge\cdots\wedge dx^{i_{n-1}},
\end{align*} 
Let $(x^0,x^1,\cdots,x^{n})$ be a local normal positively oriented space-time coordinate system around some point $p\in \partial K_r$ where $\partial_0|_p=n_p$ and $\partial_1|_p=\nu_p$, with $n$ being the future point normal to $M$ at each time, while $\nu$ stands to the outward point normal vector field of $\partial K$ as a hypersurface of $M_t$. Also, let $J_1:M_t\hookrightarrow V$ and $J_2:\partial K_r\hookrightarrow M_t$ be the natural inclusions. Then, at $p$, we get
\begin{align*}
J_1^{*}(*\mathcal{Q})_p&=\mathcal{Q}^{0 j}_p{\mu_{\hat{g}}}_{0 j i_1\cdots i_{n-1}}dx^{i_1}\wedge\cdots\wedge dx^{i_{n-1}}|_p,\\
J_2^{*}J_1^{*}(*\mathcal{Q})_p&=\mathcal{Q}_p^{01}{\mu_{\hat{g}}}_{0 1 i_1\cdots i_{n-1}}dx^{i_1}\wedge\cdots\wedge dx^{i_{n-1}}|_{p},\\
%&=\mathcal{Q}_p^{\sharp}(n^{\flat},\nu^{\flat})(\nu\lrcorner(n\lrcorner dV_{\hat{\bar{g}}}))_p.
\end{align*}
Notice also that $(\nu\lrcorner(n\lrcorner dV_{\hat{\bar{g}}}))_p={\mu_{\hat{g}}}_{0 1 i_1\cdots i_{n-1}}dx^{i_1}\wedge\cdots\wedge dx^{i_{n-1}}|_{p}$ and  since our coordinates are orthonormal at $p$, we have $\hat{\bar{g}}_{\mu\nu}(p)=\eta_{\mu\nu}=\mathrm{diag}(-1,1,\cdots,1)$, and thus it follows that $\mathcal{Q}^{01}(p)=-\mathcal{Q}_{01}(p)=-\mathcal{Q}(\partial_0|_p,\partial_1|_p)=-\mathcal{Q}_p(n,\nu)$. Thus,
\begin{align*}
J^{*}_2J_1^{*}(*\mathcal{Q})_p&=-\mathcal{Q}_p(n,\nu)(\nu\lrcorner(n\lrcorner dV_{\hat{\bar{g}}}))_p.
\end{align*}
Furthermore, since the last expression is coordinate independent, it holds for all points in $\partial K$. Finally, recalling that $(\nu\lrcorner(n\lrcorner dV_{\hat{\bar{g}}}))=d(\partial K)$, is the induced volume form on $\partial K$ 
%modif
 (once more the notations are clarified for the Lorentz setting in the appendix section \ref{subsectionconvention}), we get that
\begin{align}\label{ADMconservedchages.1}
\mathcal{E}_{\hat{g}}(M,h)=-\lim_{r\rightarrow\infty}\int_{\partial K_{r}}\mathcal{Q}(n,\nu)d(\partial K_{r}).
\end{align}
\qed
\end{proof}
%modif here I did not change the sign since I changed it in the definition of the energy

\bigskip
\begin{remark}
Notice that the condition that guarantees that (\ref{ADMconservedchages.1}) is well-defined is that $|\mathcal{Q}(n,\nu)|=O(|x|^{-(n-2)})$. This condition is weaker than the one we used in Proposition \ref{Energyconservation}.
\end{remark}

\begin{prop}\label{Energycharge.Eins}
Let $(M\times [0,T],\hat{\bar{g}})$ be an AM space-time where $A_{\hat{\bar{g}}}=0$ which admits a time-like Killing field $\xi$. Then, if ${\hat{\bar{g}}}$ is Einstein there is a $2$-form $\mathcal{Q}_{\hat{\bar{g}}} (\xi,h)\in \Omega^2(V)$ such that for any perturbation $h$ it holds that
\begin{align*}
\mathcal{P}_{\hat{\bar{g}}}(\xi,h)&=\delta_{\hat{\bar{g}}}\mathcal{Q}_{\hat{\bar{g}}} (\xi,h).
\end{align*}
\end{prop}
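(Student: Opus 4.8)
The plan is to construct $\mathcal{Q}$ by an explicit computation rather than by the cohomological argument used in Proposition \ref{ADMenergy.1}: I would linearise $A$ at the Einstein metric $\hat{\bar g}$, contract the result with $\xi$, and then reorganise the divergence-free $1$-form $\bar{\mathcal{P}}_{\hat{\bar g}}(h,\xi)=(DA_{\hat{\bar g}}\cdot h)(\xi,\cdot)$ into $\hat\nabla^\mu$ of an antisymmetric $2$-tensor, producing a closed-form (hence globally defined) superpotential. Throughout I would exploit that on an Einstein background $\mathrm{Ric}_{\hat{\bar g}}=\lambda\,\hat{\bar g}$ with $R_{\hat{\bar g}}$ constant, so that $\hat\nabla\,\mathrm{Ric}_{\hat{\bar g}}=0$ and $\hat\nabla R_{\hat{\bar g}}=0$; this covariant constancy of the background curvature is the structural input that ultimately forces the non-exact remainders to cancel, and is exactly what fails on a general $A$-flat background.

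First I would compute $\bar{\mathcal{A}}^{L}:=DA_{\hat{\bar g}}\cdot h$ in closed form, most conveniently starting from the Einstein-tensor formulation \eqref{031220201238}, since on $\hat{\bar g}$ the background Einstein tensor $G_{\hat{\bar g}}$ is a constant multiple of $\hat{\bar g}$. Consequently every algebraic curvature$\,\times\,$curvature term linearises, with constant coefficients, into multiples of the linearised Einstein tensor $G^{L}:=DG_{\hat{\bar g}}\cdot h$, of $\mathrm{Ric}^{L}$, $R^{L}$, and of $h$ itself. I would then organise $\bar{\mathcal{A}}^{L}$ by its derivative structure: a genuinely fourth-derivative part consisting of $\beta\,\hat\Box\,G^{L}$, of $\hat\nabla^{2}R^{L}$ and of $\hat\Box R^{L}\,\hat{\bar g}$, together with an undifferentiated algebraic part built from $\lambda$, $R^{L}$, $G^{L}$ and $h$. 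Each derivative-type term is then to be treated separately.

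The heart of the argument is a collection of elementary "move a derivative past $\xi$" identities, all resting on $\hat\nabla_{(\mu}\xi_{\nu)}=0$, on Kostant's identity expressing $\hat\nabla_\mu\hat\nabla_\nu\xi_\rho$ algebraically through $\mathrm{Riem}_{\hat{\bar g}}$ and $\xi$, and on the Bochner relation $\hat\Box\xi_\mu=-\mathrm{Ric}_{\hat{\bar g}\,\mu}{}^{\nu}\xi_\nu=-\lambda\,\xi_\mu$. The model computation is the scalar one: writing $\phi:=R^{L}$, one checks directly that
\[
\xi^\mu\hat\nabla_\mu\hat\nabla_\nu\phi-\xi_\nu\hat\Box\phi=\hat\nabla^\mu\big(\xi_\mu\hat\nabla_\nu\phi-\xi_\nu\hat\nabla_\mu\phi+\phi\,\hat\nabla_\mu\xi_\nu\big)+\lambda\,\phi\,\xi_\nu ,
\]
where the argument of $\hat\nabla^\mu$ is manifestly antisymmetric in $\mu,\nu$ and the remainder $\lambda\phi\xi_\nu$ is purely algebraic. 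The same manipulation, applied one derivative at a time to $\xi^\mu\hat\Box G^{L}_{\mu\nu}$, peels off the two derivatives of $\hat\Box$; at each step the commutator $[\hat\nabla,\hat\nabla]$ produces curvature contractions, but the symmetry of $G^{L}$ against the antisymmetry of $\hat\nabla\xi$, together with the linearised contracted Bianchi identity (and the already-established $\hat\nabla^\mu\bar{\mathcal{A}}^{L}_{\mu\nu}=0$ from \eqref{Linearised-Aconservation}), funnel everything into either a divergence of an antisymmetric tensor or a further algebraic $\lambda$-remainder. Collecting the exact pieces yields an explicit antisymmetric $\mathcal{Q}_{\mu\nu}(\hat{\bar g},h,\xi)$, linear in $(h,\xi)$ and built from at most three covariant derivatives, with coefficients depending on $\alpha,\beta,\lambda$.

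The main obstacle is the final bookkeeping of the accumulated algebraic remainders. After converting every derivative-type term, one is left with an algebraic $1$-form $\mathcal{T}_\nu$ proportional to $\lambda$ and built from $h$, $\xi$ and the background curvature; the claim $\bar{\mathcal{P}}=\delta_{\hat{\bar g}}\mathcal{Q}$ holds only if $\mathcal{T}_\nu$ is itself $\hat\nabla^\mu$ of an antisymmetric tensor. Verifying this — reducing $\mathcal{T}_\nu$ through $\hat\Box\xi=-\lambda\xi$, Kostant's identity and the Einstein relations until it is recognised as a further divergence (absorbed into $\mathcal{Q}$) — is the principal technical labour, and it is precisely here that the Einstein hypothesis is indispensable: on a general background the surviving $\hat\nabla\,\mathrm{Ric}_{\hat{\bar g}}$ and $\hat\nabla R_{\hat{\bar g}}$ contributions would obstruct the reduction. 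I would organise this step by keeping the $\beta$- and $(2\alpha+\beta)$-weighted contributions of \eqref{031220201238} separate, so that the cancellations can be checked independently within each grouping.
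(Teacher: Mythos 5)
Your overall route coincides with the paper's own proof: both start from the Einstein-tensor formulation \eqref{031220201238}, exploit $\mathrm{Ric}_{\hat{\bar{g}}}=\Lambda\hat{\bar{g}}$ and the covariant constancy of the background curvature \eqref{031220201311}, contract the linearised tensor with the Killing field, and shuffle derivatives past $\xi$ using $\hat{\nabla}_{(\mu}\xi_{\nu)}=0$ and $\Box_{\hat{\bar{g}}}\xi_\mu=-\Lambda\xi_\mu$. Your scalar model identity for $\phi=R'_{\hat{\bar{g}}}.h$ is exactly the manipulation the paper performs on the $(2\alpha+\beta)$-weighted part, and your separate bookkeeping of the $\beta$- and $(2\alpha+\beta)$-groupings matches the paper's organisation.

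The genuine gap is in your treatment of the remainders. After the derivative-shuffling, what survives is \emph{not} ``an algebraic $1$-form built from $h$, $\xi$ and the background curvature'': the leftover terms are $\Lambda$-multiples of $D\left(G_{\mu\nu}+\Lambda g_{\mu\nu}\right).h\,\xi^\nu$, i.e.\ they contain the linearised Einstein tensor and are therefore second order in derivatives of $h$. Kostant's identity, the Bochner relation and the Einstein relations act on $\xi$ and on the background curvature only; they cannot, by themselves, convert a term such as $\Lambda\, G'_{\hat{\bar{g}}}.h_{\mu\nu}\,\xi^\nu$ into the divergence of an antisymmetric tensor. The mechanism that closes the argument is the classical GR superpotential of Proposition \ref{pqGR} (the Abbott--Deser construction): the linearised Einstein operator with cosmological constant, contracted with a Killing field, is co-exact, $\mathcal{P}^{GR}(h,\xi)=\delta_{\hat{\bar{g}}}\mathcal{Q}^{GR}(h,\xi)$, with $\mathcal{Q}^{GR}$ built explicitly from $\mathcal{K}_{\beta\nu\alpha\mu}$ and $H=h-\frac{1}{2}(\mathrm{tr}_{\hat{\bar{g}}}h)\,\hat{\bar{g}}$. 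The paper absorbs every $\Lambda$-remainder through precisely this identity (see \eqref{031220201711} and its repeated use in \eqref{071220201627}, \eqref{031220201932} and \eqref{031220201732ais}). Your plan must either invoke this result as a lemma or re-derive it from scratch; as written, the ``principal technical labour'' you describe would not terminate, because the objects you propose to reduce have the wrong differential order for the tools you allow yourself. Once Proposition \ref{pqGR} is added as the key input, your outline becomes essentially the paper's proof.
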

\begin{proof}
We will use  simplified notations to lighten the formulas and denote $\hat{g}$ the background solution around which we linearize. We will use expression \eqref{031220201238} for convenience (the expression as a function of the Einstein curvature will be more practical given our hypothesis, and will simplify  the expression as a function of quantities known in GR):
$$\begin{aligned}
 {A_{{g}} }_{\mu \nu} &=\beta \left[ \Box_{{g}}{ G_{{g}}}_{\mu \nu} +2 \left( {\mathrm{Riem}_{{g}}}^\tau_{\: \mu \lambda \nu} - \frac{{\mathrm{Ric}_{{g}}}^\tau_\lambda}{4} {g}_{\mu \nu} \right) G^\lambda_\tau  \right]   \\& - \left( 2 \alpha + \beta \right) \left[ {\nabla}_{\mu \nu}R_{{g}} - \Box_{{g}}R_{{g}}\: {{g}_{\mu \nu}} -  R_{{g}}{G_{{g}}}_{\mu \nu} - \frac{R^2_{{g}}}{4} {g}_{\mu \nu} \right] \\
&= \beta {A_g}_{\mu\nu}^{(1)} - \left( 2 \alpha + \beta \right)  {A_g}_{\mu\nu}^{(2)}.
\end{aligned}
$$

Let us then assume that $\hat{g}$ is Einstein. 
For such a metric to be a solution of $A_g=0$, one needs $\Lambda = 0$ or $n=3$. Thus: 
\begin{equation}
\label{031220201243}
\begin{aligned}
&G_{\hat{g}}= -\Lambda \hat{g} \\
&\mathrm{Ric}_{\hat{g}} = \frac{2 \Lambda}{n-1} \hat{g} =  \Lambda \hat{g}\\
& R_{\hat{g}} =  \frac{2(n+1) \Lambda}{n-1} =4 \Lambda.
\end{aligned}
\end{equation}

From \eqref{031220201243} we deduce:
\begin{equation}
\label{031220201311}
\hat{\nabla} \mathrm{Ric}_{\hat{g}}= \hat{ \nabla} G_{\hat{g}} = \hat{\nabla} R_{\hat{g}}=0.
\end{equation}
 We consider $\xi^\nu$ a Killing field, and a perturbation of $\hat{g}$ of the form:
$g(\varepsilon) = \hat{g} + \varepsilon h + o(\varepsilon)$. Denoting $D=\left. \frac{d}{d\varepsilon} \right|_{\varepsilon=0}$ (which we will abbreviate to $R'$ and $\mathrm{Ricc}'$ in those two cases, for conciseness in the final formula),  one has: 
%$\delta = \left. \frac{d}{d\varepsilon} \right|_{\varepsilon=0}$, one has:
\begin{equation}
\label{031220201334}
\begin{aligned}
D \left( \Gamma^\tau_{\mu \nu} (\varepsilon) \right)(h)&=  \frac{1}{2} \left( \hat{\nabla}_\mu h^\kappa_\nu +\hat{\nabla}_\nu h^\kappa_\mu - \hat{\nabla}^\kappa h_{\mu \nu} \right) \\
%D \left(\mathrm{Riem}^\tau_{\, \mu \nu \lambda} (\varepsilon) \right)(h)&= \nabla_\lambda\left( \Gamma^\tau_{\mu \nu} (\varepsilon) \right) - \nabla_\nu \left( \Gamma^\tau_{\mu \lambda} (\varepsilon) \right) \\
%&=\frac{1}{2} \left( \nabla_{\lambda \mu } h^\tau_\nu + \nabla_{\lambda \nu } h^\tau_\mu+ \nabla_\nu^{\quad \tau} h_{\lambda \mu} - \nabla_\lambda^{\quad \tau}h_{\mu \nu}- \nabla_{\nu \lambda} h^\tau_\mu -\nabla_{\nu \mu} h^{\tau}_{\lambda} \right)\\
%\delta \left( \mathrm{Ric}_{\mu \nu}( \varepsilon)\right)&= \frac{1}{2} \left( \nabla_{\tau \mu } h^\tau_\nu + \nabla_{\tau \nu} h^\tau_\mu - \Box h_{\mu \nu} - \nabla_{\mu \nu } h^\sigma_\sigma \right) \\
%&= \frac{1}{2} \left( \nabla_{\tau \mu } h^\tau_\nu + \nabla_{ \nu \tau} h^\tau_\mu - \Box h_{\mu \nu} - \nabla_{\mu \nu } h^\sigma_\sigma+ \mathrm{Ric}_{\lambda\nu}h^{\lambda}_\mu - \mathrm{Riem}^\tau_{\quad \mu \nu \lambda} h^\lambda_\tau \right) \\
%\delta R(\varepsilon)&= \nabla_{\tau \lambda} h^{\tau\lambda} - \Box h^\sigma_\sigma - \mathrm{Ric}_{\tau \lambda} h^{\tau \lambda}.
\end{aligned}
\end{equation}
Then, from \eqref{031220201311} we find:
\begin{equation}
\label{031220201308}
\begin{aligned}
D \left( {\nabla}_{\mu \nu } R_g \right).h &= \hat{\nabla}_{\mu \nu} \left( D R_g.h\right) - D \left( \Gamma^\kappa_{\mu \nu} \right).h \hat{\nabla}_\kappa R_{\hat{g}}  \\
&= \hat{\nabla}_{\mu \nu} \left( DR_g.h \right):= \hat{\nabla}_{\mu \nu} \left( R_{\hat{g}}'.h\right).
\end{aligned}
\end{equation}
Which also implies:
$$
D \left( \Box_{g} R_g g_{\mu \nu} \right).h = \Box_{\hat{g}} \left( R_{\hat{g}}'.h \right)\hat{g}_{\mu \nu}.
$$

Similarly: 
$$
\begin{aligned}
D \left( R_g {G_g}_{\mu \nu}  + \frac{R_g^2}{4}g_{\mu \nu }  \right).h &=- R_{\hat{g}}'.h \Lambda \hat{g}_{\mu \nu} + 2 \Lambda R_{\hat{g}}'.h  \hat{g}_{\mu \nu} + 4 \Lambda^2 h_{\mu \nu} + 4 \Lambda  G_{\hat{g}}'.h_{\mu \nu} \\
&=  \Lambda R_{\hat{g}}'.h \hat{g}_{\mu \nu}   + 4 \Lambda D \left( {G_{{g}}}_{\mu \nu} +  \Lambda g_{\mu \nu} \right).h.
\end{aligned}
$$
Then:
$$ 
\begin{aligned}
D\left( {A_{\hat{g}}}_{\mu\nu}^{(2)} \right).h \xi^\nu&=D \left( \nabla_{\mu \nu } R_g - \Box_{g} R_g g_{\mu \nu} -R_g{G_g}_{\mu \nu} - \frac{R_g^2}{4} g_{\mu \nu} \right).h \xi^\nu \\ &=\hat{\nabla}_{\nu \mu } \left(  R_{\hat{g}}'.h \right)\xi^\nu - \Box_{\hat{g}} \left( R_{\hat{g}}'.h \right) \xi_\mu-  \Lambda ( R_{\hat{g}}'.h ) \xi_\mu \\&-4 \Lambda D \left( {G_g}_{\mu \nu} + \Lambda g_{\mu \nu} \right).h \xi^\nu \\
&= \hat{\nabla}_{\tau} \left( \hat{\nabla}_{\mu} \left(  R_{\hat{g}}'.h \right) \xi^\tau - \hat{\nabla}^\tau \left( R_{\hat{g}}'.h \right) \xi_\mu + \left(  R_{\hat{g}}'.h \right) \hat{\nabla}^\tau \xi_\mu \right) -  ( R_{\hat{g}}'.h) \Box_{\hat{g}} \xi_\mu- \Lambda ( R_{\hat{g}}'.h ) \xi_\mu \\& -4 \Lambda D \left({ G_g}_{\mu \nu} +  \Lambda g_{\mu \nu} \right).h \xi^\nu.
\end{aligned}
$$

Since $\xi$ is Killing, one has:
$$
\Box_{\hat{g}} \xi_\mu = - \hat{\nabla}_{\tau \mu} \xi^\tau = -{\mathrm{Riem}_{\hat{g}}}^\tau_{\quad \lambda \tau  \mu  } \xi^\lambda = - {\mathrm{Ric}_{\hat{g}}}_{\mu \tau} \xi^\tau =-\Lambda \xi_\mu.
$$

Then, thanks to classical GR theory (which we recall in proposition \ref{pqGR} below) %$\Lambda \left({G_g}_{\mu \nu} - \frac{n+1}{2(n-1)} \Lambda g_{\mu \nu} \right) = \Lambda \left( {G_g}_{\mu \nu} - \Lambda g_{\mu \nu} \right)$, 
we deduce that: \begin{equation} \label{031220201711} \Lambda D \left( {G_g}_{\mu \nu} +  \Lambda g_{\mu \nu} \right).h \xi^\nu=\Lambda D \left( {G_g}_{\mu \nu} - (-  \Lambda) g_{\mu \nu} \right).h \xi^\nu = \Lambda \mathcal{P}_{\hat{g}}^{GR} (\xi, h)_\mu = -\Lambda \hat{\nabla}^\tau \left( {\mathcal{Q}^{GR}_{\hat{g}}}_{  \tau \mu}  ( \xi,h) \right).\end{equation}
From this, we deduce:

\begin{equation}
\label{031220201427}
\begin{aligned}
D\left({ A_{{g}}}_{\mu\nu}^{(2)} \right).h \xi^\nu&= \hat{\nabla}^{\tau} \left( \hat{\nabla}_{\mu} \left(  R_{\hat{g}}'.h \right) \xi_\tau -  \hat{\nabla}_\tau \left(  R_{\hat{g}}'.h \right) \xi_\mu + \left(  R_{\hat{g}}'.h \right) \hat{\nabla}_\tau \xi_\mu +4 \Lambda{\mathcal{Q}^{GR}_{\hat{g}}}_{  \tau \mu}( \xi,h)  \right).
\end{aligned}
\end{equation}
Let us now consider $A_{\mu\nu}^{(1)}$. First, with \eqref{031220201311} and \eqref{031220201334}:
$$\begin{aligned}
D \left( \Box_g {G_g}_{\mu \nu} \right).h &= D \left( g^{\tau \kappa} \nabla_{\tau \kappa} {G_g}_{\mu \nu} \right).h \\
&= -h^{\tau \kappa} \hat{\nabla}_{\tau \kappa} {G_{\hat{g}}}_{\mu \nu} + \hat{g}^{\tau \kappa} \left[  \hat{\nabla}_\tau \left( D \left( \nabla_\kappa {G_g}_{\mu \nu} \right).h \right) - D\left( \Gamma^\rho_{\tau \kappa} (\varepsilon) \right).h \hat{\nabla}_\rho {G_{\hat{g}}}_{\mu \nu} \right. \\ & \left. -  D\left( \Gamma^\rho_{\tau \mu} (\varepsilon)\right).h \hat{\nabla}_\kappa {G_{\hat{g}}}_{\rho \nu} -  D\left( \Gamma^\rho_{\tau \nu} \right).h \hat{\nabla}_\rho {G_{\hat{g}}}_{\rho\nu} \right] \\
&= \hat{g}^{\tau \kappa}\hat{\nabla}_\tau \left( D \left( \nabla_\kappa {G_g}_{\mu \nu} \right).h \right) \\
&= \hat{g}^{\tau \kappa} \hat{\nabla}_\tau \left[  \hat{\nabla}_\kappa \left({G_{\hat{g}}}'.h_{\mu \nu} \right) -  D \left(\Gamma^\rho_{\kappa \mu}(\varepsilon)  \right).h{G_{\hat{g}}}_{\rho \nu}- D \left( \Gamma^\rho_{\kappa \nu}(\varepsilon) \right).h {G_{\hat{g}}}_{ \mu \rho} \right] \\
&=\Box_{\hat{g}} \  \left( {G_{\hat{g}}}'.h_{\mu \nu} \right) + \Lambda \nabla^\tau \left[ D \left(\Gamma^\rho_{\tau \mu}  \right).h \hat{g}_{\rho \nu} + D \left( \Gamma^\rho_{\tau \nu} \right).h \hat{g}_{\mu \rho} \right] \\
&= \Box_{\hat{g}}  \left( {G_{\hat{g}}}'.h_{\mu \nu} \right) + \Lambda \Box_{\hat{g}} h_{\mu \nu} \\
&= \Box_{\hat{g}} \left( D\left( {G_g}_{\mu \nu} + \Lambda g_{\mu \nu} \right).h \right).
\end{aligned}$$
Then:
\begin{equation}
\label{071220201627}
\begin{aligned}
D \left( \Box_g {G_g}_{\mu \nu} \right).h \xi^\nu &= \hat{\nabla}^\tau \left[ \hat{\nabla}_\tau \left(  D\left( {G_g}_{\mu \nu} + \Lambda g_{\mu \nu} \right).h \right) \xi^\nu -   \ D\left( {G_g}_{\mu \nu} + \Lambda g_{\mu \nu} \right).h \hat{\nabla}_\tau \xi^\nu \right]  \\&+  D\left( {G_g}_{\mu \nu} + \Lambda g_{\mu \nu} \right).h \Box_{\hat{g}} \xi^\nu  \\
&=\hat{\nabla}^\tau \left[ \hat{\nabla}_\tau \left(  D\left( {G_g}_{\mu \nu} + \Lambda g_{\mu \nu} \right).h \right) \xi^\nu -   \ D\left( {G_g}_{\mu \nu} + \Lambda g_{\mu \nu} \right).h \hat{\nabla}_\tau \xi^\nu \right]  \\&- \Lambda D\left( {G_g}_{\mu \nu} + \Lambda {g}_{\mu \nu} \right).h \xi^\nu \\
&= \hat{\nabla}^\tau \left[  \hat{\nabla}_\tau \left(  D\left( G_{\mu \nu} + \Lambda g_{\mu \nu} \right).h \right) \xi^\nu -   \ D\left( G_{\mu \nu} + \Lambda g_{\mu \nu} \right).h \hat{\nabla}_\tau \xi^\nu + \Lambda {\mathcal{Q}_{\hat{g}}^{GR}}_{\tau \mu}(\xi, h) \right].
\end{aligned}
\end{equation}

Further:
$$\begin{aligned}
D \left[ \left({\mathrm{Riem}_g}^\tau_{\; \mu \lambda \nu } - \frac{{\mathrm{Ric}_g}^\tau_\lambda}{4} {g}_{\mu \nu} \right) {G_g}^\lambda_\tau \right].h \xi^\nu& = \left[ -\Lambda   \left({\mathrm{Ric}_{\hat{g}}}'.h_{\mu \nu}\right) + \Lambda \frac{ R_{\hat{g}}'.h}{4} \hat{g}_{\mu \nu} + \Lambda \frac{R_{\hat{g}}}{4} h_{\mu \nu}+ {\mathrm{Riem}_{\hat{g}}}^\tau_{\; \mu \lambda \nu }  D({G_g}^\lambda_\tau).h \right. \\ &\left. - \frac{2 \Lambda}{4*2} \hat{g}_{\mu \nu}D \left( -\frac{2}{2}R_g \right).h \right] \xi^\nu \\
&= \left[ - \Lambda   G_{\hat{g}}'.h_{\mu \nu}  - \Lambda \frac{ R_{\hat{g}}'.h}{4} \hat{g}_{\mu \nu} - \Lambda \frac{R_{\hat{g}}}{4} h_{\mu \nu}+ {\mathrm{Riem}_{\hat{g}}}^\tau_{\; \mu \lambda \nu }D({G_g}^\lambda_\tau).h\right. \\ &\left. +\Lambda \frac{ R_{\hat{g}}'.h }{4} \hat{g}_{\mu \nu}  \right] \xi^\nu  \\
&= -\Lambda D\left[  {G_g}_{\mu \nu}  + \Lambda g_{\mu \nu} \right].h \xi^\nu + {\mathrm{Riem}_{\hat{g}}}^\tau_{\; \mu \lambda \nu } D({G_g}^\lambda_\tau).h \xi^\nu.
\end{aligned}$$
Thanks to \eqref{031220201711}, we know how to deal with the first term. Let us then consider:
$$ %\label{031220201918}
\begin{aligned}
{\mathrm{Riem}_{\hat{g}}}^\tau_{\; \mu  \lambda \nu}D({G_g}^\lambda_\tau).h \xi^\nu &= D({G_g}^\lambda_\tau).h \left( { \hat{\nabla}^\tau}_{ \, \, \mu} \xi_\lambda - \hat{\nabla}_{\mu }^{ \, \,  \, \, \, \tau} \xi_\lambda \right) \\
&=D({G_g}^\lambda_\tau).h \hat{\nabla}^\tau_{\, \,\mu} \xi_\lambda \\
&= \hat{\nabla}^\tau \left[ D({G_g}^\lambda_\tau).h \hat{\nabla}_\mu \xi_\lambda \right] - \hat{\nabla}^\tau \left( D({G_g}^\lambda_\tau).h \right) \hat{\nabla}_\mu \xi_\lambda.
\end{aligned}$$

In addition, since: ${\nabla}_\tau {G_{g}}^\tau_\lambda= 0$, one has: $D \left( \nabla_\tau {G_g}^\tau_\lambda \right).h = \hat{\nabla}_\tau \left( D\left({ G_g}^\tau_\lambda \right).h \right) + D \left( \Gamma^\tau_{\, \tau \rho}(\varepsilon) \right).h {G_{\hat{g}}}^\rho_\lambda - D \left( \Gamma^\rho_{\, \tau \lambda}(\varepsilon) \right).h {G_{\hat{g}}}^\tau_\rho=0$, which means 
$$\begin{aligned}
\hat{\nabla}_\tau \left( D\left( {G_g}^\tau_\lambda \right).h \right)& = D \left( \Gamma^\rho_{\, \tau \lambda}(\varepsilon) \right).h {G_{\hat{g}}}^\tau_\rho-D \left( \Gamma^\tau_{\, \tau \rho}(\varepsilon) \right).h {G_{\hat{g}}}^\rho_\lambda \\
&=-\Lambda \left(  D \left( \Gamma^\tau_{\, \tau \lambda} (\varepsilon)\right).h +D \left( \Gamma^\tau_{\, \tau \lambda} (\varepsilon)\right).h\right) = 0.
\end{aligned}$$

%$$\begin{aligned}
%\mathrm{Riem}^\tau_{\quad \mu \nu \lambda} \delta \left( G^\lambda_\tau \right) \xi^\nu &= - \nabla_{\lambda \nu } \delta \left( G^\lambda_\mu \right)\xi^\nu + \nabla_{\nu \lambda} \delta \left( G^\lambda_\mu \right)  \xi^\nu+ \mathrm{Riem}^\lambda_{\quad \tau \nu\lambda} \delta \left( G^\tau_\mu \right) \xi^\nu \\ 
%&= \nabla^\tau \left( \nabla_\lambda \delta \left( G^\lambda_\mu \right) \xi_\tau  + \delta \left( G^\lambda_\mu \right)\nabla_\lambda \xi_\tau  - \nabla_\nu \delta \left( G_{\tau \mu} \right) \xi^\nu \right)  - \delta \left( G^\lambda_\mu \right)\nabla_{\nu \lambda } \xi^\nu + \mathrm{Ric}_{\tau \nu} \delta \left( G^\tau_\mu \right) \xi^\nu \\
%&=\nabla^\tau \left( \nabla_\lambda \delta \left( G^\lambda_\mu \right) \xi_\tau  + \delta \left( G^\lambda_\mu \right)\nabla_\lambda \xi_\tau  - \nabla_\nu \delta \left( G_{\tau \mu} \right) \xi^\nu \right) + \frac{2\Lambda}{n-1}\delta \left( G_{\mu \nu}  \right)- \frac{2\Lambda}{n-1} \delta \left( G_{\mu \nu}  \right) \\
%&=\nabla^\tau \left( \nabla_\lambda \delta \left( G^\lambda_\mu \right) \xi_\tau  + \delta \left( G^\lambda_\mu \right)\nabla_\lambda \xi_\tau  - \nabla_\nu \delta \left( G_{\tau \mu} \right) \xi^\nu\right).
%\end{aligned}$$
Thus:
\begin{equation}
\label{031220201932}
{\mathrm{Riem}_{\hat{g}}}^\tau_{\; \mu \lambda \nu } D \left( {G_g}^\lambda_\tau \right).h \xi^\nu = \hat{\nabla}_\tau \left( D\left( {G_g}^\tau_\lambda \right).h \nabla_\mu \xi^\lambda \right)= \hat{\nabla}^\tau \left( D \left[ {G_g}_{\tau \nu} + \Lambda g_{\tau \nu }\right].h \hat{\nabla}_\mu \xi^\nu \right).
\end{equation}
 Injecting \eqref{031220201711} and \eqref{031220201932} and into the previous computation yields:
$$\begin{aligned}
D \left[ \left({\mathrm{Riem}_g}^\tau_{\; \mu \lambda \nu } - \frac{{\mathrm{Ric}_g}^\tau_\lambda}{4} g_{\mu \nu} \right) {G_g}^\lambda_\tau \right].h \xi^\nu& = \hat{\nabla}^\tau \left( \Lambda {\mathcal{Q}^{GR}_{\hat{g}}}_{ \tau \mu}( \xi, h) +D\left[  {G_g}_{\tau \nu} + \Lambda {G_g}_{\tau \nu }\right].h\nabla_\mu \xi^\nu \right).
\end{aligned}$$

From \eqref{071220201627} and \eqref{031220201932}, we  deduce that:
\begin{equation}
\label{031220201732ais}
\begin{aligned}
D\left( {A_g}^{(1)}_{\mu \nu} \right).h \xi^\nu&=
D \left( \Box_g {G_g}_{\mu \nu}  +2 \left( {\mathrm{Riem}_g}^\tau_{\; \mu \lambda \nu } - \frac{{\mathrm{Ric}_g}^\tau_\lambda}{4}\right) {G_g}^\lambda_\tau \right).h \xi^\nu \\&= \hat{\nabla}^\tau \left[  \hat{\nabla}_\tau \left(  D\left( {G_{{g}}}_{\mu \nu} + \Lambda g_{\mu \nu} \right).h \right) \xi^\nu -   \ D\left( {G_g}_{\mu \nu} + \Lambda g_{\mu \nu} \right).h \hat{\nabla}_\tau \xi^\nu + \Lambda {\mathcal{Q}^{GR}_{\hat{g}}}_{ \tau \mu}(\xi, h)   \right. \\&\left.+2 \Lambda {\mathcal{Q}_{\hat{g}}^{GR}}_{ \tau \mu}  +2 D\left(  {G_g}_{\tau \nu} + \Lambda g_{\tau \nu }\right).h\hat{\nabla}_\mu \xi^\nu \right] \\
&= \hat{\nabla}^\tau \left[  \hat{\nabla}_\tau \left(  D\left( {G_g}_{\mu \nu} + \Lambda g_{\mu \nu} \right).h  \xi^\nu \right) -  2 D\left( {G_g}_{\mu \nu} + \Lambda g_{\mu \nu} \right).h \hat{\nabla}_\tau \xi^\nu  +3  \Lambda {\mathcal{Q}^{GR}_{\hat{g}}}_{\tau \mu} \right. \\ & \left. +2 D\left(   {G_g}_{\tau \nu} + \Lambda g_{\tau \nu }\right).h \hat{\nabla}_\mu \xi^\nu    \right] \\
&= \hat{\nabla}^\tau \left[  \hat{\nabla}_{\tau} {\mathcal{P}^{GR}_{\hat{g}}}_\mu (\xi, h)+
2\left[D\left(  G_{\tau \nu} + \Lambda g_{\tau \nu }\right).h \hat{\nabla}_\mu \xi^\nu - D\left( G_{\mu \nu} + \Lambda g_{\mu \nu} \right).h \hat{\nabla}_\tau \xi^\nu \right] \right. \\ &\left.+3\Lambda {\mathcal{Q}^{GR}_{\hat{g}}}_{ \tau \mu}(\xi, h) \right].
\end{aligned}
\end{equation}
Further, one has:

$$\begin{aligned}
\hat{\nabla}^\tau d  {\mathcal{P}_{\hat{g}}}^{GR}_{\tau\mu} (\xi, h) &= \hat{\nabla}^\tau \left( \hat{\nabla}_\tau  {\mathcal{P}_{\hat{g}}}^{GR}_\mu (\xi, h) - \hat{\nabla}_\mu  {\mathcal{P}_{\hat{g}}}^{GR}_\tau (\xi,h) \right)  \\
&= \hat{\nabla}^\tau \left( \hat{\nabla}_\tau  {\mathcal{P}_{\hat{g}}}^{GR}_\mu ( \xi, h)\right) - \hat{\nabla}_\mu \left(  \hat{\nabla}^\tau {\mathcal{P}_{\hat{g}}}^{GR}_\tau ( \xi, h) \right) - \mathrm{Ric}^\kappa_\mu  {\mathcal{P}_{\hat{g}}}^{GR}_\kappa (\xi, h) \\
&= \hat{\nabla}^\tau \left( \hat{\nabla}_\tau  {\mathcal{P}_{\hat{g}}}^{GR}_\mu (\xi, h)\right) - \Lambda  {\mathcal{P}_{\hat{g}}}^{GR}_\mu (\xi, h)  \\
&= \hat{\nabla}^\tau \left( \hat{\nabla}_\tau  {\mathcal{P}_{\hat{g}}}^{GR}_\mu ( \xi, h)\right)  +\Lambda  \hat{\nabla}^\tau \left( {\mathcal{Q}_{\hat{g}}^{GR}}_{ \tau \mu} ( \xi, h) \right),
\end{aligned}$$
using \eqref{Linearised-EMconservation}. 
Injecting the above into \eqref{031220201732ais} yields:

\begin{equation}
\label{031220201732}
\begin{aligned}
D\left( {A_g}^{(1)}_{\mu \nu} \right).h \xi^\nu
&= \hat{\nabla}^\tau \left[   d  {\mathcal{P}_{\hat{g}}}^{GR}_{\tau\mu} ( \xi, h) +
2\left[D\left(  G_{\tau \nu} + \Lambda g_{\tau \nu }\right).h \hat{\nabla}_\mu \xi^\nu - D\left( G_{\mu \nu} + \Lambda g_{\mu \nu} \right).h \hat{\nabla}_\tau \xi^\nu \right] \right. \\ &\left.+ 2\Lambda {\mathcal{Q}^{GR}_{\hat{g}}}_{\tau \mu}(\xi, h) \right].
\end{aligned}
\end{equation}

Combining \eqref{031220201427} and  \eqref{031220201732} yields:
\begin{equation}
\label{071220201715}
\begin{aligned}
D({A_{\hat{g}}}_{\mu \nu } ). h \xi^\nu &= \beta D\left( {A_{\hat{g}}}^{(2)}_{\mu \nu} \right).h \xi^\nu- \left( 2 \alpha + \beta \right) D\left( {A_{\hat{g}}}^{(1)}_{\mu \nu} \right).h \xi^\nu \\
&= \nabla^\tau \left[   \beta \left(
 d  {\mathcal{P}_{\hat{g}}}^{GR}_{\tau\mu} ( \xi, h)+
2\left[D\left(  {G_g}_{\tau \nu} + \Lambda g_{\tau \nu }\right).h \nabla_\mu \xi^\nu - D\left( {G_g}_{\mu \nu} + \Lambda g_{\mu \nu} \right).h \hat{\nabla}_\tau \xi^\nu \right] \right. \right. \\ &\left. \left.+ 2 \Lambda {\mathcal{Q}^{GR}_{\hat{g}}}_{ \tau \mu}( \xi, h) \right) - \left( 2 \alpha + \beta \right) \left( \hat{\nabla}_{\mu} \left(  R_{\hat{g}}'.h \right) \xi_\tau - \hat{\nabla}_\tau \left(  R_{\hat{g}}'.h \right) \xi_\mu + \left(  R_{\hat{g}}'.h \right) \nabla_\tau \xi_\mu \right. \right. \\ & \left. \left.  +4 \Lambda{\mathcal{Q}^{GR}_{\hat{g}}}_{ \tau \mu}(\xi, h)  \right)
\right] \\
&=\nabla^\tau \left[   \beta \left(
 d  {\mathcal{P}_{\hat{g}}}^{GR}_{\tau\mu} (\xi, h)+
2\left[D\left(  {G_g}_{\tau \nu} + \Lambda g_{\tau \nu }\right).h \nabla_\mu \xi^\nu - D\left( {G_g}_{\mu \nu} + \Lambda g_{\mu \nu} \right).h \hat{\nabla}_\tau \xi^\nu \right]  \right) \right.\\&\left. - \left( 2 \alpha + \beta \right) \left( \hat{\nabla}_{\mu} \left(  R_{\hat{g}}'.h \right) \xi_\tau - \hat{\nabla}_\tau \left(  R_{\hat{g}}'.h \right) \xi_\mu + \left(  R_{\hat{g}}'.h \right) \nabla_\tau \xi_\mu \right) \right. \\ & \left.  -2(4\alpha + \beta) \Lambda{\mathcal{Q}^{GR}_{\hat{g}}}_{ \tau \mu}(\xi, h)  
\right].
\end{aligned}
\end{equation}

Which gives us the formula for $\mathcal{Q}_{\nu \mu}(\xi,h)$, expressed as a function of the Einstein and scalar curvatures. One may prefer to express it in terms of Ricci and scalar curvatures: 
\begin{equation}
\label{eqglobcase}
\begin{aligned}
D\left( {A_{\hat{g}}}_{\mu \nu } \right).h \xi^\nu &= \hat{ \nabla}^\tau \left[ \beta  d  {\mathcal{P}_{\hat{g}}}^{GR}_{\tau\mu} (\xi, h)+ 2 \beta \left( \mathrm{Ric}_{\hat{g}}'.h_{\tau\nu} \hat{\nabla}_\mu \xi^\nu   -\mathrm{Ric}_{\hat{g}}'.h_{\mu \nu} \hat{\nabla}_\tau \xi^\nu \right) \right.\\&\left. - (2 \alpha + \beta ) \left( \hat{\nabla}_\mu (R_{\hat{g}}'.h) \xi_\tau -\hat{ \nabla}_\tau( R_{\hat{g}}'.h)\xi_\mu \right) - ( 2 \alpha - \beta )R_{\hat{g}}'.h \nabla_\tau \xi_\mu  \right.\\&\left.- 2(4 \alpha+ \beta) \Lambda{ \mathcal{Q}^{GR}_{\hat{g}}}_{  \tau \mu }(h , \xi)  -2\beta\Lambda \left( h_{\tau \nu} \hat{\nabla}_\mu \xi^\nu - h_{\mu \nu} \hat{\nabla}_\tau \xi^\nu \right)
\right].
\end{aligned}
\end{equation}

The Ricci flat case ($\Lambda=0$) will be of special importance, with the following formula:
\begin{equation}
\label{linearisedfourthordercharge}
\begin{aligned}
D\left( {A_{\hat{g}}}_{\mu \nu } \right).h \xi^\nu &= \hat{ \nabla}^\tau \left[ \beta  d  {\mathcal{P}_{\hat{g}}}^{GR}_{\tau\mu} (\xi, h) + 2 \beta \left( \mathrm{Ric}_{\hat{g}}'.h_{\tau\nu} \hat{\nabla}_\mu \xi^\nu   -\mathrm{Ric}_{\hat{g}}'.h_{\mu \nu} \hat{\nabla}_\tau \xi^\nu \right) \right.\\&\left. + (2 \alpha + \beta ) \left(\hat{ \nabla}_\tau( R_{\hat{g}}'.h)\xi_\mu - \hat{\nabla}_\mu (R_{\hat{g}}'.h) \xi_\tau  \right)+ ( \beta - 2 \alpha )R_{\hat{g}}'.h \nabla_\tau \xi_\mu
\right].
\end{aligned}
\end{equation}

In all three cases, the term inside the divergence is explicitly antisymmetric, and is thus a $2$-form.
\qed
\end{proof}

\begin{remark}
These computations intersect those in \cite{ADT}, done when $\hat{g}$ is the Schwarzschild-de Sitter (or AdS) metric (see (8) in \cite{ADT}) and when the pertubation remains Einstein with the same constant (according to (34)). We obtain, in this more general case, the same formula (which can be checked by comparing \eqref{071220201715} to (31) of \cite{ADT}).
\end{remark}

In the following proposition we will recall $\mathcal{P}^{GR}$ explicitly (see for instance \cite{CB-book,ADT}), which will be useful in what follows.

\begin{prop}\label{pqGR}
Let $(V,\hat{\bar{g}})$ be a smooth Einstein space-time admitting a Killing field $\xi$. Then, for any perturbation $h$, the conserved 1-form $\mathcal{P}^{GR}(\xi, h)_{\mu}\doteq { D\left(G+ \Lambda g \right)_{\hat{g}}\cdot h}_{\nu\mu}\xi^{\nu}$ is co-exact and thus can be written as
\begin{align}\label{ADM-energymomentum.01}
\mathcal{P}^{GR}(h,\xi)=\delta_{\hat{g}}\mathcal{Q}^{GR}(\xi, h),
\end{align}
where the 2-form $\mathcal{Q}^{GR}(\xi, h)$ is given by
\begin{align}\label{ADM-energymomentum.1}
\mathcal{Q}^{GR}_{\beta\mu}=\hat{\nabla}^{\nu}\mathcal{K}_{\beta\mu\alpha\nu}\xi^{\alpha} - \mathcal{K}_{\beta\nu\alpha\mu}\hat{\nabla}^{\nu}\xi^{\alpha} ,
\end{align}
where, if we write $H_{\mu \nu } \doteq h_{\mu \nu} - \frac{1}{2}h^\sigma_\sigma \hat{g}_{\mu \nu}$:
\begin{align*}%\label{K-tensor}
\mathcal{K}_{\beta\nu\alpha\mu}&\doteq \frac{1}{2}\left(\hat{g}_{\mu\beta}H_{\nu\alpha} + \hat{g}_{\nu\alpha}H_{\mu\beta} - \hat{g}_{\alpha\beta}H_{\mu\nu} - \hat{g}_{\mu\nu}H_{\alpha\beta} \right).
\end{align*}
\end{prop}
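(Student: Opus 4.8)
Since the assertion is a pointwise tensorial identity (stronger than, and independent of, the cohomological co-exactness established in Proposition \ref{ADMenergy.1}), the plan is to prove it by a direct local computation. First I would record the linearization of the operator $G+\Lambda\hat{g}$ about the Einstein background. Writing everything in terms of the trace-reversed perturbation $H_{\mu\nu}=h_{\mu\nu}-\tfrac12 h^\sigma_\sigma\hat{g}_{\mu\nu}$ and invoking the background relations (\ref{031220201243}), namely $\Ric_{\hat{g}}=\Lambda\hat{g}$ with $R_{\hat{g}}$ constant together with $\hat\nabla\Ric_{\hat{g}}=\hat\nabla R_{\hat{g}}=0$, the linearized Einstein operator collapses to a second-order expression in $H$ whose coefficients are built solely from the background metric and its Riemann tensor. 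This is the cleanest starting point, because the Kulkarni--Nomizu-type tensor $\mathcal{K}_{\beta\nu\alpha\mu}$ appearing in (\ref{ADM-energymomentum.1}) is itself linear in $H$ and carries the algebraic symmetries of a curvature tensor.

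Next I would contract with the Killing field and reorganize the expression into a total divergence. The two structural inputs are the Killing equation $\hat\nabla_{(\mu}\xi_{\nu)}=0$, so that $\hat\nabla_\mu\xi_\nu$ is antisymmetric, and the second-order Killing identity $\hat\nabla_\alpha\hat\nabla_\beta\xi^\gamma=\mathrm{Riem}_{\hat{g}}{}^{\gamma}{}_{\delta\alpha\beta}\xi^\delta$ (in the curvature convention of the appendix, consistent with the contracted form $\hat\nabla_\tau\hat\nabla_\mu\xi^\tau=\mathrm{Riem}_{\hat{g}}{}^\tau{}_{\lambda\tau\mu}\xi^\lambda$ already used in Proposition \ref{Energycharge.Eins}). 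Every integration by parts that generates a second covariant derivative of $\xi$ is then traded, via this identity, for a curvature term carrying $\xi$ undifferentiated; on the Einstein background these contractions simplify through $\Ric^\kappa_\mu\xi_\kappa=\Lambda\xi_\mu$ and $\Box_{\hat{g}}\xi_\mu=-\Lambda\xi_\mu$. The point is that the double divergence of $\mathcal{K}$ reproduces precisely the linearized Einstein tensor, while the single-divergence-plus-$\hat\nabla\xi$ combination displayed in (\ref{ADM-energymomentum.1}) is exactly what absorbs the boundary terms produced along the way. Finally, verifying the antisymmetry $\mathcal{Q}^{GR}_{\beta\mu}=-\mathcal{Q}^{GR}_{\mu\beta}$ confirms that it is a genuine $2$-form and that $\mathcal{P}^{GR}=\delta_{\hat{g}}\mathcal{Q}^{GR}$.

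I expect the main obstacle to be the bookkeeping in this middle step: guaranteeing that all the \emph{algebraic} (underdifferentiated) curvature terms spawned by repeated use of the second-order Killing identity cancel, leaving only the total-divergence piece. This cancellation is exactly where the Einstein condition $\Ric_{\hat{g}}=\Lambda\hat{g}$ is indispensable, and tracking the index symmetries of $\mathcal{K}$ and of $\mathrm{Riem}_{\hat{g}}$ simultaneously is the delicate part. A more mechanical alternative, which sidesteps the need to \emph{discover} $\mathcal{Q}^{GR}$, is to take (\ref{ADM-energymomentum.1}) as an ansatz and verify the identity directly: compute $\hat\nabla^\beta\mathcal{Q}^{GR}_{\beta\mu}$ from the explicit $\mathcal{K}$, apply the two Killing identities together with the Einstein relations, and match term by term against $D(G+\Lambda\hat{g})_{\nu\mu}\xi^\nu$. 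This is longer but avoids guesswork, and since the result is classical (compare \cite{CB-book,ADT}) it is entirely sufficient for the present purposes.
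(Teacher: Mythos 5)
Your proposal does not actually diverge from the paper, because the paper contains no proof of Proposition \ref{pqGR} at all: it is stated as a recollection of a classical fact (the Abbott--Deser superpotential identity), with the reader pointed to \cite{CB-book,ADT}, and the formula is then simply consumed in Subsection \ref{subsection32}. What you outline is, in substance, exactly the verification carried out in those references, so your route is sound rather than genuinely different. The ingredients you isolate are the right ones: the antisymmetry of $\hat{\nabla}_\mu\xi_\nu$, the second-order Killing identity (whose contracted form agrees with the one the paper already uses in the proof of Proposition \ref{Energycharge.Eins}, namely $\hat{\nabla}_{\tau}\hat{\nabla}_{\mu}\xi^\tau={\mathrm{Riem}_{\hat{g}}}^{\tau}_{\;\;\lambda\tau\mu}\xi^\lambda$), and the background relations $\mathrm{Ric}_{\hat{g}}=\Lambda\hat{g}$, $\Box_{\hat{g}}\xi_\mu=-\Lambda\xi_\mu$. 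The two claims carrying the real weight in your sketch do check out concretely: (i) $\mathcal{K}$ is antisymmetric in its first index pair and satisfies the pair-exchange symmetry, and the contraction $\mathcal{K}_{\beta\nu\alpha\mu}\hat{\nabla}^\nu\xi^\alpha$ is antisymmetric in $(\beta,\mu)$ once the Killing antisymmetry is used, so $\mathcal{Q}^{GR}$ is a genuine $2$-form; (ii) on a flat background with $\xi$ covariantly constant one finds $\partial^\nu\partial^\beta\mathcal{K}_{\beta\mu\alpha\nu}=-\left(DG_{\hat{g}}\cdot h\right)_{\mu\alpha}$, which, with the paper's sign convention $\delta_{\hat{g}}\mathcal{Q}_\mu=-\hat{\nabla}^\beta\mathcal{Q}_{\beta\mu}$, fixes all normalizations and confirms that the double divergence of $\mathcal{K}$ reproduces the linearized Einstein tensor, as you assert. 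The only gap between your text and a complete proof is the middle bookkeeping you flag yourself --- the cancellation of the undifferentiated curvature terms on a curved Einstein background --- but this is precisely the computation of \cite{AD,ADT}, and your fallback of verifying the ansatz (\ref{ADM-energymomentum.1}) by direct differentiation terminates mechanically. For the paper's purposes, where this proposition is an input rather than an output, either version is entirely adequate.
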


Let us now analyse a particular case of interest, which has been discussed for instance in \cite{ADT}. This is the case where the perturbed metric $\bar{g}$ is Einstein with cosmological constant $\Lambda$. For instance, in \cite{ADT} it is stated that several terms associated to $\mathcal{E}_{\alpha,\beta}$ will not contribute to these kinds of solutions in a general way, which seems to omit some implicit assumptions. Let us now make these assumptions explicit in our case. 

\begin{coro}\label{ADTcoro}
Consider the same setting as in Proposition \ref{Energycharge.Eins}, assume $n=3$, and let $\bar{g}$ belong to a smooth curve of Einstein metrics $\bar{g}_{\lambda}$ with fixed cosmological constant $\Lambda$ such that $\bar{g}_0=\hat{\bar{g}}$. Then, it holds that
\begin{align*}
\mathcal{Q}_{\hat{\bar{g}}}(\xi, h)=2(4 \alpha+ \beta) \Lambda{ \mathcal{Q}^{GR}_{\hat{g}}}(\xi, h),
\end{align*}
where $\mathcal{Q}^{GR}_{\hat{\bar{g}}}(\xi, h)$ is given by (\ref{ADM-energymomentum.1}).
\end{coro}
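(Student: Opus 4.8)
The plan is to read off $\mathcal{Q}_{\hat{\bar{g}}}(h,\xi)$ directly from the master identity \eqref{eqglobcase} established in Proposition \ref{Energycharge.Eins}, where $D(A_{\hat{g}})_{\mu\nu}\cdot h\,\xi^\nu = \hat{\nabla}^\tau[\,\cdots\,]_{\tau\mu}$ and the antisymmetric bracket is precisely the superpotential $\mathcal{Q}_{\hat{\bar{g}}}$ (up to the sign fixed by the codifferential convention $\bar{\mathcal{P}}=\delta_{\hat{\bar{g}}}\mathcal{Q}$ of Proposition \ref{ADMenergy.1}). The entire content of the corollary is then to show that, under the extra hypothesis that $h$ is tangent to a curve $\bar{g}_\lambda$ of Einstein metrics sharing the fixed cosmological constant $\Lambda$, every term in that bracket collapses except the one proportional to $\mathcal{Q}^{GR}_{\hat{g}}$.

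First I would extract the infinitesimal consequences of the Einstein condition along the curve. Differentiating $\mathrm{Ric}_{\bar{g}_\lambda}=\Lambda\bar{g}_\lambda$ at $\lambda=0$ gives $\mathrm{Ric}'_{\hat{g}}.h=\Lambda h$. Tracing and using $n=3$ (so that $R_{\bar{g}_\lambda}\equiv 4\Lambda$ by \eqref{031220201243}) yields $R'_{\hat{g}}.h=0$. Finally, since $G_{\bar{g}_\lambda}+\Lambda\bar{g}_\lambda=\mathrm{Ric}_{\bar{g}_\lambda}-\tfrac{1}{2}R_{\bar{g}_\lambda}\bar{g}_\lambda+\Lambda\bar{g}_\lambda\equiv 0$ identically along the curve (in spacetime dimension four, $\mathrm{Ric}=\Lambda g$ forces $G=-\Lambda g$), we obtain $D(G+\Lambda g)_{\hat{g}}.h=0$, whence $\mathcal{P}^{GR}(h,\xi)=0$ by Proposition \ref{pqGR} and therefore $d\mathcal{P}^{GR}(h,\xi)=0$.

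Substituting these three facts into the bracket in \eqref{eqglobcase} is then purely mechanical. The term $\beta\,d\mathcal{P}^{GR}_{\hat{g}}$ drops because $d\mathcal{P}^{GR}=0$; the two terms carrying $R'_{\hat{g}}.h$ (the $(2\alpha+\beta)$- and $(2\alpha-\beta)$-contributions) drop because $R'_{\hat{g}}.h=0$; and the pair $2\beta(\mathrm{Ric}'_{\hat{g}}.h_{\tau\nu}\hat{\nabla}_\mu\xi^\nu-\mathrm{Ric}'_{\hat{g}}.h_{\mu\nu}\hat{\nabla}_\tau\xi^\nu)$ and $-2\beta\Lambda(h_{\tau\nu}\hat{\nabla}_\mu\xi^\nu-h_{\mu\nu}\hat{\nabla}_\tau\xi^\nu)$ cancel exactly once $\mathrm{Ric}'_{\hat{g}}.h=\Lambda h$ is inserted. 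What survives is solely $-2(4\alpha+\beta)\Lambda\,{\mathcal{Q}^{GR}_{\hat{g}}}_{\tau\mu}(h,\xi)$, and reinstating the codifferential sign gives $\mathcal{Q}_{\hat{\bar{g}}}(h,\xi)=2(4\alpha+\beta)\Lambda\,\mathcal{Q}^{GR}_{\hat{g}}(h,\xi)$, as claimed.

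There is essentially no analytic obstacle here; the computation is a direct specialization of \eqref{eqglobcase}. The one point that needs care — and which I would flag explicitly — is that $\mathcal{P}^{GR}(h,\xi)=0$ does \emph{not} force $\mathcal{Q}^{GR}_{\hat{g}}(h,\xi)=0$: the latter is the concrete $2$-form \eqref{ADM-energymomentum.1}, built algebraically from $h$ and $\hat{\nabla}\xi$, and is co-closed but generically nonzero. Thus the surviving term is a genuine contribution, and the corollary recovers the statement (consistent with \cite{ADT}) that for Einstein-preserving perturbations the fourth-order superpotential reduces to the multiple $2(4\alpha+\beta)\Lambda$ of the Einstein superpotential.
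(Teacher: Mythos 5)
Your proposal is correct and follows essentially the same route as the paper's proof: differentiate the Einstein condition along the curve to get $\mathrm{Ric}'_{\hat{g}}.h=\Lambda h$, $D(G+\Lambda g)_{\hat{g}}.h=0$ (hence $\mathcal{P}^{GR}\equiv 0$) and $R'_{\hat{g}}.h=0$, then substitute into \eqref{eqglobcase} and observe that the $2\beta\,\mathrm{Ric}'$-terms cancel against the $-2\beta\Lambda h$-terms, leaving only $-2(4\alpha+\beta)\Lambda\,\mathcal{Q}^{GR}_{\hat{g}}$ inside the divergence. Your added remark that $\mathcal{P}^{GR}\equiv 0$ does not force $\mathcal{Q}^{GR}_{\hat{g}}$ to vanish is a worthwhile clarification, but the argument itself coincides with the paper's.
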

\begin{proof}
In this case, since we have that $\mathrm{Ric}_{\bar{g}_{\lambda}}=\Lambda \bar{g}_{\lambda}$, we see that $D\mathrm{Ric}_{\hat{\bar{g}}}\cdot h=\Lambda h$, and similarly $DG_{\hat{\bar{g}}}.h=-\Lambda h$, which implies that $\mathcal{P}^{GR}_{\hat{\bar{g}}}\equiv 0$. Also, since $R_{\bar{g}_{\lambda}}=4\Lambda$ for all $\lambda$, then $DR_{\hat{\bar{g}}}\cdot h\equiv 0$. All this already simplifies (\ref{eqglobcase}) to
\begin{align*}
D\left( {A_{\hat{g}}}_{\mu \nu } \right).h \xi^\nu &= \hat{ \nabla}^\tau \left[ -2(4 \alpha+ \beta) \Lambda{ \mathcal{Q}^{GR}_{\hat{g}}}_{ \tau \mu }( \xi, h) +  2 \beta \left( \mathrm{Ric}_{\hat{g}}'.h_{\tau\nu} \hat{\nabla}_\mu \xi^\nu   -\mathrm{Ric}_{\hat{g}}'.h_{\mu \nu} \hat{\nabla}_\tau \xi^\nu \right) \right.\\
&\left.  - 2\beta\Lambda \left( h_{\tau \nu} \hat{\nabla}_\mu \xi^\nu - h_{\mu \nu} \hat{\nabla}_\tau \xi^\nu \right)
\right],\\
%&= \hat{ \nabla}^\tau \left[ 2(4 \alpha+ \beta) \Lambda{ \mathcal{Q}^{GR}_{\hat{g}}}_{ \mu \tau }(h , \xi) -  2 \beta\Lambda \left( h_{\tau\nu} \hat{\nabla}_\mu \xi^\nu   - h_{\mu \nu} \hat{\nabla}_\tau \xi^\nu \right) \right.\\
%&\left.  + 2\beta\Lambda \left( h_{\tau \nu} \hat{\nabla}_\mu \xi^\nu - h_{\mu \nu} \hat{\nabla}_\tau \xi^\nu \right)\right],\\
&=-\hat{ \nabla}^\tau \left( 2(4 \alpha+ \beta) \Lambda{ \mathcal{Q}^{GR}_{\hat{g}}}_{  \tau \mu }(\xi, h)
\right),
\end{align*}
implying that $\mathcal{Q}_{\hat{\bar{g}}}(\xi, h)=2(4 \alpha+ \beta) \Lambda{ \mathcal{Q}^{GR}_{\hat{g}}}_{  \tau \mu}( \xi, h)$, where $\mathcal{Q}^{GR}_{\hat{\bar{g}}}(\xi, h)$ is given by (\ref{ADM-energymomentum.1}).
\qed
\end{proof}

\bigskip
The above corollary implies that for these special families of Einstein metrics their fourth-order conserved quantities are given in practice by their second order ones as solutions of the Einstein equations. Nevertheless, notice that the above simple corollary depends crucially on the fact that the cosmological constant is kept fixed through the whole family. In this context, families of Einstein metrics parametrized by their cosmological constant have been constructed in \cite{Avalos-Lira} for instance, and the above corollary does not give us information about the situation for them. Let us also put this discussion in perspective of (and as an extra motivation for) the analysis presented in Section 5.

\subsection{ADM formulation in asymptotically Minkowski spaces}
\label{subsection32}
In the present subsection we wish to develop an ADM-like formulation for the conserved energy $\mathcal{E}$. We will thus consider  a globally hyperbolic Asymptotically Minkowski manifold. Let us first recall definition \ref{AMmanifolds}:

\begin{defn}[AM space-times]\label{AMmanifoldsb}
Let $(M\times [0,T],\bar{g})$ be a regularly sliced globally hyperbolic manifold, where $M$ is Euclidean at infinity. Thus, let us write %$\hat{g}=-\hat{N}^2dt^2+ \hat{X}_i dtdx^i+g$,
{$\hat{g}=-\hat{N}^2dt^2+ g_t$} where $\hat{N}$ stands for the associated lapse function,  $\hat{X}$ denotes the shift vector associated to the orthogonal space-time splitting and $g_t$, {restrict to} a time-dependent Riemannian metric on $M$ {when applied to vectors tangent to $M$}. We will say that $\hat{g}$ is \textbf{asymptotically-Minkowskian} of order $\tau>0$ with respect to some end coordinate system $\Phi:E_i\mapsto \mathbb{R}^n\backslash\bar{B}$, if, in such coordinates:
\begin{align}\label{Falloff-AM.1}
\begin{split}
\hat{g}_{ti}&=\hat{X}_i(.,t)=O_{4}(r^{-\hat{\tau}}),\\
\hat{g}_{ij}&=g_{ij}(.,t)=\delta_{ij}+O_{4}(r^{-\hat{\tau}}),\\
\hat{g}_{tt}&=- \left[\hat{N}(.,t)^2 - \left| \hat{X} \right|^2_{\hat{g}} \right]= - \hat{N}^2+ O_4 \left(r^{-2\hat{\tau} } \right) = -1+O_{4}(r^{-\hat{\tau}}).
\end{split}
\end{align}
 %$g_{ij}(\cdot,t)-\delta_{ij}=O(|x|^{-\tau})$; $N(\cdot,t)-1=O(|x|^{-\tau})$ and $X_i=O(|x|^{-\tau})$, where $X$ denotes the shift vector associated to the orthogonal space-time splitting.
\end{defn} 

Here by $O_a \left(r^{b} \right)$ we mean that the estimate is differentiable $a$ times by taking one power each time. For instance if $f=O_4\left( r^{-\hat{\tau} } \right)$, $\nabla f = O_3\left( r^{-\hat{\tau} -1 } \right)$,$\nabla^2 f=O_2\left( r^{-\hat{\tau}-2 } \right)$, and so on.

On such an AM manifold we consider a compact exhaustion $(K_r, \partial K_r)$ of $M$. We will denote $n$ the future pointing timelike unit normal vector to $M$ in $V$ and $\nu$ the outward pointing normal to $\partial K_r$ in $M$. Along the line of the introductions of the ADM mass we wish to find an explicit formula for $\mathcal{E}$ as in integral over the sphere at infinity (as in \eqref{ADMconservedchages.1}).  We will thus need to consider a Killing vector field $\xi$ of $\hat{g}$. We will  assume that, at least outside a compact set, $\xi$ agrees with the time coordinate vector $\partial_t$.  For $r$ large enough we will thus assume that $\xi= \partial_t$. Comparing \eqref{linearisedfourthordercharge} and \eqref{eqglobcase} shows that the leading terms in both cases are the same, and we will thus restrict ourselves to the Ricci flat case.

%\begin{remark}
%While we set this subsection in the continuity of the ADM mass considerations and use a time symmetry, it would be profitable and in line with recent considerations ({find papers to quote there}) to study this energy in spaces with different symmetries (rotational for instance).
%\end{remark}

Let us  then consider such an AM Ricci-flat solution decomposed as explained above.  We need to compute
\begin{align*}
\mathcal{Q}_{\hat{g}}(\xi, h)_{\lambda\mu}&=-\Big\{\beta d\mathcal{P}^{GR}_{\hat{g}}(\xi, h)_{\lambda\mu}  +  2\beta({\mathrm{Ric}'_{\hat{g}}}\cdot h_{\lambda\nu}\hat{\nabla}_{\mu}\xi^{\nu} - \mathrm{Ric}'_{\hat{g}}\cdot h_{\mu\nu}\hat{\nabla}_{\lambda}\xi^{\nu}) \\
&+ (\beta - 2\alpha)\hat{\nabla}_{\lambda}\xi_{\mu}(R'_{\hat{g}}\cdot h) + ( \beta  + 2\alpha)\left(\hat{\nabla}_{\lambda}(R'_{\hat{g}}\cdot h) \xi_{\mu}  - \hat{\nabla}_{\mu}(R'_{\hat{g}}\cdot h)\xi_{\lambda}\right)\Big\},
\end{align*}
where
\begin{align*}
\mathcal{P}^{GR}_{\mu}&=-\hat{g}^{\nu\beta}\hat{\nabla}_{\nu}\mathcal{Q}^{GR}_{\beta\mu},\\
\mathcal{Q}^{GR}_{\beta\mu}&=\hat{\nabla}^{\nu}\mathcal{K}_{\beta\mu\alpha\nu}\xi^{\alpha} - \mathcal{K}_{\beta\alpha\nu\mu}\hat{\nabla}^{\alpha}\xi^{\nu}.
\end{align*}
Let us now assume that the perturbation $g$ of $\hat{g}$ has the fall-off behaviour
\begin{align}\label{Falloff-AM.2}
\begin{split}
{g}_{ti}&={X}_i(.,t)=O_{4}(r^{-{\tau}}),\\
{g}_{ij}&=g_{ij}(.,t)=\delta_{ij}+O_{4}(r^{-{\tau}}),\\
{g}_{tt}&=- \left[{N}(.,t)^2 - \left| X \right|^2_g \right]= - {N}^2+ O_4 \left(r^{-2{\tau} } \right) = -1+O_{4}(r^{-{\tau}}).
%g_{tt}&=-1+O_{4}(r^{-\tau}),\\
%g_{ti}&=O_{4}(r^{-\tau}),\\
%g_{ij}&=\delta_{ij}+O_{4}(r^{-\tau}),
\end{split}
\end{align}
where $ \tau <\hat{\tau}$ and thus a perturbation of the form $g-\hat{g}=O_{4}(r^{-\tau})$. 
Then,
%Finally, let us assume that the Killing vector $\xi=\partial_t$ agrees with the asymptotic time coordinate vector field, at least near infinity. Then,
\begin{align*}
\mathcal{Q}^{GR}_{\nu\beta}&=\hat{\nabla}^{j}\mathcal{K}_{\nu\beta tj} - \mathcal{K}_{\nu \mu\alpha\beta}\hat{\nabla}^{\mu}\xi^{\alpha}%=-\hat{\nabla}^{j}\mathcal{K}_{\nu\beta tj} + \mathcal{K}_{\nu\mu\alpha\beta}\hat{\nabla}^{\mu}\xi^{\alpha},
\end{align*}
thus
\begin{align*}
\mathcal{P}^{GR}_{\beta}&=-\hat{\nabla}^{\nu}\left( \hat{\nabla}^{j}\mathcal{K}_{\nu\beta tj} - \mathcal{K}_{\nu \mu\alpha\beta}\hat{\nabla}^{\mu}\xi^{\alpha}\right)= -\hat{\nabla}^{\nu}\hat{\nabla}^{j}\mathcal{K}_{\nu\beta tj} + \hat{\nabla}^{\nu}\left(\mathcal{K}_{\nu \mu\alpha\beta}\hat{\nabla}^{\mu}\xi^{\alpha}\right),\\
&= - \hat{\nabla}^{t}\hat{\nabla}^{j}\mathcal{K}_{t\beta tj} - \hat{\nabla}^{i}\hat{\nabla}^{j}\mathcal{K}_{i\beta tj} + \hat{\nabla}^{\nu}\left(\mathcal{K}_{\nu \mu\alpha\beta}\hat{\nabla}^{\mu}\xi^{\alpha}\right).
\end{align*}
Then, it follows that
\begin{align*}
d\mathcal{P}^{GR}_{\lambda\beta}&= - \hat{\nabla}_{\lambda}\hat{\nabla}^{t}\hat{\nabla}^{j}\mathcal{K}_{t\beta tj} - \hat{\nabla}_{\lambda}\hat{\nabla}^{i}\hat{\nabla}^{j}\mathcal{K}_{i\beta tj} + \hat{\nabla}_{\lambda}\left(\hat{\nabla}^{\nu}\left(\mathcal{K}_{\nu \mu\alpha\beta}\hat{\nabla}^{\mu}\xi^{\alpha}\right)\right) \\
&+ \hat{\nabla}_{\beta}\left( \hat{\nabla}^{t}\hat{\nabla}^{j}\mathcal{K}_{t\lambda tj} + \hat{\nabla}^{i}\hat{\nabla}^{j}\mathcal{K}_{i\lambda tj} - \hat{\nabla}^{\nu}\left(\mathcal{K}_{\nu \mu\alpha\lambda}\hat{\nabla}^{\mu}\xi^{\alpha}\right) \right)
\end{align*}
where 
\begin{align*}
\mathcal{K}_{\beta\nu\alpha\mu}\doteq \frac{1}{2}\left(\hat{g}_{\mu\beta}H_{\nu\alpha} + \hat{g}_{\nu\alpha}H_{\mu\beta} - \hat{g}_{\alpha\beta}H_{\mu\nu} - \hat{g}_{\mu\nu}H_{\alpha\beta} \right).
\end{align*}

Recall that we are interested in the object
\begin{align*}
-\mathcal{Q}(\hat{n},\hat{\nu})&=-\mathcal{Q}\left(\frac{1}{\hat{N}}(\partial_t-\hat{X}),\hat{\nu} \right)=-\frac{1}{\hat{N}}\mathcal{Q}(\partial_t,\hat{\nu}) + \frac{1}{\hat{N}} \mathcal{Q}(\hat{X},\hat{\nu}),\\
&=-\frac{1}{\hat{N}}\mathcal{Q}_{tj}\hat{\nu}^{j} + \frac{1}{\hat{N}} \mathcal{Q}_{ij}\hat{X}^i\hat{\nu}^j.
\end{align*}
We are therefore mainly interested in the components
\begin{align*}
\mathcal{Q}_{tj}&=-\Big\{\beta d\mathcal{P}^{GR}_{\hat{g}}(\xi, h)_{tj}  +  2\beta({\mathrm{Ric}'_{\hat{g}}}\cdot h_{t\nu}\hat{\nabla}_{j}\xi^{\nu} - \mathrm{Ric}'_{\hat{g}}\cdot h_{j\nu}\hat{\nabla}_{t}\xi^{\nu}) \\
&+ (\beta - 2\alpha)\hat{\nabla}_{t}\xi_{j}(R'_{\hat{g}}\cdot h) + ( \beta  + 2\alpha)\left(\hat{\nabla}_{t}(R'_{\hat{g}}\cdot h) \xi_{j}  - \hat{\nabla}_{j}(R'_{\hat{g}}\cdot h)\xi_{t}\right)\Big\},\\
&=-\beta d\mathcal{P}^{GR}_{\hat{g}}(\xi, h)_{tj} + ( \beta  + 2\alpha) \hat{\nabla}_{j}(R'_{\hat{g}}\cdot h) -  2\beta({\mathrm{Ric}'_{\hat{g}}}\cdot h_{t\nu}\hat{\nabla}_{j}\xi^{\nu} - \mathrm{Ric}'_{\hat{g}}\cdot h_{j\nu}\hat{\nabla}_{t}\xi^{\nu}) \\
&- (\beta - 2\alpha)\hat{\nabla}_{t}\xi_{j}(R'_{\hat{g}}\cdot h)
\end{align*}
Therefore, we compute the following expressions.
\begin{align}\label{ADM.0}
\begin{split}
-d\mathcal{P}^{GR}_{\hat{g}}(\xi, h)_{tj}&=\hat{\nabla}_{t}\hat{\nabla}^{t}\hat{\nabla}^{k}\mathcal{K}_{tj tk} + \hat{\nabla}_{t}\hat{\nabla}^{i}\hat{\nabla}^{k}\mathcal{K}_{ij tk} - \hat{\nabla}_{t}\left(\hat{\nabla}^{\nu}\mathcal{K}_{\nu \mu\alpha j}\hat{\nabla}^{\mu}\xi^{\alpha}\right) \\
&-\hat{\nabla}_{j}\left( \hat{\nabla}^{t}\hat{\nabla}^{k}\mathcal{K}_{tt tk} + \hat{\nabla}^{i}\hat{\nabla}^{k}\mathcal{K}_{it tk} - \hat{\nabla}^{\nu}\mathcal{K}_{\nu \mu\alpha t}\hat{\nabla}^{\mu}\xi^{\alpha} \right),\\
&=\hat{\nabla}_{t}\hat{\nabla}^{t}\hat{\nabla}^{k}\mathcal{K}_{tj tk} + \hat{\nabla}_{t}\hat{\nabla}^{i}\hat{\nabla}^{k}\mathcal{K}_{ij tk} - \hat{\nabla}_{j}\hat{\nabla}^{i}\hat{\nabla}^{k}\mathcal{K}_{it tk} \\
& + \hat{\nabla}_{j}\left(\hat{\nabla}^{\nu}\left(\mathcal{K}_{\nu \mu\alpha t}\hat{\nabla}^{\mu}\xi^{\alpha}\right)\right)  - \hat{\nabla}_{t}\left(\hat{\nabla}^{\nu}\left(\mathcal{K}_{\nu \mu\alpha j}\hat{\nabla}^{\mu}\xi^{\alpha}\right)\right).
\end{split}
\end{align}
Now, recalling that $H=h-\frac{1}{2}\mathrm{tr}_{\hat{g}}h\:\hat{g}$; considering $h\doteq g-\hat{g}$ as the perturbations and appealing to (\ref{Falloff-AM.1})-(\ref{Falloff-AM.2}), it also holds that
\begin{align*}
\mathcal{K}_{tj tk}&=\frac{1}{2}\left(\underbrace{{\hat{g}_{tk}H_{jt}}+ \hat{g}_{jt}H_{tk}}_{O_{4}(r^{-(\tau + \hat{\tau})})} - \underbrace{\hat{g}_{tt}}_{-1+O_4(r^{-\hat{\tau}})}H_{jk} - \underbrace{\hat{g}_{jk}}_{\delta_{jk}+O_{4}(r^{-\hat{\tau}})}H_{tt} \right),\\
\end{align*}
and noticing  that $\hat{g}_{j \alpha}V^\alpha  = \hat{g}_{jk}V^k  + O_4(r^{-\hat{\tau}}|V|)= V_k + O_4(r^{-\hat{\tau}}|V|) $, we can raise and lower space indexes at the cost of a decaying term.  Thus,
\begin{align}
\begin{split}
\hat{\nabla}_{t}\hat{\nabla}^{t}\hat{\nabla}^{k}\mathcal{K}_{tj tk}&=\frac{1}{2}\left(- \hat{g}_{tt}\hat{\nabla}_{t}\hat{\nabla}^{t}\hat{\nabla}^{k}H_{jk} - \hat{\nabla}_{t}\hat{\nabla}^{t}\hat{\nabla}_{j}H_{tt} + O_{1}(r^{-(\tau + \hat{\tau})-3}) \right),\\
&=\frac{1}{2}\left( \hat{\nabla}_{t}\hat{\nabla}_{t}\hat{\nabla}_{j}H_{tt} - \hat{\nabla}_{t}\hat{\nabla}_{t}\hat{\nabla}^{k}H_{jk} + O_{1}(r^{-(\tau + \hat{\tau})-3}) \right),
\end{split}
\end{align}
where 
\begin{align*}
H_{tt}&=h_{tt}-\frac{1}{2}\left(\underbrace{\hat{g}^{tt}}_{-1+O_{4}(r^{-\hat{\tau}})}h_{tt} + 2\underbrace{\hat{g}^{ti}}_{O_4(r^{-\hat{\tau}})}h_{ti} + \underbrace{\hat{g}^{ab}}_{\delta_{ab}+O_4(r^{-\hat{\tau}})}h_{ab} \right)\underbrace{\hat{g}_{tt}}_{-1+O_4(r^{-\hat{\tau}})},\\
&=h_{tt}+\frac{1}{2}\left(-h_{tt} + h_{aa}\right) + O_{4}(r^{-(\tau + \hat{\tau})}).
\end{align*}
That is, 
\begin{align*}
H_{tt}&=\frac{1}{2}\left(h_{tt} + h_{aa} \right) + O_4(r^{-(\tau + \hat{\tau})}).
\end{align*}
On the other hand, it also holds that
\begin{align*}
H_{jk}&=h_{jk}-\frac{1}{2}\left(-h_{tt} + h_{aa} + O_4(r^{-(\tau + \hat{\tau})})\right)\hat{g}_{jk},
\end{align*}
implying that
\begin{align*}
\hat{\nabla}_{t}\hat{\nabla}_{t}\hat{\nabla}_{j}H_{tt} - \hat{\nabla}_{t}\hat{\nabla}_{t}\hat{\nabla}^{k}H_{jk}&=\frac{1}{2}\hat{\nabla}_{t}\hat{\nabla}_{t}\hat{\nabla}_{j}\left(h_{tt} + h_{aa} \right) - \hat{\nabla}_{t}\hat{\nabla}_{t}\hat{\nabla}^{k}\left(h_{jk}-\frac{1}{2}\left(-h_{tt} + h_{aa} \right)\hat{g}_{kj}\right) \\
&+ O_1(r^{-(\tau + \hat{\tau})-3}),\\
&=\hat{\nabla}_{t}\hat{\nabla}_{t}\hat{\nabla}_{j}h_{ii} - \hat{\nabla}_{t}\hat{\nabla}_{t}\hat{\nabla}_{i}h_{ji}  + O_1(r^{-(\tau + \hat{\tau}) - 3})
\end{align*}
and hence
\begin{align}\label{ADM.-1}
\begin{split}
\hat{\nabla}_{t}\hat{\nabla}^{t}\hat{\nabla}^{k}\mathcal{K}_{tj tk}&=\frac{1}{2}\left( \hat{\nabla}_{t}\hat{\nabla}_{t}\hat{\nabla}_{j}h_{ii} - \hat{\nabla}_{t}\hat{\nabla}_{t}\hat{\nabla}_{i}h_{ji}\right) + O_{1}(r^{-(\tau + \hat{\tau})-3}).
\end{split}
\end{align}

Since $\hat{\nabla} h =\hat{\nabla} g - \hat{\nabla } \hat{g} =\hat{\nabla} g $ we can change  \eqref{ADM.-1} into:
\begin{equation}
\label{ADM.1}
\begin{aligned}
\hat{\nabla}_{t}\hat{\nabla}^{t}\hat{\nabla}^{k}\mathcal{K}_{tj tk}&=\frac{1}{2}\left( \hat{\nabla}_{t}\hat{\nabla}_{t}\hat{\nabla}_{j}g_{ii} - \hat{\nabla}_{t}\hat{\nabla}_{t}\hat{\nabla}_{i}g_{ji}\right) + O_{1}(r^{-(\tau + \hat{\tau})-3}).
\end{aligned}
\end{equation}
We will favour these expressions and make these substitutions in the following.

We can similarly compute that
\begin{align*}
\mathcal{K}_{it tk}&=\frac{1}{2}\left(\hat{g}_{ik}H_{tt}+\hat{g}_{tt}H_{ik} \underbrace{- \hat{g}_{it}H_{tk} - \hat{g}_{tk}H_{it} }_{O_4(r^{-(\tau + \hat{\tau})})}\right)\\
\hat{\nabla}_{j}\hat{\nabla}^{i}\hat{\nabla}^{k}\mathcal{K}_{it tk}&=\frac{1}{2}\left(\hat{\nabla}_{j}\hat{\nabla}^{i}\hat{\nabla}_{i}H_{tt} - \hat{\nabla}_{j}\hat{\nabla}^{i}\hat{\nabla}^{k}H_{ik}\right)  + {O_1(r^{-(\tau + \hat{\tau})-3})},
%&=\frac{1}{2}\left(\hat{\nabla}_{j}\hat{\nabla}^{i}\hat{\nabla}_{i} g_{aa}  -  \hat{\nabla}_{j}\hat{\nabla}^{i}\hat{\nabla}^{k}g_{ik}\right) + {O_1(r^{-2\tau-3})}
\end{align*}
which gives us
\begin{align*}%\label{ADM.2}
\hat{\nabla}_{j}\hat{\nabla}^{i}\hat{\nabla}^{k}\mathcal{K}_{it tk}&=\frac{1}{2}\left(\hat{\nabla}_{j}\hat{\nabla}^{i}\hat{\nabla}_{i} g_{aa}  -  \hat{\nabla}_{j}\hat{\nabla}^{i}\hat{\nabla}^{k}g_{ik}\right) + {O_1(r^{-(\tau + \hat{\tau})-3})}.
\end{align*}
Now, in order to compute $\hat{\nabla}_{t}\hat{\nabla}^{i}\hat{\nabla}^{k}\mathcal{K}_{ij tk}$, notice that
\begin{align*}
\mathcal{K}_{ij tk}&=\frac{1}{2}\left(\hat{g}_{ik}H_{jt} \underbrace{+ \hat{g}_{jt}H_{ik} - \hat{g}_{it}H_{jk}}_{O_4(r^{-(\tau + \hat{\tau})})} - \hat{g}_{jk}H_{it} \right),\\
&=\frac{1}{2}\left(\hat{g}_{ik}H_{jt} - \hat{g}_{jk}H_{it} \right) + O_4(r^{-(\tau + \hat{\tau})}).
\end{align*}
Thus, we need to compute 
\begin{align*}
H_{jt}=h_{jt} - \frac{1}{2}(-h_{tt}+h_{aa})\hat{g}_{jt} + O_4(r^{-(\tau + \hat{\tau})})=h_{jt} + O_4(r^{-(\tau + \hat{\tau})}),
\end{align*}
implying 
\begin{align*}
\mathcal{K}_{ij tk}&=\frac{1}{2}\left(\hat{g}_{ik}h_{jt} - \hat{g}_{jk}h_{it} \right) + O_4(r^{-(\tau + \hat{\tau})}),
\end{align*}
and thus
\begin{align}\label{ADM.3}
\hat{\nabla}_{t}\hat{\nabla}^{i}\hat{\nabla}^{k}\mathcal{K}_{ij tk}&=\frac{1}{2}\left(\hat{\nabla}_{t}\hat{\nabla}^{i}\hat{\nabla}_{i}g_{jt} - \hat{\nabla}_{t}\hat{\nabla}^{i}\hat{\nabla}_{j}g_{it} \right) + O_1(r^{-(\tau + \hat{\tau}) - 3}).
\end{align}
Therefore, putting together (\ref{ADM.0})-(\ref{ADM.3}), we find
\begin{align*}%\label{ADM.4}
\begin{split}
d\mathcal{P}^{GR}_{\hat{g}}(\xi, h)_{tj}%&= \hat{\nabla}_{j}\hat{\nabla}^{i}\hat{\nabla}^{k}\mathcal{K}_{it tk} - \hat{\nabla}_{t}\hat{\nabla}^{t}\hat{\nabla}^{k}\mathcal{K}_{tj tk} - \hat{\nabla}_{t}\hat{\nabla}^{i}\hat{\nabla}^{k}\mathcal{K}_{ij tk} \\
%& - \hat{\nabla}_{j}\left(\hat{\nabla}^{\nu}\mathcal{K}_{\nu \mu\alpha t}\hat{\nabla}^{\mu}\xi^{\alpha}\right)  + \hat{\nabla}_{t}\left(\hat{\nabla}^{\nu}\mathcal{K}_{\nu \mu\alpha\beta}\hat{\nabla}^{\mu}\xi^{\alpha}\right),\\
&=\frac{1}{2}\left(\hat{\nabla}_{j}\hat{\nabla}^{i}\hat{\nabla}_{i} g_{aa}  -  \hat{\nabla}_{j}\hat{\nabla}^{i}\hat{\nabla}^{k}g_{ik}\right) - \frac{1}{2}\left( \hat{\nabla}_{t}\hat{\nabla}_{t}\hat{\nabla}_{j}g_{ii} - \hat{\nabla}_{t}\hat{\nabla}_{t}\hat{\nabla}_{i}g_{ji}\right) \\
&- \frac{1}{2}\left(\hat{\nabla}_{t}\hat{\nabla}^{i}\hat{\nabla}_{i}g_{jt} - \hat{\nabla}_{t}\hat{\nabla}^{i}\hat{\nabla}_{j}g_{it} \right) + \hat{\nabla}_{t}\left(\hat{\nabla}^{\nu}\mathcal{K}_{\nu \mu\alpha j}\hat{\nabla}^{\mu}\xi^{\alpha}\right)\\
&- \hat{\nabla}_{j}\left(\hat{\nabla}^{\nu}\mathcal{K}_{\nu \mu\alpha t}\hat{\nabla}^{\mu}\xi^{\alpha}\right) + O_1(r^{-(\tau + \hat{\tau}) -3})
\end{split}
\end{align*}

Now let us compute that
\begin{align*}
R'_{\hat{g}}\cdot h&=\hat{\nabla}_{\lambda}\hat{\nabla}_{\alpha}h^{\lambda\alpha} - \hat{g}^{\alpha\beta}\hat{\nabla}_{\alpha}\hat{\nabla}_{\beta}h^{\lambda}_{\lambda},\\
&=\hat{\nabla}_{\lambda}\hat{\nabla}_{t}h^{\lambda t} +  \hat{\nabla}_{\lambda}\hat{\nabla}_{i}h^{\lambda i} - \underbrace{\hat{g}^{tt}}_{-1+O_4(r^{-\hat{\tau}})}\hat{\nabla}_{t}\hat{\nabla}_{t}h^{\lambda}_{\lambda} - \hat{\nabla}^{i}\hat{\nabla}_{i}h^{\lambda}_{\lambda} \underbrace{- 2\hat{g}^{ti}\hat{\nabla}_t\hat{\nabla}_ih^{\lambda}_{\lambda}}_{O_2(r^{-(\tau + \hat{\tau})-2})} + O_2(r^{-(\tau + \hat{\tau}) -2} )\\
&=\hat{\nabla}_{t}\hat{\nabla}_{t}h^{tt} + \hat{\nabla}_{i}\hat{\nabla}_{t}h^{it} + \hat{\nabla}_{t}\hat{\nabla}_{i}h^{it} +  \hat{\nabla}_{u}\hat{\nabla}_{i}h^{u i} + \hat{\nabla}_{t}\hat{\nabla}_{t}(-g_{tt} + g_{uu}) \\
&- \hat{\nabla}^{i}\hat{\nabla}_{i}(-g_{tt} + g_{uu}) + O_2(r^{-(\tau + \hat{\tau}) -2} ),
%&=\partial_{t}\partial_{t}g_{0 0} - 2\partial_{i}\partial_{t}g_{i 0} +  \partial_{u}\partial_{i}g_{u i} - \partial_{t}\partial_{t}g_{00} + \partial_{t}\partial_{t}g_{uu} + \partial_{i}\partial_{i}g_{00} - \partial_{i}\partial_{i}g_{uu},\\
%&= \partial_{u}\partial_{i}g_{u i}  + \partial_{i}\partial_{i}g_{00} - \partial_{i}\partial_{i}g_{uu} - 2\partial_{i}\dot{g}_{i 0} + \ddot{g}_{uu}
\end{align*}
Notice that 
\begin{align*}
h^{tt}&=\underbrace{\hat{g}^{tt}}_{-1+O_4(r^{-\hat{\tau}})}h^t_t + \overbrace{\hat{g}^{ti}}^{O_4(r^{-\hat{\tau}})}h^t_i=-\hat{g}^{tt}h_{tt} - \hat{g}^{ti}h_{ti} + O_4(r^{-(\tau + \hat{\tau})})=h_{tt} + O_4(r^{-(\tau + \hat{\tau})}),\\
h^{ti}&=-h_{ti}+O_4(r^{-(\tau + \hat{\tau})}).
\end{align*}
Therefore
\begin{align*}
R'_{\hat{g}}\cdot h&=\hat{\nabla}_{t}\hat{\nabla}_{t}g_{tt} - \hat{\nabla}_{i}\hat{\nabla}_{t}g_{it} - \hat{\nabla}_{t}\hat{\nabla}_{i}g_{it} +  \hat{\nabla}_{u}\hat{\nabla}_{i}g_{u i} -\hat{\nabla}_{t}\hat{\nabla}_{t}g_{tt} + \hat{\nabla}_{t}\hat{\nabla}_{t}g_{uu} \\
&+ \hat{\nabla}^{i}\hat{\nabla}_{i}g_{tt} - \hat{\nabla}^{i}\hat{\nabla}_{i}g_{uu} + O_2(r^{-(\tau + \hat{\tau}) -2} ),\\
&= \hat{\nabla}_{u}\hat{\nabla}_{i}g_{u i} + \hat{\nabla}^{i}\hat{\nabla}_{i}g_{tt} - 2\hat{\nabla}_{i}\hat{\nabla}_{t}g_{it} - \hat{\nabla}^{i}\hat{\nabla}_{i}g_{uu} + \hat{\nabla}_{t}\hat{\nabla}_{t}g_{uu} + O_2(r^{-(\tau + \hat{\tau}) -2} ).
\end{align*}
All this gives us that
\begin{align*}
-\mathcal{Q}_{tj}%&=\beta d\mathcal{P}^{GR}_{\hat{g},\xi}(h)_{tj} \\
%&- ( \beta  + 2\alpha)\left( \hat{\nabla}_{j}(R'_{\hat{g}}\cdot h) \right) \\
%&+  2\beta({\mathrm{Ric}'_{\hat{g}}}\cdot h_{t\nu}\hat{\nabla}_{j}\xi^{\nu} - \mathrm{Ric}'_{\hat{g}}\cdot h_{j\nu}\hat{\nabla}_{t}\xi^{\nu}) + (\beta - 2\alpha)\hat{\nabla}_{t}\xi_{j}(R'_{\hat{g}}\cdot h),\\
&= \frac{\beta}{2}\left(\hat{\nabla}_{j}\hat{\nabla}^{i}\hat{\nabla}_{i} g_{aa}  -  \hat{\nabla}_{j}\hat{\nabla}^{i}\hat{\nabla}^{k}g_{ik}\right) - \frac{\beta}{2}\left( \hat{\nabla}_{t}\hat{\nabla}_{t}\hat{\nabla}_{j}g_{ii} - \hat{\nabla}_{t}\hat{\nabla}_{t}\hat{\nabla}_{i}g_{ji}\right) \\
&- \frac{\beta}{2}\left(\hat{\nabla}_{t}\hat{\nabla}^{i}\hat{\nabla}_{i}g_{jt} - \hat{\nabla}_{t}\hat{\nabla}^{i}\hat{\nabla}_{j}g_{it} \right)  \\
&+ ( \beta  + 2\alpha) \left( \hat{\nabla}_{j}\hat{\nabla}^{i}\hat{\nabla}_{i}g_{aa} - \hat{\nabla}_{j}\hat{\nabla}_{u}\hat{\nabla}_{i}g_{u i} - \hat{\nabla}_{j}\hat{\nabla}^{i}\hat{\nabla}_{i}g_{tt} + 2\hat{\nabla}_{j}\hat{\nabla}_{i}\hat{\nabla}_{t}g_{it} - \hat{\nabla}_{j}\hat{\nabla}_{t}\hat{\nabla}_{t}g_{uu}\right)   \\
&+ \beta\hat{\nabla}_{t}\left(\hat{\nabla}^{\nu}\left(\mathcal{K}_{\nu \mu\alpha j}\hat{\nabla}^{\mu}\xi^{\alpha}\right)\right) - \beta\hat{\nabla}_{j}\left(\hat{\nabla}^{\nu}\left(\mathcal{K}_{\nu \mu\alpha t}\hat{\nabla}^{\mu}\xi^{\alpha}\right)\right)\\
&+  2\beta({\mathrm{Ric}'_{\hat{g}}}\cdot h_{t\nu}\hat{\nabla}_{j}\xi^{\nu} - \mathrm{Ric}'_{\hat{g}}\cdot h_{j\nu}\hat{\nabla}_{t}\xi^{\nu}) + (\beta - 2\alpha)\hat{\nabla}_{t}\xi_{j}(R'_{\hat{g}}\cdot h)+ O_1(r^{-(\tau + \hat{\tau}) -3}).
\end{align*}
We can rearrange the above as follows
\begin{align*}
-\mathcal{Q}_{tj}&= ( \frac{3}{2}\beta  + 2\alpha) \left( \hat{\nabla}_{j}\hat{\nabla}^{i}\hat{\nabla}_{i}g_{aa} - \hat{\nabla}_{j}\hat{\nabla}_{u}\hat{\nabla}_{i}g_{u i}\right) - ( \beta  + 2\alpha)\hat{\nabla}_{j}\hat{\nabla}^{i}\hat{\nabla}_{i}g_{tt} \\
&+ \frac{\beta}{2}\left( \hat{\nabla}_{t}\hat{\nabla}_{t}\hat{\nabla}_{i}g_{ji} - \hat{\nabla}_{t}\hat{\nabla}_{t}\hat{\nabla}_{j}g_{ii} \right) - ( \beta  + 2\alpha)\hat{\nabla}_{j}\hat{\nabla}_{t}\hat{\nabla}_{t}g_{uu} \\
& + \frac{\beta}{2}\left( \hat{\nabla}_{t}\hat{\nabla}^{i}\hat{\nabla}_{j}g_{it} - \hat{\nabla}_{t}\hat{\nabla}^{i}\hat{\nabla}_{i}g_{jt} \right) + 2( \beta  + 2\alpha)\hat{\nabla}_{j}\hat{\nabla}_{i}\hat{\nabla}_{t}g_{it}   \\
&+ \beta\hat{\nabla}_{t}\left(\hat{\nabla}^{\nu}\left(\mathcal{K}_{\nu \mu\alpha j}\hat{\nabla}^{\mu}\xi^{\alpha}\right)\right) - \beta\hat{\nabla}_{j}\left(\hat{\nabla}^{\nu}\left(\mathcal{K}_{\nu \mu\alpha t}\hat{\nabla}^{\mu}\xi^{\alpha}\right)\right)\\
&+  2\beta({\mathrm{Ric}'_{\hat{g}}}\cdot h_{t\nu}\hat{\nabla}_{j}\xi^{\nu} - \mathrm{Ric}'_{\hat{g}}\cdot h_{j\nu}\hat{\nabla}_{t}\xi^{\nu}) + (\beta - 2\alpha)\hat{\nabla}_{t}\xi_{j}(R'_{\hat{g}}\cdot h)+ O_1(r^{-(\tau + \hat{\tau}) -3}).
\end{align*}
Now, notice that 
\begin{align*}
\hat{\nabla}_{\alpha}g_{\mu\nu}= \partial_\alpha g_{\mu \nu} - \hat{\Gamma}_{\alpha \mu}^\kappa g_{ \kappa \nu}  - \hat{\Gamma}_{\alpha \nu}^\kappa g_{  \mu \kappa } = \underbrace{\partial_{\alpha}g_{\mu\nu}}_{O_{4}(r^{-\tau-1})} + O_{3}(r^{-\hat{\tau}-1}) + O_3(r^{-\hat{\tau}- \tau -1}).
\end{align*}
Therefore, keeping track of top order terms
\begin{align*}
-\mathcal{Q}_{tj}&= \left( \frac{3}{2}\beta  + 2\alpha\right) \left( \partial_{j}\partial_{i}\partial_{i}g_{aa} - \partial_{j}\partial_{u}\partial_{i}g_{u i}\right) - ( \beta  + 2\alpha)\partial_{j}\partial_{i}\partial_{i}g_{tt} \\
&+ \frac{\beta}{2}\left( \partial_{t}\partial_{t}\partial_{i}g_{ji} - \partial_{t}\partial_{t}\partial_{j}g_{ii} \right) - ( \beta  + 2\alpha)\partial_{j}\partial_{t}\partial_{t}g_{uu} \\
& + \frac{\beta}{2}\left( \partial_{t}\partial_{i}\partial_{j}g_{it} - \partial_{t}\partial_{i}\partial_{i}g_{jt} \right) + 2( \beta  + 2\alpha)\partial_{j}\partial_{i}\partial_{t}g_{it}   \\
&+ \beta\hat{\nabla}_{t}\left(\hat{\nabla}^{\nu}\left(\mathcal{K}_{\nu \mu\alpha j}\hat{\nabla}^{\mu}\xi^{\alpha}\right)\right) - \beta\hat{\nabla}_{j}\left(\hat{\nabla}^{\nu}\left(\mathcal{K}_{\nu \mu\alpha t}\hat{\nabla}^{\mu}\xi^{\alpha}\right)\right)\\
&+  2\beta({\mathrm{Ric}'_{\hat{g}}}\cdot h_{t\nu}\hat{\nabla}_{j}\xi^{\nu} - \mathrm{Ric}'_{\hat{g}}\cdot h_{j\nu}\hat{\nabla}_{t}\xi^{\nu}) + (\beta - 2\alpha)\hat{\nabla}_{t}\xi_{j}(R'_{\hat{g}}\cdot h)+ O_1(r^{-\hat{\tau} -3})+O_1(r^{-(\hat{\tau}+\tau)-3}).
\end{align*}
Finally, let us assume that the $\hat{g}$-Killing vector $\xi$ obeys a fall-off behaviour of the form $\hat{\nabla}^\beta \xi^{\alpha}=O_3(r^{-\hat{\tau}-1})$. Since $\mathrm{Ric}'_{\hat{g}}\cdot h$ is a function of the $\hat{\nabla} \hat{\nabla} h$, one can compute as was done above and conclude that  $\mathrm{Ric}'_{\hat{g}}\cdot h=O_{2}(r^{-\tau-2})$. Similarly, straightforward computations show that $R'_{\hat{g}}\cdot h =\hat{\nabla}_{\lambda \alpha }h^{\lambda\alpha} - \Box_{\hat{g}}h^{\lambda}_{\lambda}-{\mathrm{Ric}_{\hat{g}}}^{\alpha \beta} h_{\alpha \beta}$. Since $\mathrm{Ric}_{\hat{g}} = O_2(r^{- \hat{\tau} -2})$ and $h= O_4(r^{- \tau})$, $R'_{\hat{g}}\cdot h=O_{2}(r^{-\tau-2})+O_2(r^{- ( \tau + \hat{\tau})-2} )=O_{2}(r^{-\tau-2})$ since $\hat{\tau}>0$.  We can work similarly on the $\mathcal{K}_{\nu \mu\alpha j}\hat{\nabla}^{\mu}\xi^{\alpha}$: since $h=O_4(r^{-\tau})$, $\mathcal{K}_{\nu \mu \alpha j}= O_4(r^{-\tau})$. Combined with the estimate on $\hat{\nabla}^{\mu}\xi^{\alpha}$,  we can assure that $\hat{\nabla}_{t}\left(\hat{\nabla}^{\nu}\left(\mathcal{K}_{\nu \mu\alpha j}\hat{\nabla}^{\mu}\xi^{\alpha}\right)\right) =O_1(r^{-(\tau+\hat{\tau})-3})$ .  Similarly, $\hat{\nabla}_{j}\left(\hat{\nabla}^{\nu}\left(\mathcal{K}_{\nu \mu\alpha t}\hat{\nabla}^{\mu}\xi^{\alpha}\right)\right)=O_1(r^{-(\tau+\hat{\tau})-3})$. Combining these estimates, on the initial hypersurface $M_0$ defined by the condition $t=0$, we finally find that
%modif in the equation: changed N^2 into N^2 - |X|^2
\begin{align}\label{ADM.5}
\begin{split}
-\mathcal{Q}_{tj}|_{t=0}&= \left( \frac{3}{2}\beta  + 2\alpha\right) \left( \partial_{j}\partial_{i}\partial_{i}g_{aa} - \partial_{j}\partial_{u}\partial_{i}g_{u i}\right) + \frac{\beta}{2}\left( \partial_{i}\ddot{g}_{ji} - \partial_{j}\ddot{g}_{ii} \right)   + \frac{\beta}{2} \left( \partial_{i}\partial_{j}\dot{X}_{i} - \partial_{i}\partial_{i}\dot{X}_{j} \right)  \\
&+ ( \beta  + 2\alpha)\partial_{j}\partial_{i}\partial_{i} \left[N^2 - |X|^2 \right] - ( \beta  + 2\alpha)\partial_{j}\ddot{g}_{ii}+ 2( \beta  + 2\alpha)\partial_{j}\partial_{i}\dot{X}_{i}+ O_1(r^{-\hat{\tau} -3})+O_1(r^{-(\hat{\tau}+\tau)-3}),
\end{split}
\end{align}
where we have denoted $\partial_t|_{t=0}$ by placing a dot over the corresponding quantities. We have also appealed to the shift-lapse decomposition associated to the asymptotic coordinates $(t,x^i)$ for the perturbed metric $g$: we have denoted $N$ the lapse function and $X$ the shift vector, thus {$g_{tt}=-N^2+|X|^2_{g}$} and $g_{ti}=X_i$. Therefore, we finally see that
\begin{align*}
-\mathcal{Q}(\hat{n},\hat{\nu})&=-\overbrace{\frac{1}{\hat{N}}}^{1+O_4(r^{-\hat{\tau}})}\mathcal{Q}_{tj}\hat{\nu}^{j} +\overbrace{\frac{1}{\hat{N}}}^{1+O_4(r^{-\hat{\tau}})} \mathcal{Q}_{ij}\overbrace{\hat{X}^i}^{O_4(r^{-\hat{\tau}})}\hat{\nu}^j
\end{align*}
can be rewritten as
%modif idem
\begin{align*}
\begin{split}
-\mathcal{Q}(\hat{n},\hat{\nu})|_{t=0}&=\left( \frac{3}{2}\beta  + 2\alpha\right) \left( \partial_{j}\partial_{i}\partial_{i}g_{aa} - \partial_{j}\partial_{u}\partial_{i}g_{u i}\right)\hat{\nu}^{j} + \frac{\beta}{2}\left( \partial_{i}\ddot{g}_{ji} - \partial_{j}\ddot{g}_{ii} \right)\hat{\nu}^{j}  + \frac{\beta}{2} \left( \partial_{i}\partial_{j}\dot{X}_{i} - \partial_{i}\partial_{i}\dot{X}_{j} \right)\hat{\nu}^{j} \\
&+ ( \beta  + 2\alpha)\partial_{j}\partial_{i}\partial_{i}\left[N^2-  |X|^2\right] \hat{\nu}^{j}- ( \beta  + 2\alpha)\partial_{j}\ddot{g}_{ii}\hat{\nu}^{j}+ 2( \beta  + 2\alpha)\partial_{j}\partial_{i}\dot{X}_{i}\hat{\nu}^{j} \\&+ O_1(r^{-\hat{\tau} -3}) +O_1(r^{-(\hat{\tau}+\tau)-3}).
\end{split}
\end{align*}
%modif
Here the expression can be simplified somewhat. Indeed since $X= O_4(r^{-\tau})$, a priori as soon as the shift decreases, $|X|^2$ is of higher order. Since it will be the case in all the following (even in section \ref{sectionconfflat} where the space metric and the lapse are allowed to grow, see remark \ref{remarkonthegrowth} below), we will then take the {following as the} working definition of the energy:

%The above motivates the following definition
\begin{defn}\label{Energydefn}
Let $(V,\bar{g})$ be an AM solution to $A_{\bar{g}}=0$ obeying the decay conditions of the type imposed in (\ref{Falloff-AM.2}). Then, we define its energy as
\begin{align}\label{ADM-Energy.1}
\begin{split}
\mathcal{E}_{\alpha,\beta}(\bar{g})&=\lim_{r\rightarrow\infty}\Big\{\left( \frac{3}{2}\beta  + 2\alpha\right)\int_{S^{n-1}_r} \left( \partial_{j}\partial_{i}\partial_{i}g_{aa} - \partial_{j}\partial_{u}\partial_{i}g_{u i}\right)\hat{\nu}^{j}d\omega_{r}   \\
&+ \frac{\beta}{2}\int_{S^{n-1}_r}\left( \partial_{i}\ddot{g}_{ji} - \partial_{j}\ddot{g}_{ii} \right)\hat{\nu}^{j}d\omega_r + \frac{\beta}{2} \int_{S^{n-1}_r}\left( \partial_{i}\partial_{j}\dot{X}_{i} - \partial_{i}\partial_{i}\dot{X}_{j} \right)\hat{\nu}^{j}d\omega_r \\
&+ ( \beta  + 2\alpha)\left(\int_{S^{n-1}_r}\partial_{j}\partial_{i}\partial_{i} N^2 \hat{\nu}^{j}d\omega_r - \int_{S^{n-1}_r}\partial_{j}\ddot{g}_{ii}\hat{\nu}^{j}d\omega_r + 2\int_{S^{n-1}_{r}}\partial_{j}\partial_{i}\dot{X}_{i}\hat{\nu}^{j}d\omega_r\right)  \Big\}
\end{split}
\end{align}
whenever the limit exists.
\end{defn}

\begin{remark}
It is important to stress that (\ref{ADM-Energy.1}) is expressed in terms of \textbf{rectangular end coordinates}. 
\end{remark}

\begin{remark}
\label{remarkonthegrowth}
Of course, a classical way to ensure that the limit $\mathcal{E}_{\alpha,\beta}(\bar{g})$ exists is to consider  both $g$ and $\hat{g}$ AM, that is $0< \tau < \hat{\tau}$. But this is not the only possibility. For instance, with $n=3$ and $\hat{\tau}=1$ (keeping in mind that we can then take $\hat{g}$ the Schwarzschild metric) and $\tau <0$ (to allow for a possible growth of the metric $g$) the remaining term becomes: $ O_1(r^{-\hat{\tau} -3}) +O_1(r^{-(\hat{\tau}+\tau)-3})=O_1(r^{-{\tau}-4})$.  After integration on a $2$-sphere, one has the following convergence condition for this remainder: $\tau >2$. In particular, in dimension $3$  one can take $g$ with \emph{linear growth}.
\end{remark}
\begin{remark}\label{Energy-rigidty}
Notice that from our previous analysis (\ref{ADM-Energy.1}) seems to be a reasonable notion of energy attached to solutions of (\ref{action}). This is the case since, around appropriate solutions $\hat{\bar{g}}$ of $A_{\bar{g}}=0$ which possess time-translational symmetries, if we impose appropriate asymptotics for such solutions, then (\ref{ADM.5}) becomes a canonical notion of conserved energy density. Thus, since (\ref{ADM-Energy.1}) stands for what, a priori, is the leading order contribution of (\ref{ADM.5}), it becomes a natural candidate as a notion of energy.
\end{remark}

\section{A look at static spherically symmetric $A$-flat spaces}
\label{sectionconfflat}

The natural next step in this project would be to produce examples of solutions to the fourth order equations where we can actually \textit{test} the fourth order energy presented above, and which are simple enough to provide a good intuition and interpretation for the results. This is the case in GR, where the most basic of those examples is provided by the Schwarzschild solution, which, despite its simplicity, yields  a good interpretation for the ADM energy and serves as a basis for more complicated constructions. Nevertheless, one can quickly see that things are not so simple in our case. In order to explain why, let us start by analysing static spherically symmetric solutions to our field equations $A_{\bar{g}}=0$. 

We will say that a space-time $(V,\bar{g})$ is static and spherically symmetric if $V=I \times \s^{n-1}\times \mathbb{R}$, where $I\subset \mathbb{R}_{+}$ stands for some interval (possibly $I=\mathbb{R}_{+}$); we take $t$ to be the time coordinate along the $\mathbb{R}$ factor and, we assume that the orthogonal group acts by isometries $(r,p,t)\mapsto (r,\mathcal{O}p,t)$. All this constrains our metric to have the form $\bar{g}=-A^2(r)dt^2+B^2(r)dr^2+f^2(r)d\Omega^2$, with $d\Omega^2$ the standard metric on the sphere. Finally, assuming that $f(r)$ is monotonically increasing we can make a admissible change of coordinates so that (after relabelling the radial coordinate) 
\begin{align*}
%\label{templstsphsol}
\bar{g}=-A(r)^2dt^2 + B^2(r)dr^2 + r^2d\Omega^2
\end{align*}
Since this terminology is not completely uniform across standard literature, we will refer the reader to Chapter IV in \cite{CB-book} for more details.  In this section we will consider the $n=3$ case. However, given the complexity of the fourth order problem, even in this restrained configuration, we will first take a simplifying ansatz and assume $A(r)B(r)=1$: the so-called Schwarzschild case. In order to take into account the already known solutions (the Schwarzschild-de Sitter solutions), we will employ a variation of the constant method. To make the tensorial calculations more palatable, we will make a computer assisted proof, using Maple to find simplified equations. We will then broaden our considerations to all the static spherically symmetric solutions in the conformally invariant case: $3\alpha + \beta=0$.

The authors wish to highlight that the considerations and the final classification of spherically symmetric solutions in the conformally invariant case can already be found in \cite{Fiedler-Schimming} (also seen later in  \cite{SSsols2}-\cite{SSsols4}) which approached the problem from  the Bach-flat angle. We write a slightly different proof specific to our approach and developed independently of the problem  for completeness. 
 
\subsection{Schwarzschild solutions, proof of theorem \ref{lemmahighlighted}}
As announced above, we here take the additionnal ansatz $A(r)B(r)=1$ (the so-called Schwarzschild case).  Further, since in the considered dimension $n=3$ we already know a family of solutions: the Schwarzschild-de Sitter  (or SAdS) metrics. We will thus employ  a radial variation of the constant on the mass in the Schwarzschild metric. Concretely we will set 
\begin{equation} \label{gM} g = \begin{pmatrix} -1+\frac{M(r)}{r} & 0&0&0 \\ 0 & \frac{1}{1-\frac{M(r)}{r}} &0&0 \\ 0&0 &r^2 & 0 \\ 0&0&0 & r^2 \sin(\theta)^2 \end{pmatrix},\end{equation}
and see what conditions on $M$ make $g$ solve $A=0$. All Schwarzschild solutions  can be cast in this form, seeking solutions in this form thus entails no loss of generality.   A Maple procedure (see figure \ref{FequationM}) yields that:
\begin{equation}
\label{equationM}
\frac{r^6}{(r-M(r))^2}A_{11} + r^4 A_{22} = (2 \alpha + \beta) r^3 \frac{d^4}{dr^4} M(r) - 4 \left( 3 \alpha + \beta \right)  \left( r \frac{d^2}{dr^2} M(r)- 2 \frac{d}{dr} M(r) \right)=0.
\end{equation}

\begin{figure}
\includegraphics[scale=0.5]{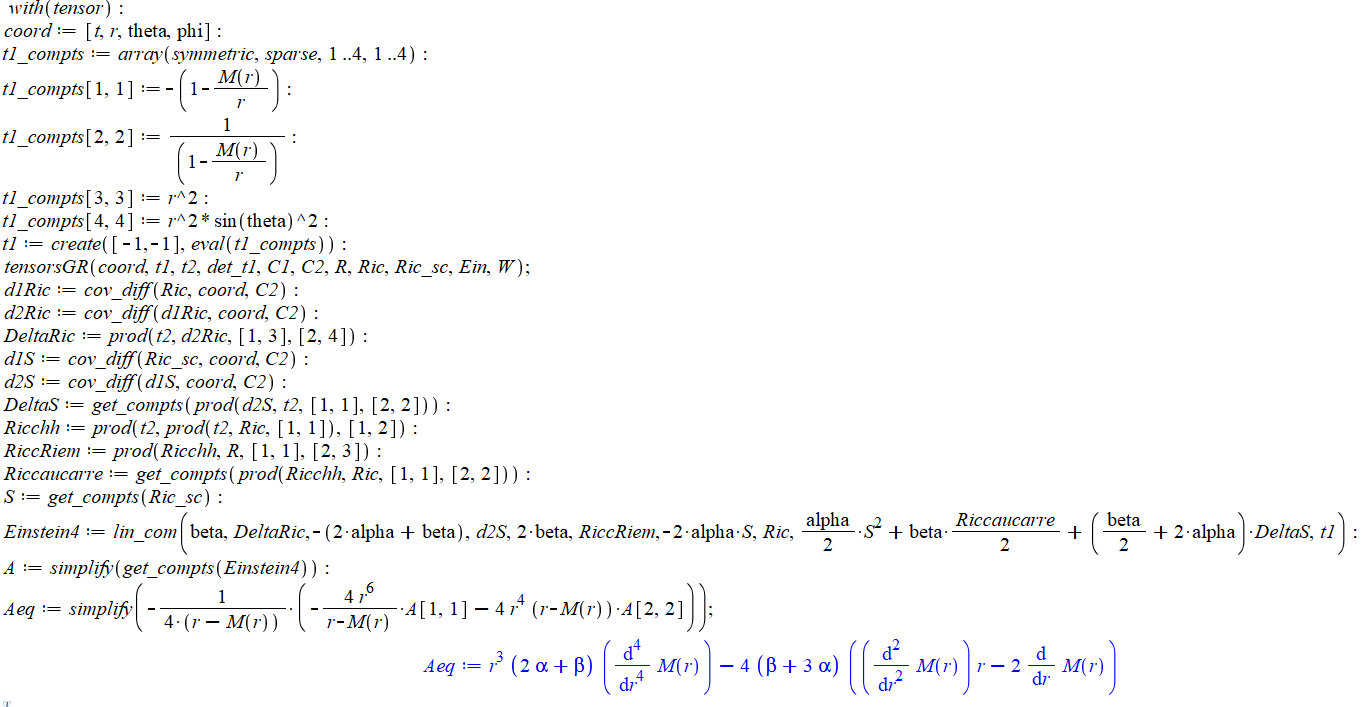}
\caption{An equation on $M$}
\label{FequationM}
\end{figure}

This equation once more highlights  the two special cases: $2 \alpha + \beta= 0$ and $3 \alpha + \beta = 0$. 
%We will rephrase \eqref{equationM} as
%$$C_{\alpha, \beta} \frac{d^4}{dr^4} M(r) -\left( \frac{1}{r^2} \frac{d^2}{dr^2} M(r)- \frac{2}{r^3} \frac{d}{dr} M(r) \right)=0,$$
%with 
%\begin{equation}
%\label{171220201215}
%C_{\alpha, \beta}=\frac{2 \alpha + \beta }{4 \left( 3 \alpha + \beta \right)}.
%\end{equation}
 
From this equation, one can deduce the $A$-flat metrics in the Schwarzschild form \eqref{gM}:
\begin{prop}\label{solscharzschgener}
Assume that $(V,g)$ is a static spherically symmetric $A$-flat space in Schwarzschild form. Assume further that $3\alpha + \beta \neq 0$. Then: 
\begin{itemize}
\item
If $\beta=0$, then $g$ is either a Schwarzchild-de Sitter  (or SAdS) metric or a Reissner-Nordstr\"om metric, meaning that:
$$g=-\left(1-\frac{m}{r} -\frac{\Lambda}{3}r^2\right)dt^2+\frac{1}{1-\frac{m}{r} -\frac{\Lambda}{3}r^2}dr^2 +r^2 d\Omega^2,$$
or
$$g=-\left(1-\frac{r_0}{r} -\frac{r_Q}{r^2}\right)dt^2+ \frac{1}{1-\frac{r_0}{r} -\frac{r_Q}{r^2}} dr^2 +r^2 d\Omega^2.$$
\item
If $\beta \neq0$ then $g$ is a Schwarzchild-de Sitter  (or SAdS) metric.
\end{itemize}
\end{prop}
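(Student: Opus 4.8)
The plan is to read \eqref{equationM} as a linear, equidimensional (Euler-type) fourth-order ODE for $M$, integrate it explicitly, and then feed the resulting candidates back into the remaining components of $A_{\bar g}=0$ to discard the spurious branches. First I would insert the monomial ansatz $M(r)=r^{k}$ into \eqref{equationM}; since every term scales homogeneously, this produces the indicial polynomial
\[
P(k)=(2\alpha+\beta)\,k(k-1)(k-2)(k-3)-4(3\alpha+\beta)\,k(k-1)+8(3\alpha+\beta)\,k .
\]
One checks directly that $P(0)=P(3)=0$, so the exponents $k=0$ (a constant in $M$, i.e. the Schwarzschild mass contributing $\sim 1/r$ to $g_{tt}$) and $k=3$ (which reconstructs the $\sim r^{2}$ term of $g_{tt}$, i.e. the de Sitter/AdS factor) are always present. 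Factoring these out leaves the quadratic factor $(2\alpha+\beta)(k^{2}-3k+q)$ with $q=-\tfrac{2(4\alpha+\beta)}{2\alpha+\beta}$, so the two remaining exponents are $k_{\pm}=\tfrac{3}{2}\pm\tfrac{1}{2}\sqrt{9-4q}$.

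The key numerical observation is that $P(-1)=8\beta$: the exponent $k=-1$ — the one producing the $1/r$ term in $M$, equivalently the Reissner--Nordstr\"om charge term $r_Q/r^{2}$ in $g_{tt}$ — is a root precisely when $\beta=0$, in which case $k^{2}-3k-4=(k-4)(k+1)$ and the four exponents are $\{-1,0,3,4\}$. For $\beta\neq0$ the exponents $k_{\pm}$ are in general irrational and distinct from $0$ and $3$, so the general solution of the \emph{linear} equation \eqref{equationM} is $M=c_{0}+c_{3}r^{3}+c_{+}r^{k_{+}}+c_{-}r^{k_{-}}$. When $2\alpha+\beta=0$ the equation degenerates to the second-order $rM''-2M'=0$, which at once gives $M=c_{0}+c_{3}r^{3}$ and hence a Schwarzschild--de Sitter (or SAdS) metric; the degenerate resonance $4\alpha+\beta=0$ (where $q=0$ and the quadratic roots collide with $0$ and $3$, introducing logarithmic modes) would be handled separately but yields the same conclusion.

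Because $\bar g$ is described by the single function $M$ while $A_{\bar g}=0$ carries, after the identity $\mathrm{div}_{\bar g}A_{\bar g}=0$, two independent scalar components, \eqref{equationM} (a combination of $A_{11}$ and $A_{22}$) is only half of the system. I would therefore substitute the four-parameter candidate into a second, genuinely \emph{nonlinear} component, say $A_{11}=0$ — again computer-assisted — and expand in powers of $r$. For generic $(\alpha,\beta)$ the exponents $0,3,k_{+},k_{-}$, together with the cross-products generated by the nonlinearity, produce distinct powers of $r$, so matching each coefficient to zero forces $c_{+}=c_{-}=0$, leaving $M=c_{0}+c_{3}r^{3}$, i.e. SdS/SAdS. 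When $\beta=0$ the same procedure kills the $r^{4}$ mode and leaves a relation allowing either $c_{3}$ or $c_{-1}$ but not both to survive; this dichotomy is explained intrinsically by the scalar curvature, since $c_{3}\neq0$ forces the constant value $R=4\Lambda\neq0$, whence $A=2\alpha R\bigl(\mathrm{Ric}-\tfrac14 R\,\bar g\bigr)$ (the reduction of \eqref{A-tensor} at $\beta=0$ with $R$ constant) vanishes only in the Einstein case (SdS), while $c_{-1}\neq0$ with $c_3=0$ gives the scalar-flat Reissner--Nordstr\"om solution, for which $A$ vanishes identically because $R\equiv0$.

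The main obstacle is precisely this last step: controlling the nonlinear component $A_{11}=0$ for arbitrary $(\alpha,\beta)$ with parameter-dependent, possibly irrational exponents $k_{\pm}$, and verifying cleanly that no resonance between the modes lets a spurious branch survive. This is where the computer algebra is indispensable. The cleanest way to organize the argument is to characterize the surviving families intrinsically — the SdS/SAdS metrics as the Einstein solutions, which are automatically $A$-flat in dimension $n=3$ by the observation preceding \eqref{031220201243}, and, for $\beta=0$, Reissner--Nordstr\"om as the scalar-flat solutions — and then to confirm, via the power-matching in $A_{11}=0$, that every other admissible combination of modes (in particular any nonzero $c_{\pm}$, the $r^{4}$ mode, or a mixed $c_3$--$c_{-1}$ branch) violates one of the remaining components.
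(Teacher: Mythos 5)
Your proposal is correct and follows essentially the same route as the paper's proof: solve \eqref{equationM} as an equidimensional linear ODE (your indicial roots $0,3,k_{\pm}$ are exactly the paper's exponents $f,g$, and your degenerate cases $2\alpha+\beta=0$ and $4\alpha+\beta=0$ coincide with the ones it treats separately), then substitute the four-parameter family into a remaining nonlinear component of $A$ and let computer-assisted coefficient matching force $c_{+}=c_{-}=0$ away from finitely many resonant ratios of $(\alpha,\beta)$ --- precisely the content of the paper's equations \eqref{5456}--\eqref{5457}, its table of exponent coincidences, and its complex-exponent discussion (which your phrase ``possibly irrational'' glosses over, though the independence argument is unchanged there). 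Your one genuine refinement is the intrinsic handling of $\beta=0$: once the $r^{4}$ mode is killed, $R$ is constant and \eqref{A-tensor} collapses to $A=2\alpha R\left(\mathrm{Ric}_{\bar{g}}-\tfrac{1}{4}R\,\bar{g}\right)$, so the dichotomy Einstein (SdS/SAdS) versus scalar-flat (Reissner--Nordstr\"om, with the charge term incompatible with $\Lambda\neq0$) falls out conceptually, where the paper instead verifies this by a further Maple computation.
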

\begin{proof}
Since  \eqref{equationM} is of order $2$ instead of $4$ when $2\alpha+\beta=0$, we deal with this case separately. Assuming $2\alpha+\beta=0$, \eqref{equationM} becomes $r \frac{d^2}{dr^2} M(r)- 2 \frac{d}{dr} M(r)=0$, meaning that only the $M(r)= m+\frac{\Lambda}{3}r^3$ yield potential $A$-flat metrics. These correspond to the Schwarzchild-de Sitter  (or SAdS) metrics, which are Einstein, and thus $A$-flat.

To avoid breaking the flow of the article, and because what remains of the proof is both simple in idea and complicated in execution (with many cases to consider), we will only sketch the proof here, and give the details in the appendix:
\begin{itemize}
\item
Solving \eqref{equationM} in the general case yields that $M= -m -\frac{\Lambda}{3}r^3 +C_1 r^{f(\alpha,\beta)} + C_2 r^{g(\alpha,\beta)}$. 
\item 
With such an $M$, computing the diagonal terms of $A$ shows they can be written as:
$$\begin{aligned}
\frac{\left(2 \alpha + \beta \right)^2 r }{24 \left( \beta + 4 \alpha \right) }A_{22}&= C_1\left( h^+_1 r^{t_1^+(\alpha, \beta) }+h^+_2r^{t_2^+(\alpha, \beta) }+h^+_3 r^{t_3^+(\alpha, \beta) } + h^+_4 r^{t_4^+(\alpha, \beta) } \right) \\&+  C_2\left( h^-_1 r^{t_1^-(\alpha, \beta) }+h^-_2 r^{t_2^-(\alpha, \beta) }+h^-_3 r^{t_3^-(\alpha, \beta) }+ h^-_4 r^{t_4^-(\alpha, \beta) } \right),
\end{aligned}
$$
where the $h^\pm_i$ are constants depending on $\alpha$, $\beta$, $m$, $\Lambda$, $C_1$ and $C_2$. The $4\alpha + \beta=0$ case will be dealt with separately.
\item
Outside of the \emph{finite} number of $[\alpha, \beta]$ configurations for which $t_i^\pm= t_j^\pm$ (and among them the $4\alpha + \beta=0$ case), one finds that $A_{11}=0$ implies that all the $h^\pm_i=0$, or that $C_1=C_2=0$. The former can only occur  when $\beta=0$ or $3\alpha + \beta=0$. The first case yields the  Reissner-Nordstr\"om  metric, while the second is outside the scope of this proposition. In the latter, we fall back on the Schwarzschild-de Sitter  (or SAdS) metric.
\item We treat the finite number of configurations left explicitely and independantly.
\end{itemize}
We once more refer the reader to the appendix for the detailed proof.
\qed
\end{proof}

\begin{remark}
That the Reissner-Nordstr\"om metric was a critical point of the $R^2$ energy was already featured in \cite{blackholesR2gravity}.
In order to explain the a priori singular Reissner-Nordstr\"om solution, one might conjecture that it is part of a family of $A$-flat metrics $g(\alpha, \beta)$  such that only the $g(\alpha, 0)$ are in the Schwarzschild form. 
\end{remark}

\begin{remark}
We do not detail the domains where the Schwarzschild-de Sitter (or SAdS) and the Reissner-Nordstr\"om are properly defined due to the abundance of litterature on these metrics. We will do it  when dealing with solutions to the conformally invariant configuration whose solutions are less commonly encountered.
\end{remark}

In the conformally invariant case, we know that the invariance group will generate more solutions. For instance, we know that any conformally Einstein metric is $A$-flat. We thus expect to find more solutions to the equations:

\begin{prop}\label{Classificationlemma}
The $A$-flat solutions in Schwarzschild form associated to the parameters choice $3\alpha+\beta=0$ can be classified into the following families:
\begin{enumerate}
\item $(V,\hat{g}(m,\Lambda,\mu))$, where
\begin{align}\label{Classification.1}
\begin{split}
\hat{g}(m,\Lambda,\mu) &=-f(r) dt^2 + \frac{1}{f(r)}dr^2 + r^2 g_{\s^2},\\
f(r)&\doteq 1  - 3m \mu - \frac{m}{r} - \mu (3m \mu -2) r - \frac{\Lambda}{3}r^2 ,
\end{split}
\end{align}
and $m$, $\Lambda$ and $\mu$ are integration constants. However in order to have admissible\footnote{by admissible we mean all metric which actually remain static spherically symmetric. In particular having $f\le 0$ means that the roles of $r$ and $t$ are exchanged and the space loses its static attribute.} metrics, these constants must be further constrained. We will detail these constraints when  $m>0$:
\begin{enumerate}
\item The choice $\Lambda\geq 0$ and $3m\mu\geq 2$ is not admissible;
\item For  $\Lambda<0$ and $3m\mu\geq 2$, there is always a solution of the form $V=\mathbb{R}\times (r_{*},\infty)\times \s^2$ for some $r_{*}>0$ depending on $(m,\Lambda,\mu)$.  Under these conditions, solutions of the form $V=\mathbb{R}\times (r_{-},r_{+})\times \s^2$ may be available, where $0<r_{-}<r_{+}<\infty$ depend on $(m,\mu,\Lambda)$;
\item For  $\Lambda>0$ and $0\leq 3m\mu<2$ there is a parameter range where $V=\mathbb{R}\times (r_{-},r_{+})\times \s^2$, and $0<r_{-}<r_{+}<\infty$ depend on $(m,\mu,\Lambda)$%;\footnote{This puts together cases (A.2) and (B.1).} 
\item For  $\Lambda\leq 0$ and $0\leq 3m\mu< 2$, with $\Lambda$ and $\mu$ not vanishing simultaneously, the only solutions are of the form $V=\mathbb{R}\times (r_{*},\infty)\times \s^2$ for some $r_{*}>0$ depending on $(m,\Lambda,\mu)$; %Additionally, if $\mu<0$, solutions of the form of $(c)$ can be available;%\footnote{This puts together the cases (A.4),(B.2) and (B.4).}
\item $\mu=\Lambda=0$ yields the Schwarzschild space;
\item For  $\Lambda> 0$ and $0\leq 3m\mu< 2$, the only possible solutions are as in $(c)$;
\item For  $\mu < 0$ and $\Lambda\geq 0$, the only available solutions are of the form of $(c)$;
\item For  $\mu < 0$ and $\Lambda< 0$, the situation is the same as in $(b)$.
\end{enumerate}
\item When $m=0$, then we fall into the following possible families $(V,\hat{g}_0=\hat{g}(0,\Lambda,\mu))$, with
	\begin{enumerate}
	\item There is a first family of the form
	\begin{align}\label{Classification.3}
	\begin{split}	
	g_0&= -N(r)dt^2 + N^{-1}(r)dr^2+r^2g_{\s^2},\\
	N(r)&=1+2\mu r - \frac{\Lambda}{3}r^2.
	\end{split}
	\end{align}
		\begin{enumerate}
		\item If $\mu\geq 0$ and $\Lambda>0$, then $r\in [0,r_{+})$, with $r_{+}>0$ depending on $\mu,\Lambda$;
		\item If $\mu\geq 0$ and $\Lambda\leq 0$, then $r\in [0,\infty)$;
		\item If $\mu<0$ and $\Lambda\geq 0$, then the solutions are are as in $(2.a.i)$; 
		\item If $\mu<0$ and $\Lambda<0$, there is always a solution of the form $r\in (r_{*},\infty)$ and, depending on $\mu,\Lambda$ there can solution with $r$ in some bounded interval;
		\end{enumerate}
\item There is a second family of the form
\begin{align}\label{Classification.2}
\begin{split}
\hat{g}_0&= - h(r)dt^2 + \frac{1}{h(r)}dr^2+  r^2g_{S^2},\\
h(r)&=-\frac{\Lambda}{3}r^2-\mu r -1,
\end{split}
\end{align}
with the following restrictions for the integration constants $\Lambda$ and $\mu$:
\begin{enumerate}
\item If $\Lambda=0$, then we must have $\mu<0$ and $V=\mathbb{R}\times (\frac{1}{|\mu|},\infty)\times \s^2$.
\item If $\Lambda\neq 0$, then the parameters are restricted to $\mu^2-\frac{4\Lambda}{3}>0$. Furthermore, if $\Lambda>0$, then the choices $\mu\geq 0$ are not admissible, while for $\mu<0$ we find $V=\mathbb{R}\times (r_{-},r_{+})\times \s^2$, for some $0<r_{-}<r_{+}<\infty$ which depend on $\Lambda$ and $\mu$. Finally, if $\Lambda<0$, then there is some $r_{*}>0$ such that $V=\mathbb{R}\times (r_{*},\infty)\times \s^2$.
\end{enumerate}
\end{enumerate}
\end{enumerate}
Furthermore, the metrics (\ref{Classification.1}),((\ref{Classification.3})) and \ref{Classification.2}) are Einstein (wherever they may be defined) iff $\mu=0$. On the other hand, these metrics are almost conformally Einstein globally. More precisely there exists at most a radius $r_s$ such that both for $r<r_s$ and $r>r_s$, these metrics are conformally Einstein. 
\end{prop}

\begin{remark}
We set ourselves in the $m\ge  0$ case to avoid doubling the conditions, and to develop the natural extension of the GR case. 
\end{remark}
\begin{remark}
We limited ourselves to this choice of admissible metrics (those that remain static) for their relevance regarding our study of the fourth order conserved quantity in subsection \ref{subsection32}. We refer the reader to  \cite{SSsols4} for more details on what may happen when the roles of $t$ and $r$ are switched. 
\end{remark}
%\subsubsection{ \bf Case 2 : $3\alpha +\beta= 0$}
\begin{proof}
\label{solvariatconst}
In this case, \eqref{equationM} yields a necessary condition $$\frac{d^4}{dr^4} M(r)=0,$$ which is satisfied by any 
$$M(r) = m + \frac{\Lambda}{3} r^3 + C_1r^2 +C_2r.$$ 
With such a $M$ one can check (see figure \ref{condC1C2}) that $A=0$ if and only if:
$$3C_1m - C_2^2+2C_2=0.$$
\begin{figure}[!h]
\includegraphics[scale=0.5]{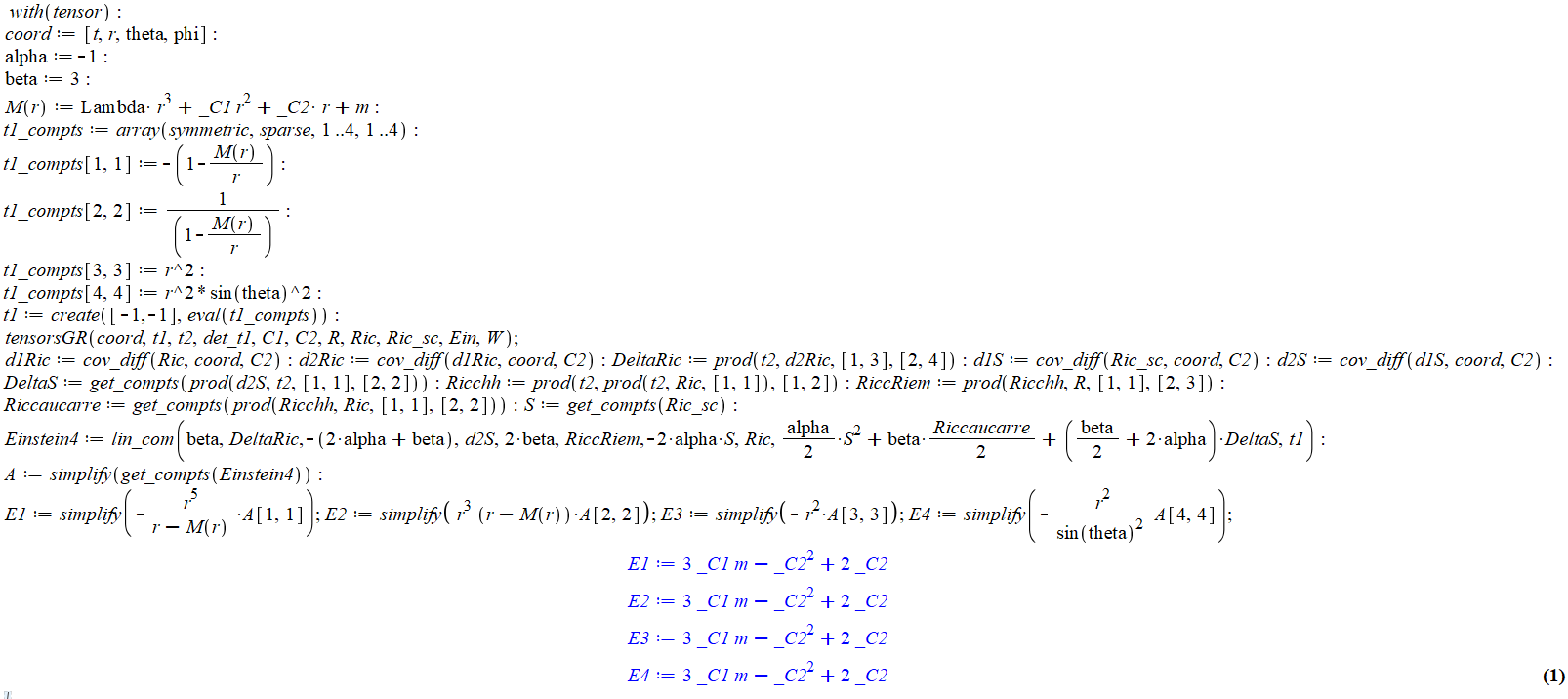}
\caption{The necessary and sufficient condition for $A=0$.}
\label{condC1C2}
\end{figure}
Consequently, we divide the results into the following two large families of cases:
\begin{enumerate}
\item[1)]  $m\neq 0$
\end{enumerate}

In this case we can re-parametrize the solutions by choosing $C_2= 3m \mu$ and $C_1=\mu (3m \mu -2 )$, and any metric
\begin{align}\label{SSmetric.1}
\begin{split}
\hat{g}(m,\Lambda,\mu) =&-f(r) dt^2 + \frac{1}{f(r)}dr^2 + r^2 d\theta^2 + r^2 \sin^2 \theta d\phi^2
\end{split}
\end{align}
with 
\begin{align*}
f(r)\doteq 1  - 3m \mu - \frac{m}{r} - \mu (3m \mu -2) r - \frac{\Lambda}{3}r^2 
\end{align*}
is a critical point of $E_{-\lambda,3\lambda}$. Clearly, the above expression is only admissible if $f(r)>0$. The combination of parameters that fulfil this condition can be analysed straightforwardly and results in the classification provided in the statement of the Lemma. This can be lengthy to do explicitly, but since this analysis does not present any technical or conceptual difficulty, it will be left for the reader. % thus, let us analyse the consequences of this restrictions depending on the values of $\mu$ and $\Lambda$.

\bigskip
\begin{enumerate}
\item[2)] $m=0$.
\end{enumerate}
In this case $C_1$ is a free constant and $C_2=0$ or $2$. The first is already taken into account with $\hat{g}(0, \Lambda, \mu)$, with $C_1 = -2 \mu$. That is, in this case, the solutions are given by
\begin{align*}%\label{SSmetric.2}
\begin{split}
\hat{g}(0,\Lambda,\mu) =&-\left(1 + 2\mu r - \frac{\Lambda}{3}r^2 \right) dt^2 + \frac{1}{1 + 2\mu r - \frac{\Lambda}{3}r^2}dr^2 + r^2 d\theta^2 + r^2 \sin^2 \theta d\phi^2.
\end{split}
\end{align*}
Again, all the admissible choices of $\mu$ and $\Lambda$ as well as the corresponding admissible space-time structures $V$ for these cases follow straightforwardly from the analysis of the condition $N(r)=1 + 2\mu r - \frac{\Lambda}{3}r^2>0$% have been contemplated in the analysis of item (B) above. 

Let as now consider the second case, that is $C_2=2$, and introduce 
\begin{align}\label{SSsolution.2}
\hat{g}_0(\Lambda,\mu) =&-\left( -1 - \frac{\Lambda}{3}r^2 - \mu r \right) dt^2 + \frac{1}{ -1 - \frac{\Lambda}{3}r^2 - \mu r} dr^2 + r^2 d\theta^2 + r^2 \sin^2 \theta d\phi^2.
\end{align}
The metrics $\hat{g}_0(\Lambda,\mu)$  are Lorentz metrics of signature $(-,+,+,+)$ when $h(r)\doteq -1 - \frac{\Lambda}{3}r^2 - \mu r \ge0$, which imposes the following compatibility conditions for the parameters
\begin{enumerate}
\item[2.1)] If $\Lambda\neq 0$, then 
\begin{equation} \label{relat} 
\mu^2 - \frac{4\Lambda}{3} > 0. 
\end{equation}
Indeed, the above quantity is obviously positive if $\Lambda<0$, and is the discriminant of $h$, and must thus be positive for $h$ to have positive values whenever $\Lambda >0$.
\item[2.2)] If $\Lambda=0$, then $\mu< 0$ since the non-negative cases would produce metrics which are not static. In this case, we arrive at the compatibility condition
\begin{align}
r>\frac{1}{|\mu|}.
\end{align}
\end{enumerate}
Let us now consider the different possibilities when $\Lambda\neq 0$.
\begin{itemize}
\item $\Lambda>0$.
\end{itemize}
Then, the two roots for $h(r)=0$ are given by
\begin{align}\label{roots}
r_{\pm}=\frac{1}{2}\left( -\frac{3\mu}{\Lambda}\pm\sqrt{\frac{9\mu^2}{\Lambda^2} - \frac{12}{\Lambda}} \right).
\end{align}
Because of (\ref{relat}), the above is well-defined and this implies that if $\mu>0$, then  $r_{\pm}<0$ and therefore $h(r)<0$ for all $r>0$. We must conclude that this case is not admissible. We still need to analyse what happens when $\mu<0$. In this last case, both $r_{\pm}>0$ and $f(r)<0$ for $r$ large enough. Thus, we find some interval $(r_{-},r_{+})$ where (\ref{SSsolution.2}) well-defined.

\begin{itemize}
\item $\Lambda<0$.
\end{itemize}
Analysing (\ref{roots}), we can see that for any admissible value of $\mu$ we always get two distinct roots $r_{-}<0$ and $r_{+}>0$. This implies that (\ref{SSsolution.2}) is well-defined for all $r>r_{+}$.

\bigskip

% A question one must now ask is: are those solutions already within the GR frame, i.e. are they Einstein. In addition, in the conformally invariant $3\alpha +\beta=0$ case, we know that any conformally Einstein metric is $A$-flat (equivalently Bach flat).  One can thus extend this question to "Are they conformally Einstein?".

 Let us first show $\hat{g}(m,\Lambda,\mu)$ are not Einstein metrics if $\mu$ is not $0$. Indeed it can be checked that their Einstein tensor satisfies:
$$\begin{aligned}
\left(G_\kappa^\lambda \right) = \Lambda \mathrm{Id} + \begin{pmatrix} 2\mu \frac{3m\mu- 2}{r} + \frac{3m\mu}{r^2} &0&0&0 \\ 0& 2\mu \frac{3m\mu- 2}{r} + \frac{3m\mu}{r^2}&0&0 \\ 0&0& 2\mu \frac{3m\mu- 2}{r} &0 \\ 0&0&0 & 2\mu \frac{3m\mu- 2}{r} \end{pmatrix}.
\end{aligned}$$
Similarly one can compute the Einstein tensor of the $\hat{g}(0, \Lambda, \mu)$ and see:
$$\begin{aligned}
\left(G_\kappa^\lambda \right) = \Lambda \mathrm{Id} + \begin{pmatrix} \frac{2 \mu}{r} + \frac{2}{r^2} &0&0&0 \\ 0& \frac{2 \mu}{r} + \frac{2}{r^2}&0&0 \\ 0&0&\frac{\mu}{r} &0 \\ 0&0&0 &\frac{\mu}{r}  \end{pmatrix}.
\end{aligned}$$

  We will now prove that $\left(\hat{g}(m,\Lambda,\mu), \hat{g}_0 ( \Lambda, \mu ) \right)$ are conformally Einstein. Let us begin with the analysis of $\hat{g}(m,\Lambda,\mu)$ and, first first of all, notice that if $\mu=0$ we fall in one the (B)-type cases labelled above and, in these cases, we can see directly that $\hat{g}(m,\Lambda,0)$ is a S(A)dS metric. Thus, let us now assume $\mu\neq 0$ and consider  a metric $\tilde g = \left(\frac{1}{1+ \mu r} \right)^2 \hat{g}$, which is well-defined in the intersection of the domain where $\hat{g}$ is well-defined with $r\neq -\frac{1}{\mu}$.  Then:
 $$\begin{aligned}
\tilde g &= 
-\frac{ 1- \frac{m}{r} - \frac{\Lambda}{3}r^2 - 3m \mu -\mu (3m \mu -2) r }{ \left( 1+ \mu r\right)^2}dt^2 + \frac{dr^2}{\left( 1+ \mu r\right)^2\left( 1- \frac{m}{r} - \frac{\Lambda}{3}r^2 - 3m \mu -\mu (3m \mu -2) r  \right)}  \\&+ \left( \frac{r}{1+ \mu r } \right)^2 d\theta^2 + \left( \frac{r}{1+\mu r } \right)^2 \sin \theta^2 d\phi^2.
\end{aligned}
$$
Setting $R = \frac{r}{1 + \mu r }$, we find that $dr = \frac{dR}{ \left(1- \mu R \right)^2} = dR \left( 1+ \mu r \right)^2$ and thus
 $$\begin{aligned}
\tilde g &= 
-\frac{ 1- \frac{m}{r} - \frac{\Lambda}{3}r^2 - 3m \mu -\mu (3m \mu -2) r }{ \left( 1+ \mu r\right)^2}dt^2 + \frac{1}{\frac{ 1- \frac{m}{r} - \frac{\Lambda}{3}r^2 - 3m \mu -\mu (3m \mu -2) r }{ \left( 1+ \mu r\right)^2}} dR^2  \\&+ R^2 d\theta^2 + R^2 \sin \theta^2 d\phi^2.
\end{aligned}$$
We can then compute $1+ \mu r = 1+ \frac{\mu R}{1-\mu R} = \frac{1}{1-\mu R}$ and thus
$$
\begin{aligned}
\frac{ 1- \frac{m}{r} - \frac{\Lambda}{3}r^2 - 3m \mu -\mu (3m \mu -2) r }{ \left( 1+ \mu r\right)^2}&= \left( 1- \mu R \right)^2 \left( 1 - \frac{m(1-\mu R )}{R} - \frac{ \Lambda}{3} \frac{R^2}{ \left( 1- \mu R \right)^2} -  3m \mu  \right. \\& \left.- \mu (3m \mu -2 ) \frac{R}{1-\mu R }\right) \\
%&= 1 - 2 \mu R + \mu^2 R^2 - \frac{m}{R}+ 3 \mu m - 3 m \mu^2 R + m \mu^3 R^2  - \frac{\Lambda}{3} R^2 \\
%&- 3 m  \mu + 6 m \mu^2R - 3m \mu^3 R^2- \mu( 3 m \mu - 2) R \\
%&+ \mu^2( 3 m \mu - 2) R^2 \\
%&= 1 - \frac{m}{R} + 3 \mu m- 3 \mu m \\
%&+ R \left( - 2 \mu - 3 \mu^2 m + 6 \mu^2 m- 3 \mu^2 m +2 \mu \right)  \\&+R^2 \left( - \frac{\Lambda}{3} + \mu^2 + \mu^3 m - 3  m \mu^3 + 3 m \mu^3 - 2 \mu^2 \right)\\
&= 1- \frac{m}{R} - \frac{ \Lambda - 3 \mu^2 \left( \mu m -1 \right) }{3} R^2.
\end{aligned}$$
This implies that $\tilde g$ is a Schwarzschild-de Sitter metric of mass $m$ and cosmological constant $\tilde \Lambda = \Lambda - 3 \mu^2 \left( \mu m -1 \right)$.

%\subsubsection{ $\hat{g}_0(\Lambda,\mu)$ is conformally Einstein }
Finally, let us analyse the conformal family of $\hat{g}_0(\Lambda,\mu)$ by considering the metrics $\tilde g_0 = \frac{1}{r^2} \hat{g}_0$.  Then:
$$ \begin{aligned} \tilde {g}_0(\Lambda,\mu) =&-\frac{ -1 - \frac{\Lambda}{3}r^2 - \mu r }{r^2} dt^2 + \frac{1}{r^2 \left( -1 - \frac{\Lambda}{3}r^2 - \mu r \right)} dr^2 + d\theta^2 +\sin^2 \theta d\phi^2. \end{aligned}$$
Setting $dx  = \frac{dr}{r \sqrt{ -1 - \frac{\Lambda}{3}r^2 - \mu r }}$, i.e.$x = \arctan \left(  -\frac{ \mu r +2 }{ 2 \sqrt{ - \frac{\Lambda}{3} r^2 - \mu r - 1 }} \right)$ we find that 
 $$\begin{aligned}
\tilde g_0 &= -\frac{1}{4} \left( \mu^2- \frac{4 \Lambda}{3} \right) \cos^2x dt^2 + dx^2 +d\theta^2 + \sin^2\theta d\phi^2.
\end{aligned}$$ 
Taking $T = \frac{1}{2} \sqrt{ \mu^2- \frac{4 \Lambda}{3} }t$, we simplify 
\begin{align}\label{cylindricalmetric}
\tilde g_0 &= - \cos^2 x dT^2 + dx^2 + d\theta^2 + \sin^2 \theta d\phi^2
\end{align}
which is periodic and well-defined in the above coordinates. Since if we denote $l(r)= -\frac{ \mu r +2 }{ 2 \sqrt{ - \frac{\Lambda}{3} r^2 - \mu r - 1 }} $, we can compute $l'(r)=\frac{\left( \mu^2- \frac{4\Lambda}{3} \right) r}{4 ( - \frac{\Lambda}{3} r^2 - \mu r - 1 )^{\frac{3}{2}}}$ and deduce that the change of variable is well defined whenever $\tilde {g}_0$ is.
% for $x\in ()$ and any $k\in\mathbb{Z}$. 
Appealing to a warped-product decomposition for the Ricci tensor, straightforwardly we see that $g_0$ is Einstein in these domains.
\qed
\end{proof}

\begin{remark}
As has been mentioned in introduction of this section, this classification (in the conformally invariant case) had already been found (see \cite{Fiedler-Schimming}, or \cite{SSsols2} and \cite{SSsols3} which refound it independantly). In fact, the metrics $\hat{g}$ are known as the FSMK solutions. %(see \cite{SSsols3}, even though they seem to predate the works of Mannheim and Kazanas according to \cite{Fiedler-Schimming}).
\end{remark}

\begin{remark}
It is interesting to compare these results to \cite{Fiedler-Schimming} (see also \cite{SSsols4}). In it they employ a $2\times 2$ decomposition of the metric and a parametrization by the scalar curvature $R$ of the first metric. When $R\neq \pm2$, they recover the Mannheim-Kazanas solutions, with two outliers when $R=2$ and $R=-2$. The $R=2$ space corresponds to the cylindrical metric \eqref{cylindricalmetric} while the $R=-2$ would be the other cylindrical metric (obtained when  $\mu^2-\frac{4\Lambda}{3}\le 0$) which we did not consider, since it is no longer static.
\end{remark}

Let us now highlight that the above classification presents exterior solutions defined up to infinity in several cases $(1.b),(1.d),(2.a.ii),(2.a.iv),(2.b.i)$ and $(2.b.ii)$, besides from the Schwarzschild solution. All these extra solutions are of the form of the $FSMK$-solutions of conformal gravity. In particular, let us focus on cases $(1.d),(2.a.ii)$ and $(2.b.i)$ which admit such exterior solutions with $\Lambda=0$ for $0<\mu<\frac{2}{3m}$, $\mu\geq 0$ and $\mu<0$ respectively. All these cases can be summarised by considering a solution of the form $(\overset{\circ}{V}=\mathbb{R}\times(r_{*},\infty)\times \s^2,\overset{\circ}{\bar{g}})$ with
\begin{equation}\label{FSMKsol.1}
\begin{aligned}
\overset{\circ}{\bar{g}}&=-N^2(r)dt^2+\frac{1}{N(r)^2}dr^2+r^2g_{\s^2},\\
N^2(r)&=c_1 - \frac{m}{r} + c_2r,
\end{aligned}
\end{equation}
where $c_1,c_2$ are positive constants  and $m\geq 0$. These types of solutions have been extensively analysed within the context of conformal gravity when analysing the rotation curves of galaxies. In particular, in this context, the presence of $c_2$ is used to explain the flattening of the rotation curves, which is deviation from typically expected results and is accounted in standard astrophysics via an appeal to dark-matter. Clearly, if this is to be an interpretation for these solutions, their extrapolation up to infinity is an artefact of an abstraction procedure, which is, nevertheless, useful for our purposes as we shall see shortly.

Notice that the above solution would represent a perturbation of Minkowski that actually grows at infinity. This falls in line with previous comments corning the fact that natural asymptotics for these higher-order problems may fail to follow usual intuition from GR. In particular, for the purposes of analysis of the fourth order energy $\mathcal{E}_{\alpha,-3\alpha}(\bar{g})$, the above solutions prove to be useful, as can be seen from theorem \ref{lemmahighlighted} which we recall here: 

\begin{theo}\label{FSMKlemma}
The fourth order energy of the solution $(\overset{\circ}{V},\overset{\circ}{\bar{g}})$ is well-defined and given by
\begin{align*}
\mathcal{E}_{\alpha,-3\alpha}(\overset{\circ}{\bar{g}})=8\pi\alpha c_2.
\end{align*}
\end{theo}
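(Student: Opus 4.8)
The plan is to evaluate the defining formula (\ref{ADM-Energy.1}) directly on $\overset{\circ}{\bar{g}}$ in the rectangular end coordinates $x^i=r\,\hat n^i$, where $r=|x|$ is the areal radial coordinate appearing in (\ref{FSMKsol.1}). First I would exploit that $\overset{\circ}{\bar{g}}$ is static and diagonal: the metric is $t$-independent and has no $dt\,dx^i$ cross terms, so the shift vanishes ($X\equiv 0$) and $\dot X=\ddot g=0$. Hence every term in (\ref{ADM-Energy.1}) built from time derivatives or the shift drops out, and after setting $n=3$ and $\beta=-3\alpha$ (so that $\tfrac32\beta+2\alpha=-\tfrac52\alpha$ and $\beta+2\alpha=-\alpha$) only two surface integrals survive:
\begin{align*}
\mathcal{E}_{\alpha,-3\alpha}(\overset{\circ}{\bar{g}})=\lim_{r\to\infty}\Big\{-\tfrac52\alpha\int_{S^{2}_r}\big(\partial_j\partial_i\partial_i g_{aa}-\partial_j\partial_u\partial_i g_{ui}\big)\hat\nu^j\,d\omega_r-\alpha\int_{S^{2}_r}\partial_j\partial_i\partial_i N^2\,\hat\nu^j\,d\omega_r\Big\}.
\end{align*}

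Second, I would record the coordinate form of the spatial metric. Writing $dr^2=\tfrac{x^ix^j}{r^2}dx^idx^j$ and $r^2 g_{\s^2}=(\delta_{ij}-\tfrac{x^ix^j}{r^2})dx^idx^j$ gives $g_{ij}=\delta_{ij}+P(r)\tfrac{x^ix^j}{r^2}$ with $P\doteq N^{-2}-1$. The key simplification is that every relevant quantity is radial, so that for any radial $F$ one has $\partial_j F\,\hat\nu^j=F'(r)$ and $\int_{S^2_r}F'(r)\,d\omega_r=4\pi r^2 F'(r)$. The lapse integral is then immediate: since $c_1$ is constant and $-m/r$ is harmonic, $\Delta N^2=2c_2/r$, hence $\partial_j\partial_i\partial_i N^2\,\hat\nu^j=-2c_2/r^2$ and the integral equals $-8\pi c_2$ for every $r$; multiplied by $-\alpha$ this already produces the claimed value $8\pi\alpha c_2$.

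Third, I would show the remaining purely spatial integral contributes nothing in the limit. Using $g_{aa}=3+P$ and computing the double divergence $\partial_u\partial_i\big(P\tfrac{x^ux^i}{r^2}\big)$, one finds that $\partial_i\partial_i g_{aa}-\partial_u\partial_i g_{ui}$ is the radial function $-2P'/r-2P/r^2$ (equivalently, minus the linearized scalar curvature of the spatial metric about the flat metric). Expanding $P=-1+\tfrac{1}{c_2 r}+O(r^{-2})$ from $N^2=c_2 r+c_1-m/r$, the decisive point is that a pure $c/r$ term in $P$ contributes zero to $-2P'/r-2P/r^2$, so the only surviving leading term comes from the constant $-1$, giving $-2P'/r-2P/r^2=2/r^2+O(r^{-4})$. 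Applying $\partial_j(\cdot)\hat\nu^j$ and integrating yields $4\pi r^2\frac{d}{dr}\big(2/r^2+O(r^{-4})\big)=-16\pi/r+O(r^{-3})\to0$. Collecting the two contributions gives $\mathcal{E}_{\alpha,-3\alpha}(\overset{\circ}{\bar{g}})=8\pi\alpha c_2$, and the same estimate shows the defining limit exists, so the energy is well defined.

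The main obstacle is this third step: because these are weak (in fact growing) asymptotics, $P$ tends to $-1$ rather than to $0$, so one must verify carefully that the spatial integrand does not diverge. This hinges on the exact cancellation of the $O(r^{-3})$ contribution coming from the $1/r$ part of $P$ against that of the constant part, leaving an integrand of order $r^{-3}$ whose flux decays like $r^{-1}$. Tracking these orders honestly — and confirming that the apparent $O(r^{-3})$ divergence truly cancels rather than merely appearing to — is the one place where the routine computation must be carried out with care.
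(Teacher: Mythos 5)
Your proposal is correct and follows essentially the same route as the paper's own proof: the same reduction of Definition \ref{Energydefn} to the two static terms, the same lapse computation ($\Delta_e N^2=2c_2/r$, so the lapse flux equals $-8\pi c_2$ for every $r$, giving $8\pi\alpha c_2$), and the same conclusion that the purely spatial flux vanishes in the limit. The only difference is in that last step, where the paper replaces your explicit computation by the soft observation that $g_{ij}-\delta_{ij}=\frac{x^ix^j}{r^2}\frac{1-N^2}{N^2}=O_{\infty}(r^0)$, so any third derivative is $O(r^{-3})$ and the flux is $O(r^{-1})\rightarrow 0$; this shows that the ``exact cancellation'' you single out as the delicate point, while genuine, is not actually needed for the limit to vanish (and note it occurs internally to the $1/r$ part of $P$, between its $-2P'/r$ and $-2P/r^2$ contributions, rather than against the constant part as your closing paragraph states).
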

\begin{proof}
The energy associated to a static solution is given by
\begin{align}\label{FSMK-energy.1}
\mathcal{E}_{\alpha,-3\alpha}(\bar{g})&=-\alpha\lim_{r\rightarrow\infty}\Big\{\frac{5}{2}\int_{\s^2_r}\left( \partial_{jii}g_{aa} - \partial_{jui}g_{ui} \right)\hat{\nu}^jd\omega_r + \int_{\s^2_r}\partial_j\partial_i\partial_iN^2\hat{\nu}^jd\omega_r \Big\}.
\end{align}
In the case that $\bar{g}$ is spherically symmetric, we find that
\begin{align*}
\hat{\nu}^j\partial_j\partial_{i}\partial_iN^2=\frac{\partial}{\partial r}\left(\Delta_{e}N^2 \right)=\frac{d}{dr}\left(\frac{1}{r^2}\frac{d}{dr}\left( r^2\frac{d N^2(r)}{dr} \right)\right),
\end{align*}
where $\Delta_e$ stands for the negative Euclidean Laplacian. Plugging $N^2(r)=c_1-\frac{m}{r}+c_2r$ in the above expression gives $\frac{d}{dr}\left(\Delta_{e}N^2 \right)=-2\frac{c_2}{r^2}$, which implies
\begin{align*}
\int_{\s^2_r}\partial_j\partial_i\partial_iN^2\hat{\nu}^jd\omega_r=-2\omega_2 c_2.
\end{align*}

On the other hand, computing the contributions of the first term in (\ref{FSMK-energy.1}) can be more difficult, since the above expression should be computed in asymptotic Cartesian coordinates. This can be done straightforwardly, nevertheless, we can save some computations proceeding as follows. 

Let $e$ denote the euclidean metric defined near infinity in $\overset{\circ}{M}=(r_{*},\infty)\times \s^2$, so that $e=dr^2+r^2g_{\s^2}$ in the same spherical coordinates as in (\ref{FSMKsol.1}). Let us denote by $\{x^i\}_{i=1}^3$ the Cartesian coordinates associated to these spherical coordinates and use $\{y^j\}_{j=1}^3$ to denote the spherical coordinate system $(r,\theta,\phi)$. Then, let us denote by $\delta_{ij}=e(\partial_{x^i},\partial_{x^j})$, $g_{ij}=g(\partial_{x^i},\partial_{x^j})$, $\tilde{e}_{ij}=e(\partial_{y^i},\partial_{y^j})$ and $\tilde{g}_{ij}=g(\partial_{y^i},\partial_{y^j})$ the matrices associated to $g$ and $e$ is both coordinate systems. Then, clearly, we have that $\partial_{x^k}g_{ij}=\partial_{x^k}(g_{ij}-e_{ij})$. Also, it holds that
\begin{align*}
g_{ij}-e_{ij}=\frac{\partial y^a}{\partial x^i}\frac{\partial y^b}{\partial x^j}\left( \tilde{g}_{ab} - \tilde{e}_{ab} \right)=\frac{\partial r}{\partial x^i}\frac{\partial r}{\partial x^j}\frac{1-N^2}{N^2}=\frac{x^ix^j}{r^2}\frac{1-c_1+\frac{m}{r}-c_2r}{c_1-\frac{m}{r}+c_2r},
\end{align*}
where we have used that $\tilde{g}_{ab} - \tilde{e}_{ab}=\mathrm{diag}\left(\frac{1-N^2}{N^2},0,0\right)$. The above expression clearly implies that $g_{ij}-e_{ij}=O_{\infty}(r^0)$, and therefore $\partial_{jii}g_{aa} - \partial_{jui}g_{ui}=O(r^{-3})$ implying that
\begin{align*}
\int_{\s^2_r}\left( \partial_{jii}g_{aa} - \partial_{jui}g_{ui} \right)\frac{x^j}{r}d\omega_r=O(r^{-1})\xrightarrow[r\rightarrow\infty]{} 0.
\end{align*}
Putting all of the above together implies the desired result.
\qed
\end{proof}

\begin{remark}
As was mentioned in remark \ref{remarkonthegrowth},  applying the previous reasoning requires a strictly below quadratic growth. However, since when considering $\hat{g}$, the $\Lambda$ is not merely quadratic but exactly $r^2$, one can extend the above proposition to the whole family $\hat{g}$ when it is well defined at infinity.
\end{remark}

%\bigskip
\subsection{$A$ flat spherically symmetric spaces in the conformally invariant case} 
One can use the conformal invariance to extend the previous discussion to all $A$ flat spherically symmetric spaces. First, let us introduce the following terminology. We will say that a space-time $(V,\bar{g})$ is \textbf{almost conformally Einstein} if  there exists at most a radius $r_s$ such that both for $r<r_s$ and $r>r_s$, these metrics are conformally Einstein.  Similarly, we will say that another Lorentzian metric $\bar{g}^{*}$ is almost conformal to $\bar{g}$ if there is some conformal transformation between them which is defined almost everywhere. In this context, we have the following theorem.

\begin{thm}\label{Classificationthm}
Any exterior static spherically symmetric Bach-flat space-time $(V,\bar{g})$ is almost conformally Einstein. More specifically, any such static spherically symmetric Bach-flat space-time is almost conformal to a subset of a Schwarzschild-de Sitter (or SAdS)  space-time, or to (\ref{cylindricalmetric}).
\end{thm}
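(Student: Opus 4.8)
The plan is to exploit the conformal invariance of the Bach tensor in order to reduce a general static spherically symmetric Bach-flat space-time to the Schwarzschild-form case already settled in Lemma~\ref{Classificationlemma}. Recall that in dimension $4$ the Bach tensor is a pointwise conformal invariant, so Bach-flatness is preserved by any conformal rescaling, and that (for the conformal choice $3\alpha+\beta=0$) Bach-flat is the same as $A$-flat. Write a general static spherically symmetric metric, \emph{without} the normalization $A(r)B(r)=1$, as
\begin{align*}
\bar{g}=-A(r)^2dt^2+B(r)^2dr^2+r^2g_{\s^2}.
\end{align*}
I would look for a purely radial conformal factor $\phi(r)$—which manifestly preserves both the static and the spherically symmetric structure—such that $\phi^2\bar{g}$, once re-expressed in its own areal radius, is in Schwarzschild form; Lemma~\ref{Classificationlemma} would then apply verbatim to $\phi^2\bar{g}$.

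The first concrete step is to set up the conformal gauge-fixing equation. The areal radius of $\phi^2\bar{g}$ is $\tilde{r}=\phi r$, and in this coordinate the rescaled lapse and radial coefficient are $\tilde{A}=\phi A$ and $\tilde{B}=\phi B/(\phi r)'$, so the Schwarzschild condition $\tilde{A}\tilde{B}=1$ becomes $\phi^2AB=(\phi r)'=\phi+r\phi'$. Substituting $u=1/\phi$ linearizes this into $ru'-u=-AB$, that is $(u/r)'=-AB/r^2$, whose general solution is
\begin{align*}
u(r)=r\left(C-\int\frac{A(r)B(r)}{r^2}\,dr\right),\qquad \phi=\frac{1}{u}.
\end{align*}
Hence a conformal factor putting $\bar{g}$ into Schwarzschild form exists on the complement of the zero set of $u$; note also that $(\phi r)'=AB\phi^2>0$, so $\tilde{r}=\phi r$ is a genuine monotone coordinate change wherever $\phi$ is finite and nonzero, and one checks directly that $\tilde{B}=1/\tilde{A}$ there.

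Next I would invoke the classification. At any zero $r_s$ of $u$ the relation $ru'-u=-AB$ forces $u'(r_s)=-A(r_s)B(r_s)/r_s\neq0$, so the zeros of $u$ are simple and isolated (and $u\equiv0$ is impossible since $AB/r^2>0$); thus on each maximal subinterval $\phi$ is smooth and nonvanishing, $\phi^2\bar{g}$ is Bach-flat and in Schwarzschild form, and Lemma~\ref{Classificationlemma} identifies it with one of the FSMK families. Each of those is shown in Lemma~\ref{Classificationlemma} to be almost conformally Einstein—conformal to a subset of a Schwarzschild-de Sitter (or SAdS) space-time through the factor $(1+\mu r)^{-2}$, or to the cylindrical model (\ref{cylindricalmetric}) through the factor $r^{-2}$. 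Composing the two conformal transformations (first $\phi^2$, then the FSMK-to-Einstein factor) exhibits $\bar{g}$ as conformal to an Einstein metric away from isolated radii, which is the stated conclusion.

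The hard part will be the bookkeeping of these exceptional radii so as to obtain the sharp form of the statement, namely that at most one radius $r_s$ separates two conformally Einstein regions. The total conformal factor from $\bar{g}$ to its Einstein representative is the product of $\phi=1/u$ with the FSMK-to-Einstein factor, and its degeneracy locus is a priori the union of the (simple) zero of $u$ with the pole of that second factor. I would control this by using the fall-off of $A,B$ forced by Bach-flatness together with the simplicity of the zeros of $u$ to show that, on the exterior (unbounded) domain, these loci either coincide or all but one lie outside the admissible range; dealing cleanly with the conformally invariant structure at such radii is the main technical point, whereas the degenerate algebraic sub-cases are routine to record.
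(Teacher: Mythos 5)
Your proposal is, in substance, the paper's own proof: your function $u$ solves exactly the same gauge-fixing ODE as the paper's $F$ (namely $(u/r)'=-AB/r^2$), your conformal factor $1/u^2$ is the paper's $1/F^2$, your areal coordinate $\tilde{r}=r/u$ is the paper's $R=-r/F$ up to a sign, and both arguments conclude by feeding the resulting Schwarzschild-form Bach-flat metric into Lemma~\ref{Classificationlemma}. The core reduction, including the verification that $\tilde{A}\tilde{B}=1$ and that the new radial coordinate is monotone, is correct.

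The only place you diverge is the part you flag as ``the hard part,'' and there your plan (invoking fall-off of $A,B$ forced by Bach-flatness) is misdirected, because the issue dissolves with the right choice of branch. You have already shown that $(u/r)'=-AB/r^2<0$, so $u/r$ is strictly decreasing; hence if you fix the integration constant $C$ so that $u<0$ at the inner boundary $r_0$ of the exterior region (this is precisely the paper's initial condition $F(r_0)<0$), then $u<0$ on all of $(r_0,\infty)$ and the conformal factor $1/u^2$ never degenerates there --- one then takes $-r/u>0$ as the areal coordinate, as the paper does. With that choice, the zeros of $u$ that you were counting simply do not occur on the exterior domain, the only exceptional radii are the ones already produced inside Lemma~\ref{Classificationlemma}, and the sharp ``at most one separating radius'' form of the statement follows immediately rather than requiring any further bookkeeping.
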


\begin{proof}
From our hypotheses, we can assume that $\bar{g}$ takes the form $$\bar{g}= - U(r)^2 dt^2 + V(r)^2dr^2 + r^2 d\theta^2 + r^2 \sin \theta^2 d \phi^2,$$ where $U,V$ are positive functions. Let us now consider 
 $$\bar{g}_1:= \frac{1}{F(r)^2} \bar{g} = - \left( \frac{U(r)}{F(r)} \right)^2 dt^2 + \left( \frac{V(r)}{F(r)} \right)^2 dr^2 + \left( \frac{r}{F(r)} \right)^2  d\theta^2 +  \left( \frac{r}{F(r)} \right)^2 \sin \theta^2 d\phi^2,$$
where ${F^2} \, : \, I\subset \R_+^* \rightarrow \R_-^*$ is such that
\begin{equation} \label{condfactconf} 
\frac{d}{dr} \left( \frac{F(r)}{r} \right)= -\frac{U(r)V(r)}{r^2},
\end{equation}
and satisfies an initial condition of the form $F(r_0)<0$.% Then $$\bar{g}_1:= \frac{1}{F(r)^2} \bar{g} = - \left( \frac{U(r)}{F(r)} \right)^2 dt^2 + \left( \frac{V(r)}{F(r)} \right)^2 dr^2 + \left( \frac{r}{F(r)} \right)^2  d\theta^2 +  \left( \frac{r}{F(r)} \right)^2 \sin \theta^2 d\phi^2.$$

Now thanks to \eqref{condfactconf}, if $\bar{g}$ does not degenerate, $\frac{r}{F(r)}$ is a local diffeomorphism. Let us then do a change of variable $R = -\frac{r}{F(r)}$. Then 
 $$g_1(t,R,\theta, \phi):=  -  P(R) dt^2 + Q(R) dR^2 + R^2  d\theta^2 +  R^2 \sin \theta^2 d\phi^2,$$

with 
$$ \begin{aligned}P(R)&=\left(\frac{U(r)}{F(r)} \right)^2\\
Q(R)&=\left(\frac{V}{F} \frac{1}{\frac{d}{dr} \left(- \frac{r}{F(r)} \right)} \right)^2 =  \left(\frac{V(r)}{F(r)} \frac{ \left( \frac{F(r)}{r} \right)^2 }{ \frac{d}{dr} \left( \frac{F(r)}{r} \right)} \right)^2 =  \left(\frac{V(r) F(r)}{r^2 \frac{U(r)V(r)}{r^2} } \right)^2= \left(\frac{F(r)}{U(r)} \right)^2= \frac{1}{P(R)}.
\end{aligned}
$$
Thus, if we force the decomposition $P(R) = 1+ \frac{M(R)}{R}$, the metric $g_1(t,R,\theta, \phi)$ is as studied in Proposition \ref{Classificationlemma}, and is thus conformally Einstein and either confofmal to S(A)dS or to (\ref{cylindricalmetric}).
\qed
\end{proof}

\bigskip

It is interesting to compare theorem \ref{Classificationthm} with the Willmore case. Indeed in the latter, R. Bryant proved (see \cite{bibdualitytheorem}) that all Willmore spheres (surfaces of genus $g=0$ in $\R^3$ which are critical points of $\int H^2$) are conformally minimal: there exists a conformal transformation which imposes $H=0$ on the whole sphere.  The comparison may be skin-deep (the proofs are markedly different) but reveal how the invariance group coupled with the fourth order equations leads to a constrained variety of solutions: under an additionnal assumption (static spherically symmetric when $A$-flat, topologically a sphere when Willmore) the solutions to the fourth order are conformally solutions of a second order equation (Einstein when $A$-flat, minimal when Willmore).

\section{Positive energy theorem for Einstein metrics}\label{PEthemSection}

Because of the above conclusion, we will now move into producing \textit{implicit} $A$-flat examples and relax the previous symmetry assumptions. Implicit constructions of $A$-flat metrics are highly non-trivial in general, contrary to the case of Einstein space-time metrics which can be obtained by evolving initial data satisfying the Einstein constraint equations (ECE). However, for space-time Einstein metrics, the Ricci tensor is proportional to the metric, and thus the $A$ tensor reduces to  the quadratic terms: $A =  2\beta \left[  {\mathrm{Ric}_{\bar{g}}}_{\cdot}\mathrm{Riem}_{\bar{g}}  - \frac{1}{4} \langle\mathrm{Ric}_{\bar{g}},\mathrm{Ric}_{\bar{g}}\rangle_{\bar{g}} {\bar{g}} \right] + 2\alpha \left[ R_{\bar{g}}\mathrm{Ric}_{\bar{g}} - \frac{1}{4}R^2_{\bar{g}} {\bar{g}} \right]$. Inserting $\mathrm{Ric}_{\bar{g}} = \lambda \bar{g}$, and thus $R = \lambda (n+1) $ in the last equality, and since ${\mathrm{Ric}_{\bar{g}}}_{\cdot}\mathrm{Riem}_{\bar{g}} = \lambda \bar{g}_{\cdot}\mathrm{Riem}_{\bar{g}} = \lambda \mathrm{Ric}_{\bar{g}} = \lambda^2 \bar{g}$ since the contraction is on the first and third index,   yields $A = 2 \beta \lambda^2 \left[ 1- \frac{n+1}{4} \right] \bar{g} + 2 \alpha \lambda^2(n+1) \left[  1 - \frac{n+1}{4} \right] \bar{g} $. For $n=3$, we can then deduce that Einstein space-time metrics are $A$-flat.

The Einstein space-time metrics then provide us with a large set of non-trivial examples where we can analyse the behaviour of $\mathcal{E}_{\alpha,\beta}$. Furthermore, for these kinds of second order solutions one may have reasonable expectations on what an appropriate notion of energy should be measuring based on physical interpretations on the different energy sources. Along these lines, one can also appeal to experience within more geometrical quadratic Langrangians, where minimal surfaces appear as second order solutions whose analysis has proven to be relevant for pure higher order results. Let us also highlight that Corollary \ref{ADTcoro} also motivates the analysis of 4-dimensional Einstein solutions and provides some intuitions on what we can expect to obtain. That is, in the cases included in Corollary \ref{ADTcoro}, the fourth order energy turned out to be proportional to the ADM contribution times the cosmological constant. We will see below that this result remains true in a broader scenario.

%First, we will analyse AM Einstein metrics which are $A$-flat by evolving appropriate initial data. Then, we will analyse their asymptotics and, in particular, we will present conditions on the initial data that guarantee that the fourth order energy is well defined. Furthermore, for these kind of examples, we will establish a positive energy theorem. Finally, we will analyse conformally Einstein solutions for $A$-flat metrics and establish similar results. Let us first make some remarks and introduce some terminology. 

Having in mind 4-dimensional Einstein solutions, first of all, let us recall that an initial data set for the Einstein equations is a set of the form $\mathcal{I}=(M,g,K,\epsilon,J)$, where $(M,g)$ is a Riemannian manifold, $K$ is a symmetric second rank tensor field on $M$, $\epsilon$ is a function and $J$ a $1$-form on $M$, which is subject to the ECE
\begin{align}\label{ECE}
\begin{split}
R_{g}-|K|^2_g + (\mathrm{tr}_gK)^2=2\epsilon,\\
\mathrm{div}_gK-d\mathrm{tr}_gK=J.
\end{split}
\end{align}
In the above equations $\epsilon$ and $J$ stand for the energy and momentum densities induced on $M\cong M\times\{0\}$ by some energy-momentum tensor field $T$ on space-time. It is a remarkable fact that on many cases of interest, which include vacuum and $\Lambda$-vacuum (which corresponds to $\epsilon=\Lambda=cte$ and $J=0$), the above equations stand as both necessary and sufficient conditions for the initial value problem associated to the space time equations $G_{\bar{g}}=T$ to be well-posed. 
Such initial value problem can be formulated for initial data in uniformly local spaces and therefore it gives great freedom on the global properties of $M$ (see, for instance, \cite{CB-book}). %Nevertheless, equations (\ref{ECE}) are not always a sufficient condition for the evolution problem to be well-posed. For instance, when the sources are modelled by charged Yang-Mills fields, further constraints must be coupled, and then the enlarged constraint systems stands as both a necessary and sufficient condition for the evolution problem to be well-posed. In view of all this remarks, let us consider the following kind of initial data sets.

Notice that in the case of Einstein spaces where $\epsilon=\Lambda=cte$ the asymptotics of \emph{isolated} systems cannot satisfy usual decaying conditions.\footnote{This would be $g_{ij} = \delta_{ij} + O_2(r^{-\tau})$ and $K_{ij} = O_1(r^{-\tau - 1})$, with $\tau > 0$.} Therefore, we are forced to consider new (weaker) asymptotic behaviour for the fields. With this in mind, we appeal to a few physical considerations. First, we know that the effect of the cosmological constant in Nature drives the expansion of our Universe when matter fields are sufficiently diluted, and thus, for instance, it actually breaks time-symmetry. In fact, idealised cosmological solutions have umbilical Cauchy surfaces, and thus we propose that a \emph{natural} effect of a cosmological constant could be to make isolated solutions asymptotically umbilical. Furthermore, the asymptotic mean curvature should be controlled by the strength of $\Lambda$. All these considerations are basically contained in the following definition, which fixes the asymptotics we shall consider by modifying classical asymptotics as little as possible.

\begin{defn}\label{LamdAE}
We will say that the initial data set $\mathcal{I}$ is admissible if the evolution problem associated to such initial data is well-posed. Also, we will say that $\mathcal{I}$ is $\Lambda$AE of order $\tau>0$ if $\epsilon=\Lambda=cte>0$, $J=0$, and, with respect to some fixed asymptotic chart, $(M,g)$ is AE of order $\tau$, satisfying the asymptotic condition
\begin{align}\label{asymptotic.1}
g_{ij}=\delta_{ij} + O_4(|x|^{-\tau})
\end{align}
and $K$ satisfies the decaying condition
\begin{align}\label{asymptotic.2}
\begin{split}
K_{ij} \pm c g_{ij}=O_4(|x|^{-\tau-1}),
\end{split}
\end{align}
with $c>0$ defined by $c^2\doteq \frac{\Lambda}{3}$.
\end{defn}

The above definition concerns initial data sets which are AE in the metric $g$ and asymptotically umbilical in the extrinsic curvature, and which shall evolve into an Einstein metric in space-time with some particular asymptotic behaviour. Notice that these asymptotics are weaker than the ones with which we dealt in previous sections, since they do not impose that all time derivatives of the metric must increase the rate of decay at space-like infinity. Let us furthermore notice that any time-symmetric vacuum solution of the ECE $(M^3,g,0)$ can be readily mapped into a $\Lambda$AE solution given by $(M^3,g,K=cg)$. This implies that basic examples such as Schwarzschild have their $\Lambda$AE counterpart. Properties of such solutions could be of legitimate interest in classical GR.

\begin{remark}\label{ExpContInterpretation}
If $(M^3,g,K)$ is a $\Lambda$AE initial data set, then asymptotically $\tau=\mp 3c + O(|x|^{-\tau-1})$. Therefore, denoting by $n$ the future-pointing unit normal to $M$ in the evolving space-time, under our conventions (See Appendix \ref{ExtGeomConv})
\begin{align*}
\mathrm{div}_{\bar{g}}n=\pm 3c + O(|x|^{-\tau-1}).
\end{align*}
We therefore interpreter that $K$ approaching $-cg$ asymptotically implies that, near space-like infinity, the associated space-time is expanding, while for $+cg$ it is contracting. Based on observational evidence, it might be more realistic to pick the first case. Thus, such initial data sets might be useful to describe isolated systems in an expanding background, something interesting in realistic situations. The detailed constructions and properties of such initial data sets can be objects of study on their on right.
\end{remark}

\subsection{Einstein solutions: proof of theorem \ref{MainThm1}}
In what follows, we will consider Einstein metrics constructed from admissible $\Lambda$AE initial data sets and establish conditions which guarantee that the fourth order energy is well-defined and actually computable in terms of the ADM mass of $g$ and the cosmological constant $\Lambda$. Before doing this, we need to recall the relevant notations. 

When analysing the initial value problem in GR it is convenient to consider the adapted frames of the form 
\begin{align}
\label{framebuild}
E_{i}  &  = \partial_{i}\; ,\; \; i=1,2,3\\
E_{0}  &  = \partial_{t} - X,
\end{align}
where $X=X^i\partial_i$ is the shift vector field (see definition \ref{AMmanifoldsb}) tangent to $M$. Then $E_0$ is orthogonal to $TM$.  Its dual coframe is then
\begin{align*}
\theta^{i}  &  = dx^{i} +X^{i}dt \; , \; \; i=1,2,3,\\
\theta^{0}  &  = dt,
\end{align*}
where $\{x^i\}_{i=1}^{3}$ are local coordinates on $M$ and $t$ is a coordinate on $\mathbb{R}$. Recalling that $N$ denotes the lapse function, the ambient metric $\bar g$ is written in terms of the coframe $\{\theta^a\}_{a=0}^3$ in the following way
\begin{align*}
\bar g = -N^{2}\theta^0\otimes\theta^0+ g_{ij}\theta^{i}\otimes\theta^{j}.
\end{align*}
Furthermore, using these adapted frames, it follows that 
\begin{align*}%\label{extcurv}
K_{ij}=-\frac{1}{2N}\left(\partial_tg_{ij} - \pounds_{X}g_{ij} \right).
\end{align*}
We are now in position to prove theorem \ref{MainThm1} which we recall for convenience:
\begin{thm}\label{PEthm}
Let $(V^4,\bar{g})$ be an Einstein space-time generated by $\Lambda$AE initial data $\mathcal{I}$ of order $\tau$ with $R_g\in L^{1}(M^3,dV_g)$. Then, the following statements follow:
\begin{enumerate}
\item If $g$ is asymptotically Schwarzschild, then the fourth-order energy (\ref{energy.2}) is well defined for general values of $\alpha$ and $\beta$. Furthermore if $\Lambda>0$; $\alpha<0$ and $\beta\ge -\frac{3}{2}\alpha$, then $\mathcal{E}_{\alpha,\beta}(\bar{g})\geq 0$. Additionally, if $R_g\geq 0$ and $\beta > -\frac{3}{2} \alpha$, then $\mathcal{E}_{\alpha,\beta}(\bar{g})=0$ iff $(M,g)\cong (\mathbb{R}^3,\cdot)$.
\item In the special case $2\alpha+\beta=0$, the fourth order energy is well-defined for $\tau>\frac{1}{2}$. If, additionally, $R_{g}\geq 0$; $\Lambda>0$ and $\alpha<0$, then $\mathcal{E}_{\alpha,-2\alpha}(\bar{g})\geq 0$ with equality holding iff $(M,g)\cong (\mathbb{R}^3,\cdot)$.
\end{enumerate}
\end{thm}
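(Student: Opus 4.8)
The plan is to reduce the boundary expression for $\mathcal{E}_{\alpha,\beta}(\bar g)$ recorded in Definition \ref{Energydefn} to data living purely on the initial slice, and then to recognise the resulting quantity as a multiple of the ADM mass of $(M,g)$. Concretely, I would start from the energy density (\ref{ADM.5}), fix the \emph{canonical observers} (worldlines orthogonal to the initial Cauchy surface, so that asymptotically $N\to1$ and $X\to0$), and rewrite the time derivatives $\ddot g_{ij}=\partial_t^2 g_{ij}|_{t=0}$ and $\dot X_i=\partial_t X_i|_{t=0}$ appearing there in terms of spatial quantities. For this one uses the Einstein evolution system $\partial_t g_{ij}=-2NK_{ij}+(\pounds_X g)_{ij}$ together with the evolution equation for $K_{ij}$ (in which, since $\mathrm{Ric}_{\bar g}=\Lambda\bar g$, the spacetime Ricci contributes the term $\Lambda g_{ij}$) and the Hamiltonian and momentum constraints (\ref{ECE}) with $\epsilon=\Lambda$, $J=0$.

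The key asymptotic input is Definition \ref{LamdAE}: $g_{ij}=\delta_{ij}+O_4(r^{-\tau})$, $N=1+O(r^{-\tau})$, $X_i=O_4(r^{-\tau})$ and $K_{ij}=\mp c\,g_{ij}+O_4(r^{-\tau-1})$ with $c^2=\Lambda/3$. Differentiating gives $\dot g_{ij}\approx\pm 2c\,g_{ij}$ and hence $\ddot g_{ij}\approx\tfrac{4}{3}\Lambda\,g_{ij}$ to leading order; the crucial structural observation is that these leading, non-decaying pieces are \emph{pure trace} (proportional to $g_{ij}\approx\delta_{ij}$) and are therefore annihilated by the antisymmetric and divergence combinations $\partial_i\ddot g_{ji}-\partial_j\ddot g_{ii}$, $\partial_j\ddot g_{ii}$, and so on, that enter (\ref{ADM.5}). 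Thus only the first subleading ($O(r^{-1})$, i.e.\ the Schwarzschild $m/r$) corrections survive the boundary integral, and a careful bookkeeping --- using the constraints, and $R_g\in L^1(M,dV_g)$ to control the quadratic remainders --- collapses all the $\ddot g$, $\dot X$ and $N^2$ contributions into a single term proportional to $\Lambda$ times the ADM mass $m$ of $(M,g)$. I expect the outcome to be $\mathcal{E}_{\alpha,\beta}(\bar g)=C(\alpha,\beta)\,\Lambda\,m$, the coefficient being governed by combinations such as $\tfrac32\beta+2\alpha$ and $\tfrac{4}{3}\beta+2\alpha$, both of which are non-negative precisely when $\alpha<0$ and $\beta\geq-\tfrac32\alpha$. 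This matches the philosophy of Corollary \ref{ADTcoro}, where the fourth-order charge of an Einstein solution is proportional to its second-order (GR) charge times $\Lambda$.

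With the reduction in hand, non-negativity and rigidity follow from the Riemannian positive mass theorem applied to $(M,g)$: under the stated sign hypotheses the factor $C(\alpha,\beta)\Lambda$ is non-negative, so $\mathcal{E}_{\alpha,\beta}(\bar g)\geq0$ once $m\geq0$, and $\mathcal{E}_{\alpha,\beta}(\bar g)=0$ forces $m=0$, whence the rigidity case (which is exactly where the hypothesis $R_g\geq0$ enters) yields $(M,g)\cong(\R^3,\delta)$. For the well-definedness claims: in the asymptotically Schwarzschild case the remainder $o_4(r^{-1})$ makes every boundary integral in (\ref{ADM-Energy.1}) converge for all $\alpha,\beta$, whereas in the distinguished case $2\alpha+\beta=0$ the coefficients $\beta+2\alpha$ vanish and the surviving combination is precisely of the type whose flux converges at the ADM threshold $\tau>\frac{n-2}{2}=\frac12$, exactly as for the usual ADM mass.

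The main obstacle is the second and third steps: carrying out the reduction of $\ddot g$ and $\dot X$ through the evolution equations and tracking their asymptotic expansions to the order at which the mass appears, while correctly disposing of the gauge (lapse/shift) dependence so that the non-decaying umbilic pieces cancel and only the $\Lambda m$ term remains. In particular one must verify that the pure-trace de Sitter-type terms $\tfrac43\Lambda g_{ij}$ genuinely drop out under the spatial derivatives in (\ref{ADM.5}), and that the quadratic-in-$h$ remainders are integrable (this is where $R_g\in L^1$ is used) rather than contributing spurious finite boundary terms; the bookkeeping, rather than any single conceptual step, is the delicate part.
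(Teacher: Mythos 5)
Your overall strategy is the same as the paper's: reduce the boundary integral (\ref{ADM-Energy.1}) on the initial slice to a multiple of $\Lambda$ times the ADM mass of $(M,g)$ (the mechanism you identify --- the leading, non-decaying part of $\ddot g_{ij}$ is proportional to $g_{ij}$, so its derivative combinations reproduce the ADM integrand of $g$ at order $r^{-\tau-1}$ --- is exactly the one the paper exploits), and then invoke the Riemannian positive mass theorem for non-negativity and rigidity, with AS asymptotics handling general $(\alpha,\beta)$ and the threshold $\tau>\tfrac12$ handling $2\alpha+\beta=0$. That part of your proposal is sound and matches the paper.

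However, there is a genuine gap in your central computational step, and it is precisely the point you flagged as ``bookkeeping'': the quantities $\ddot g_{ij}$ and $\dot X_i$ are \emph{not} determined by the ADM evolution system plus the constraints. From $\partial_t g_{ij}=-2NK_{ij}+(\pounds_Xg)_{ij}$ one gets $\ddot g_{ij}=-2\dot N K_{ij}-2N\dot K_{ij}+(\pounds_{\dot X}g)_{ij}+\dots$, so $\ddot g$ depends on $\dot N$ and $\dot X$, which are pure gauge data: they are fixed neither by the equations nor by the canonical-observer condition $N|_{t=0}=1$, $X|_{t=0}=0$. Your asserted leading order $\ddot g_{ij}\approx\tfrac43\Lambda g_{ij}$ is what one obtains by silently setting $\dot N=0$ (then $-2\dot K_{ij}\to\tfrac43\Lambda g_{ij}$). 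The paper instead generates the space-time in the $\hat e$-wave gauge (\ref{wavegauge}), under which the gauge condition forces $\dot N=\pm3c+O_3(|x|^{-\tau-1})$ with $c^2=\Lambda/3$ --- a non-decaying, non-zero quantity --- and $\dot X_j=g_{ji}g^{ab}\Gamma^i_{ab}(g)=O_2(|x|^{-\tau-1})$. The extra term $-2\dot N K\to 2\Lambda g$ then yields $\tfrac12\ddot g_{ij}=\tfrac53\Lambda g_{ij}+O_2(|x|^{-\tau-2})$, i.e.\ coefficient $\tfrac{10}{3}\Lambda$, not $\tfrac43\Lambda$, and hence $\mathcal{E}_{\alpha,\beta}(\bar g)=40\omega_2m\Lambda(\tfrac23\beta+\alpha)$ in the AS case. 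Since both coefficients are positive multiples of $\Lambda m$, your sign and rigidity conclusions would survive, but the value of the energy differs, and this is not a harmless discrepancy: the energy of Definition \ref{Energydefn} is observer/gauge dependent (this is exactly what Theorem \ref{MainThm2} is about, and its invariance class is built on top of the wave-gauge evolution via Lemma \ref{GeneralGaugeConditions}). A complete proof must therefore specify the gauge in which the Einstein space-time is generated and compute $(\dot N,\dot X)$ from that gauge condition --- this is the content of the paper's derivation of (\ref{asymptotic.3}) --- rather than treat $\dot g_{ij}\approx\pm2cg_{ij}$ as a relation that can be differentiated in time. A secondary, smaller point: $R_g\in L^1(M,dV_g)$ is used to ensure the ADM mass itself is well defined and chart-independent (Bartnik's condition), not to control quadratic remainders in the expansion.
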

\begin{proof}
In order to analyse the behaviour of the fourth-order energy $\mathcal{E}_{\alpha,\beta}(\bar{g})$ along these kind of solutions, we need to know the behaviour at infinity of the full initial data for $\bar{g}$, which consists of $(g,K)$ and also $(N,X,\partial_tN,\partial_tX)|_{t=0}$. From our hypotheses, we already know the asymptotic behaviour of $(g,K)$ and in what follows we will analyse the corresponding behaviour for the rest of these quantities. 

Let us recall that given an admissible initial data for the $\Lambda$-vacuum Einstein equations, in order to evolve them away of $t=0$, we need to solve the so called reduced Einstein equations, which correspond to the Einstein equations in a suitable gauge. In particular, we can pick a global gauge condition by first fixing a Riemannian metric $e$ on $M$; then defining $\hat{e}=dt^2+e$ as a metric on $V$ and finally imposing initial data so as to satisfy
\begin{align}\label{wavegauge}
\bar{g}^{\mu\nu}\left(\bar{\Gamma}^{\lambda}_{\mu\nu} - \hat{\Gamma}^{\lambda}_{\mu\nu} \right)|_{t=0}=0.
\end{align}
It is standard to see that, in this setting, initial data which satisfy (\ref{wavegauge}) evolve into a solution of the space-time Einstein equations. In particular, it follows that the initial data $N|_{t=0}$ and $X|_{t=0}$ are freely specifiable, while $\dot{N}\doteq \partial_tN|_{t=0}$ and $\dot{X}\doteq \partial_tX|_{t=0}$ are fixed by solving (\ref{wavegauge}). In this setting, taking advantage of our asymptotic structure at infinity, let us fix an asymptotic chart $\{x^i\}_{i=1}^3$ where (\ref{asymptotic.1})-(\ref{asymptotic.2}) hold and construct $e$ so as to be a complete metric which is exactly Euclidean outside a compact set. Thus, (\ref{wavegauge}) implies that near infinity, in these coordinates, it follows that
\begin{align*}
\bar{g}^{\alpha\beta}\Gamma^{\mu}_{\alpha\beta}(\bar{g})|_{t=0}=0,
\end{align*}
which (using $N|_{t=0}=1$ and $X|_{t=0}=0$) translates to
\begin{align*}
&\bar{g}^{\alpha\beta}\Gamma^{0}_{\alpha\beta}(\bar{g})|_{t=0}=-(\partial_tN|_{t=0} + \mathrm{tr}_gK)=0,\\
&\bar{g}^{\alpha\beta}\Gamma^{i}_{\alpha\beta}(\bar{g})|_{t=0}=-g^{ij}\partial_tX_i|_{t=0} + \bar{g}^{ab}\Gamma^{i}_{ab}(g)=0
\end{align*}
Thus,\footnote{The $\pm$ sign for $\dot{N}$ depends on whether we are in the \emph{asymptotically expanding} or \emph{contracting} case (see Remark \ref{ExpContInterpretation}).} 
%modif no modif here just to signal that since you've already made the plus minus changes here I did not hunt for here
\begin{align}\label{asymptotic.3}
\begin{split}
\dot{N} &= \pm 3c + O_{3}(|x|^{-\tau-1}),\\
\dot{X}_j &= g_{ji}g^{ab}\Gamma^{i}_{ab}(g).
\end{split}
\end{align}
Associated to the initial $(g,N,X,K,\dot{N},\dot{X})$ we have a space-time $(M\times I,\bar{g})$, with $I=[0,T)$ for some $T>0$, where $\bar{g}$ is Einstein and thus $A_{\bar{g}}=0$. Let us evaluate the fourth order energy on these solutions. From our lapse-shift conditions, we get
\begin{align*}
\mathcal{E}_{\alpha,\beta}(\bar{g})&=\lim_{r\rightarrow\infty}\Big\{\left( \frac{3}{2}\beta  + 2\alpha\right)\int_{S^{n-1}_r} \left( \partial_{j}\partial_{i}\partial_{i}g_{aa} - \partial_{j}\partial_{u}\partial_{i}g_{u i}\right)\hat{\nu}^{j}d\omega_{r}   \\
&+ \frac{\beta}{2}\int_{S^{n-1}_r}\left( \partial_{i}\ddot{g}_{ji} - \partial_{j}\ddot{g}_{ii} \right)\hat{\nu}^{j}d\omega_r + \frac{\beta}{2}\int_{S^{n-1}_r}\left( \partial_{i}\partial_{j}\dot{X}_{i} - \partial_{i}\partial_{i}\dot{X}_{j} \right)\hat{\nu}^{j}d\omega_r \\
&+ ( \beta  + 2\alpha)\left( 2\int_{S^{n-1}_{r}}\partial_{j}\partial_{i}\dot{X}_{i}\hat{\nu}^{j}d\omega_r - \int_{S^{n-1}_r}\partial_{j}\ddot{g}_{ii}\hat{\nu}^{j}d\omega_r \right)  \Big\}
\end{align*} 
Now, from our hypotheses and (\ref{asymptotic.3}) we see that
%\begin{align*}
%g_{ij}=\delta_{ij} + O_{3}(|x|^{-\tau}), \text{ with } \tau>0.
%\end{align*}
%Then, we find
\begin{align*}
\partial_{ijk}g=O(|x|^{-(\tau +3 )}) \; , \; \dot{X}=O_2(|x|^{-(\tau + 1)}) \; , \; \partial_{ij}\dot{X}=O(|x|^{-(\tau+3)}).
\end{align*}
Also, in the asymptotic region, we have that 
\begin{align}\label{gauge0}
\begin{split}
R_{\alpha\beta}(\bar{g})&=-\frac{1}{2}\bar{g}^{\lambda\mu}\partial_{\lambda}\partial_{\mu}\bar{g}_{\alpha\beta} - \frac{1}{2}\big\{  \partial_{\beta}\bar{g}^{\nu\lambda}\partial_{\nu}\bar{g}_{\lambda\alpha}  + \partial_{\alpha}\bar{g}^{\nu\lambda}\partial_{\nu}\bar{g}_{\lambda\beta} \}  - \bar{\Gamma}^{\nu}_{\sigma\beta}\bar{\Gamma}^{\sigma}_{\nu\alpha},
%R^{(\hat{e})}_{\alpha\beta}&\doteq -\frac{1}{2}g^{\lambda\mu}D_{\lambda}D_{\mu}g_{\alpha\beta}+f_{\alpha\beta}(g,Dg),\\
%F^{\lambda}&\doteq g^{\alpha\beta}(\Gamma^{\lambda}_{\alpha\beta}-\hat{\Gamma}^\lambda_{\alpha\beta})\doteq g^{\alpha\beta}%S^{\lambda}_{\alpha\beta},
\end{split}
\end{align}
and since $\bar{g}$ is Einstein
\begin{align*}
\mathrm{Ric}_{\bar{g}}=\Lambda\bar{g},
\end{align*}
therefore, due to $N|_{t=0}=1$ and $X|_{t=0}=0$, we get %\footnote{Recall that the notation we typically use implies $\partial_0=\partial_t-X$. That is, in this setting $\partial_0$ is not the asymptotic-wave coordinate $\partial_t$. Thus, using $N|_{t=0}=1,X|_{t=0}=0$, we get
%\begin{align*}
%\bar{g}^{\lambda\mu}\partial_{\lambda}\partial_{\mu}\bar{g}_{ij}|_{t=0}&=\Big(-N^2\partial_0\partial_0\bar{g}_{ij} + \bar{g}^{ab}\partial_{ab}\bar{g}_{ij}\Big)\Big|_{t=0},\\
%&=\Big(-N^2\left(\partial^2_t\bar{g}_{ij} - \partial_t(X^k\partial_k\bar{g}_{ij})  + X^l\partial_l(\partial_t\bar{g}_{ij} - X^k\partial_k\bar{g}_{ij} )\right) + \bar{g}^{ab}\partial_{ab}\bar{g}_{ij}\Big)\Big|_{t=0},\\
%&=-\partial^2_t\bar{g}_{ij} + g^{ab}\partial_{ab}g_{ij}+ \dot{X}^k\partial_kg_{ij}.
%\end{align*}
%}
\begin{align*}%\label{umbilicalenergy.0}
\begin{split}
\frac{1}{2}\partial^2_{t}\bar{g}_{ij}|_{t=0} &=\Big( \Lambda\bar{g}_{ij} + \frac{1}{2}\bar{g}^{ab}\partial_{a}\partial_{b}\bar{g}_{ij} + \frac{1}{2}\big(  \partial_{j}\bar{g}^{\nu\lambda}\partial_{\nu}\bar{g}_{i \lambda}  + \partial_{i}\bar{g}^{\nu\lambda}\partial_{\nu}\bar{g}_{j\lambda} \big) + \bar{\Gamma}^{\nu}_{j\sigma}\bar{\Gamma}^{\sigma}_{i\nu}\Big)\Big|_{t=0},
\end{split}
\end{align*}
We can also compute
\begin{align*}
\bar{\Gamma}^{\nu}_{j\sigma}\bar{\Gamma}^{\sigma}_{i\nu}\Big|_{t=0}&=\frac{1}{4}\bar{g}^{\nu\mu}\bar{g}^{\sigma\gamma}
\left(\partial_{j}\bar{g}_{\mu\sigma} + \partial_{\sigma}\bar{g}_{\mu j} - \partial_{\mu}\bar{g}_{j\sigma} \right)\left(\partial_{i}\bar{g}_{\gamma\nu} + \partial_{\nu}\bar{g}_{\gamma i} - \partial_{\gamma}\bar{g}_{i\nu} \right)\Big|_{t=0},\\
%&=\frac{1}{4}\Big\{\bar{g}^{tt}\bar{g}^{\sigma\gamma}\left(\partial_{j}\bar{g}_{t\sigma} + \partial_{\sigma}\bar{g}_{t j} - \partial_{t}\bar{g}_{j\sigma} \right)\left(\partial_{i}\bar{g}_{\gamma t} + \partial_{t}\bar{g}_{\gamma i} - \partial_{\gamma}\bar{g}_{ti} \right)\\
%&+\bar{g}^{ab}\bar{g}^{\sigma\gamma} \left(\partial_{j}\bar{g}_{a\sigma} + \partial_{\sigma}\bar{g}_{a j} - \partial_{a}\bar{g}_{j\sigma} \right)\left(\partial_{i}\bar{g}_{\gamma b} + \partial_{b}\bar{g}_{\gamma i} - \partial_{\gamma}\bar{g}_{ib} \right)\Big\}\Big|_{t=0}\\
%&=\frac{1}{4}\Big\{\bar{g}^{tt}\bar{g}^{tt}\left(\partial_{j}\bar{g}_{tt} + \partial_{t}\bar{g}_{t j} - \partial_{t}\bar{g}_{jt} \right)\left(\partial_{i}\bar{g}_{t t} + \partial_{t}\bar{g}_{t i} - \partial_{t}\bar{g}_{ti} \right)\\
%&+\bar{g}^{tt}\bar{g}^{ab}\left(\partial_{j}\bar{g}_{ta} + \partial_{a}\bar{g}_{t j} - \partial_{t}\bar{g}_{ja} \right)\left(\partial_{i}\bar{g}_{b t} + \partial_{t}\bar{g}_{b i} - \partial_{b}\bar{g}_{ti} \right)\\
%&+\bar{g}^{ab}\bar{g}^{tt} \left(\partial_{j}\bar{g}_{at} + \partial_{t}\bar{g}_{a j} - \partial_{a}\bar{g}_{jt} \right)\left(\partial_{i}\bar{g}_{t b} + \partial_{b}\bar{g}_{t i} - \partial_{t}\bar{g}_{ib} \right)\\
%&+\bar{g}^{ab}\bar{g}^{cd} \left(\partial_{j}\bar{g}_{ac} + \partial_{c}\bar{g}_{a j} - \partial_{a}\bar{g}_{jc} \right)\left(\partial_{i}\bar{g}_{d b} + \partial_{b}\bar{g}_{d i} - \partial_{d}\bar{g}_{ib} \right)\Big\}\Big|_{t=0}\\
&=\frac{1}{4}\Big\{2\bar{g}^{ab}\partial_{t}\bar{g}_{ja}  \partial_{t}\bar{g}_{b i}+\bar{g}^{ab}\bar{g}^{cd} \left(\partial_{j}\bar{g}_{ac} + \partial_{c}\bar{g}_{a j} - \partial_{a}\bar{g}_{jc} \right)\left(\partial_{i}\bar{g}_{d b} + \partial_{b}\bar{g}_{d i} - \partial_{d}\bar{g}_{ib} \right)\Big\}\Big|_{t=0}\\
&=\frac{1}{2} g^{ab} \dot{g}_{a j}  \dot{g}_{ib} + \frac{1}{4}g^{ab}g^{cd} \left(\partial_{j}g_{ac} + \partial_{c}g_{a j} - \partial_{a}g_{jc} \right)\left(\partial_{i}g_{d b} + \partial_{b}g_{d i} - \partial_{d}g_{ib} \right),
\end{align*}
where we used $X|_{t=0}=0,\partial_aN|_{t=0}=0$. In the above identity, $\partial_ag=O(|x|^{-(\tau+1)})$ while $\dot{g}$ has the same asymptotics as $g$. Explicitly, 
\begin{align*}
\dot{g}=\left(-2N K + \pounds_Xg \right)|_{t=0}=-2K
\end{align*}
which, for our initial data sets, means $\dot{g}=\pm 2cg+O_2(|x|^{-(\tau+1)})$. Therefore, 
\begin{align*}
\bar{\Gamma}^{\nu}_{j\sigma}\bar{\Gamma}^{\sigma}_{i\nu}\Big|_{t=0}&= 2c^2 g^{ab} g_{a j}  g_{ib} + O_2(|x|^{-2(\tau+1)})=\frac{2\Lambda}{3}g_{ij} + O_2(|x|^{-2(\tau+1)}).
\end{align*}
With similar arguments, we can also compute that
\begin{align*}
\big( \partial_{j}\bar{g}^{\nu\lambda}\partial_{\nu}\bar{g}_{i \lambda}  + \partial_{i}\bar{g}^{\nu\lambda}\partial_{\nu}\bar{g}_{j\lambda} \big)|_{t=0}&=\big( \partial_{j}\bar{g}^{0\lambda}\partial_{0}\bar{g}_{i \lambda} + \partial_{j}\bar{g}^{a\lambda}\partial_{a}\bar{g}_{i \lambda}  + \partial_{i}\bar{g}^{0\lambda}\partial_{0}\bar{g}_{j\lambda} + \partial_{i}\bar{g}^{a\lambda}\partial_{a}\bar{g}_{j\lambda} \big)|_{t=0},\\
&=\big( \partial_{j}\bar{g}^{a0}\partial_{a}\bar{g}_{i 0} + \partial_{j}\bar{g}^{ab}\partial_{a}\bar{g}_{ib}  + \partial_{i}\bar{g}^{a0}\partial_{a}\bar{g}_{j0} + \partial_{i}\bar{g}^{ab}\partial_{a}\bar{g}_{jb} \big)|_{t=0},\\
&=\big( \partial_{j}\bar{g}^{ab}\partial_{a}\bar{g}_{ib}  + \partial_{i}\bar{g}^{ab}\partial_{a}\bar{g}_{jb} \big)|_{t=0},\\
&=O_2(|x|^{-2(\tau+1)}).
\end{align*}
Putting all the above together, we find that
\begin{align*}
\begin{split}
\frac{1}{2}\partial^2_{t}\bar{g}_{ij}|_{t=0} &= \Lambda g_{ij} + \frac{2\Lambda}{3}g_{ij} + O_2(|x|^{-(\tau+2)}) ,\\
&=\frac{5}{3}\Lambda g_{ij} + O_2(|x|^{-(\tau+2)}) .
\end{split}
\end{align*}
Thus, the following holds
\begin{align*}
\frac{1}{2}\partial_k\ddot{g}_{ij}=\frac{5}{3}\Lambda\partial_kg_{ij} + O_1(|x|^{-(\tau+3)}).
\end{align*}
Thus, the leading order is given by the first term in the right-hand side, which behaves like $O_3(|x|^{-(\tau+1)})$. Using $\partial$ to denote arbitrary spacial derivatives, notice that
\begin{align*}
\partial^3g_{ij}r^{2}&=O(r^{-(\tau + 1)}),\\
\partial^2\dot{X}_ir^{2}&=O(r^{-(\tau + 1)}).
\end{align*}
The above implies that, in these cases, the only contributions to the energy can come from
%modif nothing changes: the X terms are null
\begin{align*}%\label{umbilicalenergy.1}
\begin{split}
\mathcal{E}_{\alpha,\beta}(\bar{g})&=\lim_{r\rightarrow\infty}\Big\{\frac{\beta}{2}\int_{S^{n-1}_r}\left( \partial_{i}\ddot{g}_{ji} - \partial_{j}\ddot{g}_{ii} \right)\hat{\nu}^{j}d\omega_r - ( \beta  + 2\alpha)\int_{S^{n-1}_r}\partial_{j}\ddot{g}_{ii}\hat{\nu}^{j}d\omega_r  \Big\}
,\\
&=\frac{5}{3}\Lambda\lim_{r\rightarrow\infty}\Big\{\beta\int_{S^{n-1}_r}\left( \partial_{i}g_{ji} - \partial_{j}g_{ii} \right)\hat{\nu}^{j}d\omega_r - 2( \beta  + 2\alpha)\int_{S^{n-1}_r}\partial_{j}g
_{ii}\hat{\nu}^{j}d\omega_r  \Big\}
,
%&= -\lim_{r\rightarrow\infty}\Big\{ \frac{\beta}{2}\int_{S_r}\partial_i\ddot{g}_{ij}\nu_jr^{n-1}dS^{n-1} + (2\alpha + \frac{\beta}{2} )\int_{S_r}\partial_{j}\ddot{g}_{ii}\nu_jr^{n-1}dS^{n-1} \Big\},\\
%&=-4c_n\Lambda\lim_{r\rightarrow\infty}\Big\{\frac{\beta}{2}\int_{S_r}\partial_ig_{ij}\nu_jr^{n-1}dS^{n-1} + (2\alpha + \frac{\beta}{2} )\int_{S_r}\partial_{j}g_{ii}\nu_jr^{n-1}dS^{n-1} \Big\},\\
%&=-4c_n\Lambda\lim_{r\rightarrow\infty}\Big\{(2\alpha + \frac{\beta}{2} )\int_{S_r}(\partial_{j}g_{ii}-\partial_ig_{ji})\nu_jr^{n-1}dS^{n-1}\\
%&+ (2\alpha + \beta )\int_{S_r}\partial_{i}g_{ji}\nu_jr^{n-1}dS^{n-1} \Big\}.
\end{split}
\end{align*}
%That is, 
%\begin{align}
%\begin{split}
%\mathbb{E}^{4th}_0(\bar{g})&=-4c_n\Lambda\Big\{(2\alpha + \frac{\beta}{2} )\lim_{r\rightarrow\infty}\int_{S_r}(\partial_{j}g_{ii}-\partial_ig_{ji})\nu_jr^{n-1}dS^{n-1}\\
%&+ (2\alpha + \beta )\lim_{r\rightarrow\infty}\int_{S_r}\partial_{i}g_{ji}\nu_jr^{n-1}dS^{n-1} \Big\}.
%\end{split}
%\end{align}
From the above we see that, for general $\alpha$ and $\beta$, if $g$ is asymptotically Schwarzschild, then
%\begin{align*}
%g_{ij}=\left(1+\frac{m}{2r^{n-2}}\right)^{\frac{4}{n-2}}\delta_{ij} + o_{1}(r^{-(n-2)}).
%\end{align*}
%Then
\begin{align*}
%\partial_kg_{ij}&=-2m\left(1+\frac{m}{2r^{n-2}}\right)^{\frac{4}{n-2}-1}r^{-(n-1)}\frac{x_k}{r}\delta_{ij} + o({r^{-(n-1)}}),\\
\left(\partial_{j}g_{ii}-\partial_ig_{ji}\right)\nu_j&=-4m\left(1+\frac{m}{2r}\right)^{3}r^{-2} + o(r^{-2}),
\end{align*}
and this implies
\begin{align*}
\lim_{r\rightarrow\infty}\int_{S_r}(\partial_ig_{ji}-\partial_{j}g_{ii})\nu_jr^{2}d\omega_{2}&=4\omega_{2}m,\\
\lim_{r\rightarrow\infty}\int_{S_r}\partial_{j}g_{ii}\nu_jr^{2}d\omega_{2}&= -6\omega_{2}m,
\end{align*}
establishing
%\begin{align}\label{umbilicalenergy.2}
%\begin{split}
%\mathcal{E}_{\alpha,\beta}(\bar{g})&=4c_n\Lambda\omega_{n-1}\left( \beta(n-1) + (\beta+2\alpha)2n \right)m,\\
%&=4c_n\Lambda\omega_{n-1}\left( (3n-1)\beta + 4n\alpha \right)m,
%\end{split}
%\mathbb{E}^{4th}_0(\bar{g})&=8c_n\omega_{n-1}m \Lambda \Big\{ (n-1)(2\alpha + \frac{\beta}{2} ) +  (2\alpha + \beta )\Big\}.
%\end{align}

%Let us now split the following cases: 

%\noindent 1) Let us analyse the case $n=3$. In this case, we get \\
\begin{align*}
%\mathcal{E}_{\alpha,\beta}(\bar{g})&=\frac{5}{3}\Lambda\left(4m\omega_2\beta + 12m\omega_2(\beta+2\alpha) \right),\\
%\mathcal{E}_{\alpha,\beta}(\bar{g})&=\frac{20}{3}\Lambda m\omega_2\left(\beta + 3(\beta+2\alpha) \right),\\
%\mathcal{E}_{\alpha,\beta}(\bar{g})&=\frac{40}{3}\Lambda m\omega_2\left( 2\beta + 3\alpha \right),\\
%\mathcal{E}_{\alpha,\beta}(\bar{g})&= 40\Lambda m\omega_2\left( \frac{2}{3}\beta + \alpha \right),\\
\mathcal{E}_{\alpha,\beta}(\bar{g})&=40\omega_{2}m\Lambda\left( \frac{2}{3}\beta + \alpha \right)\geq 0 \text{ iff } \beta\geq -\frac{3}{2}\alpha,
\end{align*}
which gives us a positive energy statement for Einstein solutions which are asymptotically Schwarzschild. On the other hand, we can weaken the asymptotics if we consider the special choice $\beta=-2\alpha$. In this case,
\begin{align*}
\mathcal{E}_{\alpha,-2\alpha}(\bar{g})&=-\frac{10}{3}\alpha\Lambda \lim_{r\rightarrow\infty}\int_{S_r}(\partial_ig_{ji}-\partial_{j}g_{ii})\nu_jr^{2}d\omega_{2}.
\end{align*}
Recognizing the last factor as the ADM energy from GR, we see that under the geometric condition $R_g\geq 0$ and $g_{ij}-\delta_{ij}=O_2(|x|^{-\tau})$ for $\tau > \frac{1}{2}$, it follows from the positive energy theorem in GR that if $\alpha<0$, then $\mathcal{E}_{\alpha,-2\alpha}(\bar{g})\geq 0$ for any umbilical Einstein solution with $\Lambda>0$ \cite{PM1}. The rigidity follows,  if $\beta > -\frac{3}{2} \alpha$, from the same results associated to the PMT in GR, which for the kind of asymptotics on $g$ considered here can be consulted, for instance, in \cite{Bartnik,Eichmair1}.

%\noindent 2) Let us analyse the case $n=4$, which implies $\beta=5\alpha$. In this case, 
%\begin{align*}
%\mathcal{E}_{\alpha,5\alpha}(\bar{g})&=284 c_n\Lambda\omega_{n-1}\alpha m\geq 0 \text{ iff } \alpha>0.
%\end{align*}
%Therefore, in this cases the positive energy statement also gets reduced to the condition $\alpha>0$ plus $R_g\geq 0$ and the rigidity also follows.
\qed
\end{proof}

\begin{remark}
Let us highlight that there is an explicit dependence of $\mathcal{E}_{\alpha,\beta}$ on the chosen slicing for space-time, which comes about trough the explicit dependence on the initial values of $(N,X,\dot{N},\dot{X})$. Although these values do not affect the evolving space-time due to geometric uniqueness of the Cauchy problem (see, for instance, \cite[Theorem 8.9, Chapter 6]{CB-book}), these values determine the \emph{space-time observers} along whose flow we evolve the initial data near the initial Cauchy surface. What we explicitly see is that the energy $\mathcal{E}_{\alpha,\beta}$ is sensitive to such a choice. This could be expected on physical grounds as the energy measured by different asymptotic observers should be dependent on the observers. Furthermore, we could even draw some experience with the ADM energy, which is known to be sensitive to such choices (see, for instance, \cite[Chapter 1, Section 1.1.3]{ChruscielEnergyNotes}). Let us however notice that, by examination of the above proof, it is only the behaviour near infinity of these observers that matter, and therefore there will be a class of observers, asymptotic to the one defined by $N=1$ and $X=0$, who perceive the same value to the energy. This will be the subject of the next subsection.% leads to the following related result.
\end{remark}

\subsection{Einstein solutions: proof of theorem \ref{MainThm2}}
We introduce the following definition.

\begin{defn}\label{AsymptoticObserversDefns}
Given a ($\Lambda$)AE initial data set $\mathcal{J}$ we define the \emph{canonical space-time observers} $\mathcal{O}_0$ as those whose flow lines in the evolving space-time satisfy $N_0=1$ and $X_0=0$ at $t=0$. If $\{x^i\}_{i=1}^n$ denote asymptotic coordinates for $M^n$, we will say that a different set of observers $\mathcal{O}_\rho$ are asymptotic of order $\rho$ to $\mathcal{O}_0$ on $M$ if $N-1=O_4(|x|^{-\rho})$ and $X^i=O_4(|x|^{-\rho})$.
\end{defn}

We now intend to show that there exist a class of observers which are asymptotic to the canonical observers $\mathcal{O}_0$ for which the energy $\mathcal{E}_{\alpha,\beta}(\bar{g})$ represents a asymptotic invariant, in the sense of having the same value for every observer in the class, and furthermore, in those cases, the energy is independent of the asymptotic chart used to compute as long as the chart satisfies a minimal order of decay for the metric.\footnote{See Theorem \ref{InvarianceUmbilicalEnergyTHM} for a detailed statement.} The proof of such a statement is computationally heavy and therefore we first present a computational lemma, which can be of interest on its own, since in it we shall explicitly compute the $\hat{e}$-wave gauge condition which determined $(\dot{N},\dot{X})$ in the initial value problem for a general observer.

\begin{lemma}\label{GeneralGaugeConditions}
Consider a space-time $(V\doteq \mathbb{R}\times M^n,\bar{g})$ and let $\{E_{\alpha}\}^{n+1}_{\alpha=0}$ be an adapted orthogonal frame constructed as in \eqref{framebuild}. Let also $e$ be a Riemannian metric on $M$ and $\hat{e}\doteq dt^2+e$ a Riemannian metric on $V$, where $t:V\mapsto \mathbb{R}$ stands for a coordinate choice for the $\mathbb{R}$-factor and we assume that $\partial_t$ is time-like. Denote the associated Riemmanian covariant derivatives to $e$ and $\hat{e}$ by $D$ and $\hat{D}$ respectively. Let us denote by $(N,X)$ the lapse function and shift vector fields associated to the time-like vector field $\partial_t$. Then, denoting by $\mathcal{S}\in \Gamma(T^1_{2}V)$ the tensor field defined by $S^{\mu}_{\alpha\beta}\doteq \Gamma^{\mu}_{\alpha\beta}(\bar{g}) - \Gamma^{\mu}_{\alpha\beta}(\hat{e})$, the following decompositions hold:
\begin{align*}%\label{StensorDecomposition}
\begin{split}
\mathcal{S}^{0}_{00}&=N^{-1}(\partial_tN - X(N)),\\
\mathcal{S}^{0}_{ab}&=\frac{N^{-2}}{2}( - 2NK + (X\otimes d|X|^2_e)_{\mathrm{Sym}} - 2(\bar{g}_{\cdot}DX)_{\mathrm{Sym}} + \pounds_{X}\bar{g} - D_X\bar{g})_{ab},\\
\mathcal{S}^{j}_{00}&={\partial_tX^j - \frac{X^j}{2}\partial_t|X|^2_e + \frac{X^{j}}{2}X(|X|^2_e) - D_XX^{j} + N\bar{\nabla}^jN},\\
\mathcal{S}^{j}_{ab}&=\frac{\bar{g}^{jk}}{2}\left(D_{a}\bar{g}_{kb} + D_{b}\bar{g}_{ka} - D_{k}\bar{g}_{ab}  \right)
\end{split}
\end{align*}
where above $(\bar{g}_{\cdot}DX)_{ab}\doteq g_{bl}D_aX^l$ and we have used the subscript $\mathrm{Sym}$ to denote symmetrization of $2$-tensors.
\end{lemma}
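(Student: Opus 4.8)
## Proof plan for Lemma \ref{GeneralGaugeConditions}

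The plan is to compute the Christoffel symbols of $\bar g$ and of $\hat e$ directly in the adapted coframe $\{\theta^\alpha\}$ and then subtract. The essential simplification comes from the lapse-shift structure: in the orthogonal splitting one has $\bar g = -N^2\,\theta^0\otimes\theta^0 + g_{ij}\,\theta^i\otimes\theta^j$ with $\theta^i = dx^i + X^i dt$, $\theta^0=dt$, whereas $\hat e = dt^2 + e$ is written in the \emph{coordinate} coframe $\{dt,dx^i\}$. Since $\mathcal S^\mu_{\alpha\beta}=\Gamma^\mu_{\alpha\beta}(\bar g)-\Gamma^\mu_{\alpha\beta}(\hat e)$ is the difference of two connections it is a genuine $(1,2)$-tensor, so I am free to evaluate it in whichever frame is most convenient and then read off the components in the adapted frame $\{E_\alpha\}$; I would do all computations relative to the coordinate basis $\{\partial_t,\partial_i\}$ and convert at the end using $E_0=\partial_t - X$, $E_i=\partial_i$.

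First I would record the inverse metric of $\bar g$ in coordinates: writing $\bar g_{tt}=-N^2+|X|^2_g$, $\bar g_{ti}=X_i$, $\bar g_{ij}=g_{ij}$, the standard ADM inversion gives $\bar g^{tt}=-N^{-2}$, $\bar g^{ti}=N^{-2}X^i$, $\bar g^{ij}=g^{ij}-N^{-2}X^iX^j$. I would then feed these into the Christoffel formula $\Gamma^\mu_{\alpha\beta}(\bar g)=\tfrac12\bar g^{\mu\sigma}(\partial_\alpha \bar g_{\sigma\beta}+\partial_\beta \bar g_{\sigma\alpha}-\partial_\sigma \bar g_{\alpha\beta})$, and subtract the corresponding expression for $\hat e$, whose only nonvanishing symbols are the spatial ones $\Gamma^k_{ij}(e)$ built from $e$ (the $t$-direction of $\hat e$ being flat and orthogonal to $M$). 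The four claimed identities then correspond to the four index-type blocks $(\mathcal S^0_{00},\mathcal S^0_{ab},\mathcal S^j_{00},\mathcal S^j_{ab})$. For the purely spatial block $\mathcal S^j_{ab}$ the statement is essentially immediate once one notes that the difference involves the spatial Levi-Civita connection $D$ of $e$ and that $\partial_k\bar g_{ab}=D_k\bar g_{ab}+(\text{connection terms})$, yielding $\mathcal S^j_{ab}=\tfrac{\bar g^{jk}}{2}(D_a\bar g_{kb}+D_b\bar g_{ka}-D_k\bar g_{ab})$.

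The genuinely laborious blocks are the two involving a time index. For $\mathcal S^0_{00}$ and $\mathcal S^j_{00}$ I would expand $\partial_t\bar g_{\sigma 0}$, $\partial_\sigma\bar g_{00}$ and organize the result so that the raw coordinate expression is rewritten covariantly with respect to $D$ and $\hat D$; the key recurring identity is $K_{ij}=-\tfrac{1}{2N}(\partial_t g_{ij}-\pounds_X g_{ij})$ from the text, which lets me trade the coordinate time-derivative $\partial_t g_{ij}$ for $-2NK_{ij}+(\pounds_X g)_{ij}$. This is where the terms $\pounds_X\bar g - D_X\bar g$, the symmetrized $(\bar g_\cdot DX)_{\mathrm{Sym}}$, and the $X\otimes d|X|^2_e$ correction in $\mathcal S^0_{ab}$ arise: they are precisely the bookkeeping needed to express $\partial_t\bar g_{ab}$ and the $X$-contractions in frame-covariant form. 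I expect the main obstacle to be exactly this reorganization in the $\mathcal S^0_{ab}$ and $\mathcal S^j_{00}$ blocks — keeping careful track of which derivatives are coordinate versus $D$-covariant, and correctly isolating the $|X|^2_e$ terms — rather than any conceptual difficulty. Since the identity is purely algebraic/tensorial, verifying it amounts to matching terms; I would carry out the $\mathcal S^j_{ab}$ and $\mathcal S^0_{00}$ blocks explicitly and then indicate that the remaining two blocks follow by the same systematic substitution, relegating the lengthiest manipulations to a direct (possibly computer-assisted) check in the spirit of the earlier sections.
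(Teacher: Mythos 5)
Your proposal is correct, but it takes a genuinely different route from the paper's proof. The paper never computes coordinate Christoffel symbols: it starts from the tensor identity $\mathcal{S}^{\mu}_{\alpha\beta}=\frac{\bar{g}^{\mu\sigma}}{2}\bigl(\hat{D}_{\alpha}\bar{g}_{\sigma\beta}+\hat{D}_{\beta}\bar{g}_{\sigma\alpha}-\hat{D}_{\sigma}\bar{g}_{\alpha\beta}\bigr)$ and evaluates it directly in the anholonomic adapted frame $\{E_0=\partial_t-X,\,E_i=\partial_i\}$. The price there is computing the Christoffel symbols of $\hat{e}$ in that frame, including the structure coefficients $C^{\sigma}_{\alpha\beta}=[E_\alpha,E_\beta]^{\sigma}$, and verifying that many of them (e.g.\ $\hat{\Gamma}^{0}_{j0}$, $\hat{\Gamma}^{0}_{t0}$, $\hat{\Gamma}^{0}_{jk}$) vanish; the reward is that $\bar{g}$ is block diagonal in that frame ($\bar{g}^{00}=-N^{-2}$, $\bar{g}^{0j}=0$, $\bar{g}^{jk}=g^{jk}$), so each block of $\mathcal{S}$ involves few inverse-metric terms and $K$, $\pounds_X g$, $D_X$ appear naturally. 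You work instead in the holonomic coordinate frame, where the Christoffel symbols of $\hat{e}$ are trivial apart from $\Gamma^{k}_{ij}(e)$, and convert the resulting tensor to the adapted frame at the end; this is legitimate precisely because $\mathcal{S}$ is a tensor, and your use of $K_{ij}=-\frac{1}{2N}(\partial_t g_{ij}-\pounds_X g_{ij})$ to eliminate $\partial_t g_{ij}$ is exactly how $K$ enters. What you pay for the trivial $\hat{e}$-symbols is that the ADM inverse is not block diagonal ($\bar{g}^{ti}=N^{-2}X^{i}$, $\bar{g}^{ij}=g^{ij}-N^{-2}X^{i}X^{j}$) and that the final conversion mixes components: for instance the adapted $\mathcal{S}^{j}_{ab}$ equals the coordinate $\mathcal{S}^{j}_{ab}+X^{j}\mathcal{S}^{t}_{ab}$, so the spatial block is ``essentially immediate'' only after the cancellations $\bar{g}^{jt}+X^{j}\bar{g}^{tt}=0$ and $\bar{g}^{jl}+X^{j}\bar{g}^{tl}=g^{jl}$, which are what reconstruct, on your side, the block-diagonal structure the paper enjoys from the outset. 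Finally, note that the two blocks you defer to a ``systematic substitution'' ($\mathcal{S}^{0}_{ab}$ and $\mathcal{S}^{j}_{00}$) are precisely where the paper spends nearly all of its effort and where the terms $(X\otimes d|X|^{2}_{e})_{\mathrm{Sym}}$, $(\bar{g}_{\cdot}DX)_{\mathrm{Sym}}$ and $\pounds_X\bar{g}-D_X\bar{g}$ are produced; since your method is purely algebraic this is labour rather than a conceptual gap, but be aware that the proposal as written leaves the substantive part of the lemma unexecuted.
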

\begin{proof}
Let us start by recalling that, in any basis (including the one given by our adapted frame) the tensor $S$ is written
\begin{align*}
\mathcal{S}^{\mu}_{\alpha\beta}&=\frac{\bar{g}^{\mu\sigma}}{2}\left(\hat{D}_{\alpha}\bar{g}_{\sigma\beta} + \hat{D}_{\beta}\bar{g}_{\sigma\alpha} - \hat{D}_{\sigma}\bar{g}_{\alpha\beta}  \right)
\end{align*}
and also that in coordinates related to an arbitrary frame the Levi-Civita connection coefficients read as
\begin{align*}
\hat{\Gamma}^{\mu}_{\alpha\beta}=\frac{\hat{e}^{\mu\nu}}{2}\left(E_{\alpha}(\hat{e}_{\nu\beta}) + E_{\beta}(\hat{e}_{\nu\alpha}) - E_{\nu}(\hat{e}_{\alpha\beta}) \right) + \frac{\hat{e}^{\mu\nu}}{2}\left(C^{\sigma}_{\nu\alpha}\hat{e}_{\sigma\beta} + C^{\sigma}_{\nu\beta}\hat{e}_{\sigma\alpha} - C^{\sigma}_{\beta\alpha}\hat{e}_{\nu\sigma} \right)
\end{align*}
where $C^{\sigma}_{\gamma\rho}\doteq [E_{\gamma},E_{\rho}]^{\sigma}$. Furthermore, the following identities also hold: 
\begin{align*}
\hat{e}(E_0,E_j)&=\hat{e}(\partial_t,E_j) - \hat{e}(X,E_j)=- e(X,E_j)\doteq-X^{\flat}_j\\
\hat{e}(E_0,E_0)&=\hat{e}(\partial_t,\partial_t) + \hat{e}(X,X)=1+|X|^2_e,\\
\hat{e}^{00}&=\hat{e}^{\sharp}(\theta^0,\theta^0)=\hat{e}^{tt}=1,\\
\hat{e}^{0j}&=\hat{e}^{\sharp}(\theta^0,\theta^j)=\hat{e}^{\sharp}(dt,dx^j) + \hat{e}^{\sharp}(dt,X^jdt)=X^j
\end{align*}
For brevity's sake  we will only detail the computations for the first term. Then,
\begin{align*}
\mathcal{S}^{0}_{00}(\bar{g})&=\frac{\bar{g}^{00}}{2}\hat{D}_{0}\bar{g}_{00}=-\frac{N^{-2}}{2}(\hat{D}_{\partial_t}\bar{g}_{00} - \hat{D}_{X}\bar{g}_{00}).
\end{align*}
%\begin{align*}
%\hat{D}_{X}\bar{g}(e_0,e_0)&=X^j\hat{D}_{X}\bar{g}(\partial_t-X,\partial_t-X)=X^j(\hat{D}_{j}\bar{g}_{tt} - 2X^i\hat{D}_{j}\bar{g}_{ti} + X^iX^k\hat{D}_{j}\bar{g}_{ik}).
%\end{align*}
where
\begin{align*}
\hat{D}_{X}\bar{g}_{00}&=X^j(E_j(g_{00}) - \hat{\Gamma}^{\alpha}_{j0}\bar{g}_{\alpha 0 } - \hat{\Gamma}^{\alpha}_{j0}\bar{g}_{0\alpha})=X^j(E_j(g_{00}) - \hat{\Gamma}^{0}_{j0}\bar{g}_{00} - \hat{\Gamma}^{0}_{j0}\bar{g}_{00}),\\
&=X^j(E_j(g_{00}) + 2N^{2}\hat{\Gamma}^{0}_{j0})=-X(N^2),
\end{align*}
and we used that $\hat{\Gamma}^{0}_{j0}X^j=0$, which follows from 
\begin{align*}
\hat{\Gamma}^{0}_{j0}&=\frac{\hat{e}^{0\nu}}{2}\left(E_{j}(\hat{e}_{\nu 0}) + E_{0}(\hat{e}_{\nu j}) - E_{\nu}(\hat{e}_{j0}) \right) + \frac{\hat{e}^{0\nu}}{2}\left(C^{\sigma}_{\nu j}\hat{e}_{\sigma 0} + C^{\sigma}_{\nu 0}\hat{e}_{\sigma j} - C^{\sigma}_{0j}\hat{e}_{\nu\sigma} \right),\\
&=\frac{\hat{e}^{00}}{2}(E_{j}(\hat{e}_{0 0}) + \underbrace{E_{0}(\hat{e}_{0 j}) - E_{0}(\hat{e}_{j0})}_{=0} ) + \frac{\hat{e}^{00}}{2}(C^{\sigma}_{0j}\hat{e}_{\sigma 0} + \underbrace{C^{\sigma}_{0 0}}_{=0}\hat{e}_{\sigma j} - C^{\sigma}_{0j}\hat{e}_{0\sigma} )\\
&+\frac{\hat{e}^{0k}}{2}\left(E_{j}(\hat{e}_{k 0}) + E_{0}(\hat{e}_{k j}) - E_{k}(\hat{e}_{j0}) \right) + \frac{\hat{e}^{0k}}{2}(\underbrace{C^{\sigma}_{kj}}_{=0}\hat{e}_{\sigma 0} + C^{\sigma}_{k 0}\hat{e}_{\sigma j} - C^{\sigma}_{0j}\hat{e}_{k\sigma} ),\\
&=\frac{1}{2}E_{j}(|X|^2_e)  +\frac{X^{k}}{2}( \partial_t(\hat{e}_{k j}) - X(\hat{e}_{k j}) + \underbrace{E_{k}(X^{\flat}_{j}) - E_{j}(X^{\flat}_{k})}_{={dX^{\flat}_{kj}}} ) + \frac{X^{k}}{2}( -E_k(X^{l})\hat{e}_{l j} - E_j(X^{l})\hat{e}_{kl} ),\\
&=\frac{1}{2}E_{j}(|X|^2_e)  +\frac{X^{k}}{2}( \underbrace{\partial_t(\hat{e}_{k j})}_{-2\hat{K}_{kj}=0} + {dX^{\flat}_{kj}} ) - \frac{X^{l}}{2}X^kE_k(\hat{e}_{l j}) + \frac{X^{k}}{2}( -E_k(X^{l})\hat{e}_{l j} - E_j(X^{l})\hat{e}_{kl} ),\\
&=\frac{1}{2}E_{j}(|X|^2_e)  +\frac{1}{2} X^{k} {dX^{\flat}_{kj}}  - \frac{X^{k}}{2}( \underbrace{X^lE_k(\hat{e}_{l j}) + E_k(X^{l})\hat{e}_{l j}}_{=E_k(X^{\flat}_j)} + E_j(X^{l})\hat{e}_{kl} ),\\
&=\frac{1}{2}E_{j}(|X|^2_e) + \frac{1}{2} {dX^{\flat}_{kj}}X^{k}  - \frac{1}{2}( X^{k}\underbrace{E_k(X^{\flat}_j)}_{=dX^{\flat}_{kj} + E_j(X^{\flat}_k)} + E_j(X^{l})X^{\flat}_l ),\\
&=\frac{1}{2}E_{j}(|X|^2_e) + \frac{1}{2}{dX^{\flat}_{kj}}X^{k}  - \frac{1}{2}\left( dX^{\flat}_{kj}X^{k} + E_j(|X|^2_e)  \right),\\
&={0.}%dX^{\flat}_{jk}X^{k}.
\end{align*}
Above we used that $X^{\flat} = \hat{e}(X,\cdot)$ and
\begin{align*}
C^{\sigma}_{0j}=[E_0,E_j]^{\sigma}=-[X,E_j]^{\sigma}=E_jX^{\sigma}.
\end{align*}
%Hence $\hat{\Gamma}^{0}_{j0}X^j=2dX^{\flat}(X,X) = 0$.

Let us highlight that here  we denoted $dX^{\flat}_{jk} = E_{j}(X^{\flat}_{k}) - E_{k}(X^{\flat}_{j})$, to be coherent with our differential forms notations (see \eqref{differentialformnotation} in the appendix).

Similarly, 
\begin{align*}
\hat{D}_{\partial_t}\bar{g}_{00}&=\partial_tg_{00} - 2\hat{\Gamma}^{\alpha}_{t0}\bar{g}_{\alpha 0} = - \partial_tN^2 -2\hat{\Gamma}^{0}_{t0}\bar{g}_{00},\\
&=- \partial_tN^2 + 2N^2\hat{\Gamma}^{0}_{t0}=- \partial_tN^2.
\end{align*}
where we used
\begin{align*}
\hat{\Gamma}^{0}_{t 0}&=\frac{\hat{e}^{0\nu}}{2}\left(E_{t}(\hat{e}_{\nu 0}) + E_{0}(\hat{e}_{\nu t}) - E_{\nu}(\hat{e}_{t 0}) \right) + \frac{\hat{e}^{0\nu}}{2}\left(C^{\sigma}_{\nu t}\hat{e}_{\sigma 0} + C^{\sigma}_{\nu 0}\hat{e}_{\sigma t} - C^{\sigma}_{0 t}\hat{e}_{\nu\sigma} \right),\\
%&=\frac{1}{2}(\partial_{t}|X|^2_e + \underbrace{E_{0}(\hat{e}_{0 t}) - E_{0}(\hat{e}_{t 0})}_{=0} ) + \frac{1}{2}(C^{\sigma}_{0 t}\hat{e}_{\sigma 0} + \underbrace{C^{\sigma}_{0 0}}_{=0}\hat{e}_{\sigma t} - C^{\sigma}_{0 t}\hat{e}_{0\sigma} )\\
%&+\frac{\hat{e}^{0k}}{2}(E_{t}(\hat{e}_{k0}) + E_{0}(\underbrace{\hat{e}_{k t}}_{=0}) - E_{k}(\underbrace{\hat{e}_{t 0}}_{=1}) ) + \frac{\hat{e}^{0k}}{2}(\underbrace{C^{\sigma}_{kt}}_{=0}\hat{e}_{\sigma 0} + C^{\sigma}_{k0}\hat{e}_{\sigma t} - C^{\sigma}_{0 t}\hat{e}_{k\sigma} ),\\
%&=\frac{1}{2}\partial_{t}|X|^2_e  -\frac{X^{k}}{2}\partial_{t}(X^{\flat}_{k}) + \frac{X^{k}}{2}\left( C^{\sigma}_{k0}\hat{e}_{\sigma t} - C^{\sigma}_{0 t}\hat{e}_{k\sigma} \right),\\
%&=\frac{1}{2}\partial_{t}|X|^2_e  -\frac{X^{k}}{2}\partial_{t}(X^{\flat}_{k}) + \frac{X^{k}}{2}( -E_k(X^{l})\underbrace{\hat{e}_{l t}}_{=0} - \partial_tX^{l}\hat{e}_{kl} ),\\
%&=\frac{1}{2}\partial_{t}|X|^2_e  -\frac{X^{k}}{2}\partial_{t}(X^{\flat}_{k}) + \frac{X^{k}}{2}( - \partial_tX^{l}\hat{e}_{kl}),\\
&=\frac{1}{2}\partial_{t}|X|^2_e  - \frac{1}{2}( \partial_tX^{l}X^{\flat}_l + X^{k}\partial_{t}X^{\flat}_{k})=0.
\end{align*}
%and above we appealed to the following relations:
%\begin{align}\label{Commutations.1}
%[E_k,E_0]^{\sigma}=-[E_k,X]^{\sigma}=-E_k(X^{\sigma})\;,\; [E_0,\partial_t]^{\sigma}=-[X,\partial_t]^{\sigma}=\partial_tX^{\sigma}.
%\end{align}
Therefore, 
\begin{align*}
\mathcal{S}^{0}_{00}(\bar{g})&=\frac{N^{-2}}{2}(\partial_tN^2 - X(N^2))=N^{-1}(\partial_tN - X(N)).
\end{align*}

Also
\begin{align*}
\begin{split}
\mathcal{S}^{j}_{00}(\bar{g})&=\frac{\bar{g}^{jk}}{2}\left( 2\hat{D}_{0}\bar{g}_{k 0} - \hat{D}_{k}\bar{g}_{00}  \right)=\frac{\bar{g}^{jk}}{2}\left( 2\hat{D}_{\partial_t}\bar{g}_{k 0} - 2\hat{D}_{X}\bar{g}_{k 0}  - \hat{D}_{k}\bar{g}_{00}  \right).
%&=\frac{\bar{g}^{jk}}{2}\left( 2 \partial_t\bar{g}_{k 0} - 2X^j(\partial_{j}\bar{g}_{k 0} - \hat{\Gamma}^{\sigma}_{jk}\bar{g}_{\sigma 0} - \hat{\Gamma}^{\sigma}_{j0}\bar{g}_{k\sigma} ) - \hat{D}_{k}\bar{g}_{00}  \right),\\
%&=\frac{\bar{g}^{jk}}{2}\left( 2X^j( \hat{\Gamma}^{0}_{jk}\bar{g}_{0 0} + \hat{\Gamma}^{\sigma}_{j0}\bar{g}_{kl} ) - \hat{D}_{k}\bar{g}_{00}  \right).
\end{split}
\end{align*}
Now, one can compute that
\begin{align*}
\hat{D}_{\partial_t}\bar{g}_{k 0}&=\partial_t\bar{g}_{k 0} - \hat{\Gamma}^{\alpha}_{tk}\bar{g}_{\alpha 0} - \hat{\Gamma}^{\alpha}_{t0}\bar{g}_{k\alpha}= - \hat{\Gamma}^{\alpha}_{tk}\bar{g}_{00} - \hat{\Gamma}^{j}_{t0}\bar{g}_{kj} =N^2 \hat{\Gamma}^{0}_{tk} - \hat{\Gamma}^{j}_{t0}\bar{g}_{kj}
\end{align*}
%which follows from
and since
\begin{align*}
\hat{\Gamma}^{0}_{tk}&=\frac{\hat{e}^{0\nu}}{2}(E_{t}(\hat{e}_{\nu k}) +E_{k}(\hat{e}_{\nu t}) - E_{\nu}({\hat{e}_{tk}}) ) + \frac{\hat{e}^{0\nu}}{2}(C^{\sigma}_{\nu t}\hat{e}_{\sigma k} + C^{\sigma}_{\nu k}\hat{e}_{\sigma t} - {C^{\sigma}_{k t}}\hat{e}_{\nu\sigma} )%,\\
%&=\frac{1}{2}(\partial_{t}(\hat{e}_{0k}) + E_{k}(\underbrace{\hat{e}_{0t}}_{=1}) ) + \frac{1}{2}(\underbrace{C^{\sigma}_{0t}}_{\partial_tX^{\sigma}}\hat{e}_{\sigma k} + \underbrace{C^{\sigma}_{0k}}_{E_k(X^{\sigma})}\hat{e}_{\sigma t} )+\frac{\hat{e}^{0l}}{2}(\underbrace{\partial_{t}(\hat{e}_{lk})}_{=0} + E_{k}(\underbrace{\hat{e}_{lt}}_{=0}) ) + \frac{\hat{e}^{0l}}{2}(\underbrace{C^{\sigma}_{lt}}_{=0}\hat{e}_{\sigma k} + \underbrace{C^{\sigma}_{lk}}_{=0}\hat{e}_{\sigma t} ),\\
%&=-\frac{1}{2}\partial_{t}X^{\flat}_{k} + \frac{1}{2}(\underbrace{\partial_tX^{l}\hat{e}_{lk}}_{=\partial_tX^{\flat}_{k}} + E_k(X^{l})\underbrace{\hat{e}_{lt}}_{=0} )=0,
%&
=-\frac{1}{2}\partial_{t}X^{\flat}_{k} + \frac{1}{2}\partial_tX^{l}\hat{e}_{lk},
\end{align*}
%where we have used (\ref{Commutations.1}), and also
and
\begin{align*}
\hat{\Gamma}^{j}_{t0}&=\frac{\hat{e}^{j\nu}}{2}\left(E_{t}(\hat{e}_{\nu 0}) + E_{0}(\hat{e}_{\nu t}) - E_{\nu}(\hat{e}_{t0}) \right) + \frac{\hat{e}^{j\nu}}{2}\left(C^{\sigma}_{\nu t}\hat{e}_{\sigma 0} + C^{\sigma}_{\nu 0}\hat{e}_{\sigma t} - C^{\sigma}_{0t}\hat{e}_{\nu\sigma} \right)%,\\
%&=\frac{\hat{e}^{j0}}{2}(E_{t}(\hat{e}_{00}) + \underbrace{E_{0}(\hat{e}_{0t}) - E_{0}(\hat{e}_{t0})}_{=0} ) + \frac{\hat{e}^{j0}}{2}(C^{\sigma}_{0t}\hat{e}_{\sigma 0} + \underbrace{C^{\sigma}_{00}}_{=0}\hat{e}_{\sigma t} - C^{\sigma}_{0t}\hat{e}_{0\sigma} )\\
%&+\frac{\hat{e}^{jl}}{2}(E_{t}(\hat{e}_{l0}) + E_{0}(\underbrace{\hat{e}_{lt}}_{=0}) - E_{l}(\underbrace{\hat{e}_{t0}}_{=1}) ) + \frac{\hat{e}^{jl}}{2}(\underbrace{C^{\sigma}_{lt}}_{=0}\hat{e}_{\sigma 0} + C^{\sigma}_{l0}\hat{e}_{\sigma t} - C^{\sigma}_{0t}\hat{e}_{l\sigma} ),\\
%&=\frac{X^{j}}{2}\partial_{t}(|X|^2_e) -\frac{\hat{e}^{jl}}{2}\partial_{t}X^{\flat}_{l}  + \frac{\hat{e}^{jl}}{2}( -E_l(X^{\sigma})\hat{e}_{\sigma t} - \partial_tX^{\sigma}\hat{e}_{l\sigma} ),\\
%&=\frac{X^{j}}{2}\partial_{t}(|X|^2_e) - \frac{\hat{e}^{jl}}{2}\partial_{t}X^{\flat}_{l}  - \frac{1}{2}\partial_tX^{j},\\
%&
=\frac{X^{j}}{2}\partial_{t}(|X|^2_e) - \partial_{t}X^{j},
\end{align*}
 we find
\begin{align*}
\hat{D}_{\partial_t}\bar{g}_{k 0}%&=N^2 \hat{\Gamma}^{0}_{tk} - \hat{\Gamma}^{j}_{t0}\bar{g}_{kj},\\
&=-\bar{g}_{kj}(\frac{X^{j}}{2}\partial_{t}(|X|^2_e) - \partial_{t}X^{j} )=\bar{g}_{kj}(\partial_{t}X^{j} - \frac{X^{j}}{2}\partial_{t}(|X|^2_e)).
\end{align*}

Now, since
\begin{align*}
\hat{D}_{X}\bar{g}_{k 0}=X^j(E_j(\bar{g}_{k0} )- \hat{\Gamma}^{\alpha}_{jk}\bar{g}_{\alpha 0} - \hat{\Gamma}^{\alpha}_{j0}\bar{g}_{k\alpha})=-X^j(\hat{\Gamma}^{0}_{jk}\bar{g}_{00} + \hat{\Gamma}^{l}_{j0}\bar{g}_{kl})=-X^j(-N^2\hat{\Gamma}^{0}_{jk} + \hat{\Gamma}^{l}_{j0}\bar{g}_{kl}),
\end{align*}
we need to compute the following quantities:
\begin{align*}
\hat{\Gamma}^{0}_{jk}&=\frac{\hat{e}^{0\nu}}{2}(E_{j}(\hat{e}_{\nu k}) + E_{k}(\hat{e}_{\nu j}) - E_{\nu}(\hat{e}_{jk}) ) + \frac{\hat{e}^{0\nu}}{2}(C^{\sigma}_{\nu j}\hat{e}_{\sigma k} + C^{\sigma}_{\nu k}\hat{e}_{\sigma j} - \underbrace{C^{\sigma}_{kj}}_{=0}\hat{e}_{\nu\sigma} )%,\\
%&=\frac{\hat{e}^{00}}{2}(E_{j}(\underbrace{\hat{e}_{0k}}_{=-X^{\flat}_{k}}) + E_{k}(\underbrace{\hat{e}_{0j}}_{=-X^{\flat}_{j}}) - \underbrace{E_{0}(\hat{e}_{jk})}_{=-X(e_{jk})} ) + \frac{\hat{e}^{00}}{2}(\underbrace{C^{l}_{0j}}_{=E_j(X^l)}\hat{e}_{lk} + \underbrace{C^{l}_{0k}}_{=E_k(X^l)}\hat{e}_{lj} )\\
%&+\frac{\hat{e}^{0a}}{2}(E_{j}(\hat{e}_{a k}) + E_{k}(\hat{e}_{a j}) - E_{a}(\hat{e}_{jk}) ) + \frac{\hat{e}^{0a}}{2}(\underbrace{C^{l}_{aj}}_{=0}\hat{e}_{lk} + \underbrace{C^{l}_{ak}}_{=0}\hat{e}_{lj} ),\\
%&=\frac{1}{2}( -E_{j}(X^{\flat}_{k}) - E_{k}(X^{\flat}_{j}) + X(e_{jk}) ) + \frac{1}{2}(E_j(X^l)\hat{e}_{lk} + E_k(X^l)\hat{e}_{lj} )\\
%&+\frac{X^{a}}{2}(E_{j}(\hat{e}_{a k}) + E_{k}(\hat{e}_{a j}) - E_{a}(\hat{e}_{jk}) ),\\
%&=\frac{1}{2}( -E_{j}(X^{\flat}_{k}) - E_{k}(X^{\flat}_{j}) ) + \frac{1}{2}(\underbrace{_j(X^l)\hat{e}_{lk}}_{=E_j(X^{\flat}_k) - X^lE_j(e_{lk})} + \underbrace{E_k(X^l)\hat{e}_{lj}}_{=E_k(X^{\flat}_{j}) - X^lE_k(\hat{e}_{lj}) } ) +\frac{X^{l}}{2}(E_{j}(\hat{e}_{lk}) + E_{k}(\hat{e}_{lj}) ),\\
%&
=0
\end{align*}
and 
\begin{align*}
\hat{\Gamma}^{l}_{j0}&=\frac{\hat{e}^{l\nu}}{2}(E_{j}(\hat{e}_{\nu 0}) + E_{0}(\hat{e}_{\nu j}) - E_{\nu}(\hat{e}_{j0}) ) + \frac{\hat{e}^{l\nu}}{2}(C^{\sigma}_{\nu j}\hat{e}_{\sigma 0} + C^{\sigma}_{\nu 0}\hat{e}_{\sigma j} - C^{\sigma}_{0 j}\hat{e}_{\nu\sigma} )\\
&=\frac{X^{l}}{2}(E_{j}(|X|^2_e) )- D_jX^{l}.
\end{align*}
Thus
\begin{align*}
\begin{split}
\hat{\Gamma}^{l}_{j0}\bar{g}_{kl}&=\frac{X_{k}}{2}(E_{j}(|X|^2_e) ) - \bar{g}_{kl}D_jX^{l}
\end{split}
\end{align*}
implying
\begin{align*}
\hat{D}_{X}\bar{g}_{k 0}&=-X^j(-N^2\hat{\Gamma}^{0}_{jk} + \hat{\Gamma}^{l}_{j0}\bar{g}_{kl})=-\frac{X_{k}}{2}X(|X|^2_e) + \bar{g}_{kl}D_XX^{l}
\end{align*}
%Hence
%\begin{align*}
%2\hat{D}_{X}\bar{g}_{k 0} + \hat{D}_k\bar{g}_{00}&=-2\frac{X_{k}}{2}X(|X|^2_e) + 2\bar{g}_{kl}\hat{e}^{la}X(X^{\flat}_{a}) - 2\bar{g}_{kl}\frac{\hat{e}^{la}}{2} X^jX^be_a(e_{bj}) \\
%&+\frac{1}{2}e_{k}(|X|^2_e) + dX^{\flat}_{kl}X^{l}  - \frac{1}{2}( X^{l}e_l(X^{\flat}_k) + e_k(X^{l})X^{\flat}_l ) 
%\end{align*}
Thus
\begin{align*}
\mathcal{S}^{j}_{00}(\bar{g})%&=\frac{\bar{g}^{jk}}{2}( 2\hat{D}_{\partial_t}\bar{g}_{k 0} - 2\hat{D}_{X}\bar{g}_{k 0}  - \hat{D}_{k}\bar{g}_{00}  ),\\
&=\partial_tX^j - \frac{X^j}{2}\partial_t|X|^2_e + \frac{X^{k}}{2}X(|X|^2_e) - D_XX^{j} + N\bar{\nabla}^jN.
\end{align*}

%\begin{align*}
%\hat{\Gamma}^{0}_{jk}=\frac{\hat{e}^{0\nu}}{2}\left(e_{j}(\hat{e}_{\nu k}) + e_{k}(\hat{e}_{\nu j}) - e_{\nu}(\hat{e}_{jk}) \right) + \frac{\hat{e}^{0\nu}}{2}\left(C^{\sigma}_{\nu j}\hat{e}_{\sigma k} + C^{\sigma}_{\nu k}\hat{e}_{\sigma j} - C^{\sigma}_{kj}\hat{e}_{\nu\sigma} \right)
%\end{align*}
%and
%\begin{align*}
%\hat{\Gamma}^{\mu}_{\alpha\beta}=\frac{\hat{e}^{\mu\nu}}{2}\left(e_{\alpha}(\hat{e}_{\nu\beta}) + e_{\beta}(\hat{e}_{\nu\alpha}) - e_{\nu}(\hat{e}_{\alpha\beta}) \right) + \frac{\hat{e}^{\mu\nu}}{2}\left(C^{\sigma}_{\nu\alpha}\hat{e}_{\sigma\beta} + C^{\sigma}_{\nu\beta}\hat{e}_{\sigma\alpha} - C^{\sigma}_{\beta\alpha}\hat{e}_{\nu\sigma} \right)
%\end{align*}
Also
\begin{align*}
\mathcal{S}^{\mu}_{ab}&=\frac{\bar{g}^{\mu\sigma}}{2}\left(\hat{D}_{a}\bar{g}_{\sigma b} + \hat{D}_{b}\bar{g}_{\sigma a} - \hat{D}_{\sigma}\bar{g}_{ab}  \right),\\
&= \frac{\bar{g}^{\mu 0}}{2}(\hat{D}_{a}\bar{g}_{0 b} + \hat{D}_{b}\bar{g}_{0 a} - \hat{D}_{0}\bar{g}_{ab}  ) + \frac{1}{2}(\bar{g}^{\mu k}\hat{D}_{a}\bar{g}_{k b} + \bar{g}^{\mu k}\hat{D}_{b}\bar{g}_{k a} - \bar{g}^{\mu k}\hat{D}_{k}\bar{g}_{ab}  ),
%&=\frac{1}{2}\left(\bar{g}^{\mu k}\hat{D}_{a}\bar{g}_{k b} + \bar{g}^{\mu k}\hat{D}_{b}\bar{g}_{k a} - \bar{g}^{\mu\sigma}\hat{D}_{\sigma}\bar{g}_{ab}  \right).
\end{align*}
and noticing that
\begin{align*}
\hat{D}_{a}\bar{g}_{0 b}&=-\hat{\Gamma}^{\alpha}_{a0}\bar{g}_{\alpha b} -\hat{\Gamma}^{\alpha}_{ab}\bar{g}_{0\alpha}=-\hat{\Gamma}^{l}_{a0}\bar{g}_{lb} -\underbrace{\hat{\Gamma}^{0}_{ab}}_{=0}\bar{g}_{00}= - \frac{X_{b}}{2}(E_{a}(|X|^2_e) )+\bar{g}_{lb}\hat{e}^{lu}DX^{\flat}_{au},\\
\hat{D}_{b}\bar{g}_{0 a}&=- \frac{X_{a}}{2}(e_{b}(|X|^2_e) )+\bar{g}_{la}\hat{e}^{lu}DX^{\flat}_{bu}
\end{align*}
and also that
\begin{align*}
\hat{D}_{0}\bar{g}_{ab}&=\hat{D}_{\partial_t}\bar{g}_{ab} - \hat{D}_{X}\bar{g}_{ab}=\partial_t\bar{g}_{ab}-\hat{\Gamma}^{\alpha}_{ta}\bar{g}_{\alpha b} - \hat{\Gamma}^{\alpha}_{tb}\bar{g}_{a\alpha} - X^{u}\hat{D}_u\bar{g}_{ab},\\
&=-2NK_{ab} + \pounds_{X}\bar{g}_{ab} - \underbrace{\hat{\Gamma}^{l}_{ta}}_{=0}\bar{g}_{lb} - \underbrace{\hat{\Gamma}^{l}_{tb}}_{=0}\bar{g}_{al} - \underbrace{X^{u}\hat{D}_u\bar{g}_{ab}}_{=D_X\bar{g}_{ab}},\\
&=-2NK_{ab} + \pounds_{X}\bar{g}_{ab} - D_X\bar{g}_{ab}
\end{align*}
we see that
\begin{align*}
\mathcal{S}^{\mu}_{ab}&= \frac{\bar{g}^{\mu 0}}{2}(\hat{D}_{a}\bar{g}_{0 b} + \hat{D}_{b}\bar{g}_{0 a} - \hat{D}_{0}\bar{g}_{ab}  )+ \frac{1}{2}(\bar{g}^{\mu k}\hat{D}_{a}\bar{g}_{k b} + \bar{g}^{\mu k}\hat{D}_{b}\bar{g}_{k a} - \bar{g}^{\mu k}\hat{D}_{k}\bar{g}_{ab}  ),\\
&=\frac{\bar{g}^{\mu 0}}{2}(- \frac{X_{b}}{2}(e_{a}(|X|^2_e) )+\bar{g}_{lb}D_aX^{l} - \frac{X_{a}}{2}(E_{b}(|X|^2_e) )+\bar{g}_{la}D_bX^{l} + 2NK_{ab} - \pounds_{X}\bar{g}_{ab} + D_X\bar{g}_{ab})\\
&+ \frac{\bar{g}^{\mu k}}{2}(\hat{D}_{a}\bar{g}_{k b} + \hat{D}_{b}\bar{g}_{k a} - \hat{D}_{k}\bar{g}_{ab}  ).
\end{align*}
Thus, 
\begin{align*}
\mathcal{S}^{0}_{ab}&=\frac{N^{-2}}{2}( - 2NK_{ab} + \frac{X_{b}}{2}D_{a}(|X|^2_e) + \frac{X_{a}}{2}D_{b}(|X|^2_e) - \bar{g}_{lb}D_aX^{l} - \bar{g}_{la}D_bX^{l} + \pounds_{X}\bar{g}_{ab} - D_X\bar{g}_{ab}),
\end{align*}
%Thus
%\begin{align*}
%\mathcal{S}^{0}_{ab}\big\vert_{t=0}&=\frac{N^{-2}}{2}(-2N^2K_{ab} + \pounds_Xg - D_Xg_{ab})=-K_{ab} + \frac{N^{-2}}{2}(\pounds_Xg_{ab} - D_Xg_{ab}).
%\end{align*}
and similarly, 
\begin{align*}
\mathcal{S}^{j}_{ab}%&=\frac{\bar{g}^{jk}}{2}\left(\hat{D}_{a}\bar{g}_{kb} + \hat{D}_{b}\bar{g}_{ka} - \hat{D}_{k}\bar{g}_{ab}  \right)\big\vert_{t=0},\\
&=\frac{\bar{g}^{jk}}{2}\left(D_{a}\bar{g}_{kb} + D_{b}\bar{g}_{ka} - D_{k}\bar{g}_{ab}  \right)
\end{align*}
\qed
\end{proof}

We can now appeal to the above Lemma to prove theorem \ref{MainThm2} which we recall: 

\begin{thm}\label{InvarianceUmbilicalEnergyTHM}
Under the same hypotheses as in Theorem \ref{PEthm}, given a $\Lambda$AE initial data set and two asymptotic observers $\mathcal{O}_1$ and $\mathcal{O}_2$ of orders $\rho> \frac{1}{2}$, if we denote the energies associated to them by $\mathcal{E}^{(\mathcal{O}_i)}_{\alpha,\beta}$, $i=1,2$, then
\begin{align*}
\mathcal{E}^{(\mathcal{O}_1)}_{\alpha,\beta}(\bar{g})=\mathcal{E}^{(\mathcal{O}_2)}_{\alpha,\beta}(\bar{g}).
\end{align*}
Furthermore, if $\phi_x,\phi_y:M\backslash K\mapsto \mathbb{R}^3\backslash \overline{B_1(0)}$ are two asymptotic charts where $g$ is of order $\tau_x,\tau_y>\frac{n-2}{2}$ respectively, and where the general hypotheses of Theorem \ref{PEthm} are satisfied, then 
\begin{enumerate}
\item If ${\phi_x^{-1}}^{*}g, {\phi_y^{-1}}^{*}g$ are both AS, the value of  $\mathcal{E}^{(\mathcal{O}_i)}_{\alpha,\beta}$ is the same for both coordinate systems for general values of $\alpha$ and $\beta$.
\item If $2\alpha+\beta=0$, the value of  $\mathcal{E}^{(\mathcal{O}_i)}_{\alpha,\beta}$ is equal in both asymptotic coordinate systems.
\end{enumerate}
%
%The same result stands on AS spaces without $R_g\in L^1(M,dV_g)$.
\end{thm}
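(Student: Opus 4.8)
The plan is to treat the two invariance claims separately. For the invariance under change of observer, I would first reduce to a comparison with the canonical observer $\mathcal{O}_0$ of Definition \ref{AsymptoticObserversDefns}: since $\mathcal{O}_0$ (with $N\equiv 1$, $X\equiv 0$ at $t=0$) is trivially asymptotic of every order and the decay conditions on $(N-1,X)$ compose, it suffices to prove $\mathcal{E}^{(\mathcal{O}_\rho)}_{\alpha,\beta}(\bar g)=\mathcal{E}^{(\mathcal{O}_0)}_{\alpha,\beta}(\bar g)$ for a single observer with $N-1=O_4(|x|^{-\rho})$, $X=O_4(|x|^{-\rho})$, $\rho>\tfrac12$. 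The spatial data $(g,K)$ being fixed, the two observers feed the same $g_{ij}|_{t=0}$ into (\ref{ADM-Energy.1}), so the purely spatial term proportional to $(\tfrac32\beta+2\alpha)$ is literally identical and drops out of the difference. I would then compute the remaining ingredients $\dot N,\dot X$ and $\ddot g|_{t=0}$ for a general observer: $\dot N,\dot X$ come from imposing the $\hat e$-wave gauge (\ref{wavegauge}) on the data generated by $(N,X)$, for which Lemma \ref{GeneralGaugeConditions} supplies the exact decomposition of $\mathcal{S}^\lambda_{\mu\nu}$, while $\ddot g|_{t=0}$ follows from $\mathrm{Ric}_{\bar g}=\Lambda\bar g$ together with the ADM evolution $\dot g=-2NK+\pounds_X g$, exactly as in the proof of Theorem \ref{PEthm} but now retaining every $N-1$ and $X$ contribution.

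The crux is then the asymptotic bookkeeping of the observer-dependent difference. One finds $\ddot g_{ij}=\tfrac{10}{3}\Lambda N^2 g_{ij}+(\text{lower order})$ and $\dot X_j=\dot X_j^{(0)}-N\bar\nabla_j N+(\text{lower order})$, so the naive linear-in-$(N-1,X)$ corrections to the energy integrand are of two kinds: those entering through $\dot X$ and through $\partial^3N^2$, which cancel in antisymmetric pairs (the combinations $\partial_i\partial_j\dot X_i-\partial_i\partial_i\dot X_j$ and the $\partial_j\partial_i\partial_iN^2$ term kill the gradient parts, since mixed partials commute) or fall to order $o(r^{-(n-1)})$; and the scalar piece $\tfrac{10}{3}\Lambda(N^2-1)g_{ij}$ coming from $\ddot g$. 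The heart of the matter, and the step I expect to be the main obstacle, is to show that after invoking the wave-gauge identities of Lemma \ref{GeneralGaugeConditions} these scalar pieces reorganize into exact angular forms or cancel against the $\dot X$ contributions, so that nothing genuine survives at linear order; the residual terms are then quadratic, of size $O(r^{-2\rho})$ before differentiation (for instance $|X|^2_g$, $(N-1)^2$ and the cross products $g^{ab}\dot g_{aj}\dot g_{ib}$ on the perturbed $\dot g$). After three spatial derivatives and integration over a sphere of area $\sim r^{n-1}$ these scale like $r^{1-2\rho}$ in $n=3$, which is exactly why $\rho>\tfrac12$ is the sharp threshold; a cruder treatment that failed to exhibit the linear cancellation would only give $\rho>1$.

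For invariance under change of asymptotic chart I would exploit the reduction already established in Theorem \ref{PEthm}, under whose hypotheses the energy collapses onto a classical ADM surface integral,
\[
\mathcal{E}_{\alpha,-2\alpha}(\bar g)=-\tfrac{10}{3}\alpha\Lambda\lim_{r\to\infty}\int_{S_r}(\partial_i g_{ji}-\partial_j g_{ii})\nu_j\,r^2 d\omega_2,
\]
and $\mathcal{E}_{\alpha,\beta}(\bar g)=40\,\omega_2\,m\,\Lambda(\tfrac23\beta+\alpha)$ in the asymptotically Schwarzschild case. In both cases the chart dependence is carried entirely by the ADM energy/mass of $(M,g)$, so case (2) follows directly from the invariance of the ADM energy under changes of asymptotic coordinates for $\tau_x,\tau_y>\frac{n-2}{2}$ with $R_g\in L^1$, which is Bartnik's theorem \cite{Bartnik}; and case (1) follows because the asymptotically Schwarzschild normalization determines the single scalar $m$ intrinsically from the $|x|^{-(n-2)}$ coefficient of $\mathrm{tr}(g-\delta)$, so the surface integral takes the same value in any AS chart.

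Finally, for the closing assertion that these conclusions persist on AS spaces without assuming $R_g\in L^1(M,dV_g)$, the point is that the $L^1$ hypothesis entered only to guarantee both the convergence of the limit defining the energy and the applicability of the general AE invariance of \cite{Bartnik}. On an AS space both are automatic: the AS expansion forces $R_g=o(|x|^{-3})$ and pins down $m$ coordinate-invariantly through its explicit leading term, so the defining limit converges and its value is chart-independent directly, bypassing the integrability control. Substituting the AS decay for $R_g\in L^1$ therefore leaves both the observer-invariance and the chart-invariance arguments intact, and beyond re-running the order counting of the first part with the AS rates in place of the order-$\tau$ rates I do not anticipate a new difficulty here.
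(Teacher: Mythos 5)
Your overall route is the same as the paper's: reduce to the canonical observer $\mathcal{O}_0$ of Definition \ref{AsymptoticObserversDefns}, feed Lemma \ref{GeneralGaugeConditions} and the wave-gauge form of the Einstein equations into the asymptotics of $(\dot N,\dot X,\ddot g)$ so that the energy collapses onto the $\ddot g$-terms of (\ref{ADM-Energy.1}), and settle chart invariance through the ADM mass (Bartnik's theorem \cite{Bartnik} in the $R_g\in L^1$ case, a direct expansion in the AS case). But your argument for observer invariance has a genuine gap at exactly the point you yourself flag as ``the main obstacle'': you never exhibit the cancellation of the linear piece $\tfrac{10}{3}\Lambda(N^2-1)g_{ij}$ that your formula $\ddot g_{ij}=\tfrac{10}{3}\Lambda N^2 g_{ij}+\dots$ produces; you only conjecture that it ``reorganizes into exact angular forms or cancels against the $\dot X$ contributions.'' It cannot cancel against those terms: under the gauge conditions both $\partial^3 N^2$ and $\partial^2\dot X$ are $O_1(r^{-\tau_*-3})$, so their sphere integrals vanish individually and are not available to offset anything. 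If the scalar piece were genuinely present, its contribution to (\ref{InvariantEnergyContribution}) would be $-\tfrac{10}{3}\Lambda\,(4\beta+6\alpha)\int_{S_r}\partial_jN^2\,\hat\nu^j d\omega_r=O(r^{1-\rho})$, and $4\beta+6\alpha\neq 0$ even in the case $2\alpha+\beta=0$ (it equals $-2\alpha$); for $\tfrac12<\rho\le 1$ and generic admissible lapses (say $N^2-1=r^{-\rho}$ near infinity) this neither vanishes nor converges. So, as you essentially concede, the proposal as written only yields the statement for $\rho>1$, not for the claimed range $\rho>\tfrac12$.

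The paper closes this step differently: in its bookkeeping no linear term ever appears. The two inputs are (i) in the frame version of the gauge conditions from Lemma \ref{GeneralGaugeConditions} the lapse enters $\dot X^j$ only through the \emph{spatial} gradient $N\bar\nabla^jN=O_3(r^{-\rho-1})$, whence $\dot X^j=O_3(r^{-\tau_*-1})$, and (ii) in the quadratic Christoffel term the factor $\bar g^{00}=-N^{-2}$ compensates the $N^2$ carried by $\dot g_{ij}=\pm 2cNg_{ij}+\dots$, so that $\bar\Gamma^{\nu}_{j\sigma}\bar\Gamma^{\sigma}_{i\nu}=\tfrac{2\Lambda}{3}g_{ij}+O_3(r^{-\rho-1})+O_3(r^{-2\tau_*})$ with no $(N^2-1)$-correction; this leads to $\tfrac12\ddot g_{ij}=\tfrac53\Lambda g_{ij}+O_3(r^{-\tau_*-1})+O_3(r^{-2\tau_*})$, see (\ref{InvarianceUmbilicalEnergy.0}) and what follows, so the only surviving error after one derivative and integration is the quadratic one $O(r^{1-2\tau_*})$, and this is the sole origin of the threshold $\rho>\tfrac12$. (Whether the paper's silent replacement of $\tfrac12 N^{-2}\ddot g_{ij}=\Lambda\bar g_{ij}+\dots$ by $\tfrac12\ddot g_{ij}=\Lambda\bar g_{ij}+\dots$ fully disposes of the prefactor you worry about is a fair question --- you have put your finger on the one step the paper treats tersely --- but identifying the obstacle is not resolving it, and your proposal contains no proof of the needed cancellation.) Separately, your chart-invariance argument in the AS case is assertion rather than proof: saying the AS normalization ``determines $m$ intrinsically'' is the conclusion, not an argument. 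The paper proves it by comparing the two charts via $y^j=x^j+o_1(|x|^{1-\tau_*})$ and matching the two AS expansions term by term, which forces $m_x=m_y$; some such comparison is unavoidable, since a priori each chart carries its own mass coefficient.
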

\begin{proof}
The results follows from the arguments of Theorem \ref{MainThm1} together  with the results of Lemma \ref{GeneralGaugeConditions}. Notice that in this more general case the gauge condition $F^{\mu}\doteq\bar{g}^{\alpha\beta}\mathcal{S}^{\mu}_{\alpha\beta}=0$ is decomposed
\begin{align}\label{GeneralGaugeCondition}
\begin{split}
0=F^{0}%&=\bar{g}^{00}\mathcal{S}^{0}_{00} + \bar{g}^{ab}\mathcal{S}^{0}_{ab},\\
&=-N^{-3}(\partial_tN-X(N)) - N^{-1}\tau + \frac{N^{-2}}{2}( 2\mathrm{div}_{g}X -2\mathrm{div}_{e}X- \bar{g}^{ab}D_X\bar{g}_{ab} + X(|X|^2_e)),\\
0=F^{j}%&=\bar{g}^{00}\mathcal{S}^{j}_{00} + \bar{g}^{ab}\mathcal{S}^{j}_{ab},\\
&=-N^{-2}(\partial_tX^j - \frac{X^j}{2}\partial_t|X|^2_e + \frac{X^{j}}{2}X(|X|^2_e) - D_XX^{j} + N\bar{\nabla}^jN) + \bar{g}^{ab}\mathcal{S}^{j}_{ab}.
\end{split}
\end{align}
{Notice that from the second of the above equations, one gets
\begin{align*}
0&=N^{-2}(\overbrace{\underbrace{X^{\flat}_j\partial_tX^j}_{\frac{1}{2}\partial_t |X|^2_e} - \frac{|X|^2_e}{2}\partial_t|X|^2_e}^{\frac{1}{2}(1-|X|^2_{e})\partial_t|X|^2_e} + \frac{|X|^2_e}{2}X(|X|^2_e) - X^{\flat}_jD_XX^{j} +X^{\flat}_j N\bar{\nabla}^jN) - \bar{g}^{ab}X^{\flat}_j\mathcal{S}^{j}_{ab},
\end{align*}
which implies that, as long as $|X|_e\neq 1$,\footnote{{Since below we shall only be interested in an explicit expression for $\partial_t X^j$ near infinity, where $|X|_e \rightarrow 0$, this condition will be satisfied. Within the general Cauchy problem, this does not pose a relevant problem, since the metric $e$ is actually auxiliary. Thus, if $X\neq 0$, one can chose an equivalent metric given by $e'= \frac{1}{2 \max |X|_e} e,$ which guarantees that $|X|_{e'} <1$ over all of $M$ over all of $M$.}
}
\begin{align*}
\partial_t|X|^2_e&=\frac{2}{|X|^2_e-1}\big( \frac{|X|^2_e}{2}X(|X|^2_e) - \frac{1}{2}D_X|X|^2_e +X^{\flat}_j N\bar{\nabla}^jN   - N^2\bar{g}^{ab}X^{\flat}_j\mathcal{S}^{j}_{ab}\big).
\end{align*}
}
Under our decaying assumptions, this implies that
\begin{align*}
\partial_t|X|^2_e=O_3(|x|^{-2\tau_*-1})
\end{align*}
where $\tau_{*}\doteq \min\{\tau,\rho\}>0$.Therefore, from (\ref{GeneralGaugeCondition}), one gets
\begin{align*}
\begin{split}
\partial_tN&= - N^{2}\tau +X(N) + \frac{N}{2}(  2\mathrm{div}_{g}X -2\mathrm{div}_{e}X- \bar{g}^{ab}D_X\bar{g}_{ab} + X(|X|^2_e))=\pm 3c + O_{3}(|x|^{-\tau_{*}-1}),\\
\partial_tX^j&= \frac{X^j}{2}\partial_t|X|^2_e - \frac{X^{j}}{2}X(|X|^2_e) + D_XX^{j} - N\bar{\nabla}^jN + N^2\bar{g}^{ab}\mathcal{S}^{j}_{ab}=O_{3}(|x|^{-\tau_{*}-1}).
\end{split}
\end{align*}
 Thus, since $\partial^3N^2$ and $\partial^2\dot{X}$ are both $O_1(|x|^{-\tau_{*}-3})$, one sees that these more general choice of asymptotic observers still gives us the following non-zero contribution for the energy
%Thus,\footnote{The $\pm$ sign for $\dot{N}$ depends on whether we are in the \emph{asymptotically expanding} or \emph{contracting} case (see Remark \ref{ExpContInterpretation}).} 
%\begin{align}\label{asymptotic.3}
%\begin{split}
%\dot{N} &= \pm 3c + O_{3}(|x|^{-\tau-1}),\\
%\dot{X}_j &= g_{ji}g^{ab}\Gamma^{i}_{ab}(g).
%\end{split}
%\end{align}
%Associated to the initial $(g,N,X,K,\dot{N},\dot{X})$ we have a space-time $(M\times I,\bar{g})$, with $I=[0,T)$ for some $T>0$, where $\bar{g}$ is Einstein and thus $A_{\bar{g}}=0$. Let us evaluate the fourth order energy on these solutions. From our lapse-shift conditions, we get
\begin{align}\label{InvariantEnergyContribution}
\mathcal{E}_{\alpha,\beta}(\bar{g})&=\lim_{r\rightarrow\infty}\Big\{ \frac{\beta}{2}\int_{S^{2}_r}\left( \partial_{i}\ddot{g}_{ji} - \partial_{j}\ddot{g}_{ii} \right)\hat{\nu}^{j}d\omega_r  - ( \beta  + 2\alpha)\int_{S^{2}_r}\partial_{j}\ddot{g}_{ii}\hat{\nu}^{j}d\omega_r  \Big\}
\end{align} 
%Now, from our hypotheses and (\ref{asymptotic.3}) we see that
%\begin{align*}
%\partial_{ijk}g=O(|x|^{-(\tau +3 )}) \; , \; \dot{X}=O_2(|x|^{-(\tau + 1)}) \; , \; \partial_{ij}\dot{X}=O(|x|^{-(\tau+3)}).
%\end{align*}
Also, in the asymptotic region, we have that 
\begin{align*}
\bar{g}^{\lambda\mu}\hat{D}_{\lambda}\hat{D}_{\mu}\bar{g}_{\alpha\beta}&=-N^{-2}\hat{D}_{0}\hat{D}_{0}\bar{g}_{\alpha\beta} + \bar{g}^{ab}\hat{D}_{a}\hat{D}_{b}\bar{g}_{\alpha\beta},\\
&=-N^{-2}\hat{D}_{0}\hat{D}_{\partial_t}\bar{g}_{\alpha\beta} + N^{-2}\hat{D}_{0}\hat{D}_{X}\bar{g}_{\alpha\beta} + \bar{g}^{ab}\hat{D}_{a}\hat{D}_{b}\bar{g}_{\alpha\beta},\\
&=-N^{-2}\hat{D}_{t}\hat{D}_{\partial_t}\bar{g}_{\alpha\beta} + N^{-2}\hat{D}_{X}\hat{D}_{\partial_t}\bar{g}_{\alpha\beta} + N^{-2}\hat{D}_{\partial_t}\hat{D}_{X}\bar{g}_{\alpha\beta} - N^{-2}\hat{D}_{X}\hat{D}_{X}\bar{g}_{\alpha\beta} + \bar{g}^{ab}\hat{D}_{a}\hat{D}_{b}\bar{g}_{\alpha\beta},\\
&=-N^{-2}\partial^2_{t}\bar{g}_{\alpha\beta} + N^{-2}X^j\partial_{j}\partial_t\bar{g}_{\alpha\beta} + N^{-2}\partial_t(X^j\partial_j\bar{g}_{\alpha\beta}) - N^{-2}X^j\partial_{j}(X^k\partial_{k}\bar{g}_{\alpha\beta}) + \bar{g}^{ab}\partial_{a}\partial_{b}\bar{g}_{\alpha\beta}
\end{align*}
thus
\begin{align*}
\bar{g}^{\lambda\mu}\hat{D}_{\lambda}\hat{D}_{\mu}\bar{g}_{ij}\vert_{t=0}&=-N^{-2}\partial^2_{t}\bar{g}_{ij}\vert_{t=0} + \underbrace{\bar{g}^{ab}\partial_{a}\partial_{b}\bar{g}_{ij}}_{=O_2(|x|^{-\tau-2})} + \underbrace{N^{-2}X^j\partial_{j}\partial_t\bar{g}_{ij}}_{=O(|x|^{-(\rho+\tau+1)})} + N^{-2}( \underbrace{\partial_tX^j\partial_j\bar{g}_{ij}}_{=O_2(|x|^{-(\tau+\tau_{*}+2)})} + \underbrace{X^j\partial_j\partial_t\bar{g}_{ij})}_{=O_2(|x|^{-(\tau+\rho+1)})} ) \\
&- N^{-2}\underbrace{X^j(\partial_{j}X^k\partial_{k}\bar{g}_{ij} + X^k\partial_{j}\partial_{k}\bar{g}_{ij})}_{=O_2(|x|^{-(2\rho+\tau+2)})},\\
&=-N^{-2}\partial^2_{t}\bar{g}_{ij}\vert_{t=0} + O_2(|x|^{-\tau-2})+O_2(|x|^{-\rho-\tau-1}) \\
&=-N^{-2}\partial^2_{t}\bar{g}_{ij}\vert_{t=0} + O_2(|x|^{-\tau-2})+O_2(|x|^{-2\tau_*-1}).
\end{align*}

%\begin{align}\label{gauge0}
%\begin{split}
%R_{\alpha\beta}(\bar{g})&=-\frac{1}{2}\bar{g}^{\lambda\mu}\partial_{\lambda}\partial_{\mu}\bar{g}_{\alpha\beta} - \frac{1}{2}\big\{  \partial_{\beta}\bar{g}^{\nu\lambda}\partial_{\nu}\bar{g}_{\lambda\alpha}  + \partial_{\alpha}\bar{g}^{\nu\lambda}\partial_{\nu}\bar{g}_{\lambda\beta} \}  - \bar{\Gamma}^{\nu}_{\sigma\beta}\bar{\Gamma}^{\sigma}_{\nu\alpha},
%R^{(\hat{e})}_{\alpha\beta}&\doteq -\frac{1}{2}g^{\lambda\mu}D_{\lambda}D_{\mu}g_{\alpha\beta}+f_{\alpha\beta}(g,Dg),\\
%F^{\lambda}&\doteq g^{\alpha\beta}(\Gamma^{\lambda}_{\alpha\beta}-\hat{\Gamma}^\lambda_{\alpha\beta})\doteq g^{\alpha\beta}%S^{\lambda}_{\alpha\beta},
%\end{split}
%\end{align}

Putting this together with (\ref{gauge0}) and the Einstein hypothesis on $\bar{g}$, we now get
%\begin{align*}
%\mathrm{Ric}_{\bar{g}}=\Lambda\bar{g},
%\end{align*}
%therefore, due to $N|_{t=0}=1$ and $X|_{t=0}=0$, we get \footnote{Recall that the notation we typically use implies $\partial_0=\partial_t-X$. That is, in this setting $\partial_0$ is not the asymptotic-wave coordinate $\partial_t$. Thus, using $N|_{t=0}=1,X|_{t=0}=0$, we get
%\begin{align*}
%\bar{g}^{\lambda\mu}\partial_{\lambda}\partial_{\mu}\bar{g}_{ij}|_{t=0}&=\Big(-N^2\partial_0\partial_0\bar{g}_{ij} + \bar{g}^{ab}\partial_{ab}\bar{g}_{ij}\Big)\Big|_{t=0},\\
%&=\Big(-N^2\left(\partial^2_t\bar{g}_{ij} - \partial_t(X^k\partial_k\bar{g}_{ij})  + X^l\partial_l(\partial_t\bar{g}_{ij} - X^k\partial_k\bar{g}_{ij} )\right) + \bar{g}^{ab}\partial_{ab}\bar{g}_{ij}\Big)\Big|_{t=0},\\
%&=-\partial^2_t\bar{g}_{ij} + g^{ab}\partial_{ab}g_{ij}+ \dot{X}^k\partial_kg_{ij}.
%\end{align*}
%}
\begin{align}\label{InvarianceUmbilicalEnergy.0}
\begin{split}
\!\!\!\!\frac{1}{2}\partial^2_{t}\bar{g}_{ij}|_{t=0} &=\Big( \Lambda\bar{g}_{ij} + \frac{1}{2}\big(  \partial_{j}\bar{g}^{\nu\lambda}\partial_{\nu}\bar{g}_{i \lambda}  + \partial_{i}\bar{g}^{\nu\lambda}\partial_{\nu}\bar{g}_{j\lambda} \big) + \bar{\Gamma}^{\nu}_{j\sigma}\bar{\Gamma}^{\sigma}_{i\nu}\Big)\Big|_{t=0} + O_2(|x|^{-\tau-2}) +O_2(|x|^{-2\tau_*-1}).
\end{split}
\end{align}
We can also compute the quadratic term (the details will be omitted for conciseness):
\begin{align*}
\bar{\Gamma}^{\nu}_{j\sigma}\bar{\Gamma}^{\sigma}_{i\nu}\Big|_{t=0}&=\frac{1}{4}\bar{g}^{\nu\mu}\bar{g}^{\sigma\gamma}
(\partial_{j}\bar{g}_{\mu\sigma} + \partial_{\sigma}\bar{g}_{\mu j} - \partial_{\mu}\bar{g}_{j\sigma} )(\partial_{i}\bar{g}_{\gamma\nu} + \partial_{\nu}\bar{g}_{\gamma i} - \partial_{\gamma}\bar{g}_{i\nu} )\Big|_{t=0},\\
&=\frac{1}{2}\bar{g}^{ab}\dot{g}_{ja}\dot{g}_{b i} + O_3(|x|^{-\rho-1}) + O_3(|x|^{-2\tau_{*}}).
\end{align*}
where we used since $\bar{g}_{\alpha\beta}|_{t=0}=\eta_{\alpha\beta} + O_4(|x|^{-\tau_{*}})$, $\tau_{*}>0$, then $\bar{g}^{\alpha\beta}=\eta^{\alpha\beta}+O_4(|x|^{-\tau_{*}})$. Recalling that $\dot{g}=\pm 2cg+O_2(|x|^{-(\tau+1)})$, we now find 
\begin{align*}
\bar{\Gamma}^{\nu}_{j\sigma}\bar{\Gamma}^{\sigma}_{i\nu}\Big|_{t=0}&= 2c^2 g^{ab} g_{a j}  g_{ib} + O_3(|x|^{-\rho-1}) + O_3(|x|^{-2\tau_{*}})=\frac{2\Lambda}{3}g_{ij} + O_3(|x|^{-\rho-1}) + O_3(|x|^{-2\tau_{*}}).
\end{align*}
With similar arguments, we can also compute that
\begin{align*}
\big( \partial_{j}\bar{g}^{\nu\lambda}\partial_{\nu}\bar{g}_{i \lambda}  + \partial_{i}\bar{g}^{\nu\lambda}\partial_{\nu}\bar{g}_{j\lambda} \big)|_{t=0}&=\big( \partial_{j}\bar{g}^{t\lambda}\partial_{t}\bar{g}_{i \lambda} + \partial_{j}\bar{g}^{a\lambda}\partial_{a}\bar{g}_{i \lambda}  + \partial_{i}\bar{g}^{t\lambda}\partial_{t}\bar{g}_{j\lambda} + \partial_{i}\bar{g}^{a\lambda}\partial_{a}\bar{g}_{j\lambda} \big)|_{t=0},\\
%&=\Big\{\underbrace{\partial_{j}\bar{g}^{tt}}_{O_{3}(|x|^{-\tau_{*}-1})}\underbrace{\partial_{t}X_{i}}_{O_3(|x|^{-\tau_{*}-1})} + \underbrace{\partial_{j}\bar{g}^{ta}\partial_{t}\bar{g}_{i a}}_{O_3(|x|^{-\tau_{*}-1})} + \underbrace{\partial_{j}\bar{g}^{at}\partial_{a}X_{i}}_{O_3(-2\tau_{*}-2)} + \underbrace{\partial_{j}\bar{g}^{ab}\partial_{a}\bar{g}_{ib}}_{O_{3}(|x|^{-2\tau-2})}  \\
%&+ \underbrace{\partial_{i}\bar{g}^{tt}\partial_{t}X_{j}}_{O_3(|x|^{-2\tau_{*}-2})} + \underbrace{\partial_{i}\bar{g}^{ta}\partial_{t}\bar{g}_{ja}}_{O_3(|x|^{-\tau_{*}-1})} + \underbrace{\partial_{i}\bar{g}^{at}\partial_{a}\bar{g}_{jt}}_{=O_3(|x|^{-2\tau_{*}-2})} + \underbrace{\partial_{i}\bar{g}^{ab}\partial_{a}\bar{g}_{jb}}_{O_3(|x|^{-2\tau-2})} \Big\}\Big\vert_{t=0},\\
&=O_3(|x|^{-\tau_{*}-1}).
\end{align*}
Putting all the above together, we find that
\begin{align*}
\begin{split}
\frac{1}{2}\partial^2_{t}\bar{g}_{ij}|_{t=0} &= \Lambda g_{ij} + \frac{2\Lambda}{3}g_{ij} + O_2(|x|^{-\tau_{*}-1}) + O_2(|x|^{-2\tau_{*}}) +O_2(|x|^{-2\tau_*-1}),\\
&=\frac{5}{3}\Lambda g_{ij} + O_2(|x|^{-\tau_{*}-1}) + O_2(|x|^{-2\tau_{*}}).
\end{split}
\end{align*}
Thus, the following holds
\begin{align*}
\frac{1}{2}\partial_k\ddot{g}_{ij}=\frac{5}{3}\Lambda\partial_kg_{ij} + O_1(|x|^{-\tau_{*}-2}) + O_1(|x|^{-2\tau_{*}-1}).
\end{align*}
Plugging this information in (\ref{InvarianceUmbilicalEnergy.0}) one obtain
\begin{align*}
&\int_{S^{2}_r}\left( \partial_{i}\ddot{g}_{ji} - \partial_{j}\ddot{g}_{ii} \right)\hat{\nu}^{j}d\omega_r - ( \beta  + 2\alpha)\int_{S^{2}_r}\partial_{j}\ddot{g}_{ii}\hat{\nu}^{j}d\omega_r=\\
&\frac{5}{3}\Lambda\Big\{\beta\int_{S^{2}_r}\left( \partial_{i}g_{ji} - \partial_{j}g_{ii} \right)\hat{\nu}^{j}d\omega_r - 2( \beta  + 2\alpha)\int_{S^{2}_r}\partial_{j}g_{ii}\hat{\nu}^{j}d\omega_r  \Big\} + O_2(r^{-\tau_{*}}) + O_2(r^{-2\tau_{*}+1}).
\end{align*}
Above, the $O_2(r^{-\tau_{*}})$ term always vanishes in the limit $r\rightarrow\infty$, while the term of order $O_2(r^{-2\tau_{*}+1})$ also vanishes under the condition $\tau_{*}=\min\{\tau,\rho\}>\frac{1}{2}$. Therefore, we find
\begin{align*}%\label{InvarianceUmbilicalEnergy.1}
\begin{split}
\mathcal{E}^{\mathcal{O}}_{\alpha,\beta}(\bar{g})%&=\lim_{r\rightarrow\infty}\Big\{\frac{\beta}{2}\int_{S^{n-1}_r}\left( \partial_{i}\ddot{g}_{ji} - \partial_{j}\ddot{g}_{ii} \right)\hat{\nu}^{j}d\omega_r - ( \beta  + 2\alpha)\int_{S^{n-1}_r}\partial_{j}\ddot{g}_{ii}\hat{\nu}^{j}d\omega_r  \Big\},\\
&=\frac{5}{3}\Lambda\lim_{r\rightarrow\infty}\Big\{\beta\int_{S^{n-1}_r}\left( \partial_{i}g_{ji} - \partial_{j}g_{ii} \right)\hat{\nu}^{j}d\omega_r - 2( \beta  + 2\alpha)\int_{S^{n-1}_r}\partial_{j}g
_{ii}\hat{\nu}^{j}d\omega_r  \Big\}=\mathcal{E}^{\mathcal{O}_0}_{\alpha,\beta}(\bar{g})
,
%&= -\lim_{r\rightarrow\infty}\Big\{ \frac{\beta}{2}\int_{S_r}\partial_i\ddot{g}_{ij}\nu_jr^{n-1}dS^{n-1} + (2\alpha + \frac{\beta}{2} )\int_{S_r}\partial_{j}\ddot{g}_{ii}\nu_jr^{n-1}dS^{n-1} \Big\},\\
%&=-4c_n\Lambda\lim_{r\rightarrow\infty}\Big\{\frac{\beta}{2}\int_{S_r}\partial_ig_{ij}\nu_jr^{n-1}dS^{n-1} + (2\alpha + \frac{\beta}{2} )\int_{S_r}\partial_{j}g_{ii}\nu_jr^{n-1}dS^{n-1} \Big\},\\
%&=-4c_n\Lambda\lim_{r\rightarrow\infty}\Big\{(2\alpha + \frac{\beta}{2} )\int_{S_r}(\partial_{j}g_{ii}-\partial_ig_{ji})\nu_jr^{n-1}dS^{n-1}\\
%&+ (2\alpha + \beta )\int_{S_r}\partial_{i}g_{ji}\nu_jr^{n-1}dS^{n-1} \Big\}.
\end{split}
\end{align*}
which shows that under our hypotheses the value of the energy is independent of the chosen observers $\mathcal{O}$.

Since in all the cases we treat $\mathcal{E}_{\alpha,\beta}(\bar{g})$ is proportional to the ADM energy of $g$, if $\phi_x,\phi_y$ are two asymptotic charts with coordinates $\{x^i\}_{i=1}^{3}$ and $\{y^i\}_{i=1}^{3}$ respectively of decay rates $\tau_x,\tau_y>\frac{1}{2}$, under the condition $R_g\in L^1(M,dV_g)$ the equality $\mathcal{E}^{\phi_x}_{\alpha,\beta}(\bar{g})=\mathcal{E}^{\phi_y}_{\alpha,\beta}(\bar{g})$ follows for the equality $E^{\phi_x}_{ADM}(g)=E^{\phi_y}_{ADM}(g)$, which itself is a consequence of \cite[Theorem 4.2]{Bartnik}.

\end{proof}
%\begin{remark}
%Since, in the AS case 
%\begin{align*}
%R_g=o(|x|^{-3}),
%\end{align*}
%but $|x|^{-3}\not\in L^1(\mathbb{R}^3\backslash \overline{B_1(0)})$,  the $L^1$-%condition is not guaranteed automatically. Theorem  \ref{MainThm3} thus completes %theorem \ref{MainThm2} once the first asymptotic decay is fixed.
%\end{remark}

\subsection{{Conformally Einstein solutions}}%\footnote{{I think that this section can contribute to the general discussion, but it should be rephrased as a sort of corollary of the previous theorem, or something like that.}}}

In this section we would like to consider space-time fourth order solutions, that is $A_{\bar{g}}=0$, which are conformally Einstein. {In particular, let us notice that conformally Einstein metrics are Bach-flat, and therefore they correspond to solutions to (vacuum) conformal gravity, as described in Section \ref{SectionConfGrav}. Such solutions represent a borderline, being tightly related to second order solutions.} %We know that these type of solutions are allowed by specific choices of the constants $\alpha, \beta$.
{Nevertheless, the conformal symmetry of the equations imposes too much asymptotic freedom. That is, if we want to impose a particular kind of asymptotic behaviour for the metric, such behaviour cannot be in general conformally invariant and therefore one needs to concentrate on a subclass of conformal transformations. With this in mind, let us consider the following kinds of space-times.}

{\begin{coro}\label{BachFlatCoro}
Let $(V^4=M^3\times \mathbb{R},\bar{g})$ be an AM space-time which is conformal to an Einstein space-time $(V^4=M^3\times \mathbb{R},\hat{\bar{g}})$, where the latter is generated by $\Lambda$AE initial data $\mathcal{J}\doteq (M^3,g,K)$ of order $\tau>0$ with respect to an asymptotic chart $\{x^i\}_{i=1}^3$. If $\mathcal{J}$ satisfies the hypotheses of Theorem \ref{InvarianceUmbilicalEnergyTHM} and $\Omega$ stands for the conformal factor, $\Omega:M\times \mathbb{R}\mapsto \mathbb{R}_{+}$, $\bar{g}=\Omega^2\hat{\bar{g}}$, and $\Omega=1+o_4(|x|^{-\sigma})$,\footnote{{In this case, in the $o_4(|x|^{-\sigma})$ hypothesis we impose on $\Omega$ implies that time-derivatives also increase the decay by one order.}} with $\sigma\geq \max\{1,\tau\}$, then $\mathcal{E}_{\alpha,\beta}(\bar{g})=\mathcal{E}_{\alpha,\beta}(\hat{\bar{g}})$, which is well-defined, and independent both of the asymptotic observers and asymptotic charts under the same conditions of Theorem \ref{InvarianceUmbilicalEnergyTHM}.
\end{coro}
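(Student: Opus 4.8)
The plan is to treat $\mathcal{E}_{\alpha,\beta}$ as the boundary-integral functional of Definition~\ref{Energydefn} and to compute it on $\bar{g}$ directly from the ADM data of $\hat{\bar{g}}$, exploiting the conformal relation $\bar{g}=\Omega^2\hat{\bar{g}}$. Since $\hat{\bar{g}}$ is Einstein it is $A$-flat (here $n=3$), so by Theorem~\ref{InvarianceUmbilicalEnergyTHM} its energy $\mathcal{E}_{\alpha,\beta}(\hat{\bar{g}})$ is well defined and equal to $\tfrac{5}{3}\Lambda$ times the ADM energy of $\hat g$, independently of the admissible observers and charts. First I would fix the canonical slicing of $\hat{\bar{g}}$ (so $\hat N|_{t=0}=1$, $\hat X|_{t=0}=0$) and record that in the induced frame the ADM data of $\bar g$ are $g_{ij}=\Omega^2\hat g_{ij}$, $N=\Omega\hat N$ and $X^i=\hat X^i$, hence $X_i=\Omega^2\hat X_i$ and $N^2|_{t=0}=\Omega^2|_{t=0}$. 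Writing $\Omega^2=1+\psi$ with $\psi=o_4(|x|^{-\sigma})$ (time derivatives raising the decay by one order, by the footnote hypothesis), these observers are asymptotic of order $\sigma\geq1>\tfrac12$ to the canonical ones, and since $\sigma\geq\tau$ the metric $\bar g$ stays asymptotically Euclidean of the same order $\tau$; thus $\bar g$ lies in the class covered by Theorem~\ref{InvarianceUmbilicalEnergyTHM}.

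Next I would substitute these data into the energy formula \eqref{ADM-Energy.1}, splitting each integrand into its $\hat{\bar{g}}$-part plus a conformal correction. The two terms carrying three spatial derivatives of $g_{aa}$ and of $N^2$ contribute $\partial^3(\,\cdot\,)\,r^2=o(r^{-\sigma-1})\to0$ after integration (and vanish for $\hat{\bar{g}}$ for the same reason, exactly as in the proof of Theorem~\ref{PEthm}); the two $\dot X$-terms vanish because $\hat X|_{t=0}=0$ forces $\dot X_i=\dot{\hat X}_i+\psi\dot{\hat X}_i$ with $\dot{\hat X}=O_2(|x|^{-\tau-1})$, so $\partial^2\dot X\,r^2\to0$. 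Hence only the $\ddot g$-terms survive, and the whole problem reduces to comparing $\ddot g_{ij}$ for $\bar g$ with $\ddot{\hat g}_{ij}$.

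The heart of the argument is the expansion, evaluated at $t=0$,
\begin{align*}
\ddot g_{ij}=\ddot{\hat g}_{ij}+\ddot\psi\,\hat g_{ij}+2\dot\psi\,\dot{\hat g}_{ij}+\psi\,\ddot{\hat g}_{ij}.
\end{align*}
The first term reproduces the Einstein contribution, which by the computation of Theorem~\ref{PEthm} gives $\tfrac12\partial_k\ddot{\hat g}_{ij}=\tfrac53\Lambda\,\partial_k\hat g_{ij}+\ldots$ and hence $\mathcal{E}_{\alpha,\beta}(\hat{\bar{g}})$. For the corrections I would use $\dot{\hat g}=-2K=O(1)$ and $\ddot{\hat g}=\tfrac{10}{3}\Lambda\hat g+O_2(|x|^{-\tau-2})=O(1)$ to estimate $\ddot\psi\,\hat g=o_2(|x|^{-\sigma-2})$, $\dot\psi\,\dot{\hat g}=o_3(|x|^{-\sigma-1})$ and $\psi\,\ddot{\hat g}=o_4(|x|^{-\sigma})$; after one spatial derivative and integration over $S^2_r$ these contribute $o(r^{-\sigma-1})$, $o(r^{-\sigma})$ and $o(r^{1-\sigma})$ respectively, all tending to $0$ for $\sigma\geq1$. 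Therefore $\mathcal{E}_{\alpha,\beta}(\bar g)=\mathcal{E}_{\alpha,\beta}(\hat{\bar{g}})$, and the well-definedness together with the independence of observers (of order $\rho>\tfrac12$) and of charts are inherited from Theorem~\ref{InvarianceUmbilicalEnergyTHM}; the same reduction, run for a general observer by means of Lemma~\ref{GeneralGaugeConditions}, shows that the value is observer-independent for $\bar g$ as well.

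I expect the main obstacle to be precisely the borderline correction $\psi\,\ddot{\hat g}_{ij}$. Because $\ddot{\hat g}$ does not decay (it is $O(1)$, carrying the $\tfrac{10}{3}\Lambda\hat g$ piece), this term is only $o_4(|x|^{-\sigma})$, and its integrated contribution $o(r^{1-\sigma})$ fails to vanish at the threshold $\sigma=1$ unless one uses the strict little-$o$ decay of $\Omega$ (and the fact that its time derivatives gain an extra power). This is the reason the hypothesis is $\sigma\geq\max\{1,\tau\}$ with $o_4$ rather than $O_4$ control, and it is the one place where the estimate is genuinely tight rather than a routine decay bound.
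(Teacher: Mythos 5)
Your proposal is correct and follows essentially the same route as the paper's proof: reduce the energy to the $\ddot{g}$-terms, expand $\ddot{g}_{ij}$ conformally (your $\psi=\Omega^2-1$ bookkeeping is equivalent to the paper's expansion in $\Omega,\dot\Omega,\ddot\Omega$), use the $\Lambda$AE Einstein asymptotics $\ddot{\hat{g}}_{ij}=\tfrac{10}{3}\Lambda\hat{g}_{ij}+O_2(|x|^{-(\tau_*+2)})$ to show the corrections integrate to zero, and inherit well-definedness and invariance from Theorem \ref{InvarianceUmbilicalEnergyTHM}. You also correctly isolate the borderline term $\psi\,\ddot{\hat{g}}_{ij}$, whose $o(r^{1-\sigma})$ contribution is exactly where the strict little-$o$ hypothesis on $\Omega$ is needed at $\sigma=1$, matching the paper's estimate.
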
}
{\begin{remark}
Notice that, although the above result is stated for a general conformally Einstein metric and general $\alpha,\beta$, the physically motivated examples arise for the special case of Bach flat metrics and $3\alpha+\beta=0$.
\end{remark}}
\begin{proof}
%Thus, let $\bar{g}=\Omega^2\hat{\bar{g}}$, with $\hat{\bar{g}}$ being Einstein. Then, in particular, $\hat{\bar{g}}$ is generated by initial data $(\hat{g},\hat{K})$ solving (\ref{ECE}). Furthermore, we will assume that such initial data is an admissible $\Lambda$AE initial data set.
{Considering that $\Omega:M\times \mathbb{R}\mapsto \mathbb{R}_{+}$ satisfies our hypotheses, notice that
\begin{align*}
\bar{X}=\hat{\bar{X}},\:\: n&=N^{-1}E_0=\Omega^{-1}\hat{n}=\left(\Omega\hat{N}\right)^{-1}\hat{E}_0,
\end{align*}
where $\{E_{\alpha}\}$ and $\{\hat{E}_{\alpha}\}$ stand for the adapted orthogonal frames associated to $\bar{g}$ and $\hat{\bar{g}}$ respectively. From this we see that
\begin{align*}
&g=\Omega^2\hat{g},\; \; N=\Omega\hat{N},\;\; \partial_t\bar{X}|_{t=0}=\partial_t\hat{\bar{X}}|_{t=0},\\
&\ddot{g}%=\partial_t\left(\Omega^2\partial_t\hat{\bar{g}} + 2\Omega\partial_t\Omega\:\hat{\bar{g}} \right)|_{t=0},\\
=\Omega^2\ddot{\hat{g}} + 4\Omega\dot{\Omega}\partial_t\hat{\bar{g}} + 2\dot{\Omega}^2\:\hat{g} + 2\Omega\ddot{\Omega}\:\hat{g} .
%&=\Omega^2\ddot{\hat{g}}.
\end{align*}
%From the above expressions let us assume that if, near space infinity, it holds that
%\begin{align*}
%\hat{\bar{g}}=\eta + O_k(r^{-\tau}),
%\end{align*}
%where $\eta$ denotes the Minkowski metric. 
%Then, let us assume our conformal factor behaves like
%\begin{align*}
%\Omega=1 + o_k(r^{-\sigma}),
%\end{align*}
%with $\sigma\geq \tau>0$, and recalling that $\tau$ denotes the order of decay of the initial data for $\hat{\bar{g}}$ (see Definition \ref{LamdAE}). 
Let us now appeal to the computations obtained in the proof of Theorem \ref{InvarianceUmbilicalEnergyTHM} to get
\begin{align*}
g_{ij}=\delta_{ij} + O_4(r^{-\tau}),\;\; N|_{t=0}=1 + O_4(r^{-\tau_{*}}), \;\; \dot{X}_j=O_{3}(r^{-(\tau_{*}+1)}),
\end{align*}
where $\tau_{*}\doteq\min\{\tau,\rho,\sigma\}$. Notice that this already implies that the energy will be given by the same expression as in (\ref{InvariantEnergyContribution}). Also,
\begin{align*}
\ddot{g}&=\Omega^2\ddot{\hat{g}} + 4\Omega\dot{\Omega}\dot{\hat{g}} + 2\dot{\Omega}^2\:\hat{g} + 2\Omega\ddot{\Omega}\:\hat{g}=\left(1 + o_4(r^{-\sigma}) \right)\ddot{\hat{g}} + o_4(r^{-(\sigma+1)})\dot{\hat{g}} + o_4(r^{-2(\sigma+1)})\:\hat{g} + o_4(r^{-(\sigma+2)}) \hat{g}.
\end{align*}
Now, appealing to the asymptotic analysis done for $\Lambda$-AE Einstein solutions, we know that
\begin{align*}
\frac{1}{2}\ddot{\hat{g}}_{ij}=\frac{5}{3}\Lambda \hat{g}_{ij} + O_2(r^{-(\tau_{*}+2)}),\;\;\dot{\hat{g}}_{ij}=\pm 2\left(\frac{\Lambda}{3}\right)^{\frac{1}{2}} \hat{g}_{ij} + O_3(r^{-\tau_{*}}),
\end{align*}
then, since $\hat{g}_{ij}=\delta_{ij}+O_4(r^{-\tau_{*}})$, we find
\begin{align*}
\begin{split}
\ddot{g}&=\frac{10}{3}\Lambda (1 + o_4(r^{-\sigma})) \hat{g}_{ij} + o_1(r^{-(\sigma+1)}) + O_2(r^{-(\tau_{*}+2)}).
%&=4\Lambda c_n(\delta + O_1(r^{-\sigma})) + o_1(r^{-\sigma})
\end{split}
\end{align*}
From which it also follows that
\begin{align*}
\partial_k\ddot{g}_{ij}&=\frac{10}{3}\Lambda(1 + o(r^{-\sigma}))\partial_k\hat{g}_{ij} + \frac{10}{3}\Lambda o(r^{-(\sigma+1)})\hat{g}_{ij} + o(r^{-(\sigma+2)})  + O(r^{-(\tau_{*}+3)}),\\
&=\frac{10}{3}\Lambda\partial_k\hat{g}_{ij} + o(r^{-(\sigma+1)})  + O(r^{-(\tau_{*}+3)}),
\end{align*}
which implies
\begin{align*}
\partial_{j}\ddot{g}_{ii}-\partial_i\ddot{g}_{ij}&=\frac{10}{3}\Lambda\left(\partial_j\hat{g}_{ii} - \partial_i\hat{g}_{ij} \right) + o(r^{-(\sigma+1)}) + O_1(r^{-(\tau_{*}+3)}).
\end{align*}
Therefore, we find
\begin{align*}
\int_{S^2_r}\left(\partial_{j}\ddot{g}_{ii}-\partial_i\ddot{g}_{ij}\right)\nu_jd\omega_r&=\frac{10}{3}\Lambda\int_{S^2_r}\left(\partial_{j}\hat{g}_{ii}-\partial_i\hat{g}_{ij}\right)\nu_jd\omega_r + \underbrace{o(r^{-(\sigma+1)})r^{2} + O(r^{-(\tau_{*}+3)})r^{2}}_{o(1)}.
\end{align*}
Thus, passing to the limit the last terms do not contribute, and %This points towards imposing $\sigma\geq 1$, so that $o(r^{-(\sigma+1)})r^{2}$ also vanishes in the limit. With this choice of $\sigma$,
we find 
\begin{align}\label{BachSolutions.1}
\begin{split}
\lim_{r\rightarrow\infty}\int_{S^2_r}\left(\partial_{j}\ddot{g}_{ii}-\partial_i\ddot{g}_{ij}\right)\nu_jd\omega_r&=\frac{10}{3}\Lambda\lim_{r\rightarrow\infty}\int_{S_r}\left(\partial_{j}\hat{g}_{ii}-\partial_i\hat{g}_{ij}\right)\nu_jd\omega_r \\
&=\lim_{r\rightarrow\infty}\int_{S^2_r}\left(\partial_{j}\ddot{\hat{g}}_{ii}-\partial_i\ddot{\hat{g}}_{ij}\right)\nu_jd\omega_r
\end{split}
\end{align}
Similarly, one finds that 
\begin{align}\label{BachSolutions.2}
\lim_{r\rightarrow\infty}\int_{S^2_r}\partial_{j}\ddot{g}_{ii}\nu^jd\omega_r=\lim_{r\rightarrow\infty}\int_{S^2_r}\partial_{j}\ddot{\hat{g}}_{ii}\hat{\nu}^jd\omega_r.
\end{align}
Putting together (\ref{BachSolutions.1})-(\ref{BachSolutions.2}) in (\ref{InvariantEnergyContribution}), we establish our claim.\qed
%That is, this contribution to $\mathcal{E}_{\alpha,\beta}(\bar{g})$ is preserved for conformal transformations with the above asymptotic behaviour. Similarly, we can see that under the constraint $\sigma\geq 1$, all the terms are preserved and thus, under the same hypotheses as in Theorem \ref{PEthm}, its conclusions hold for $\bar{g}=\Omega^2\hat{\bar{g}}$.
}
\end{proof}

{
\begin{remark}
Let us highlight that, although the Bach-flat solutions associated to the above corollary are related to the same kind of equations as those treated in Section \ref{sectionconfflat}, the kind of asymptotic behaviour imposed in the above corollary is notably different than that of the FMSK solutions associated to Theorem \ref{FSMKlemma}. Interestingly, in both cases we get that $\mathcal{E}_{\alpha,\beta}$ is related to a physically meaningful quantity.
\end{remark}}

\section{The intrinsic case}

Finally, we would like to merely introduce another limit in which the energy $\mathcal{E}_{\alpha,\beta}$ is particularly interesting from the geometric viewpoint. This limit is realized in a \textit{natural} situation, which is given by \textit{fourth order stationary solutions}. Borrowing partial terminology from relativity, we could define a (globally hyperbolic) stationary Lorentzian manifold to be a manifold $(M^n\times I,\bar{g})$, with $I\subset \mathbb{R}$, such that
\begin{align*}
\bar{g}=-N^2dt^2 + \tilde{g},
\end{align*}
where $\tilde{g}$ stands for a time-independent tensor field which restricts to the same Riemannian metric $g$ when applied to tangent vectors to $M$, so that $(M,g,N)$ is a Riemannian manifold, with metric $g$ and $N:M\mapsto \mathbb{R}$ is a positive function. In this case, the energy would look like
\begin{align*}%\label{4thenergy-static.1}
\begin{split}
\mathcal{E}_{\alpha,\beta}(\bar{g})&=\lim_{r\rightarrow\infty}\Big\{\left( \frac{3}{2}\beta  + 2\alpha\right)\int_{S^{n-1}_r} \left( \partial_{j}\partial_{i}\partial_{i}g_{aa} - \partial_{j}\partial_{u}\partial_{i}g_{u i}\right)\hat{\nu}^{j}d\omega_{r}   \\
&+ ( \beta  + 2\alpha)\int_{S^{n-1}_r}\partial_{j}\partial_{i}\partial_{i}N^2 \hat{\nu}^{j}d\omega_r  \Big\}
\end{split}.
\end{align*}
In particular, for those theories parametrized by $2\alpha+\beta=0$ we get a purely Riemannian situation, given by
\begin{align}\label{4thenergy-static.2}
\begin{split}
\mathcal{E}_{\alpha}(g)\doteq \mathcal{E}_{\alpha,-2\alpha}(\bar{g})&=- \alpha\lim_{r\rightarrow\infty}   \int_{S^{n-1}_r} \left( \partial_{j}\partial_{i}\partial_{i}g_{aa} - \partial_{j}\partial_{u}\partial_{i}g_{u i}\right)\hat{\nu}^{j}d\omega_{r}   \end{split}.
\end{align}
The above relation is not only a suggestive higher-order generalization of the ADM energy from GR, but it strongly related to $Q$-curvature analysis and in particular to positive mass theorems for the Paneitz operator. These topics have received plenty of attention in geometric analysis (see, for instance, \cite{Malchiodi1}, or \cite{HangYang1}), and due to delicate analysis related with these problems, the analysis of (\ref{4thenergy-static.2}) will be done separately. Nevertheless, we would like to highlight that (\ref{4thenergy-static.2}) carries a powerful positive energy theorem, which implies the know positive mass theorems for the Paneitz operator. Furthermore, it creates a bridge between $Q$-curvature analysis and positive energy theorem for these fourth-order gravitational theories, which seems to parallel the relation between scalar curvature analysis and positive energy theorems in GR (see \cite{avalos2021positive}). We expect this relation to bring about strong results in geometry and analysis.

\section{Appendix}
\subsection{Geometric Conventions}
\label{subsectionconvention}
\subsubsection{Curvature conventions}
To avoid any ambiguity let us pinpoint the curvature conventions we follow in this text: % been following the conventions adopted by Choquet-Bruhat \cite{CB1}. In particular, I define the curvature tensor as
 the curvature tensor is defined as:
\begin{align*}%\label{curvature1}
R(X,Y)Z=\nabla_X\nabla_YZ - \nabla_Y\nabla_XZ - \nabla_{[X,Y]}Z,
\end{align*}
with its components ordered as follows:
%This convention is the opposite to O'Neill, that is, $R_{ON}=-R$. Nevertheless, we also differ on how we label the components of this tensor. That is, in my notations, I define
\begin{align*}
\begin{split}
R^{i}_{jkl}&=dx^{i}(R(\partial_k,\partial_l)\partial_j),\\
&=\partial_k\Gamma^{i}_{lj}-\partial_l\Gamma^{i}_{kj} +  \Gamma^{i}_{ku}\Gamma^u_{jl}- \Gamma^{i}_{lu}\Gamma^u_{jk}.\\
%&={R_{{}_{ON}}}^i_{jkl}.
\end{split}
\end{align*}
%That is, when it comes to the components of the curvature tensor, we have the same convention, despite the fact that when we talk about the operator we use opposite conventions. This comes down to whether we relate the $\partial_l$ and $\partial_k$ to the third or fourth index of $R$. Then, concerning the Ricci tensor, we have that
From this we  get the canonical Ricci, Einstein and scalar tensors:
\begin{align*}
\mathrm{Ric}_{ij}\doteq R^l_{ilj}.%= \mathrm{Ric}_{{ON}_{ij}}.
\end{align*}

\subsubsection{Differential forms}\label{DiffFormsConventions}

In this section we will establish our conventions concerning differential forms and operations related to them. First of all, given a $k$-form $\omega\in \Omega^k(V)$ on a $d$-dimensional manifold $V$ and a local coordinate system $\{x^i\}_{i=1}^d$, we fix $\omega_{i_1\cdots i_k}\doteq \omega(\partial_{i_1},\cdots,\partial_{i_k})$ as its components relative to the basis $\{dx^{j_1}\wedge\cdots dx^{j_k}\}_{j_1<\cdots<j_k}$, and therefore we may locally write
\begin{align*}
\omega=\sum_{i_1<\cdots<i_{k}}\omega_{i_1\cdots i_k}dx^{i_1}\wedge\cdots dx^{i_k}=\frac{1}{k!}\omega_{i_1\cdots i_k}dx^{i_1}\wedge\cdots dx^{i_k},
\end{align*}
where in the last equality the summation is not restricted to $i_1<\cdots<i_k$. In this setting we have several well-known operations. To start with, given a semi-Riemannian manifold $(V^d,g)$ we have an induced semi-Riemannian metric $g^{(k)}$ on each space $\Omega^k(V)$ which is given by
\begin{align*}
g^{(k)}(\alpha,\beta)\doteq \sum_{i_1<\cdots <i_k}\alpha_{i_1\cdots i_k}\beta^{i_1\cdots i_k}=\frac{1}{k!}\alpha_{i_1\cdots i_k}\beta^{i_1\cdots i_k}, \text{ for any } \alpha,\beta\in \Omega^k(V)
\end{align*}
where above $\beta^{i_1\cdots i_k}\doteq g^{i_1j_1}\cdots g^{i_kj_k}\beta_{j_1\cdots j_k}$. It is not difficult to see that if $\{e_1,\cdots,e_d\}$ is a $g$-orthonormal basis for $T_pV$ at a point $p\in V$, and if $\{e^1,\cdots,e^d\}$ stands for its dual basis, then $\{e^i_{1}\wedge\cdots\wedge e^{i_k}\}_{i_1<\cdots<i_k}$ is a $g^{(k)}$-orthonormal basis for the fibre of $\Omega^k(V)$ over $p$ (see, for instance, \cite[Proposition 7.2.11]{AMR}). In particular, it holds that
\begin{align*}
g^{(k)}(e^i_{1}\wedge\cdots\wedge e^{i_k},e^i_{1}\wedge\cdots\wedge e^{i_k})=c_{i_1}\cdots c_{i_k}
\end{align*}
where above $c_{i_{j}}\doteq g(e_{i_j},e_{i_j})=\pm 1$.

Also, in this setting, we introduce the Hodge-star operator, which is defined point wise as a linear operator on each fiber. For this one need to introduce a volume form. Thus, let us again consider an orientable semi-Riemannian manifold $(V^d,g)$ and denote the associated Riemannian volume form by $dV_g$, which locally reads as $dV_g=\sqrt{|\mathrm{det}(g)|}dx^1\wedge\cdots\wedge dx^d$. Given our vector bundle $\Omega^k(V)\xrightarrow[]{\pi} V$, one defines a linear operator
\begin{align*}
\star_{g}:\Omega^{k}(V)\mapsto \Omega^{d-k}(V)
\end{align*}
by the requirement that for all $\alpha,\beta\in \Omega^{k}(V)$ it holds that
\begin{align*}
\star_{g}\alpha=g^{(k)}(\beta,\star_g \alpha)dV_g.
\end{align*}
Above, we add the subscript $g$ on $\star$ to highlight the dependence on this operator on the choice of metric $g$. This can be seen to be well-defined and, in fact, in local (oriented) coordinates one can see that the action of $\star$ on $\alpha\in \Omega^k(V)$ is given by (see, for instance, \cite[Proposition 7.2.12 and Example 7.2.14(D)]{AMR}):
\begin{align*}
\begin{split}
\star_g\alpha&=\sum_{\underset{j_{k+1}<\cdots<j_d}{j_1<\cdots<j_k}}\alpha^{j_1\cdots j_k}\epsilon_{j_1\cdots j_k j_{k+1}\cdots j_d}\sqrt{|\mathrm{det}(g)|}dx^{j_{k+1}}\wedge\cdots\wedge dx^{j_d},\\
(\star_g\alpha)_{j_{k+1}\cdots j_{d}}&=\sum_{j_1<\cdots<j_k}\alpha^{j_1\cdots j_k}\epsilon_{j_1\cdots j_k j_{k+1}\cdots j_d}\sqrt{|\mathrm{det}(g)|},
\end{split}
\end{align*}
where above $\epsilon_{j_1\cdots j_d}$ stands for the antisymmetric Levi-Civita symbol and the sums which appeal to Einstein summation convention are understood as over all possible values of the indices. Sometimes, we shall write $\mu_{j_1\cdots j_d}=\epsilon_{j_1\cdots j_d}\sqrt{|\mathrm{det}(g)|}$ which is defined via the relation
\begin{align*}
dV_g=\mu_{j_1\cdots j_d}dx^{j_1}\wedge\cdots\wedge dx^{j_d} \text{ (no summation) }.
\end{align*}

Let us now recall that, given $\alpha\in \Omega^k(V)$, the exterior differential $d:\Omega^k(V)\mapsto \Omega^{k+1}(V)$ is characterised by its local action given by
\begin{align}
\label{differentialformnotation}
d\alpha=\sum_{i_1<\cdots<i_{k}}\partial_{i}\alpha_{i\cdots i_{k+1}}dx^{i}\wedge dx^{i_1}\wedge\cdots\wedge dx^{k}.
\end{align}
In particular, given a $1$-form $\alpha\in \Omega^1(V)$, we see that 
\begin{align*}
d\alpha&=\sum_{j=1}^d\partial_{i}\alpha_{j}dx^{i}\wedge dx^{j}=\partial_{i}\alpha_{j}dx^{i}\wedge dx^{j}=\sum_{i<j}(\partial_i\alpha_j-\partial_j\alpha_i)dx^i\wedge dx^j,\\
d\alpha_{ij}&=\partial_i\alpha_j-\partial_j\alpha_i.
\end{align*} 

Now, given a semi-Riemannian manifold $(V^d,g)$ we can consider the formal adjoint $d^{*}:\Omega^{k+1}(V)\mapsto \Omega^{k}(V)$, which we shall denote by $\delta_{g}$ and where we shall highlight its dependence on $g$, which arises through the canonical choice of $dV_g$ as the volume form. One can thus globally write
\begin{align*}
\delta_g\alpha=(-1)^{dk+1+\mathrm{Ind}(g)}\star_gd\star_g\alpha \text{ for all } \alpha\in\Omega^{k+1}(V),
\end{align*}
where $\mathrm{Ind}(g)$ denotes the \emph{index} of the semi-Riemannian metric $g$. For instance, if $g$ is Lorentzian, we have $\mathrm{Ind}(g)=1$. One can then compute that, in local coordinates, the following formula holds:
\begin{align*}
\delta_g\alpha_{i_1\cdots i_k}=-\nabla^i\alpha_{ii_1\cdots i_k},
\end{align*}
where above $\nabla$ stands for the Riemannian connection associated to $g$.

Also, given $X\in \Gamma(TV)$, let us introduce the \emph{interior product} $X\lrcorner:\Omega^k(V)\mapsto \Omega^{k-1}(TV)$ on any manifold $V^d$, which is defined by
\begin{align*}
X\lrcorner\alpha(X_1,\cdots,X_{k-1})\doteq \alpha(X,X_1,\cdots,X_{k-1}) \text{ for any } \alpha\in \Omega(V) \text{ and } \forall \:\: X_1,\cdots,X_{k-1}\in \Gamma(TV).
\end{align*}

An important formula linking the exterior derivative, the interior product and the Lie derivative is given by Cartan's famous \emph{magic} formula:
\begin{align*}%\label{Cartan}
\pounds_X\alpha=d(X\lrcorner\alpha) + X\lrcorner(d\alpha) \:\: \forall \:\: \alpha\in \Omega^k(V) \text{ and } X\in \Gamma(TV).
\end{align*}

The above formula plays an important role in the application of Stokes' theorem to operators in divergence form. Since in the core of this paper we will be interested in certain flux formalae which are derived in this manner within the Lorentzian setting, let us below briefly highlight a few differences with the more usual Riemannian setting. 

Let $(V^{n+1}=M^n\times \mathbb{R}, \bar{g})$ be a Lorentzian manifold, parametrise the $\mathbb{R}$ factor with a coordinate $t$. Assume $\partial_t$ is time-like and denote by $g_t$ the induced Riemannian metric on each $M_t\doteq M\times \{t\}$. Assume furthermore that $(M^n,g_t)$ are orientable Riemmanian manifolds and denote by $dV_{g_t}$ their corresponding volume forms. With all this, we have a natural orientation for $V$: if at $p\in M$ $\{e_1,\cdots,e_n\}$ denotes a a positive basis for $T_pM$, then $\{\partial_t,e_1,\cdots,e_n\}$ denotes a positive basis for $T_{(p,t)}V$. Thus, if $\{x^i\}_{i=1}^n$ is a positively oriented coordinate system for $M$, then $dV_{\bar{g}}=\sqrt{|\mathrm{det}(\bar{g})|}dt\wedge dx^1\wedge\cdots\wedge dx^n$ denotes our volume form.

Let $\Omega\subset M$ be a compact subset with smooth boundary and define the subset $\mathcal{C}_T=\Omega\times [0,T]$. This subset is \emph{Stokes regular}, in the sense that it regular enough so as to apply Stokes' theorem over it. Now, let $X\in \Gamma(TV)$ and from the formula 
\begin{align*}
L_XdV_{\bar{g}}=\mathrm{div}_{\bar{g}}XdV_{\bar{g}}=d(X\lrcorner dV_{\bar{g}})
\end{align*}
one gets that
\begin{align}\label{Flux.0}
\int_{\mathcal{C}_T}\mathrm{div}_{\bar{g}}XdV_{\bar{g}}&=\int_{\partial \mathcal{C}_T}\mathcal{J}^{*}(X\lrcorner dV_{\bar{g}})%=\int_{\Omega_0}\mathcal{J}^{*}_0i_XdV_{\bar{g}} + \int_{\Omega_T}\mathcal{J}^{*}_Ti_XdV_{\bar{g}} + \int_{L}\mathcal{J}^{*}_Li_XdV_{\bar{g}},  
\end{align}
where above $\mathcal{J}^{*}:\partial\mathcal{C}_T\mapsto \mathcal{C}_T$ denotes the inclusion. We can split $\partial\mathcal{C}_{T}=\Omega_0\cup\Omega_T\cup L$, where $\Omega_0=\Omega\times \{0\}$, $\Omega_T=\Omega\times \{T\}$ and $L=\partial\Omega\times [0,T]$. On each of these hypersurfaces we denote the inclusion into $V$ by $\mathcal{J}^{*}_0,\mathcal{J}^{*}_T$ and $\mathcal{J}^{*}_L$ respectively. Now, let $n$ denote the future-pointing unit normal to each $t$-constant hypersurface $M_t=M\times\{t\}$. Then, writing $X=-\bar{g}(X,n)n + X^{\top}$, we find 
\begin{align*}
\mathcal{J}^{*}_0(X\lrcorner dV_{\bar{g}})=\mathcal{J}^{*}_0(dV_{\bar{g}}(-\bar{g}(X,n)n + X^{\top},\cdot))=-\bar{g}(X,n)\mathcal{J}^{*}_0(dV_{\bar{g}}(n,\cdot)).
\end{align*}
Notice that in (\ref{Flux.0}) $\Omega_0$ is oriented with its Stokes induced orientation, where the \emph{outward-pointing} unit normal corresponds to $-n$ and thus $\{e_1,\cdots,e_n\}$ is a positive basis for $\Omega_0$ at $p$ iff $\{-n,e_1,\cdots,e_n\}$ is positive for $\mathcal{C}_T$. This implies that the induced Stokes orientation for $\Omega_0$ is actually opposite to its intrinsic orientation. On the other hand, we see that in the case of $\Omega_T$ these two orientations agree. All this implies that
\begin{align*}
\mathcal{J}^{*}_0(X\lrcorner dV_{\bar{g}})=-\bar{g}(X,n)dV_{\bar{g}_0} \: ,\: \mathcal{J}^{*}_T(X\lrcorner dV_{\bar{g}})=-\bar{g}(X,n)dV_{\bar{g}_T}
\end{align*}
and, using intrinsic orientations for $\Omega$,
\begin{align}\label{Flux.1}
\begin{split}
\int_{\mathcal{C}_T}\mathrm{div}_{\bar{g}}XdV_{\bar{g}}&=-\int_{\Omega_0}\mathcal{J}^{*}_0(X\lrcorner dV_{\bar{g}}) + \int_{\Omega_T}\mathcal{J}^{*}_T(X\lrcorner dV_{\bar{g}}) + \int_{L}\mathcal{J}^{*}_L(X\lrcorner dV_{\bar{g}}),\\
%&=-\int_{\Omega_0}-\bar{g}(X,n)dV_{\bar{g}_0} + \int_{\Omega_T}-\bar{g}(X,n)dV_{\bar{g}_T} + \int_{L}\mathcal{J}^{*}_Li_XdV_{\bar{g}},\\
&=\int_{\Omega_0}\bar{g}(X,n)dV_{\bar{g}_0} - \int_{\Omega_T}\bar{g}(X,n)dV_{\bar{g}_T} + \int_{L}\mathcal{J}^{*}_L(X\lrcorner dV_{\bar{g}}),
\end{split}
\end{align}
where ${J}^{*}_L(X\lrcorner dV_{\bar{g}})=g_t(X,\nu)\nu\lrcorner dV_{\bar{g}}=g_t(X,\nu)dL$, with $\nu$ the  outward pointing unit normal  vector field to $L$, which is to be understood with its induced Stokes orientation.

\subsubsection*{Extrinsic geometry}\label{ExtGeomConv}

In this section we shall quickly fix our conventions for the extrinsic curvature. Thus, let $M^n\hookrightarrow (V^{n+1},\bar{g})$ be an immersed hypersurface in a time-oriented Lorentzian manifold. We define the second fundamental form of $M$ as
\begin{align*}
\begin{split}
\mathbb{II}:\Gamma(TM)\times \Gamma(TM)&\mapsto \Gamma(TM^{\perp})\\
(X,Y)&\mapsto (\bar{\nabla}_{\bar{X}}\bar{Y})^{\perp}
\end{split}
\end{align*}
where $\bar{X}$ and $\bar{Y}$ denote arbitrary extensions (respectively) of $X$ and $Y$ to $V$, $\bar{\nabla}$ denotes the Riemannian connection associated to $\bar{g}$ and $TM^{\perp}$ denotes the normal bundle of $M$. Associated to the second fundamental form, we have the extrinsic curvature, here denoted by $K\in \Gamma(T^0_2M)$, which we define with respect to the \emph{future-pointing} unit normal to $M$. Thus, $K$ is defined by
\begin{align*}%\label{K-def}
K(X,Y)\doteq \bar{g}(\mathbb{II}(\bar{X},\bar{Y}),n)=\bar{g}(\bar{\nabla}_{\bar{X}}\bar{Y},n), \:\: \forall \: X,Y\in\Gamma(TM).
\end{align*}

Also, we define $\tau\doteq \mathrm{tr}_gK$ as the (not normalised) mean curvature of the immersion, and therefore we find that
\begin{align*}
\tau=-\mathrm{div}_{\bar{g}}n.
\end{align*}

Finally, let us notice that if $V^{n+1}=M^n\times\mathbb{R}$, with $\mathbb{R}$ parametrised by a coordinate $t$ and the time orientation given by $\partial_t$, then defining the associated lapse $N$ and shift $X$ by
\begin{align*}
N=-\bar{g}(\partial_t,n),\:\: X=\partial_t-Nn.
\end{align*}
we may write $n=\frac{1}{N}(\partial_t-X)$. This implies that
\begin{align*}
-K(U,W)&=\bar{g}(\bar{U},\bar{\nabla}_{\bar{W}}n)%=\bar{W}(N^{-1})\bar{g}(\bar{U},\partial_t-X) +\frac{1}{N}\left( \bar{g}(\bar{U},\bar{\nabla}_{\bar{W}}\partial_t) - \bar{g}(\bar{U},\bar{\nabla}_{\bar{W}}X)\right),\\
=\frac{1}{N}\left(\bar{g}(\bar{U},\bar{\nabla}_{\bar{W}}\partial_t) - \bar{g}(\bar{U},\bar{\nabla}_{\bar{W}}X)\right),\\%=\frac{1}{N}\left(\bar{g}(\bar{U},\bar{\nabla}_{\partial_t}\bar{W}) + \bar{g}(\bar{U},[\bar{W},\partial_t]) - g(U,\nabla_{W}X)\right),\\
&=\frac{1}{N}\left(\partial_t(\bar{g}(\bar{U},\bar{W})) - \bar{g}(\bar{\nabla}_{\partial_t}\bar{U},\bar{W}) + \bar{g}(\bar{U},[\bar{W},\partial_t]) - g(U,\nabla_{W}X)\right)
\end{align*}
Therefore,
\begin{align*}
-2NK(U,W)%&=2\partial_t(\bar{g}(\bar{U},\bar{W})) - (\bar{g}(\bar{\nabla}_{\partial_t}\bar{U},\bar{W}) + \bar{g}(\bar{\nabla}_{\partial_t}\bar{W},\bar{U})) + \bar{g}(\bar{U},[\bar{W},\partial_t]) + \bar{g}(\bar{W},[\bar{U},\partial_t])  \\
%&- g(U,\nabla_{W}X) - g(W,\nabla_{U}X),\\
&=\partial_t(\bar{g}(\bar{U},\bar{W})) - \bar{g}(\bar{U},[\partial_t,\bar{W}]) - \bar{g}(\bar{W},[\partial_t,\bar{U}]) - \pounds_{X}g(U,W).
\end{align*}
That is, 
\begin{align*}
K(U,W)=-\frac{1}{2N}\left( \partial_t(\bar{g}(\bar{U},\bar{W})) - \bar{g}(\bar{U},[\partial_t,\bar{W}]) - \bar{g}(\bar{W},[\partial_t,\bar{U}]) - \pounds_{X}g(U,W)\right)
\end{align*}
Notice that locally this reduces to a simple expression given by
\begin{align*}
K_{ij}=-\frac{1}{2N}\left( \partial_t\bar{g}_{ij} - \pounds_{X}g_{ij}\right),
\end{align*}
from which we sometimes write $K=-\frac{1}{2N}\left( \partial_tg - \pounds_{X}g_{ij}  \right)$.

\subsection{Proof of proposition \ref{solscharzschgener}}
Since the sketch presented has already dealt with the $2 \alpha + \beta = 0$ case, we will assume in the following that $\chi:= 2 \alpha + \beta \neq 0$. Employing the same Maple procedure as presented in figure  \ref{FequationM},  we can see (see figure \ref{shapeofM}) that, as announced in the sketch of the proof: $M= -m -\frac{\Lambda}{3}r^3 +C_1 r^{f(\alpha,\beta)} + C_2 r^{g(\alpha,\beta)}$, with
$$%\begin{equation}
%\label{thefandtheg}
\begin{aligned}
f&= \frac{6 \alpha +3 \beta + \sqrt{100 \alpha^2 + 84 \alpha \beta + 17 \beta^2}}{2\left( 2 \alpha + \beta\right)} \\
g&= \frac{6 \alpha +3 \beta - \sqrt{100 \alpha^2 + 84 \alpha \beta + 17 \beta^2}}{2\left( 2 \alpha + \beta\right)}
\end{aligned}
$$%\end{equation}
and 
$$\begin{aligned}
\frac{\left(2 \alpha + \beta \right)^2 r }{24 \left( \beta + 4 \alpha \right) }A_{22}&= C_1\left( h^+_1 r^{t_1^+(\alpha, \beta) }+h^+_2r^{t_2^+(\alpha, \beta) }+h^+_3 r^{t_3^+(\alpha, \beta) } + h^+_4 r^{t_4^+(\alpha, \beta) } \right) \\&+  C_2\left( h^-_1 r^{t_1^-(\alpha, \beta) }+h^-_2 r^{t_2^-(\alpha, \beta) }+h^-_3 r^{t_3^-(\alpha, \beta) }+ h^-_4 r^{t_4^-(\alpha, \beta) } \right),
\end{aligned}
$$

with

\begin{equation}
\label{theti}
\begin{aligned}
t_1^-&=\frac{18 \alpha +9 \beta - \sqrt{100 \alpha^2 + 84 \alpha \beta + 17 \beta^2}}{4 \alpha + 2 \beta}\\
t_2^-&=\frac{10 \alpha +5 \beta - \sqrt{100 \alpha^2 + 84 \alpha \beta + 17 \beta^2}}{4 \alpha + 2 \beta}\\
t_3^-&=\frac{6 \alpha +3 \beta - \sqrt{100 \alpha^2 + 84 \alpha \beta + 17 \beta^2}}{4 \alpha + 2 \beta}\\
t_4^-&=\frac{6 \alpha +3\beta - \sqrt{100 \alpha^2 + 84 \alpha \beta + 17 \beta^2}}{2 \alpha +  \beta}\\
t_1^+&=\frac{18 \alpha +9 \beta +\sqrt{100 \alpha^2 + 84 \alpha \beta + 17 \beta^2}}{4 \alpha + 2 \beta}\\
t_2^+&=\frac{10 \alpha +5 \beta + \sqrt{100 \alpha^2 + 84 \alpha \beta + 17 \beta^2}}{4 \alpha + 2 \beta}\\
t_3^+&=\frac{6 \alpha +3 \beta +\sqrt{100 \alpha^2 + 84 \alpha \beta + 17 \beta^2}}{4 \alpha + 2 \beta}\\
t_4^+&=\frac{6 \alpha +3\beta +\sqrt{100 \alpha^2 + 84 \alpha \beta + 17 \beta^2}}{2 \alpha +  \beta}.
\end{aligned}
\end{equation}

The $4 \alpha + \beta=0$ case will be treated separately as a special case.

\begin{figure}
\includegraphics[scale=0.5]{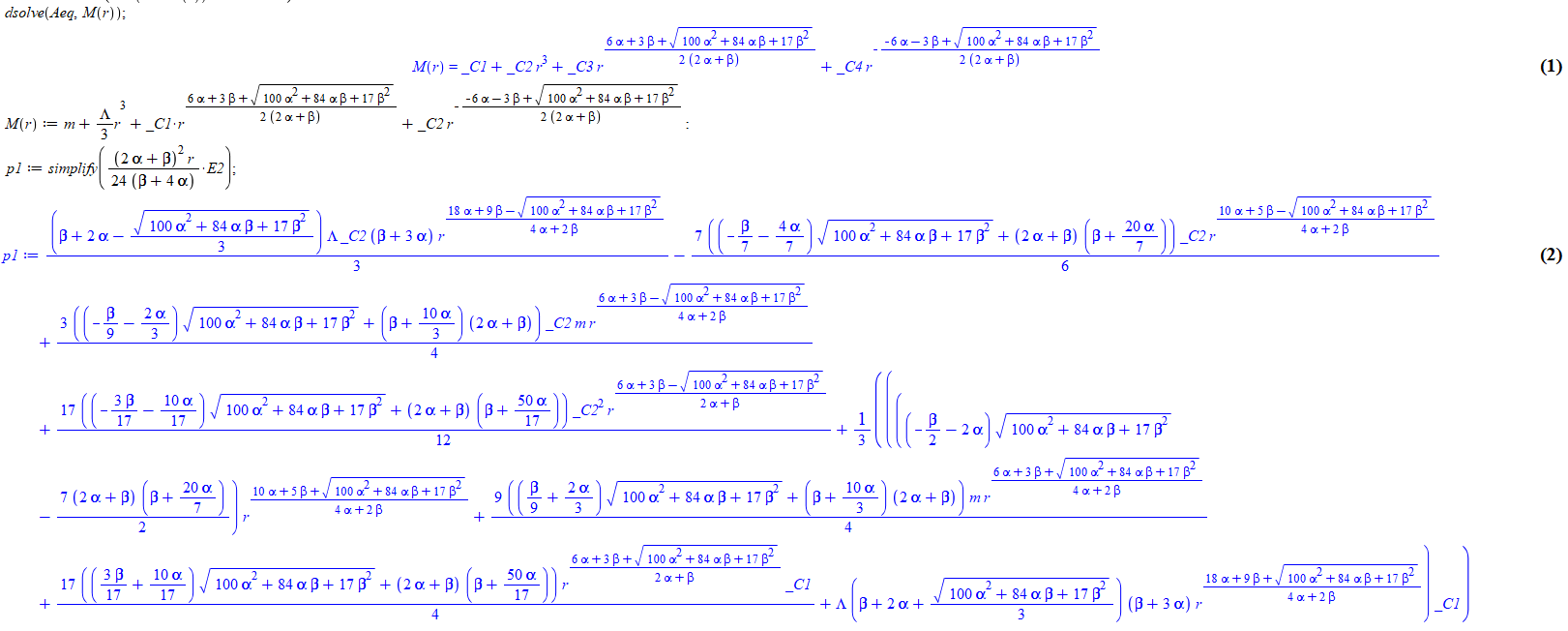}
\caption{Necessary conditions on $M$}
\label{shapeofM}
\end{figure}

\begin{remark}
We must point out that  $100 \alpha^2 + 84 \alpha \beta + 17 \beta^2$ is not necessarily positive.  We will consider first that $100 \alpha^2 + 84 \alpha \beta + 17 \beta^2 \ge 0$, before explaining that the situation is highly similar in the opposite case.
\end{remark}

Since $100 \alpha^2 + 84 \alpha \beta + 17 \beta^2 = 25 \left( 2 \alpha + \beta \right)^2 -8 \beta \left( 2 \alpha + \beta \right) = 25 \chi^2 - 8 \beta \chi$, we will favor working with $(\chi, \beta)$ instead of $(\alpha, \beta)$. We will thus write:

\begin{equation}
\label{thefandthegchi}
\begin{aligned}
f&=\frac{3}{2}+\frac{\sqrt{25 -8 \frac{\beta}{\chi} } }{2} \\
g&= \frac{3}{2}-\frac{\sqrt{25 -8 \frac{\beta}{\chi} } }{2},
\end{aligned}
\end{equation}

and
\begin{equation}
\label{theti}
\begin{aligned}
t_1^-&=\frac{9}{2} -\frac{\sqrt{25 -8 \frac{\beta}{\chi} } }{2} \\
t_2^-&=\frac{5}{2} -\frac{\sqrt{25 -8 \frac{\beta}{\chi} } }{2}\\
t_3^-&=\frac{3}{2} -\frac{\sqrt{25 -8 \frac{\beta}{\chi} } }{2}\\
t_4^-&=3 -\sqrt{25 -8 \frac{\beta}{\chi} }\\
t_1^+&=\frac{9}{2} +\frac{\sqrt{25 -8 \frac{\beta}{\chi} } }{2}\\
t_2^+&=\frac{5}{2} +\frac{\sqrt{25 -8 \frac{\beta}{\chi} } }{2}\\
t_3^+&=\frac{3}{2} +\frac{\sqrt{25 -8 \frac{\beta}{\chi} } }{2}\\
t_4^+&=3 +\sqrt{25 -8 \frac{\beta}{\chi} }.
\end{aligned}
\end{equation}

This of course works under the assumption that $\chi$ is positive. However, if $\chi \le 0$, $ \frac{ \sqrt{ 100 \alpha^2 + 84 \alpha \beta + 17 \beta^2} }{\chi} = \mathrm{sg}(\chi) \sqrt{25 -8 \frac{\beta}{\chi}}$. Up to exchanging $f$ and $g$, and the $t_i^+$ and the $t_i^-$, the coefficients remain the same.

Using these formulas, one can deduce that there exists a finite number of $[\alpha, \beta] \in \R\mathbb{P}^1$  for which  one of the $t_i^\pm$ equals another $t_j^\pm$. We present those values in the table, figure \ref{tabvalues} and detail a few representative cases:
\begin{itemize}
\item
$t_1^- =t_2^-, t_3^-$ clearly has no solution.
\item
$t_1^-=t_4^- $ is equivalent to $\frac{9}{2}-\frac{\sqrt{25 -8 \frac{\beta}{\chi} } }{2}=3 -\sqrt{25 -8 \frac{\beta}{\chi} }$ which is rephrased as $\sqrt{25 -8 \frac{\beta}{\chi} }=-3$. There are then no solutions.
\item
$t_1^-=t_1^+$ if and only if $\sqrt{25 -8 \frac{\beta}{\chi} }=0$, i.e. $\frac{\beta}{\chi}=\frac{25}{8}.$
\item
$t_1^-=t_2^+$ if and only if $\frac{9}{2} -\frac{\sqrt{25 -8 \frac{\beta}{\chi} } }{2}=\frac{5}{2} +\frac{\sqrt{25 -8 \frac{\beta}{\chi} } }{2}$, i.e. $\sqrt{25 -8 \frac{\beta}{\chi} }=2$, which means that: $\frac{\beta}{\chi}=\frac{21}{8}$.
\end{itemize}
All the other combinations fall into one of these configurations (obviously no solution, no solution because of negative squareroot, solution with null squareroot, solution with positive squareroot).
\begin{figure}
\begin{tabular}{|l|l|l|l|l|l|l|l|l|}
\hline
  \quad & $ t_1^-$ &$ t_2^-$ & $t_3^-$& $t_4^-$&$t_1^+$&$ t_2^+$&$t_3^+$&$t_4^+$ \\
\hline
$   t_1^- $& $\R\cup \{ \infty\}$ &$ \emptyset $&$\emptyset$&$\emptyset$&$\frac{25}{8}$&$\frac{21}{8}$&$2$&$3$\\
\hline
$   t_2^- $& $\emptyset$ & $\R\cup \{ \infty\}$ &$\emptyset$&$3$&$\emptyset$&$\frac{25}{8}$&$3$&$\emptyset$\\
\hline
$   t_3^- $& $\emptyset$ & $\emptyset$ &$\R\cup \{ \infty\}$&$2$&$\emptyset$&$\emptyset$&$\frac{25}{8}$&$\emptyset$\\
\hline
$   t_4^- $& $\emptyset$ & $3$ &$2$&$\R\cup \{ \infty\}$&$\emptyset$&$\frac{28}{9}$&$3$&$\frac{25}{8}$\\
\hline
$   t_1^+ $& $\frac{25}{8}$ & $\emptyset$ &$\emptyset$&$\emptyset$&$\R\cup \{ \infty\}$&$\emptyset$&$\emptyset$&$2$\\
\hline
$   t_2^+$& $\frac{21}{8}$ & $\frac{25}{8}$ &$\emptyset$&$\frac{28}{9}$&$\emptyset$&$\R\cup \{ \infty\}$&$\emptyset$&$\emptyset$\\
\hline
$   t_3^+ $& $2$& $3$ &$\frac{25}{8}$&$3$&$\emptyset$&$\emptyset$&$\R\cup \{ \infty\}$&$\emptyset$\\
\hline
$   t_4^+ $& $3$ & $\emptyset$ &$\emptyset$&$\frac{25}{8}$&$2$&$\emptyset$&$\emptyset$&$\R\cup \{ \infty\}$\\
\hline
\end{tabular}
\caption{Values of $\frac{\beta}{\chi}$ for which  $t_i^\pm=t_j^\pm$}
\label{tabvalues}
\end{figure}

Outside of those specific values, the $\left( r^{t_i^\pm} \right)$ form  a free family. Thus for the metric to be $A$ flat one must have:
\begin{equation} \label{5456}\left(-\left( \beta + 4 \alpha \right) \sqrt{ 100 \alpha^2+ 84 \alpha \beta + 17 \beta^2} + \left( 2 \alpha + \beta \right) \left( 7 \beta + 20 \alpha \right) \right) C_2=0.\end{equation}
This equation is obtained by looking at the  $r^{\frac{10 \alpha +5 \beta - \sqrt{100 \alpha^2 + 84 \alpha \beta + 17 \beta^2}}{4 \alpha + 2 \beta}}$ term  (last term of the first line in formula (2) in figure \ref{shapeofM}).
We will rephrase \eqref{5456} in terms of $\beta$ and $\chi$:
$$C_2 \left(- \left( 2-\frac{\beta}{\chi} \right) \sqrt{ 25 - 8 \frac{\beta}{\chi} } +10- 3 \frac{\beta}{\chi} \right)=0,$$
which implies that either $C_2=0$ or $\frac{\beta}{\chi}=0$, i.e. $\beta=0$.

Similarly, looking at the $r^{\frac{10 \alpha +5 \beta + \sqrt{100 \alpha^2 + 84 \alpha \beta + 17 \beta^2}}{4 \alpha + 2 \beta}}$ term  (the term of (2) in figure \ref{shapeofM} between the third and fourth line):
\begin{equation} \label{5457}\left(\left( \beta + 4 \alpha \right) \sqrt{ 100 \alpha^2+ 84 \alpha \beta + 17 \beta^2} + \left( 2 \alpha + \beta \right) \left( 7 \beta + 20 \alpha \right) \right) C_1=0.\end{equation}
We once more rephrase this as: 
$$C_1 \left( \left( 2-\frac{\beta}{\chi} \right) \sqrt{ 25 - 8 \frac{\beta}{\chi} } +10- 3 \frac{\beta}{\chi} \right)=0,$$
which implies that $C_1=0$ or $\frac{\beta}{\chi}=3$. 

Thus outside of  $\frac{\beta}{\chi} = 2, $ $\frac{25}{8}$ $\frac{21}{8}$, $\frac{28}{9}$ $0$, $3$, one must have $C_1=C_2=0$, which implies that the metric is Schwarzschild-de Sitter (or AdS). We only have to test these remaining values to conclude. For convenience, and in order to use the same Maple procedure, we will rephrase those in term of $\alpha$ and $\beta$.
We need to test the cases: $ \beta +4 \alpha=0$, $50 \alpha + 17 \beta=0$, $42 \alpha +13 \beta=0$, $56 \alpha + 19 \beta=0$, $\beta=0$, $3 \alpha + \beta=0$. Actually, this last case corresponds to the conformally invariant one, and will not be considered here (see Proposition \ref{Classificationlemma} for this configuration).

 On the Maple results displayed in figure \ref{caspart}, one can see that for $ \beta +4 \alpha=0$,  $42 \alpha +13 \beta=0$, $56 \alpha + 19 \beta=0$ one must have $C_1=C_2=0$, which is the desired result.
 In the configuration $50 \alpha + 17 \beta=0$ however, one obtains only  $C_1+C_2=0$. Nevertheless, since this corresponds to the case where $f=g$, one concludes that $M(r)= m + \frac{\Lambda}{3}r^3$ (second line of (4) in figure \ref{caspart}), which implies that the metric is indeed Schwarzschild-de Sitter (or AdS).
\begin{figure}
\includegraphics[scale=0.5]{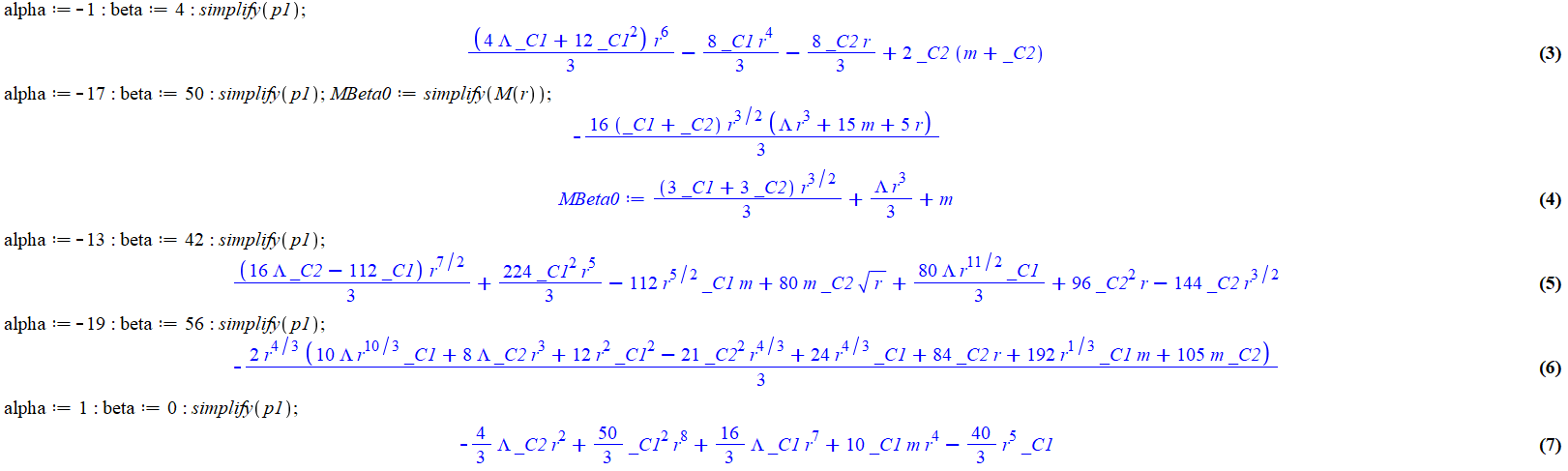}
\caption{Cases  $ \beta +4 \alpha=0$, $50 \alpha + 17 \beta=0$, $42 \alpha +13 \beta=0$, $56 \alpha + 19 \beta=0$ }
\label{caspart}
\end{figure}
 
In the final case: $\beta= 0,$ while $C_1=0$, \emph{a priori} $\Lambda=0$, $C_2 \neq 0$ is an admissible solution, corresponding to the Reissner-Nordstr\"om metric. We can check that it is indeed  a solution (see figure \ref{nordstrom}) and conclude the proof.

\begin{figure}
\includegraphics[scale=0.5]{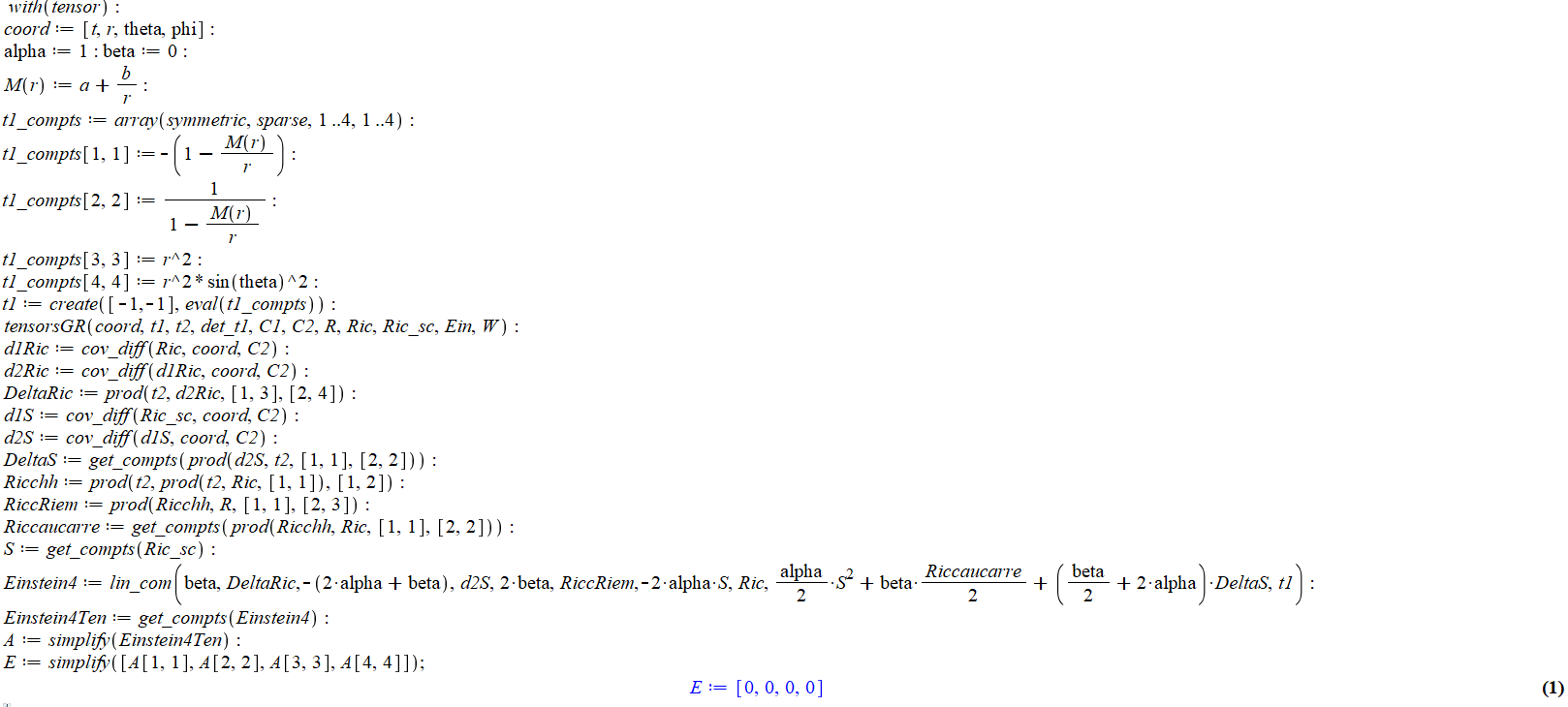}
\caption{The  Reissner-Nordstr\"om metric is $A_{\alpha, 0}$-flat. }
\label{nordstrom}
\end{figure}

Of course the above stands when $100 \alpha^2 + 84 \alpha \beta + 17 \beta^2\ge 0$. The reasoning when $100 \alpha^2 + 84 \alpha \beta + 17 \beta^2<0$ will be very similar. We will thus give a brief overview of the proof in that case: one simply has to replace $f$ and $g$ by 
$$
\begin{aligned}
f&= \frac{6 \alpha +3 \beta + i\sqrt{\left|100 \alpha^2 + 84 \alpha \beta + 17 \beta^2  \right|}}{2\left( 2 \alpha + \beta\right)} \\
g&= \frac{6 \alpha +3 \beta - i\sqrt{\left|100 \alpha^2 + 84 \alpha \beta + 17 \beta^2\right|}}{2\left( 2 \alpha + \beta\right)}.
\end{aligned}
$$
The algebraic operations will remain the same even with complex exponents, and thus $A$ will be written as a sum of (complex) powers of $r$. One simply has to replace the $t_i^{\pm}$ by:
$$
\begin{aligned}
t_1^-&=\frac{18 \alpha +9 \beta - i\sqrt{|100 \alpha^2 + 84 \alpha \beta + 17 \beta^2|}}{4 \alpha + 2 \beta}\\
t_2^-&=\frac{10 \alpha +5 \beta - i\sqrt{|100 \alpha^2 + 84 \alpha \beta + 17 \beta^2|}}{4 \alpha + 2 \beta}\\
t_3^-&=\frac{6 \alpha +3 \beta - i\sqrt{|100 \alpha^2 + 84 \alpha \beta + 17 \beta^2|}}{4 \alpha + 2 \beta}\\
t_4^-&=\frac{6 \alpha +3\beta - i\sqrt{|100 \alpha^2 + 84 \alpha \beta + 17 \beta^2|}}{2 \alpha +  \beta}\\
t_1^+&=\frac{18 \alpha +9 \beta +i\sqrt{|100 \alpha^2 + 84 \alpha \beta + 17 \beta^2|}}{4 \alpha + 2 \beta}\\
t_2^+&=\frac{10 \alpha +5 \beta + i\sqrt{|100 \alpha^2 + 84 \alpha \beta + 17 \beta^2|}}{4 \alpha + 2 \beta}\\
t_3^+&=\frac{6 \alpha +3 \beta +i\sqrt{|100 \alpha^2 + 84 \alpha \beta + 17 \beta^2|}}{4 \alpha + 2 \beta}\\
t_4^+&=\frac{6 \alpha +3\beta +i\sqrt{|100 \alpha^2 + 84 \alpha \beta + 17 \beta^2|}}{2 \alpha +  \beta}.
\end{aligned}
$$

In this case, the $t_i^{\pm}$ cannot interfere and thus the $r^{t_i^{\pm}}$ form a free family. One can then, \emph{mutatis mutandis}, look at \eqref{5456} and \eqref{5457} in the same manner as before, and conclude that $C_1=C_2=0$, and thus that the metric is Schwarzschild-de Sitter (or AdS).

\addcontentsline{toc}{section}{References}
%\bibliographystyle{unsrt}
%\bibliography{bibliography}
\printbibliography

\end{document}